\font\ssc=pplrc9d at 11 truept
\newcommand\qedbox{$\rlap{$\sqcap$}\sqcup$}
\let\ceheadL\cehead
\renewcommand\cehead[1]{
\ceheadL{\textnormal{#1}}
}
\definecolor{Maroon}{cmyk}{0, 0.87, 0.68, 0.32}
\definecolor{RoyalBlue2}{cmyk}{80,100,0,0.1}
\newcommand\auths[1]{\large \textsc{\textcolor{Maroon}{#1}}\setstretch{1.2}}
\newcommand\titl[1]{\center \linespread{1.1}\color{RoyalBlue2}\Large\textbf{ #1}\color{black}\bigskip} 
\renewcommand\abstract[1]{
\begin{center}
{\textbf{Abstract}}
\end{center}
{
\linespread{1.1}\fontsize{9pt}{-10pt}\selectfont #1}}
\DeclareSymbolFont{operators}{\encodingdefault}{ppl}{m}{n}
\DeclareMathAlphabet{\mathbf}{\encodingdefault}{ppl}{bx}{n}
\DeclareMathAlphabet{\mathit}{\encodingdefault}{ppl}{m}{it}
\renewcommand{\thesection}{\arabic{section}}
\titleformat{\section}{\medskip\bigskip\normalfont\Large\bf}{\thesection}{0.5em}{}
\titleformat{\subsection}{\smallskip\bigskip\normalfont\large\bf}{\thesubsection}{0.5em}{}
\newtheoremstyle{dotless}{}{}{\itshape}{}{\bfseries}{}{1em}{}
\theoremstyle{dotless}
\newtheorem{theo}{Theorem}
\newtheorem{prop}[theo]{Proposition}
\newtheorem{lem}[theo]{Lemma}
\newtheorem{cor}[theo]{Corollary}
\newtheorem{rem}[theo]{Remark}
\newtheorem{fact}[theo]{Fact}
\renewenvironment{proof}{\smallbreak\noindent {\sc Proof \;---\;}}{\hfill\qedbox\smallskip}
\numberwithin{theo}{section}
\DeclareOldFontCommand{\rm}{\normalfont\rmfamily}{\mathrm}
\DeclareOldFontCommand{\sf}{\normalfont\sffamily}{\mathsf}
\DeclareOldFontCommand{\tt}{\normalfont\ttfamily}{\mathtt}
\DeclareOldFontCommand{\bf}{\normalfont\bfseries}{\mathbf}
\DeclareOldFontCommand{\it}{\normalfont\itshape}{\mathit}
\DeclareOldFontCommand{\sl}{\normalfont\slshape}{\@nomath\sl}
\DeclareOldFontCommand{\sc}{\normalfont\scshape}{\@nomath\sc}
\numberwithin{theo}{section}
\def\bar{\overline}
\def\tp{\mbox{\rm tp}}
\def\Ann{\operatorname{Ann}}
\def\Soc{\operatorname{Soc}}
\def\Ker{\operatorname{Ker}}
\def\Fixl{\operatorname{Fix}^l}
\def\Fixr{\operatorname{Fix}^r}
\def\Stab{\operatorname{Stab}}
\def\dcl{\operatorname{dcl}}
\def\mc{$\mathfrak M$c}
\def\Kid{\operatorname{Kid}}
\begin{document}

\titl{Skew Braces from a model-theoretic point of view
\footnote{The first and third authors are supported by GNSAGA (INdAM) and are members of the non-profit association Advances in Group Theory and Applications (www.advgrouptheory.com). The first author was supported by the project COSYMA -- CUP B26G21000070005 and partially supported by the project COMBINE of VALERE: VAnviteLli pEr la RicErca of the University of Campania Luigi Vanvitelli. Funded by the European Union - Next Generation EU, Missione 4 Componente 1 CUP B53D23009410006, PRIN 2022- 2022PSTWLB - Group Theory and Applications. The fourth author would like to acknowledge the support of the Science Committee of the Ministry of Science and Higher Education of the Republic of Kazakhstan (Grant No.\ AP19677451).}}

\auths{Maria Ferrara --- Moreno Invitti --- Marco Trombetti --- Frank O. Wagner}

\thispagestyle{empty}
\justify\noindent
\setstretch{0.3}
\abstract{Skew braces are one of the main algebraic tools controlling the structure of a non-degenerate bijective set-theoretic solution of the Yang--Baxter equation. The aim of this paper is to study model-theoretically tame skew braces, with particular attention to the notions of solubility and nilpotency.}

\setstretch{2.1}
\noindent
{\fontsize{10pt}{-10pt}\selectfont {\it Mathematics Subject Classification (2020)}: 03C45, 16T25}\\[-0.8cm]

\noindent 
\fontsize{10pt}{-10pt}\selectfont  {\it Keywords}: stable theory; $\omega$-categorical theory; skew brace; Yang–Baxter equation\\[-0.8cm]

\setstretch{1.1}
\fontsize{11pt}{12pt}\selectfont
\section{Introduction}

\noindent The Yang--Baxter equation (YBE) is a fundamental equation of statistical mechanics that arose from the independent studies of the physicists Chen-Ning Yang \cite{Yang} and Rodney Baxter \cite{Baxter}. This equation plays a relevant role in many areas of mathematics such as knot theory, braid theory, operator theory, Hopf algebras, quantum groups, $3$-manifolds and the monodromy of differential equations. 

A {\it solution} to the YBE is a pair $(V,R)$, where $V$ is a vector space and $R$ is a linear map $R:\, V\otimes V\longrightarrow V\otimes V$ such that $$(R\otimes\operatorname{id})(\operatorname{id}\otimes R)(R\otimes\operatorname{id})=(\operatorname{id}\otimes R)(R\otimes\operatorname{id})(\operatorname{id}\otimes R).$$ We are currently far from being able to provide a full classification of the solutions to the YBE, but, in recent years, there has been some progress concerning the so-called {\it set-theoretic} (or, {\it combinatorial}) solutions of the YBE, \hbox{i.e.} those solutions $(V,R)$ such that~$R$ is induced by linear extensions of bijective maps $$r:\, X\times X\longrightarrow X\times X,$$ where $X$ is a basis of $V$ (see \cite{Drinfeld}). This is mainly because the construction of set-theoretic solutions can sometimes be based on the use of (associative and non-associa\-tive) algebraic structures (see \cite{Rump2}). Among these structures, skew braces hold a prominent position (see for example \cite{OriginalBrace},\cite{Vendramin}). A skew brace is essentially a set endowed with two group structures linked together with a ``distributivity-like'' relation (see next section for the precise definitions).

The aim of this paper is to study the first-order theory of skew braces, in particular under model-theoretic tameness conditions.

The layout of the paper is as follows. In Section \ref{preliminaries} we recall some of the basic definitions and results concerning skew braces, as well as the model-theoretic background. In Section \ref{stability} we introduce the chain conditions on subgroups which hold under various model-theoretic tameness conditions, and also apply both to the additive and multiplicative group of a skew brace. We introduce the connected component and the $\Phi$-component and show that they define ideals. In Section \ref{solubility} we study the various notions of solubility for a skew brace, and show that a soluble skew sub-brace of a stable skew brace is contained in a type-definable one. In Section \ref{nilpotency} we study nilpotent skew braces. The main results here are that an annihilator-nilpotent, socle-nilpotent of right nilpotent skew sub-brace of a stable skew brace is contained in a definable one of the same class; for a left nilpotent skew sub-brace we merely find an enveloping type-definable one, again of the same class. In Section \ref{omega-cat} we analyze locally finite and $\omega$-categorical skew braces, and show that a stable $\omega$-categorical skew brace is virtually left nilpotent of nilpotent type, and an $\omega$-stable $\omega$-categorical skew brace is abelian and trivial. Finally, in Section \ref{local-nilpotency} we study $\pi$-nilpotency in skew braces of nilpotent type.

\section{Preliminaries}\label{preliminaries}
\subsection{Skew braces}
Let $B$ be a set. If $(B,+)$ and $(B,\circ)$ are groups, then the triple $(B,+,\circ)$ is a {\it skew \textnormal(left\textnormal) brace} if the skew (left) distributive property $$a\circ(b+c)=a\circ b-a+a\circ c$$ holds for all $a,b,c\in B$. Now, let $(B,+,\circ)$ be a skew brace. We refer to $(B,+)$ as the \emph{additive group} of $B$ and to $(B,\circ)$ as the \emph{multiplicative group} of $B$. We denote by~$0$ the identity of $(B,+)$, by $1$ the identity of $(B,\circ)$, and  by $-a$ and $a^{-1}$ the inverses of~$a$ in~$(B,+)$ and $(B,\circ)$, respectively. The skew distributive property implies $0=1$. If $a,b\in B$, we employ the notation $[a,b]_+$ and $[a,b]_\circ$ to denote respectively the commutator in $(B,+)$ and $(B,\circ)$ of $a$ and $b$ (notice that our convention for commutators in a group $(G,\cdot)$ is the following one: $[x,y]=xyx^{-1}y^{-1}$); similar expedients are used for other natural group theoretic concepts. 
Note that we do not suppose addition to be commutative.

For any $b\in B$ we denote by $\lambda_b$ the map $x\mapsto -b+b\circ x$. It follows from left distributivity that $\lambda_b$ is an additive automorphism of $B$. The map $b\mapsto\lambda_b$ is a group homomorphism from $(B,\circ)$ to $\operatorname{Aut}(B,+)$ and the following relations hold $$a+b=a\circ\lambda_a^{-1}(b),\quad a\circ b=a+\lambda_a(b),\quad -a=\lambda_a\big(a^{-1}\big).$$

In analogy with ring theory, a third relevant (non-necessarily associative) operation in skew braces is defined as follows $$a\ast b=\lambda_a(b)-b=-a+a\circ b-b$$ and one easily checks that it satisfies the relations 
\[
\begin{array}{c}
a \ast (b + c) = a \ast b + b + a \ast c - b,\\[0.2cm]
(a+b)\ast c= a\ast\big(\lambda_a^{-1}(b)\ast c)+\lambda_a^{-1}(b)\ast c+a\ast c\quad\textnormal{and}\\[0.2cm]
(a \circ b) \ast c=a \ast (b \ast c) + b \ast c + a \ast c,
\end{array}
\] 
for all $a,b,c\in B$. Taking into account $G=(B,+)\rtimes (B,\circ)$, an easy computation shows that the $\ast$-operation corresponds to a commutator of type \[
\begin{array}{c}
\big[(0,a),(b,0)\big]=(a\ast b,0),
\end{array}
\] for all $a,b\in B$.

Let $\mathfrak{X}$ be a class of groups. If the additive group $(B,+)$ is in $\mathfrak{X}$, we call $B$ a (left) skew brace of type $\mathfrak{X}$. In case $\mathfrak{X}=\mathfrak{A}$ is the class of all abelian groups, we also say that $B$ is a (left) brace.

We shall call a skew brace {\it trivial} if addition equals multiplication, and {\it almost trivial} if addition is opposite multiplication. In this way, every group can be considered a trivial or an almost trivial skew brace.

A {\it left ideal} of a skew brace $B$ is an additive subgroup $I$ such that $\lambda_a(I)\subseteq I$ for all~\hbox{$a\in B$;} this is equivalent to $B\ast I\subseteq I$, so $I$ is also a subgroup of $(B,\circ)$ and~\hbox{$a\circ I\subseteq a+I$} for all $a\in B$ (considering $a^{-1}$ we actually have equality $a\circ I=a+I$). A left ideal $I$ is {\em strong} if it is additively normal. An {\it ideal} is a strong left ideal that is also multiplicatively normal (or equivalently such that $I\ast B\subseteq I$); in this case $B/I$ with the induced operations is again a skew brace. It is easy to see that the sum of two (strong) (left) ideals is again a (strong) (left) ideal.

The ideal generated by an element $b\in B$ will be denoted by $(b)$, and $I\trianglelefteq B$ denotes that $I$ is an ideal in $B$. An ideal will be called {\it trivial} if it is trivial as a skew brace.
A skew brace is {\it simple} if it has no proper non-zero ideals. A skew sub-brace $I\le B$ is a sub-(left)(strong) ideal if there is a sequence
$$B=I_0\ge I_1\ge\cdots\ge I_n=I$$
such that $I_{i+1}$ is a (strong)(left) ideal in $I_i$ for all $i<n$.

If $C$ is another skew brace, a {\it homomorphism} (resp.\ {\it isomorphism}) from $B$ to~$C$ is as usual a function (resp.\ {\it bijective function}) $\varphi:B\longrightarrow C$ preserving addition and multiplication; an {\it automorphism} of $B$ is just an isomorphism from $B$ to $B$. The kernels of brace homomorphisms are precisely the ideals, and the homomorphism theorems hold for braces.

An important group associated to a skew brace $B$ is its {\em design group} $G(B)=(B,+)\rtimes_\lambda(B,\circ)$, where $$(a,b)(c,d)=(a + \lambda_b(c),b\circ d)$$
for all $a,b,c,d\in B$. Clearly, a subgroup $A\le(B,+)$ (considered as a subgroup of $G$) is a strong left ideal iff it is normal in $G$.

\begin{lem}\label{idealnormal} Let $B$ be a skew brace. Let $I\trianglelefteq B$ be an ideal in $B$. Then $G(I)$ is a normal subgroup of $G$. If $J\le B$ is a skew sub-brace, then $G(I)\,G(J)=G(I+J)$.\end{lem}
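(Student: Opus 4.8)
The plan is to argue entirely inside the design group $G=G(B)=(B,+)\rtimes_\lambda(B,\circ)$, exploiting the multiplication $(a,b)(c,d)=(a+\lambda_b(c),b\circ d)$, the resulting inverse formula $(a,b)^{-1}=(-\lambda_{b^{-1}}(a),b^{-1})$, and the two structural facts that $b\mapsto\lambda_b$ is a homomorphism (so $\lambda_{b\circ y}=\lambda_b\lambda_y$ and $\lambda_1=\operatorname{id}$) and that each $\lambda_b$ is an additive automorphism of $B$.

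For the normality of $G(I)$, note that, since $0=1$, every element $(x,y)\in G(I)$ (with $x,y\in I$) factors as $(x,1)(0,y)$, so it suffices to conjugate elements of these two forms by an arbitrary $(a,b)\in G$. Substituting into the multiplication rule gives $(a,b)(x,1)(a,b)^{-1}=(a+\lambda_b(x)-a,\,1)$, where $\lambda_b(x)\in I$ because $I$ is a left ideal and $a+\lambda_b(x)-a\in I$ because $I$ is additively normal (it is strong); similarly $(a,b)(0,y)(a,b)^{-1}=\big(a-\lambda_b\lambda_y\lambda_{b^{-1}}(a),\,b\circ y\circ b^{-1}\big)$, whose second coordinate lies in $I$ by multiplicative normality, and whose first coordinate rewrites as $\lambda_b\big(u-\lambda_y(u)\big)=-\lambda_b(y\ast u)$ with $u=\lambda_{b^{-1}}(a)$, and this lies in $I$ since $y\ast u\in I\ast B\subseteq I$ and $I$ is a left ideal. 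Hence $G(I)\trianglelefteq G$. (Conceptually this just says that the homomorphism $G(B)\to G(B/I)$, $(a,b)\mapsto(\pi a,\pi b)$, induced by the quotient map $\pi$ --- a homomorphism because $\pi\lambda_b=\bar{\lambda}_{\pi b}\pi$ --- has kernel exactly $G(I)$.)

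For the product formula I would first record that, $I$ being an ideal, $I+a=a+I=a\circ I=I\circ a$ for every $a\in B$ (combining additive and multiplicative normality of $I$ with the identity $a\circ I=a+I$ valid for left ideals), so that $I+J=I\circ J$ as subsets; this set is at once an additive and a multiplicative subgroup of $B$, hence a skew sub-brace, and $\lambda_b(x)=-b+b\circ x\in I+J$ for $b,x\in I+J$, so $G(I+J)$ is indeed a subgroup of $G$. The inclusion $G(I)\,G(J)\subseteq G(I+J)$ is then immediate: for $a,b\in I$ and $c,d\in J$ one has $b\circ d\in I\circ J=I+J$ and $\lambda_b(c)=(b\ast c)+c\in I+J$ (using $b\ast c\in I\ast B\subseteq I$), so $(a,b)(c,d)=(a+\lambda_b(c),\,b\circ d)\in G(I+J)$. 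For the reverse inclusion take $(p,q)\in G(I+J)$: since $q\in I+J=I\circ J$ write $q=b\circ d$ with $b\in I$, $d\in J$, and write $p=i_0+j_0$ with $i_0\in I$, $j_0\in J$; then with $a:=i_0-(b\ast j_0)\in I$ (again $b\ast j_0\in I\ast B\subseteq I$) and $c:=j_0\in J$, the identity $\lambda_b(j_0)=(b\ast j_0)+j_0$ gives $(a,b)(c,d)=(i_0+j_0,\,b\circ d)=(p,q)$, so $(p,q)\in G(I)\,G(J)$.

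The only points that require genuine care are the repeated appeals to the non-commutativity of $+$ --- so that the conjugation identities above have to be read in the order written, and ``$a+\lambda_b(x)-a\in I$'' really uses additive normality of $I$ rather than commutativity --- and the identification $I+J=I\circ J$, which is precisely what allows the multiplicative component of an arbitrary element of $G(I+J)$ to be split as $b\circ d$ with $b\in I$ and $d\in J$; everything else is routine substitution into the multiplication rule of $G$.
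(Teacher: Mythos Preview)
Your proof is correct and follows essentially the same approach as the paper: verify normality of $G(I)$ via the ideal axioms (additive normality, $\lambda$-invariance, multiplicative normality, and $I\ast B\subseteq I$), and establish the product formula by direct computation with the multiplication rule in $G(B)$. The paper is simply more terse --- for normality it just records that $(I,+)\trianglelefteq G(B)$, $(I,\circ)\trianglelefteq(B,\circ)$, and that the $\lambda$-action of $(I,\circ)$ on $(B/I,+)$ is trivial, while for the reverse inclusion of the product formula it writes $(a+a',b\circ b')=(a,b)(\lambda_{b^{-1}}(a'),b')$, which is your computation with $i_0=a$, $j_0=a'$, $d=b'$ rearranged.
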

Recall that since $I$ is an ideal, $I+J=J+I=J\circ I=I\circ J$.
\begin{proof} Since $I$ is an ideal in $B$, clearly $(I,+)$ is normal in $G(B)$ and $(I,\circ)$ is normal in $(B,\circ)$. Moreover, the $\lambda$-action of $(I,\circ)$ on $(B/I,+)$ is trivial, which means that $G(I)$ is normal in $G(B)$.
	
Now consider $(a,b)\in G(I)$ and $(a',b')\in G(J)$. Then 
$$(a,b)(a'b')=\big(a+\lambda_b(a'),b\circ b'\big)\in G(I+J).$$
Conversely, if $a,b\in I$ and $a',b'\in J$, then
$$(a+a',b\circ b')=(a,b)\,\big(\lambda_{b^{-1}}(a'),b'\big)\in G(I)\,G(J).$$
Equality follows.\end{proof}

The {\it socle} of $B$ is defined as $\Soc(B)=\Ker(\lambda)\cap Z^+(B)$ and the {\it annihilator} of $B$ is defined as $\Ann(B)=\Soc(B)\cap Z^\circ(B)$, where $Z^+(B)$ and $Z^\circ(B)$ denote the additive and multiplicative centre. More generally, we define the {\em upper annihilator series} by
$$\Ann_0(B)=\{0\},\quad\mbox{and}\quad\Ann_{n+1}(B)=\{b\in B:b\ast B,[b,B]_+,[b,B]_\circ\subseteq \Ann_n(B)\},$$
and the {\em upper socle series} by
$$\Soc_0(B)=\{0\},\quad\mbox{and}\quad
\Soc_{n+1}(B)=\{b\in B:b\ast B,[b,B]_+\subseteq \Soc_n(B)\}.$$
All of the above are ideals in $B$. We call $B$ {\em annihilator nilpotent of class $n$} if $\Ann_n(B)=B>\Ann_{n-1}(B)$, and {\em socle nilpotent of class $n$} if $\Soc_n(B)=B>\Soc_{n-1}(B)$.

We define two descending series $B^{(n)}$ and $B^n$ as follows:
$$B^{(1)}=B^1=B,\quad B^{(n+1)}=\langle B^{(n)}\ast B\rangle_+,\quad\mbox{and}\quad B^{n+1}=\langle B\ast B^n\rangle_+.$$
While the $B^{(n)}$ are ideals, in general the $B^n$ are only left ideals in $B$, and $B^{n+1}$ is an ideal in $B^n$. We call $B$ {\em right nilpotent of class $n$} if $B^{(n+1)}=\{0\}$, and {\em left nilpotent of class $n$} if $B^{n+1}=\{0\}$. Then $B$ is socle nilpotent iff $B$ is right nilpotent of nilpotent type \cite[Lemma 2.16]{Cedo}.

We have not been able to find the following lemma in the literature.

\begin{lem} Let $B$ be a skew brace. Then $\ker\lambda$ is a trivial skew sub-brace.\end{lem}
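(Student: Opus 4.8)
The plan is to set $K=\ker\lambda$ and simply unwind the definition. Recall that $b\in K$ means $\lambda_b=\operatorname{id}$, which by the defining formula $\lambda_b(x)=-b+b\circ x$ is equivalent to saying that $b\circ x=b+x$ for every $x\in B$. Since $\lambda\colon(B,\circ)\to\operatorname{Aut}(B,+)$ is a group homomorphism, $K$ is automatically a subgroup of the multiplicative group $(B,\circ)$; the only real content is to check that $K$ is also an additive subgroup and that on $K$ the two operations coincide.

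I would proceed as follows. First note $0=1\in K$. Next, for $a,b\in K$ one has $a\circ b=a+\lambda_a(b)=a+b$ because $\lambda_a=\operatorname{id}$; since $K$ is closed under $\circ$, this forces $a+b=a\circ b\in K$, which gives additive closure. For additive inverses I would invoke the identity $-a=\lambda_a(a^{-1})$ recalled in the preliminaries: for $a\in K$ it reads $-a=a^{-1}$, which lies in $K$ as $K$ is $\circ$-closed under inverses. Hence $K$ is a subgroup of $(B,+)$ as well, so it is a skew sub-brace (the skew distributive law is inherited from $B$). Finally, the equality $a\circ b=a+b$ for all $a,b\in K$ established above says precisely that addition equals multiplication on $K$, i.e.\ $K$ is trivial as a skew brace.

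There is essentially no obstacle here; the one mildly delicate point is that additive closure does not follow formally from $\lambda$ being a homomorphism, and one has to notice that the additive sum of two elements of the kernel is secretly their multiplicative product and then use that $K$ is closed under $\circ$. Everything else is a direct substitution of $\lambda_a=\operatorname{id}$ into the standard skew-brace identities listed above.
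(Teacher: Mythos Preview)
Your proof is correct and follows essentially the same route as the paper's: both use that $K=\ker\lambda$ is automatically a multiplicative subgroup, then exploit $\lambda_a=\operatorname{id}$ to identify additive operations on $K$ with multiplicative ones and deduce additive closure. The only cosmetic difference is that the paper handles $-a+b$ in a single line via $-a+b=\lambda_a(a^{-1}\circ b)=a^{-1}\circ b$, whereas you check $a+b$ and $-a$ separately; the underlying idea is identical.
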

\begin{proof} Closure under product and multiplicative inverse are obvious. So let $a,b\in\ker\lambda$. Then 
$$-a+b=-a+a\circ a^{-1}\circ b=\lambda_a(a^{-1}\circ b)=a^{-1}\circ b\in\ker\lambda.$$
It follows that $\ker\lambda$ is a skew sub-brace. Clearly it is trivial.
\end{proof}

\subsection{Model theory}
Let $\mathcal{L}=\{+,\circ,\,^{-1},-,0\}$ be the first-order language of skew braces. In what follows, unless mentioned otherwise, a {\it formula} is a formula in the language $\mathcal{L}$.
Let $B$ be a skew brace. Then $\operatorname{Th}(B)=\{\varphi\,:\, B\models\varphi\}$ denotes the first-order theory of $B$. If $\bar b$ is a tuple of elements from $B$ (denoted $\bar b\in B$ by abuse of notation), a subset $X$ of $B^n$ is {\it $\bar b$-definable} if $X=\big\{\bar a\in B^n\,:\, B\models\varphi(\bar a,\bar b)\big\}$ for some formula $\varphi(\bar x,\bar y)$; we may sometimes omit $\bar b$ if the exact parameters are not important. If $\bar b=\emptyset$, the set $X$ is {\it parameter-free definable} or {\it $\emptyset$-definable}; if we want to emphasize the formula $\varphi(\bar x,\bar y)$ used, we call $X$ $\varphi$-definable. We shall call a function {\it definable} if its graph is definable. For subsets $X,Y\subseteq B$ we put $X^{-1}=\{x^{-1}:x\in X\}$ and $X\lozenge Y=\{x\lozenge y:x\in X,\,y\in Y\}$, where $\lozenge\in\{+,\circ,\ast\}$.
Note that if $X$, $Y$ and a function $f$ are definable, so are $X\cup Y$, $X\cap Y$, $X\setminus Y$, $X\times Y$, $f[X]$, $f^{-1}[X]$, $X^{-1}$ and $X\lozenge Y$. Clearly, the additive and multiplicative centres $Z^+(B)$ and $Z^\circ(B)$, the $\lambda$-kernel $\Ker(\lambda)$, the $n^{th}$ socle $\Soc_n(B)$ and the $n^{th}$ annihilator $\Ann_n(B)$ are all $\emptyset$-definable, and for any finite tuple $\bar b\in B$ the additive and multiplicative centralisers $C^+_B(\bar b)$ and $C^\circ_B(\bar b)$ are $\bar b$-definable.

If $\mathcal B$ is a skew brace containing $B$, it is an {\em elementary extension} if every sentence $\varphi$ with parameters in $B$ is true in $B$ iff it is true in $\mathcal B$. If $B$ is infinite, the compactness theorem implies that for every cardinal $\kappa\ge|B|$ there is an elementary extension of cardinal $\kappa$.

A {\em partial type} in variables $\bar x$ over some set of parameters $A\subseteq B$ is a set $\pi(\bar x)$ of $\mathcal{L}(A)$-formulas with free variables $\bar x$ such that any finite subset has a realization in $B$. A partial type $\pi$ is {\em complete} if it is maximal; this is equivalent to saying that for every $\mathcal{L}(A)$-formula $\varphi(\bar x)$ either $\varphi$ or $\neg\varphi$ is in $\pi$. Complete types are just called {\em types}. The compactness theorem states that every partial type is realized in some elementary extension. A skew brace $B$ is {\em $\kappa$-saturated} if every type over $<\kappa$ parameters is already realized in $B$. A skew brace $B$ is {\em saturated} if it is $|B|$-saturated.

We shall call a set $X$ or a function $f$ {\em type-definable} if $X$ or the graph of $f$ are given by an infinite intersection of definable sets. Note that some care is needed when dealing with type-definable sets in insufficiently saturated models, as the intersection may happen to be accidentally too small, or even empty, even though in some elementary extension this is no longer true.

A skew brace $B$ is {\it $\omega$-categorical} if, up to isomorphism, there is only one skew brace of power $\aleph_0$ satisfying $\operatorname{Th}(B)$. The Ryll-Nardzewski Theorem (also proved by Engeler and Svenonius) states that $B$ is $\omega$-categorical iff, for every $n\in\omega$, $\operatorname{Th}(B)$ has only finitely many $n$-types, iff for every $n\in\omega$ there are only finitely many formulas in variables $x_1,\ldots,x_n$ up to equivalence in $\operatorname{Th}(B)$. This in particular implies that for any finite set $A$ of parameters, a set $X$ is $A$-definable iff it is stabilized under all automorphisms fixing $A$ pointwise.

It is easy to see that $\omega$-categoricity is preserved under adding finitely many parameters as constants to the language. Any structure definable in an $\omega$-categorical skew brace (using finitely many parameters) is again $\omega$-categorical. Note that $\omega$-categoricity implies {\it uniform local finiteness}: There is a function $f:\omega\to\omega$ such that any substructure generated by $n$ elements has size at most $f(n)$. In particular, if $B$ is an $\omega$-categorical skew brace, then
\begin{enumerate}[label=$(\arabic*)$]
    \item $(B,+)$, $(B,\circ)$ and $G(B)$ have finite exponent;
    \item every definable skew sub-brace and every quotient $B/I$ for a definable ideal $I$ is still $\omega$-categorical;
\item if $B$ is countably infinite, then a subset of $B^n$ is definable iff it is invariant under all automorphisms of $B$.\end{enumerate}

Let $\lambda$ be an infinite cardinal. Then $B$ is {\it $\lambda$-stable} iff, for every model $A$ of $\operatorname{Th}(B)$ of cardinality $\lambda$, the set of complete types over $A$ has cardinality $\lambda$. A skew brace $B$ is {\it stable} if it is $\lambda$-stable for some infinite cardinal $\lambda$; it is {\em superstable} if it is $\lambda$-stable for all $\lambda\ge 2^{\aleph_0}$. It turns out that $\omega$-stability implies superstability, which obviously implies stability. We refer the reader to \cite{poizat} and \cite{wa97} for results and properties of stable and $\omega$-stable groups. Note that $\lambda$-stability is preserved under adding $\le\lambda$ parameters to the language. Any structure definable in a $\lambda$-stable skew brace is again $\lambda$-stable.

A theory has the strict order property SOP if there is a formula $\varphi(\bar x,\bar y)$ which defines a partial pre-order with infinite chains on some definable set; otherwise the theory is NSOP. Any stable theory is NSOP, and NSOP is preserved under adding parameters to the language; any structure definable in an NSOP skew brace is again NSOP.

\section{Stable Skew Braces}\label{stability}
\subsection{Chain conditions, connected components and generic types}
We shall need the following facts, which can be found in \cite{poizat} and \cite{wa97}. We shall formulate them for type-definable groups, i.e.\ groups given by some partial type. It can be shown that in a stable theory, type-definable groups can actually be given as an intersection of definable supergroups. When dealing with type-definable groups, some care has to be taken, as we want all the relevant properties to be preserved in elementary extensions. This is not a problem if the model is sufficiently saturated; if not, in the given model there may be too few elements realising the infinite intersection given by the partial type.

We also may want to consider {\em arbitrary} (neither definable nor undefinable) subgroups of a stable group; we consider them just as a set of parameters, which does not change when going to an elementary extension. 

\begin{fact}\label{f:icc} In a stable group, for any family $(H_i:i\in\mathcal{I})$ of uniformly definable subgroups, there is an integer $n$ depending only on the formula defining the $H_i$ and $\mathcal{J}\subseteq\mathcal{I}$ of size at most $n$ such that $\bigcap_{i\in\mathcal I}H_i=\bigcap_{i\in\mathcal J}H_i$.
\end{fact}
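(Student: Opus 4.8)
The plan is to prove this by the classical Baldwin--Saxl argument; the only model-theoretic input is that a stable theory does not have the independence property.

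First I would reduce the statement to a bound on \emph{irredundant} intersections. Fix a formula $\varphi(x;\bar y)$ and parameters $\bar b_i$ with $H_i=\{g:\models\varphi(g;\bar b_i)\}$, and call a finite intersection $H_{i_1}\cap\dots\cap H_{i_m}$ irredundant if deleting any single factor strictly enlarges it. It suffices to produce an integer $n=n(\varphi)$ bounding the number of factors in any irredundant intersection of $\varphi$-definable subgroups: for a finite family one discards redundant factors one at a time to reach an irredundant subintersection of size $\le n$; for an arbitrary family $(H_i)_{i\in\mathcal I}$ one picks $\mathcal J\subseteq\mathcal I$ of size $\le n$ with $\bigcap_{i\in\mathcal J}H_i$ as small as possible, and observes that if some $H_k$, $k\in\mathcal I$, failed to contain $\bigcap_{i\in\mathcal J}H_i$, then $\bigcap_{i\in\mathcal J\cup\{k\}}H_i$ would be strictly smaller and, after discarding redundancies, would be an irredundant subintersection of size $\le n$, contradicting minimality; hence $\bigcap_{i\in\mathcal J}H_i=\bigcap_{i\in\mathcal I}H_i$. (That a minimal such short subintersection exists is the one soft point to spell out; it is cleanest to run the enlargement process just described and note, using the $n$-bound, that the resulting strictly descending chain of $\le n$-fold subintersections must terminate.)

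The heart of the matter is the bound $n$ itself, via the product trick. Suppose irredundant intersections $H_1\cap\dots\cap H_m$ exist for arbitrarily large $m$; fix one, and by irredundancy choose, for each $k\le m$, an element $a_k\in\bigl(\bigcap_{j\neq k}H_j\bigr)\setminus H_k$. For $S\subseteq\{1,\dots,m\}$ let $a_S$ be the product of the $a_j$ with $j\in S$, taken in increasing order of index. Then $a_S\in H_k$ exactly when $k\notin S$: if $k\notin S$ every factor of $a_S$ lies in $H_k$; if $k\in S$, write $a_S=u\,a_k\,v$ where $u$ and $v$ are the products of the $a_j\in S$ with $j<k$, respectively $j>k$, so $u,v\in H_k$, whence $a_S\in H_k$ would force $a_k=u^{-1}a_S v^{-1}\in H_k$. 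Thus for each $S\subseteq\{1,\dots,m\}$ the element $a_S$ satisfies $\models\neg\varphi(a_S;\bar b_k)$ precisely for $k\in S$, so $\neg\varphi(x;\bar y)$ shatters the $m$-element parameter set $\{\bar b_1,\dots,\bar b_m\}$. Letting $m\to\infty$ shows that $\varphi$ has the independence property, so the theory is unstable --- a contradiction. Hence $n=n(\varphi)$ exists, and with the first paragraph this proves the Fact.

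The only real computation is the non-commutative bookkeeping in the product trick --- splitting $a_S$ exactly at the factor $a_k$ and exploiting that every other $a_j$ lies in $H_k$; this is where, applied later to a non-abelian additive or multiplicative group of a skew brace, a moment's care is needed, but there is no genuine difficulty. The other point deserving an explicit line is the passage from finite families to arbitrary ones in the first paragraph; everything else is soft.
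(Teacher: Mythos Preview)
The paper does not prove this Fact; it is quoted as standard background on stable groups, with the references to Poizat and Wagner given at the start of the section. Your argument is the classical Baldwin--Saxl proof, and the product trick establishing the bound $n$ on irredundant intersections is carried out correctly.

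There is, however, one real gap. You assert that ``the only model-theoretic input is that a stable theory does not have the independence property'' and that the descending chain of $\le n$-fold subintersections terminates ``using the $n$-bound''. Neither claim is right: NIP alone does not force termination, and hence does not give the infinite-family version. In the valued field $\mathbb Q_p$ (which is NIP but unstable) the uniformly definable additive subgroups $p^k\mathbb Z_p$ are linearly ordered by inclusion, so every irredundant subfamily has size $1$, yet $\bigcap_k p^k\mathbb Z_p=\{0\}$ equals no finite subintersection. What actually gives termination in a \emph{stable} group is NSOP: an infinite strictly descending chain of $\psi$-definable subgroups (with $\psi$ the $n$-fold conjunction of $\varphi$) would make the formula $\forall x\,\bigl(\psi(x;\bar z_2)\to\psi(x;\bar z_1)\bigr)$ into a definable preorder with infinite chains, contradicting stability. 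So your outline goes through, but the passage to infinite families must invoke NSOP rather than the irredundancy bound.
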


This allows us to define various subgroups related to a subgroup $H$ of a stable group $G$. Let $\varphi(x,\bar y)$ a formula.\begin{itemize}
\item If $H$ is definable or arbitrary, we put $\mathfrak X^0_\varphi(H)$ to be the collection of all $\varphi$-definable subgroups of $G$ intersecting $H$ in a subgroup of finite index.
\item If $H=\bigcap_{i\in I} H_i$ is type-definable, we put $\mathfrak X^0_\varphi(H)$ to be the collection of all $\varphi$-definable subgroups of $G$ intersecting $\bigcap_{i\in I_0}H_i$ in a subgroup of finite index for some finite $I_0\subseteq I$.
\item If $H$ is arbitrary, we put $\mathfrak X_\varphi(H)$ to be the collection of all $\varphi$-definable subgroups of $G$ containing $H$.\end{itemize}
Then $H^0_\varphi=\bigcap\mathfrak X^0_\varphi(H)$ is a definable subgroup of $G$, over the same parameters as $H$ if $H$ is definable or type-definable, and over $H$ if $H$ is arbitrary, and intersecting $H$ (or some $\bigcap_{i\in I_0}H_i$ for $I_0\subseteq I$ finite if $H$ is type-definable) in a subgroup of finite index. It is in fact the intersection of all such $\varphi$-definable subgroups.

The {\em connected component} of $H$ is defined as $H^0=\bigcap_\varphi H^0_\varphi$ for definable or type-definable $H$, and as $H^0=H\cap \bigcap_\varphi H^0_\varphi$ for arbitrary $H$.

For arbitrary $H$, the intersection $\bar H_\varphi=\bigcap\mathfrak X_\varphi(H)$ is an $H$-definable subgroup of $G$ containing $H$, and $\bar H=\bigcap_\varphi \bar H_\varphi$ is the {\em definable hull} of $H$. It is the intersection of all definable subgroups containing $H$. We have $\mathfrak X^0_\varphi(H)=\mathfrak X^0_\varphi(\bar H)$, so $H^0_\varphi=(\bar H)^0_\varphi\cap H$ and $H^0=(\bar H)^0\cap H$.

We call $H$ {\em connected} if $H=H^0$. Note that in the stable case the connected component even of a definable group is only type-definable. It is easy to see that a connected component is connected and {\em definably characteristic}, i.e.\ stabilised by all definable automorphisms. Moreover, as every $G^0_\varphi$ has finite index, $|G:G^0|\le 2^{\aleph_0}$; if $H^0$ is definable, it has finite index in $H$.

If $A$ is an additive/multiplicative subgroup of a skew brace, we shall denote the additive/multiplicative definable hull
by $\bar A^+$ and $\bar A^\circ$, respectively.

\begin{rem}So far we have only considered type-definable subgroups given as an intersection of definable subgroups. In a stable theory this is a general phenomenon: every group given by some partial type is in fact also given as an intersection of definable subgroups.\end{rem}

\begin{rem}\label{r:ideal} 
Given an additive subgroup $A$ and a multiplicative subgroup $G$ of a skew brace $B$, we put $\Stab_G(A)=\{g\in G:\lambda_g(A)= A\}$, the {\em stabilizer} of $A$ in $G$. If $G=B$ it is omitted. Note that $g\in G$ is in $\Stab_G(A)$ iff $g\circ A= g+A$. Clearly, $\Stab_G(A)$ is a multiplicative subgroup of $G$; if both $G$ and $A$ are definable, so is $\Stab_G(A)$.

If $H$ is a definable additive subgroup containing $A$, or intersecting $A$ in a subgroup of finite index, we can consider $H^\lambda=\bigcap_{b\in\Stab(A)}\lambda_b(H)$. This is again a definable additive subgroup containing $A$, or intersecting $A$ in a subgroup of finite index, and $\Stab(H^\lambda)\ge\Stab(H)$. In particular, if $A$ is a left ideal, so is $H^\lambda$, and $\bar A^+$ and $(\bar H^+)^0$ are type-definable as intersections of left ideals.

However, if $A$ is merely a skew sub-brace, it is difficult to obtain a small definable skew sub-brace containing $A$ (e.g.\ contained in $H$); if $A$ is an ideal, it is difficult to obtain a small definable ideal containing $A$.\end{rem}

\begin{fact} If $G$ is $\omega$-stable, it has the descending chain condition on definable subgroups. In particular, $G^0$ is definable and has finite index in $G$. If $G$ is stable $\omega$-categorical, since all $G^0_\varphi$ are definable over the same parameters as $G$, there are only finitely many possibilities for $G^0_\varphi$ and $G^0$ again has finite index. Finally, if $G$ is $\omega$-stable of finite Morley rank, it also has the ascending chain condition on connected definable subgroups.\end{fact}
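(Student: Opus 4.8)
The plan is to establish the three assertions separately; each is a classical fact about $\omega$-stable, respectively $\omega$-categorical, groups contained in \cite{poizat} and \cite{wa97}, so I only indicate the reductions. For the descending chain condition, suppose toward a contradiction that $H_0>H_1>H_2>\cdots$ is a strictly descending chain of definable subgroups of $G$. For each $i$, either $H_{i+1}$ has infinite index in $H_i$, in which case the Morley rank strictly drops, $\mathrm{RM}(H_{i+1})<\mathrm{RM}(H_i)$, or $H_{i+1}$ has finite index $k>1$ in $H_i$, in which case $H_i$ is a disjoint union of $k$ translates of $H_{i+1}$, each of the same Morley rank and degree as $H_{i+1}$, so that $\mathrm{RM}(H_{i+1})=\mathrm{RM}(H_i)$ while $\mathrm{dM}(H_i)=k\cdot\mathrm{dM}(H_{i+1})>\mathrm{dM}(H_{i+1})$. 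Hence the pair $\big(\mathrm{RM}(H_i),\mathrm{dM}(H_i)\big)$ strictly decreases along the chain in the lexicographic order on pairs (ordinal, positive integer), which is well-founded; so no such infinite chain exists.

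Granting the descending chain condition, the family of definable subgroups of $G$ of finite index has a minimal member $H$, and in fact $H$ is the smallest one, since $H\cap K=H$ for every definable finite-index $K$ by minimality; moreover $H$ is normal, as each conjugate $gHg^{-1}$ is definable of the same finite index and so equals $H$ by minimality, and $H$ has no proper definable subgroup of finite index. It follows that $H$ coincides with $G^0=\bigcap_\varphi G^0_\varphi$: each $G^0_\varphi$ is, by Fact~\ref{f:icc}, a definable subgroup of finite index, hence contains $H$, while $G^0_\psi\subseteq H$ for a formula $\psi$ defining $H$. Therefore $G^0$ is definable and has finite index in $G$.

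For the stable $\omega$-categorical case, note that $G$, being a definable group, is defined over some finite tuple $\bar b$, and, as recalled in the discussion preceding the statement, each $G^0_\varphi$ is then also defined over $\bar b$. By the Ryll-Nardzewski theorem there are only finitely many $\mathcal L(\bar b)$-formulas in the single variable $x$ up to equivalence, hence only finitely many $\bar b$-definable subsets of $B$, and in particular only finitely many distinct subgroups among the $G^0_\varphi$. Their intersection $G^0$ is thus a finite intersection of finite-index definable subgroups, so it is definable and has finite index in $G$.

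Finally, for the ascending chain condition on connected definable subgroups under finite Morley rank, let $H_0<H_1<H_2<\cdots$ be a strictly ascending chain of connected definable subgroups. Since $H_{i+1}$ is connected it has no proper definable subgroup of finite index, so $H_i$ has infinite index in $H_{i+1}$, whence $\mathrm{RM}(H_i)<\mathrm{RM}(H_{i+1})$; as all these ranks are natural numbers bounded by $\mathrm{RM}(G)$, the chain has length at most $\mathrm{RM}(G)+1$. I do not anticipate a genuine obstacle here, since all three parts are standard; the one step that relies on the particular set-up of this paper rather than on generic theory is the $\omega$-categorical case, where one must invoke exactly the observation recorded just before the statement, namely that the $G^0_\varphi$ all live over the finitely many parameters defining $G$, so that Ryll-Nardzewski bounds their number.
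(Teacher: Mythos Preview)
The paper does not prove this statement at all: it is listed as a \emph{Fact}, with the blanket reference at the start of the subsection that such facts ``can be found in \cite{poizat} and \cite{wa97}.'' Your arguments are the standard textbook proofs one finds in those references, and they are correct as written; in particular your lexicographic use of $(\mathrm{RM},\mathrm{dM})$ for the DCC, the minimality argument for $G^0$, the Ryll--Nardzewski bound in the $\omega$-categorical case, and the rank-increase argument for the ACC are all sound.
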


\begin{fact} Let $G$ be a connected stable group. Then there is a unique type $p$ over $G$ such that $g\circ p=p\circ g=p^{-1}=p$ for all $g\in G$, where $g\circ p=\tp(g\circ a/G)$, $p\circ g=\tp(a\circ g/G)$ and $p^{-1}=\tp(a^{-1}/G)$ for some/any realization $a$ of $p$. Conversely, $G$ is connected if there is a type $p$ such that $g\circ p=p$ for all $g\in G$.\end{fact}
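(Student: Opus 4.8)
\medskip
\noindent\textbf{Proof proposal.} This is the fundamental theorem on generic types of a stable group, and the plan is to derive it from the existence and basic properties of generics, which I would quote from \cite{poizat} and \cite{wa97}. Recall that a definable set $X\subseteq G$ is \emph{generic} if $G$ is covered by finitely many left translates $g_1\circ X,\dots,g_n\circ X$, and that a type $q$ over $G$ (realised in a sufficiently saturated elementary extension) is \emph{generic} if every formula in it defines a generic set. Three facts from the theory of stable groups will be used: generic types exist (the non-generic definable sets form a proper translation-invariant ideal, and stability guarantees that a generic set cannot be covered by two non-generic definable ones, so there is a complete type all of whose formulas are generic); a type $q$ over $G$ is generic precisely when its left stabiliser $\{g\in G:g\circ q=q\}$ equals $G^0$; and a connected stable group has a \emph{unique} generic type.

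Granting these, the first assertion is immediate. As $G$ is connected, $G^0=G$, so for a type $q$ over $G$ the conditions ``$g\circ q=q$ for all $g\in G$'' and ``$q$ is generic'' are the same, and therefore there is exactly one type $p$ with $g\circ p=p$ for all $g\in G$. The remaining invariances follow formally from this uniqueness together with the symmetry of the left-invariance condition: for $g\in G$ the translate $p\circ g$ is again left-invariant, since $h\circ(p\circ g)=(h\circ p)\circ g=p\circ g$ for every $h\in G$, whence $p\circ g=p$; and $p^{-1}$ is again left-invariant, because if $a\models p$ then $h\circ a^{-1}=(a\circ h^{-1})^{-1}$ with $a\circ h^{-1}\models p\circ h^{-1}=p$ by the right-invariance just established, so $h\circ a^{-1}\models p^{-1}$; hence $p^{-1}=p$.

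For the converse I would argue directly, without the generic machinery. If $G$ is not connected, then by the definition $G^0=\bigcap_\varphi G^0_\varphi$ some $H:=G^0_\varphi$ is a definable proper subgroup of $G$ of finite index $n>1$, the finiteness of the index being Fact~\ref{f:icc}. Suppose $p$ is a type over $G$ with $g\circ p=p$ for all $g\in G$, and pick $a\models p$. The $n$ left cosets of $H$ are definable with parameters in $G$ and partition $G$, so exactly one of them, $g_0\circ H$ say, is such that the formula $x\in g_0\circ H$ belongs to $p$. Translating on the left by an arbitrary $g\in G$ shows that the formula $x\in g\circ g_0\circ H$ also belongs to $g\circ p=p$, so $g\circ g_0\circ H=g_0\circ H$, that is $g_0^{-1}\circ g\circ g_0\in H$, for every $g\in G$; since $g\mapsto g_0^{-1}\circ g\circ g_0$ is a bijection of $G$, this forces $H=G$, a contradiction. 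Hence $G$ is connected.

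The only genuinely non-elementary ingredients are the existence of generics, the description of generics by their stabiliser, and the uniqueness of the generic in the connected case; these constitute the heart of the theory of stable groups and I would simply cite them. The point requiring care --- exactly as flagged in the remarks preceding this Fact --- is that the whole argument takes place in the type space over a sufficiently saturated elementary extension of $G$, so that $p$ genuinely has realisations and ``generic'', ``connected'' and the stabiliser behave correctly under passing to such an extension; beyond this, the work is the short bookkeeping carried out above.
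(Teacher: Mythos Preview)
The paper does not prove this statement: it is stated as a \emph{Fact} quoted from \cite{poizat} and \cite{wa97}, with no argument given. Your proposal is therefore not comparable to a proof in the paper, but it is a correct and standard derivation of the Fact from the basic theory of generic types in stable groups. The reductions you give --- right-invariance of $p$ from left-invariance via uniqueness, and $p^{-1}=p$ from right-invariance --- are exactly the usual ones, and your direct coset argument for the converse is clean and correct. Since the paper merely cites the result, your write-up in fact supplies more detail than the paper does.
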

The type $p$ is called {\it generic} for $G$. It is fixed under all definable automorphisms of~$G$. For a general type-definable stable group, the {\em principal} generic type is the generic type of the connected component $G^0$; a type is generic if it is a translate of the principal generic type.

\begin{fact}[{\cite{wa90}}]\label{f:finsat} Let $G$ be a stable group and $H$ an arbitrary subgroup of $G$. Then the generic types of $\bar H$ are finitely satisfiable in $H$, i.e.\ any finite subset $\pi$ has a realization in $H$. Moreover, any generic type of $\bar H{}^0$ is finitely satisfiable in $H\cap H_0$, for any definable supergroup $H_0\ge \bar H{}^0$.\end{fact}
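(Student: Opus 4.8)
The plan is to pass to a monster model $\mathfrak C\succ G$, treating $H$ throughout as a fixed parameter set (so it does not grow), and to obtain the required generic types as limits of the types $\tp(h/\mathfrak C)$ with $h\in H$; finite satisfiability in $H$ is then automatic, and the real work lies in forcing genericity. First I would record the reductions. By the chain condition (Fact~\ref{f:icc}) each $\bar H_\varphi$ is a finite intersection of $\varphi$-definable subgroups containing $H$, so $\bar H$ and $\bar H^0$ are type-definable over $H$, $\bar H^0$ is connected and hence carries a unique generic type, and every generic type of $\bar H$ is an $\bar H$-translate of that one. Since $HD=\bar H$ for every definable finite-index subgroup $D\le\bar H$ (as $HD$ is then a definable subgroup containing $H$), the subgroup $H$ meets every coset of every such $D$; hence in each coset of $\bar H^0$ the partial type ``I lie in this coset, and I am finitely satisfiable in $H$'' is consistent. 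It therefore suffices to produce a generic type of $\bar H$ concentrating on $\bar H^0$ and finitely satisfiable in $H\cap H_0$ (this yields the second assertion, the generic of $\bar H^0$ being unique), and to do the same in each coset of $\bar H^0$ with $H$ in place of $H\cap H_0$ (this yields the first).

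Let $\mathcal C$ be the set of global types concentrating on $\bar H^0$ and finitely satisfiable in $H\cap H_0$; it is non-empty (it contains the type of the identity element) and closed in the Stone space. The key structural point is that $\mathcal C$ is closed under the stationary product $p\cdot q:=\tp(a\cdot b/\mathfrak C)$, where $a$ realises $p$ and $b$ realises the nonforking extension of $q$ to $\mathfrak C a$ (unique in a stable theory), and under $p\mapsto p^{-1}$: in a stable theory the nonforking extension of a type finitely satisfiable in a set is a coheir and hence still finitely satisfiable in that set, and as $H\cap H_0$ is a subgroup the witnesses multiply and invert back inside it; concentration on $\bar H^0$ is preserved because $\bar H^0$ is a group. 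Since $q$ is definable, $\varphi(x)\in p\cdot q$ is equivalent to a single formula on $x$ belonging to $p$, so $p\mapsto p\cdot q$ is continuous, and $(\mathcal C,\cdot)$ is a compact right-topological semigroup.

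Now I would extract the generic type. By Ellis' lemma $\mathcal C$ has an idempotent; pick one, $\mathfrak e$, whose stabiliser $K:=\Stab(\mathfrak e)$ is as large as possible. Such a maximum is attained because an idempotent type is invariant under, and concentrates on, its own stabiliser, so by the connectedness criterion $K$ is a connected type-definable subgroup with generic type $\mathfrak e$, and Fact~\ref{f:icc} bounds the ascent of the local pieces $K\cap\bar H^0_\varphi$, so a limit along an increasing sequence of such idempotents remains idempotent and lies in the closed set $\mathcal C$. Clearly $K\le\bar H^0$, and I claim $K=\bar H^0$. If not, then $H\cap H_0\not\subseteq K$ (otherwise $\overline{H\cap H_0}\subseteq K$, but one can choose $H_0$ so that $\overline{H\cap H_0}=\bar H^0$, or argue directly that $H$ meets every coset of $K$ that $\bar H^0$ does, contradicting properness); combining $\mathfrak e$ with types $\tp(h/\mathfrak C)$ for suitable $h\in H\cap H_0$ via the product on $\mathcal C$ and passing to a limit then produces an idempotent of $\mathcal C$ with strictly larger stabiliser — a contradiction. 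Hence $\mathfrak e$ is the generic type of $\bar H^0$ and is finitely satisfiable in $H\cap H_0$, proving the second assertion. For a coset $C_0=c\,\bar H^0$ one argues identically with the (closed, non-empty) set of global types concentrating on $C_0$ and finitely satisfiable in $H$, now a left module over the analogous semigroup: maximising the stabiliser of such a type forces it to be the generic type of the coset $C_0$, hence a generic type of $\bar H$; this proves the first assertion.

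The step I expect to be the main obstacle is precisely the last one: that a stabiliser-maximal idempotent of $\mathcal C$ has full stabiliser $\bar H^0$. This requires the fine theory of stabilisers of types in stable groups (a type is the generic of the coset of its stabiliser; stabilisers are well-behaved under the stationary product; connected components interact correctly with type-definable subgroups) from \cite{poizat,wa97}, together with Fact~\ref{f:icc} to guarantee termination of the maximisation. A secondary, easily mishandled point is the interplay between $H$, $H\cap H_0$ and the cosets of $\bar H^0$, and the (harmless) fact that moving to $\mathfrak C$ leaves $H$ and the notion of finite satisfiability in $H$ untouched.
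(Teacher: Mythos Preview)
The paper does not prove this statement: it is recorded as a Fact with a citation to \cite{wa90} and no accompanying argument, so there is no proof in the paper to compare your proposal against.

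That said, your Ellis-semigroup strategy is a reasonable outline and is genuinely different from Wagner's original proof. The setup is essentially sound: in a stable theory the nonforking extension of a global type finitely satisfiable in a set $S$ remains finitely satisfiable in $S$, so $\mathcal C$ is a closed subsemigroup under the stationary product. (A minor wrinkle: your claim that $HD=\bar H$ for a definable finite-index $D\le\bar H$ tacitly assumes $HD$ is a subgroup; you should first replace $D$ by $\bigcap_{h\in H}hDh^{-1}$, which is still definable of finite index by Fact~\ref{f:icc}, and then $HD$ is a finite union of cosets of $D$, hence a definable subgroup containing $H$.) However, the step you yourself flag as the main obstacle is a genuine gap, not just a technicality. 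Connected type-definable subgroups of a stable group do not in general satisfy the ascending chain condition, and Fact~\ref{f:icc} only controls chains of \emph{uniformly definable} subgroups, not arbitrary type-definable stabilisers; moreover the Ellis product is only one-sidedly continuous, so a Stone-space limit of idempotents need not be idempotent. Your proposed enlargement via $h\cdot\mathfrak e$ for $h\in H\cap H_0$ also runs into the difficulty that such $h$ need not lie in $\bar H^0$, so $h\cdot\mathfrak e$ need not concentrate on $\bar H^0$ and hence need not lie in $\mathcal C$.

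Wagner's argument in \cite{wa90} avoids all of this by working $\varphi$-locally: for each formula $\varphi$ one takes the finitely many $\varphi$-types of maximal local rank among those finitely satisfiable in $H$; left translation by $H$ permutes this finite set, so each $\varphi$-stabiliser meets $H$ in a subgroup of finite index, hence (by the index-transfer you sketch) meets $\bar H$ in a subgroup of finite index, forcing the $\varphi$-type to be $\varphi$-generic for $\bar H$. Finite satisfiability of the full generic type follows by compactness, and the version for $\bar H^0$ and $H\cap H_0$ is obtained analogously. This sidesteps the maximisation problem entirely and uses only the finite bound on $\varphi$-types of a given rank, which is where stability actually bites.
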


Note that we cannot expect a generic type of $\bar H{}^0$ to be finitely satisfiable in $H^0$, as $H^0$ might be trivial.

\begin{theo}\label{conncpt} Let $B$ be a stable skew brace, and $B^0$ the additive connected component. Then $B^0$ is multiplicatively connected, and an ideal of index $\le 2^{\aleph_0}$ in $B$; if $B$ is $\omega$-stable or $\omega$-categorical, $B^0$ has finite index.\end{theo}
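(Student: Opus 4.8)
The plan is to establish, in order: that $B^0$ is a strong left ideal (hence a multiplicative subgroup whose cosets coincide with the additive ones, which yields the index bounds), that $B^0$ is multiplicatively connected, and that it is consequently an ideal; the $\omega$-stable and $\omega$-categorical refinements will then come straight from the chain-condition Facts. Since $B^0$ is only type-definable, I would, whenever it matters, pass to a sufficiently saturated elementary extension of $B$ and transfer the conclusions back.

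First I would use that $B^0=(B,+)^0$ is definably characteristic in $(B,+)$, hence stabilised by every definable automorphism of the additive group. In particular the inner automorphisms $x\mapsto a+x-a$ and the maps $\lambda_a$ (for $a\in B$) fix $B^0$ setwise, so $B^0$ is additively normal and $\lambda$-invariant, i.e.\ a strong left ideal. By the basic description of left ideals, $B^0$ is then also a subgroup of $(B,\circ)$ with $a\circ B^0=a+B^0$ for every $a$; so its left cosets for $\circ$ and for $+$ coincide, its multiplicative index equals its additive index $|(B,+):(B,+)^0|\le 2^{\aleph_0}$, and $B^0$ is a type-definable subgroup of bounded index in $(B,\circ)$, whence $(B,\circ)^0\subseteq B^0$.

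The main point is multiplicative connectedness, and here I would invoke the generic-type criterion. The group $(B^0,+)$ is connected; let $p$ be its additive generic type. For each $g\in B^0$ the restriction of $\lambda_g$ to $B^0$ is a definable automorphism of $(B^0,+)$, and the generic type of a connected stable group is fixed by all definable automorphisms, so $\lambda_g(p)=p$; moreover $g+p=p$ since $g\in B^0$. Consequently, for a realization $a$ of $p$ one has $g\circ a=g+\lambda_g(a)$ with $\lambda_g(a)\models p$, and because $g+p=p$ this gives $g\circ a\models p$; thus $g\circ p=p$ for every $g\in B^0$. By the converse half of the generic-type Fact, applied to the type-definable group $(B^0,\circ)$, the group $(B^0,\circ)$ is connected. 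Since $(B,\circ)^0\subseteq B^0$ has bounded index in the connected group $(B^0,\circ)$, it follows that $B^0=(B^0,\circ)=(B,\circ)^0$, which is exactly the assertion of multiplicative connectedness.

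Finally, as $B^0=(B,\circ)^0$ it is normal in $(B,\circ)$, hence multiplicatively normal; being also a strong left ideal it is an ideal of $B$, of index $\le 2^{\aleph_0}$ by the first step. If $B$ is $\omega$-stable then $(B,+)$ is $\omega$-stable and has the descending chain condition on definable subgroups, so $B^0$ is definable of finite index; if $B$ is stable and $\omega$-categorical the same conclusion holds, since $(B,+)$ is stable $\omega$-categorical and there are only finitely many possibilities for each $(B,+)^0_\varphi$. The step I expect to be the genuine obstacle is the multiplicative connectedness: it does not seem reachable by purely algebraic manipulation of $B^0$ as a left ideal, and the decisive move is to feed the additive generic type of $B^0$ into the translation-invariance characterisation of connectedness, verifying that this type is simultaneously invariant under the parameter-definable automorphisms $\lambda_g$ and under left $\circ$-translation by elements of $B^0$.
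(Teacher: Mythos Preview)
Your proposal is correct and follows essentially the same route as the paper: show $B^0$ is a strong left ideal via definable characteristicity under the $\lambda_a$, then use that the additive generic type $p$ of $B^0$ satisfies $\lambda_g(p)=p$ and $g+p=p$ for $g\in B^0$ to deduce $g\circ p=p$, whence multiplicative connectedness and hence $B^0=(B,\circ)^0$ is multiplicatively normal and an ideal. The only differences are cosmetic: you spell out the coset-coincidence argument for the index and the inclusion $(B,\circ)^0\subseteq B^0$ more explicitly than the paper does.
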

\begin{proof} The maps $\lambda_a$ for $a\in B$ are definable additive automorphisms, and must stabilise $B^0$. Thus $B^0$ is a strong left ideal, in particular a multiplicative subgroup. Now for any $a\in B^0$ and $b$ realising the principal generic type over $B$ we have $p=\lambda_a(p)=\tp(-a+a\circ b/B)$, so $p=a+p=\tp(a\circ b/B)=a\circ p$. It follows that $p$ is also the unique multiplicative generic type of $B^0$, so $B^0$ is multiplicatively connected, and must be equal to the multiplicative connected component. Hence $B^0$ is both additively and multiplicatively normal in $B$, i.e.\ an ideal. The statement about the index follows from the same property for groups.\end{proof}

\begin{theo} Let $B$ be a stable skew  brace, and $C$ an arbitrary skew sub-brace. Then $(\bar C^+)^0\cap C=(\bar C^\circ)^0\cap C$ is a connected ideal of index  $\le 2^{\aleph_0}$ in $C$; if $B$ is $\omega$-stable or $\omega$-categorical, the index is finite.\end{theo}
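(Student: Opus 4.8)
Write $I:=(\bar C^+)^0\cap C$ and $J:=(\bar C^\circ)^0\cap C$; by the identity $H^0=(\bar H)^0\cap H$ these are precisely the additive, respectively multiplicative, connected component of $C$ (viewed as an arbitrary subgroup of the stable group $(B,+)$, resp.\ $(B,\circ)$), so the statement amounts to $I=J$ together with the ideal and index assertions. I would work throughout in a sufficiently saturated elementary extension of $B$, so that the type-definable objects involved behave correctly. The basic observation is that for $c\in C$ the map $\lambda_c$ is a definable additive automorphism of $B$ with $\lambda_c(C)=C$ (since $C$ is a skew sub-brace and $\lambda_c^{-1}=\lambda_{c^{-1}}$), so it permutes the definable additive subgroups containing $C$; hence $\lambda_c(\bar C^+)=\bar C^+$ and, being a definable automorphism of $\bar C^+$, $\lambda_c((\bar C^+)^0)=(\bar C^+)^0$, whence $\lambda_c(I)=I$. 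Thus $I$ is an additive subgroup of $C$ stable under all $\lambda_c$ with $c\in C$, i.e.\ a left ideal of $C$ — in particular a skew sub-brace and a subgroup of $(C,\circ)$ — and, a connected component being additively normal in its ambient group, $I$ is a strong left ideal of $C$.

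To prove $I\subseteq J$ I would use generic types, as in the proof of Theorem~\ref{conncpt}. Let $p$ be the principal generic type of the connected group $(\bar C^+)^0$; by Fact~\ref{f:finsat} it is finitely satisfiable in $C$. For $a\in C$ the automorphism $\lambda_a$ fixes $(\bar C^+)^0$, hence fixes $p$, and for $a\in(\bar C^+)^0$ one has $a+p=p$; so for $a\in I$ and $b\models p$, $\lambda_a(b)\models\lambda_a(p)=p$ and $a\circ b=a+\lambda_a(b)$ realises $a+p=p$, i.e.\ $a\circ p=p$. Now let $K$ be a definable multiplicative subgroup with $[C:C\cap K]<\infty$; replacing $K$ by the intersection of its $C$-conjugates (a finite intersection by Fact~\ref{f:icc}) gives a smaller such subgroup normalised by $C$, and $(\bar C^\circ)^0$ is the intersection of all definable multiplicative subgroups with finite-index trace on $C$, so it suffices to treat $K$ normalised by $C$. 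Decomposing $C$ into finitely many left $\circ$-cosets of $C\cap K$ and using finite satisfiability of $p$ in $C$, a pigeonhole argument produces $g\in C$ with $g^{-1}\circ p$ finitely satisfiable in $C\cap K$; then for $a\in I$ and $b\models p$ we have $g^{-1}\circ b\in K$ and, since $a\circ b\models p$, also $g^{-1}\circ a\circ b\in K$, whence $g^{-1}\circ a\circ g=(g^{-1}\circ a\circ b)\circ(g^{-1}\circ b)^{-1}\in K$ and so $a\in g\circ K\circ g^{-1}=K$. Therefore $I\subseteq(\bar C^\circ)^0$, i.e.\ $I\subseteq J$.

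For the reverse inclusion $J\subseteq I$ I would, instead of generic types, pass from additive to multiplicative subgroups via stabilisers. Fix a formula $\varphi$; then $H:=(\bar C^+)^0_\varphi$ is a definable additive subgroup with $[C:C\cap H]<\infty$. Put $H^\lambda:=\bigcap_{b\in\Stab(C)}\lambda_b(H)$, a definable additive subgroup (a finite intersection by Fact~\ref{f:icc}), still of finite index in $C$, contained in $H$, and with $\lambda_c(H^\lambda)=H^\lambda$ for all $c\in C$ because $c\circ\Stab(C)=\Stab(C)$. Consequently $\Stab(H^\lambda)$ is a definable multiplicative subgroup containing $C$, so $\bar C^\circ\subseteq\Stab(H^\lambda)$, and $K:=H^\lambda\cap\Stab(H^\lambda)$ is itself a multiplicative subgroup: for $x,y\in K$, $x\circ y=x+\lambda_x(y)\in H^\lambda$ since $\lambda_x(H^\lambda)=H^\lambda$ and $H^\lambda$ is an additive group, $x^{-1}=-\lambda_{x^{-1}}(x)\in H^\lambda$ for the same reason, and $1=0\in K$. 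Since $C\subseteq\Stab(H^\lambda)$ we get $K\cap C=H^\lambda\cap C$, of finite index in $C$, so $J=(\bar C^\circ)^0\cap C\subseteq K$ and hence $J\subseteq H^\lambda\cap C\subseteq H$. Intersecting over all $\varphi$ gives $J\subseteq(\bar C^+)^0$, and as $J\subseteq C$, this is $J\subseteq I$.

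It follows that $I=J$ is simultaneously the additive and the multiplicative connected component of $C$: in particular it is connected; it is a strong left ideal of $C$ by the first step; and it is multiplicatively normal in $C$ because $(\bar C^\circ)^0$ is normal in $\bar C^\circ\supseteq C$ — hence it is an ideal of $C$. For the index, $[C:I]=[C:(\bar C^+)^0\cap C]$ and $(\bar C^+)^0\cap C=\bigcap_\varphi((\bar C^+)^0_\varphi\cap C)$ is a countable intersection of finite-index subgroups of $C$, so $[C:I]\le 2^{\aleph_0}$; if $B$ is $\omega$-stable the descending chain condition on definable subgroups collapses this to a finite intersection, and if $B$ is $\omega$-categorical one argues as in Theorem~\ref{conncpt}, so the index is finite in both cases. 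The main difficulty is the asymmetry between the two inclusions: a definable additive subgroup can be converted into a definable multiplicative one by intersecting with its $\lambda$-stabiliser, but there is no such move the other way, so $I\subseteq J$ genuinely needs the generic type of $(\bar C^+)^0$ and the finite-satisfiability pigeonhole step, whose coset bookkeeping is the delicate point; checking that $H^\lambda\cap\Stab(H^\lambda)$ is multiplicatively closed is the other place that requires care.
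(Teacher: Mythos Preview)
Your proof is correct. The route is close in spirit to the paper's but organized rather differently, so a brief comparison is worthwhile.

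For the direction $J\subseteq I$ you and the paper do essentially the same thing: starting from a $\lambda_C$-stable additive $H$ of finite index in $C$, you form the definable multiplicative group $K=H\cap\Stab(H)$ (the paper calls this the multiplicative stabiliser $M_i=\{b:b\circ H_i=H_i\}$, which unwinds to exactly $H_i\cap\Stab(H_i)$), observe $K\cap C=H\cap C$, and intersect. Your argument is slightly cleaner here because you check directly that $K$ is a multiplicative subgroup rather than appealing to the set-stabiliser description.

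For $I\subseteq J$ the paper takes a more roundabout path: it introduces the auxiliary type-definable multiplicative group $\bigcap_i M_i$, shows it is connected (because its elements fix the additive generic $p$), and then runs \emph{two} separate contradiction arguments using finite satisfiability of $p$ to prove first $\bigcap_i M_i\cap C=(C^+)^0$ and then $(\bar C^\circ)^0=\bigcap_i M_i$. You bypass this intermediate object entirely: having shown $a\circ p=p$ for $a\in I$, you feed a single definable multiplicative $K$ (with finite-index trace on $C$, normalised by $C$) into a pigeonhole/finite-satisfiability step to force $p\vdash x\in g\circ K$, and the computation $g^{-1}\circ a\circ g=(g^{-1}\circ a\circ b)\circ(g^{-1}\circ b)^{-1}\in K$ finishes. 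This is more direct and avoids the somewhat delicate ``$(C^+)^0\circ G$ is a definable finite extension of $G$'' step in the paper's second contradiction. The paper's approach, on the other hand, yields as a byproduct the identification $(\bar C^\circ)^0=\bigcap_i M_i$, which is a little extra structural information your proof does not extract.
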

\begin{proof} By Remark \ref{r:ideal} the connected component $(\bar C^+)^0$ is an intersection of $C$-definable additive subgroups $(H_i:i\in I)$ stabilised and additively normalized by $C$. In particular $H_i\cap C$ is a strong left ideal of finite index in $C$. Recall that for left ideals additive and multiplicative left cosets coincide, so additive and multiplicative index in $C$ are the same. Moreover, for $c\in H_i\cap C$ we have $c\circ H_i=c+\lambda_c(H_i)\subseteq H_i$. By stability we must have equality, so $H_i\cap C\le M_i:=\{b\in B:c\circ H_i=H_i\}$, the multiplicative stabiliser of $H_i$. Then 
$$(C^+)^0=(\bar C^+)^0\cap C=\bigcap_{i\in I}H_i\cap C\le\bigcap_{i\in I}M_i.$$
Now $(\bar C^+)^0$ has a unique (additive) generic type $p$ which is finitely satisfiable in every $H_i\cap C$ by Fact \ref{f:finsat}. Moreover, every $c\in\bigcap_{i\in I}M_i$ stabilises $\bigcap_{i\in I}H_i$ multiplicatively, whence the unique generic type $p$, and $p=c\circ p$. It follows that $\bigcap_{i\in I}M_i$ is connected.

Suppose that $(C^+)^0$ is strictly contained in $\bigcap_{i\in I}M_i\cap C$. Then there is $c_0\in(\bigcap_{i\in I}M_i\cap C)\setminus (C^+)^0$. Choose $i\in I$ such that $c_0\notin H_i$. Since $x\in H_i$ is in $p$, the formula $c_0^{-1}\circ x\in H_i$ is in $c_0\circ p=p$. But $H_i\cap c_0\circ H_i\cap C$ is empty (recall that $H_i\cap C$ is a multiplicative group and $c_0\in C$), contradicting finite satisfiability. 

Suppose $\bar C^0<\bigcap_{i\in I}M_i$. As $\bigcap_{i\in I}M_i$ is connected, there is a definable multiplicative subgroup $G$ intersecting $\bigcap_{i\in I}M_i$ in a subgroup of infinite index, but $C$ in a subgroup of finite index. Replacing $G$ by $\bigcap_{c\in C}c^{-1}\circ G\circ c$ we may assume $C\le N_B^\circ(G)$. Then $(C^+)^0\circ G$ is a definable finite extension of $G$ containing all of $(C^+)^0$, but still intersecting $\bigcap_{i\in I}M_i$ in a subgroup of infinite index. So $x\notin (C^+)^0\circ G$ is a generic formula for $\bigcap_{i\in I}M_i$ and must be in $p$, contradicting finite satisfiability of $p$ in $C$.

Thus $\bigcap_{i\in I}M_i\cap C=(C^+)^0$, and $C^0=(C^+)^0=(C^\circ)^0=(\bar C^+)^0\cap C=(\bar C^\circ)^0\cap C$. By the additive characterisation it is a strong left ideal, and by the multiplicative characterisation it is multiplicatively normal, whence an ideal in $C$, of index $\le 2^{\aleph_0}$.

If $B$ is $\omega$-stable or $\omega$-categorical, the connected components are definable and the indices are finite.\end{proof}

We put $C^0=(\bar C^+)^0\cap C=(\bar C^\circ)^0\cap C$, the {\em connected component} of $C$. Note that if $(\bar C^+)^0$ has infinite index in $\bar C^+$, then $C^0$ may be trivial.

\subsection{Internality and $\Phi$-components}
Let $\Phi$ be an invariant collection of partial types. A partial type $\pi$ is {\em $\Phi$-internal} if there are finitely many partial types $\phi_1,\ldots,\phi_n$ in $\Phi$ and a definable function $f$ with domain $\phi_1\times\cdots\times\phi_n$ whose image contains $\pi$. We may have to add new parameters to define $f$.

An {\em imaginary (element)} is the class of some tuple modulo an $\emptyset$-definable equivalence relation. For instance, a tuple itself can be viewed as an imaginary element. If $\bar a$ is an imaginary, then the {\em definable closure} $\dcl(\bar a)$ is the tuple of all imaginary elements $e$ such that there is a formula $\varphi(x,\bar a)\in\mathcal L(\bar a)$ whose only realisation is $e$. We say that $e$ is {\em definable} over $\bar a$.

\begin{fact}
Let $G$ be a stable group, $g$ realize its principal generic type, and put
$$g_\Phi=(g_0\in\dcl(g):\tp(g_0)\mbox{ is $\Phi$-internal}).$$ Then there is a unique normal type-definable subgroup $N$ of $G$ such that $\dcl(g_\Phi)=\dcl(gN)$. It does not depend on the choice of $g$, and is definably characteristic and automorphism-invariant.\end{fact}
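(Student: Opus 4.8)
The plan is to show that the assignment $g\mapsto g_\Phi$ is, up to interdefinability over the base, a group homomorphism on the principal generic type, so that $N$ appears as its kernel. I will work in a large saturated model, fix a small model $M$ over which $G$ and $\Phi$ are defined, and replace $G$ by $G^0$; then the principal generic $p$ is the generic of $G$ over $M$, any two realisations of $p$ are conjugate by an $M$-automorphism, and $g_\Phi$ --- which we may treat as a bounded-length tuple, or hyperimaginary, in $\dcl(g)$ --- depends $M$-invariantly on $g\models p$. Throughout I use freely the standard theory of internality in stable theories (\cite{poizat},\cite{wa97}): $\Phi$-internality of a type is preserved under $\dcl$, under finite products, and under enlarging the base; in particular every imaginary in $\dcl(g_\Phi)$ has $\Phi$-internal type.

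The key step is the claim that, for $g_1,g_2\models p$ independent over $M$, one has
$$\dcl\big((g_1g_2)_\Phi\big)\subseteq\dcl\big(M,(g_1)_\Phi,(g_2)_\Phi\big).$$
Here $g_1g_2\models p$, and for any $\Phi$-internal imaginary $e\in\dcl(Mg_1g_2)$ one argues that $e$ is definable over $M$ together with $\operatorname{Cb}\big(\tp(e/Mg_1)\big)$ and $\operatorname{Cb}\big(\tp(e/Mg_2)\big)$; since these canonical bases are built from Morley sequences in the $\Phi$-internal type $\tp(e/M)$, they are interalgebraic over $M$ with $\Phi$-internal imaginaries lying in $\dcl(Mg_1)$, resp.\ $\dcl(Mg_2)$, whence $e\in\dcl(M,(g_1)_\Phi,(g_2)_\Phi)$. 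This is the compatibility of the $\Phi$-internal closure with forking independence. Together with the $M$-invariance of $g\mapsto g_\Phi$, the claim says precisely that $g\mapsto g_\Phi$ is a \emph{generic homomorphism} on $p$: it carries a generic product of two independent generics to something definable over the images of the factors.

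Now apply the standard recovery of a type-definable group and a homomorphism from a generic homomorphism (\cite{poizat},\cite{wa97}; pass to hyperimaginaries if necessary, then eliminate them). This yields a type-definable group $\bar G$ whose generic over $M$ is $\tp(g_\Phi/M)$, with the operation induced by multiplication in $G$, together with a type-definable surjective homomorphism $\rho\colon G\to\bar G$ such that $\rho(g)$ is interdefinable over $M$ with $g_\Phi$ for $g\models p$. Put $N=\ker\rho$: a type-definable normal subgroup of $G$ (type-definable already over the original parameters, since the construction was carried out invariantly). For $g\models p$ the coset $gN$ is then interdefinable over $M$ with $\rho(g)$, hence with $g_\Phi$, so $\dcl(gN)=\dcl(g_\Phi)$; and $G/N\cong\bar G$ has generic type $\tp(g_\Phi/M)$, every imaginary of which is $\Phi$-internal, so $N$ is the desired ``$\Phi$-component''.

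Finally, the formal properties. If $N'$ is another type-definable normal subgroup with $\dcl(g_\Phi)=\dcl(gN')$ for $g\models p$, then the $M$-invariant maps $g\mapsto gN$ and $g\mapsto gN'$ on $p$ differ only by a base-definable bijection of their ranges; since such a generic coset map determines its subgroup, $N=N'$. The construction depends only on $p$, hence only on $G$ and $\Phi$, so $N$ does not depend on the choice of $g$. Because $\Phi$ is invariant and $N$ was obtained canonically from $G$ and $\Phi$, any automorphism of the monster --- in particular any definable automorphism of $G$ --- fixing the parameters maps $N$ to the subgroup built from its image, namely $N$; thus $N$ is automorphism-invariant and definably characteristic, and applying this to inner automorphisms re-proves normality. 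The main obstacle is the generic-homomorphism step of the second paragraph: controlling the $\Phi$-internal part of $\dcl(Mg_1g_2)$ by those of $\dcl(Mg_1)$ and $\dcl(Mg_2)$ for independent generics genuinely requires the theory of internality together with the stable calculus of forking and canonical bases, rather than group theory alone; granting it, the passage to $N$ and the remaining verifications are routine.
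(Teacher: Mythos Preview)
The paper does not prove this statement; it is recorded as a \emph{Fact} from the stable-groups literature, and the only additional content is the sentence immediately after it: $N$ is the intersection of all definable subgroups $H\le G$ with $G/H$ $\Phi$-internal. So there is no proof in the paper to compare against beyond this explicit description of $N$.

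Your argument has a genuine gap at the ``key step''. You claim that for $\Phi$-internal $e\in\dcl(Mg_1g_2)$ one has $e\in\dcl(M,c_1,c_2)$ where $c_i=\operatorname{Cb}(\tp(e/Mg_i))$, but this is false in general. Take $G$ a torsion-free abelian stable group, $g_1,g_2$ independent generics, and $e=2g_1+2g_2\in\dcl(g_1+g_2)$: then $\tp(e/Mg_i)$ is the generic type of the coset $2g_i+2G=2G$, which is $\emptyset$-definable, so $c_1,c_2\in\dcl(M)$, while $e$ is generic in $2G$ and not algebraic over $M$. The underlying problem is that $e\mathop{\perp}_{Mc_1}g_1$, $e\mathop{\perp}_{Mc_2}g_2$ and $g_1\mathop{\perp}_M g_2$ do not combine to give $e\mathop{\perp}_{Mc_1c_2}g_1g_2$; pairwise independence does not yield joint independence in a stable theory. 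Notice also that your derivation of $e\in\dcl(M,c_1,c_2)$ never invokes the $\Phi$-internality of $e$, which already signals that the step cannot be right as stated. The direct construction implicit in the paper's remark bypasses all of this: with $N=\bigcap\{H:G/H\ \Phi\text{-internal}\}$ one has $\dcl(gN)\subseteq\dcl(g_\Phi)$ trivially, and for $e=f(g)$ with $\tp(e)$ $\Phi$-internal the generic left action $h\cdot f(g)=f(hg)$ exhibits $G/\operatorname{Stab}(e)$ as a $\Phi$-internal set, whence $N\le\operatorname{Stab}(e)$ and $e\in\dcl(gN)$. Your generic-homomorphism viewpoint is a legitimate way to organise the statement, but the compatibility with products needs a different justification than the canonical-base computation you give.
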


In fact, $N$ is the intersection of all definable subgroups $H$ such that $G/H$ is $\Phi$-internal. Note that a priori $N$ need not be connected. We call $N$ the {\em $\Phi$-component} of $G$, denoted $G^\Phi$. Recall that if $G$ is $\omega$-stable, $G^\Phi$ is outright definable.

\begin{theo} Let $B$ be a connected stable skew brace, and $\Phi$ an invariant collection of partial types. Then the additive $\Phi$-component $B^\Phi_+$ equals the multiplicative $\Phi$-component $B^\Phi_\circ$ and is just called the $\Phi$-component $B^\Phi$; it is an ideal in $B$.\end{theo}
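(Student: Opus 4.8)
\emph{Plan.} I would argue via generic types and the structure Fact on $\Phi$-components, in the spirit of Theorem~\ref{conncpt}. Since $B$ is connected, that theorem (or rather its proof) shows that $(B,+)$ and $(B,\circ)$ are connected stable groups sharing a common principal generic type; call it $p$ and fix a realisation $g$ of $p$ in a sufficiently saturated elementary extension. The tuple $g_\Phi=(g_0\in\dcl(g):\tp(g_0)\mbox{ is $\Phi$-internal})$ of the Fact depends only on $\dcl(g)$ and on $\Phi$, not on either group operation, so applying the Fact once to $(B,+)$ and once to $(B,\circ)$ — with the \emph{same} $g$ — yields
$$\dcl(g_\Phi)=\dcl(g+B^\Phi_+)=\dcl(g\circ B^\Phi_\circ),$$
where the cosets are taken with respect to the additive, resp.\ multiplicative, group.

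The bridge between the two sides is that $B^\Phi_+$ is $\lambda$-invariant: it is definably characteristic in $(B,+)$, and every $\lambda_a$ is a definable automorphism of $(B,+)$. Hence $B^\Phi_+$ is a left ideal, so $a+B^\Phi_+=a\circ\lambda_a^{-1}[B^\Phi_+]=a\circ B^\Phi_+$ for all $a\in B$, and in particular $\dcl(g\circ B^\Phi_+)=\dcl(g_\Phi)=\dcl(g\circ B^\Phi_\circ)$ — now a statement about cosets of two \emph{multiplicative} subgroups, valid for every realisation of $p$ (the subgroup of the Fact not depending on the chosen realisation). To deduce $B^\Phi_+=B^\Phi_\circ$, I would fix $n\in B^\Phi_+$ and take $g\models p$ generic over $n$; by the right-translation invariance $p\circ n=p$, also $g':=g\circ n$ realises $p$, while $g'\circ B^\Phi_+=g\circ(n\circ B^\Phi_+)=g\circ B^\Phi_+$ since $n\in B^\Phi_+$. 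Choosing an automorphism $\sigma$ with $\sigma(g)=g'$ fixing the base parameters (so $\sigma$ stabilises $B^\Phi_+$ and $B^\Phi_\circ$), $\sigma$ fixes the coset $g\circ B^\Phi_+$, hence also $g\circ B^\Phi_\circ$ because the latter lies in $\dcl(g\circ B^\Phi_+)$; but $\sigma(g\circ B^\Phi_\circ)=g'\circ B^\Phi_\circ$, so $g\circ B^\Phi_\circ=g\circ n\circ B^\Phi_\circ$, i.e.\ $n\in B^\Phi_\circ$. The reverse inclusion is the symmetric argument (now using $g\circ B^\Phi_+\in\dcl(g\circ B^\Phi_\circ)$). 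Writing $B^\Phi:=B^\Phi_+=B^\Phi_\circ$, it is additively normal and $\lambda$-invariant, hence a strong left ideal, and it is multiplicatively normal as $B^\Phi_\circ$, hence an ideal in $B$.

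The point needing care — and the main obstacle — is exactly this additive/multiplicative matching: the relation $\dcl(g+B^\Phi_+)=\dcl(g\circ B^\Phi_\circ)$ only asserts interdefinability of two cosets living in \emph{different} quotient sorts, not their equality, so the components cannot be equated head-on. The observation that $B^\Phi_+$ is automatically a left ideal — so that its additive cosets coincide with multiplicative ones — is what transports the whole comparison inside $(B,\circ)$, after which the coset/automorphism argument closes it; the residual bookkeeping about type-definable subgroups and interdefinable imaginaries in possibly non-saturated models is routine once one works in a sufficiently saturated model. (One inclusion, $B^\Phi_\circ\subseteq B^\Phi_+$, can alternatively be read off the characterisation of $G^\Phi$ as the intersection of definable subgroups with $\Phi$-internal quotient: reduce to $\lambda$-invariant definable additive $H$ with $(B,+)/H$ $\Phi$-internal, note that $(B,\circ)$ acts faithfully and transitively on $(B,+)/H=(B,\circ)/H$ modulo the multiplicative normal core $K_H=\bigcap_{a\in B}a\circ H\circ a^{-1}$, which is definable by Fact~\ref{f:icc}, so that $(B,\circ)/K_H$ definably embeds into a finite power of $(B,+)/H$ and is thus $\Phi$-internal, whence $B^\Phi_\circ\le K_H\le H$; the opposite inclusion is less transparent this way, which is why I favour the generic-type route.)
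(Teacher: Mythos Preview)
Your proof is correct and follows essentially the same route as the paper's: both note that $B^\Phi_+$ is definably characteristic hence a strong left ideal, convert its additive cosets to multiplicative ones, and then use the interdefinability $\dcl(g\circ B^\Phi_+)=\dcl(g_\Phi)=\dcl(g\circ B^\Phi_\circ)$ together with an automorphism sending a generic $g$ to $g\circ h$ to force $B^\Phi_+=B^\Phi_\circ$. The only cosmetic difference is that the paper obtains the inclusion $B^\Phi_\circ\le B^\Phi_+$ directly---once $B^\Phi_+$ is a multiplicative subgroup with $\Phi$-internal multiplicative quotient, this is immediate from the characterisation of $B^\Phi_\circ$---and runs the automorphism argument only for the reverse inclusion, whereas you run it symmetrically twice (your parenthetical alternative via normal cores is an unnecessarily elaborate version of the paper's one-line observation).
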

\begin{proof} Since $B^\Phi_+$ is definably characteristic, it is additively normal and invariant under $\lambda_b$ for all $b\in B$, whence a strong left ideal.

Thus $B^\Phi_+$ is a multiplicative subgroup, and the multiplicative quotient $B/B^\Phi_+$ is $\Phi$-internal (since additive and multiplicative cosets are the same). But then $B^\Phi_\circ\le B^\Phi_+$, and for a generic $g$ we have
$$\dcl(gB^\Phi_+)=\dcl(g_\Phi)=\dcl(gB^\Phi_\circ).$$

Suppose that $B^\phi_\circ<B^\Phi_+$, let $h\in B^\Phi_+\setminus B^\Phi_\circ$, and let $g$ be generic over $h$. Then $gh$ is also generic; as $B$ is connected, there is a unique generic type, and there is an automorphism $\sigma$ of $B$ mapping $g$ to $gh$. So $\sigma$ fixes $gB^\Phi_+$ and moves $gB^\Phi_\circ$ to $ghB^\phi_\circ\not= gB^\Phi_\circ$, contradicting $gB^\Phi_\circ\in\dcl(gB^\phi_+)$. Hence $B^\Phi_+=B^\Phi_\circ=:B^\Phi$. It follows that $B^\Phi$ is multiplicatively normal, whence an ideal.
\end{proof} 

Note that if $I$ is a connected type-definable ideal of $B$, then $I^\phi$ is additively and multiplicatively definably characteristic, whence an ideal in $B$.

In a superstable theory there is an ordinal-valued rank $U$ on the collection of all types, called {\em Lascar rank}, such that $U(p)=0$ iff $p$ is {\em algebraic}, i.e.\ has only finitely many realisations (in any elementary extension). It satisfies the {\em Lascar inequalities}
$$U(\bar a/A\bar b)+U(\bar b/A)\le U(\bar a\bar b/A)\le U(\bar a/A\bar b)\oplus U(\bar b/A),$$
where $\oplus$ is the natural sum of ordinals: If $\alpha=\sum_{i\le k}\omega^{\gamma_i}\cdot m_i$ and $\beta=\sum_{i\le k}\omega^{\gamma_i}\cdot n_i$ for some $\gamma_0>\gamma_1>\cdots>\gamma_k$ and $m_i,n_i\in\omega$, then $\alpha\oplus\beta=\sum_{i\le k}\omega^{\gamma_i}\cdot(m_i+n_i)$.

In the context of groups $H\le G$ the Lascar inequalities translate as
$$U(H)+U(G/H)\le U(G)\le U(H)\oplus U(G/H),$$
where the rank of a type-definable group $G$ is the rank of any of its generic types (and the generic types of $G$ are precisely the types in $G$ of maximal rank).

In an $\omega$-stable theory there is another ordinal-valued rank, {\em Morley rank} $RM$, which is also defined on formulas. We have $RM(p)\ge U(p)$ for any type $p$. In groups of finite Morley rank, the two ranks coincide.

\begin{cor}[Berline-Lascar analysis]\label{c:components}
If $U(B)=\sum_{i\le k}\omega^{\alpha_i}\cdot n_i$ and $\Phi=\{p:U(p)<\omega^{\alpha_j}\}$, then $(B^0)^\Phi$ is the unique type-definable connected ideal of Lascar-rank $\sum_{i\le j}\omega^{\alpha_i}\cdot n_i$.
\end{cor}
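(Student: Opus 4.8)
The plan is to deduce the statement from the Berline--Lascar analysis of superstable groups applied to the additive group $(B^0,+)$, and then to transfer it to skew braces via the results of this section. First I would pass to the connected case. Since the Lascar rank of a type-definable group is the rank of its generic type and the principal generic type of $B$ is by definition that of its connected component, $U(B^0)=U(B)=\sum_{i\le k}\omega^{\alpha_i}\cdot n_i$; and by Theorem \ref{conncpt}, $B^0$ is a type-definable ideal of $B$ which is both additively and multiplicatively connected, so $B^0$ is simultaneously a connected stable skew brace and a connected type-definable ideal of $B$.

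Next I would invoke the Berline--Lascar analysis for $(B^0,+)$ (see \cite{poizat}, \cite{wa97}): for $\Phi=\{p:U(p)<\omega^{\alpha_j}\}$, the additive $\Phi$-component $(B^0)^\Phi_+$ is connected, has Lascar rank $\sum_{i\le j}\omega^{\alpha_i}\cdot n_i$, and is the unique type-definable connected normal subgroup of $(B^0,+)$ of that rank. One half of the rank computation is already available from the excerpt: $B^0/(B^0)^\Phi_+$ is $\Phi$-internal, hence a finite product of types of rank $<\omega^{\alpha_j}$ maps onto it, so $U\big(B^0/(B^0)^\Phi_+\big)<\omega^{\alpha_j}$, and the Lascar inequalities then force $U\big((B^0)^\Phi_+\big)\ge\sum_{i\le j}\omega^{\alpha_i}\cdot n_i$. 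The reverse inequality (together with connectedness, which can fail for $\Phi$-components in general but holds here because $(B^0,+)$ is connected) rests on producing a definable subgroup of $(B^0,+)$ of rank exactly $\sum_{i\le j}\omega^{\alpha_i}\cdot n_i$ with connected quotient of rank $<\omega^{\alpha_j}$; this is the substantial, purely group-theoretic part of Berline--Lascar, which I would cite rather than reprove. It is the only genuine obstacle in the argument.

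The transfer is then routine. Applying the theorem of this section asserting that the additive and multiplicative $\Phi$-components of a connected stable skew brace coincide and form an ideal, to the connected stable skew brace $B^0$, we get $(B^0)^\Phi_+=(B^0)^\Phi_\circ=:(B^0)^\Phi$, an ideal of $B^0$; and since $B^0$ is a connected type-definable ideal of $B$, the note following that theorem shows that $(B^0)^\Phi$ is in fact an ideal of $B$. Hence $(B^0)^\Phi$ is a type-definable connected ideal of $B$ of Lascar rank $\sum_{i\le j}\omega^{\alpha_i}\cdot n_i$, which gives existence.

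For uniqueness, let $J$ be a type-definable connected ideal of $B$ of Lascar rank $\sum_{i\le j}\omega^{\alpha_i}\cdot n_i$. As a connected type-definable additive subgroup, $J$ is contained in the additive connected component $B^0$: if $K$ is any of the definable additive subgroups of finite index whose intersection defines $B^0$, then $J\cap K$ is a relatively definable finite-index subgroup of the connected group $J$, hence equals $J$, so $J\le K$; intersecting over all such $K$ gives $J\le B^0$. Being an ideal of $B$, $J$ is additively normal in $B^0$. Thus $J$ is a type-definable connected normal subgroup of $(B^0,+)$ of rank $\sum_{i\le j}\omega^{\alpha_i}\cdot n_i$, and the uniqueness clause of Berline--Lascar applied to $(B^0,+)$ yields $J=(B^0)^\Phi_+=(B^0)^\Phi$. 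Everything specific to skew braces here is supplied by Theorem \ref{conncpt} and the $\Phi$-component theorem, so the statement is indeed a corollary of them.
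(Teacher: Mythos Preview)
Your argument is correct and is exactly the intended deduction: the paper gives no proof of this corollary, leaving it as an immediate consequence of the preceding $\Phi$-component theorem (and the remark after it) combined with the classical Berline--Lascar analysis for superstable groups applied to $(B^0,+)$. Your handling of the reduction to $B^0$, the rank computation via the Lascar inequalities, and the uniqueness step are all sound.
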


We shall need the following {\em Indecomposability Theorem} (which we state in the version without indecomposability \cite[Theorem 5.4.5]{wa00}).
\begin{fact}\label{f:indecomposability} Let $G$ be a superstable group with $U(G)<\omega^{\alpha+1}$ and $\mathfrak X$ a collection of non-empty type-definable subsets of $G$ closed under inverse, union and product. Then there is a type-definable connected subgroup $H\subseteq X$ for some $X\in\mathfrak X$ with $U(H)=\omega^\alpha\cdot k$ for some $k\in\omega$ such that $U(X/H)<\omega^\alpha$ for all $X\in\mathfrak X$.\end{fact}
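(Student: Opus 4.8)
Since the statement is the superstable version of Zilber's indecomposability theorem, due to Berline--Lascar, the plan is to follow the proof of \cite[Theorem 5.4.5]{wa00}; here is an outline of how I would proceed. Call a connected type-definable subgroup $K\le G$ \emph{good} if $K\subseteq X$ for some $X\in\mathfrak X$, and consider the supremum of $U(K)$ over all good $K$. This is well-posed: $\mathfrak X$ is non-empty, and for any $X\in\mathfrak X$ the set $XX^{-1}\in\mathfrak X$ contains $1$, so the trivial subgroup is good; while $U(K)\le U(G)<\omega^{\alpha+1}$ bounds the possible ranks, so the supremum is attained. Fix a good $H\subseteq X_0$, $X_0\in\mathfrak X$, with $U(H)$ maximal. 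In a superstable theory every type-definable group is an intersection of definable subgroups, so taking $H$ type-definable causes no difficulty.

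I would then show that this $H$ works, interpreting $U(X/H)$ as the $U$-rank of the image of $X$ under the projection onto the left coset space $G/H$. Two things must be checked, both by contradiction with the maximality of $U(H)$: (i) that $U(H)=\omega^\alpha\cdot k$ for some $k\in\omega$, and (ii) that $U(X/H)<\omega^\alpha$ for every $X\in\mathfrak X$. For the crucial claim (ii): assume some $X\in\mathfrak X$ has $U(X/H)\ge\omega^\alpha$. Passing to $X_0\cup X\cup X_0^{-1}\cup X^{-1}\in\mathfrak X$, which contains $1$ and whose image in $G/H$ still has rank $\ge\omega^\alpha$, and then to a suitable finite product thereof --- all legitimate since $\mathfrak X$ is closed under inverse, union and product --- one obtains a member $Y\in\mathfrak X$ with $1\in Y$ such that the connected component of the stabilizer of a generic type of $Y$ is again good (it sits inside, say, $YY^{-1}Y\in\mathfrak X$) but has $U$-rank at least $\omega^\alpha\cdot(k+1)>U(H)$ --- a contradiction. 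Claim (i) follows from the same circle of ideas: a good subgroup of rank strictly between $\omega^\alpha\cdot k$ and $\omega^\alpha\cdot(k+1)$ could always be enlarged within a member of $\mathfrak X$.

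The technical core, and the step I expect to be the main obstacle, is the stabilizer computation: showing that the connected component of the stabilizer of the generic type of $Y$ is type-definable, contained in the prescribed member of $\mathfrak X$ rather than merely in its definable hull, and has $U$-rank jumping up by at least $\omega^\alpha$. This rests on the Lascar inequalities together with the standard theory of generic types and stabilizers in superstable groups, and is precisely what is carried out in \cite[Theorem 5.4.5]{wa00}; I would invoke that result directly rather than reprove it.
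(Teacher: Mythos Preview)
The paper does not prove this statement at all: it is stated as a \emph{Fact} with a direct citation to \cite[Theorem 5.4.5]{wa00}, so your approach---outlining the Berline--Lascar argument and then invoking that reference---is entirely in line with the paper's treatment. Your sketch of the stabilizer argument is accurate and more detailed than what the paper provides.
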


We shall call a skew brace {\em definably simple} if it has no definable proper non-zero ideal, and {\em type-definably simple} if it has no type-definable proper non-zero ideal. A superstable type-definably simple group is simple; the next theorem shows the analogous result for skew braces.

\begin{theo} A superstable type-definably simple skew brace is simple. An $\omega$-stable definably simple skew brace is simple.\end{theo}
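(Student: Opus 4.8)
The plan is to argue by contradiction. Suppose $B$ is type‑definably simple (the $\omega$‑stable, definably simple case will be handled in parallel, reading ``definable'' for ``type‑definable'' and using that in an $\omega$‑stable theory every type‑definable subgroup is in fact definable, and connected components are definable of finite index). Assume $B$ is not simple, fix a non‑zero ideal $I\lneq B$, and try to produce a proper non‑zero type‑definable ideal of $B$. The first move is to look at the connected component of $I$ as a skew sub‑brace, $I^0=(\bar I^+)^0\cap I=(\bar I^\circ)^0\cap I$, furnished by the theorem on connected components of sub‑braces.

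The key technical point — and the one I would establish first — is that, although a priori $I^0$ is only an ideal \emph{in} $I$, it is in fact an ideal \emph{in} $B$. Indeed, since $I$ is an ideal, for every $b\in B$ the maps $x\mapsto b+x-b$ and $\lambda_b$ are definable automorphisms of $(B,+)$ fixing $I$ setwise; hence they fix $\bar I^+$, hence its connected component $(\bar I^+)^0$, hence $(\bar I^+)^0\cap I=I^0$. Thus $I^0$ is an additively normal, $\lambda$‑invariant subgroup, i.e.\ a strong left ideal of $B$. Dually, $x\mapsto b\circ x\circ b^{-1}$ is a definable automorphism of $(B,\circ)$ fixing $I$, hence $\bar I^\circ$, hence $(\bar I^\circ)^0$, hence $(\bar I^\circ)^0\cap I=I^0$, so $I^0$ is multiplicatively normal in $B$. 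Therefore $I^0$ is an ideal of $B$. Now one distinguishes two cases. If $I^0\neq 0$: in the $\omega$‑stable case $I^0$ is a definable non‑zero ideal, so $I^0=B$ by definable simplicity and hence $I=B$, a contradiction; in the superstable case one shows $I^0$ is enveloped by a proper type‑definable ideal (below). If $I^0=0$: then $I$ embeds into $\bar I^+/(\bar I^+)^0$, which is finite in the $\omega$‑stable case, so $I$ is finite, hence a proper non‑zero definable ideal — again absurd. For the superstable case, the connected component has only bounded index, so instead one argues through $U$‑rank: $B^0$ itself is a type‑definable ideal (connected‑component theorem), so $B^0\in\{0,B\}$; if $B^0=0$ then $B$, being definable of bounded cardinality $\le 2^{\aleph_0}$, is finite and hence simple, so we may assume $B$ is connected. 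Writing $U(B)=\sum_{i\le k}\omega^{\alpha_i}n_i$, the Berline–Lascar analysis produces the type‑definable connected ideals $(B^0)^{\Phi_j}$ of Lascar rank $\sum_{i\le j}\omega^{\alpha_i}n_i$, each of which must be $0$ or $B$; this is possible only if $U(B)=\omega^{\alpha_0}\cdot n_0$ has a single Cantor term, and for $n_0\ge 2$ the indecomposability theorem applied to a collection $\mathfrak X$ of type‑definable subsets chosen invariant under the brace operations yields a type‑definable connected ideal of intermediate rank, again contradicting simplicity, while the remaining low‑rank situations are dealt with by hand using that $\ker\lambda$, the socles and the annihilators are $\emptyset$‑definable ideals.

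The hard part is exactly this last reduction: an \emph{arbitrary} ideal need neither contain nor be contained in a proper type‑definable one, and the naive hulls $\bar I^+$, $\bar I^\circ$ are only strong left ideals, not ideals. The observation that unlocks the argument is that the connected component $I^0$, having simultaneously an additive and a multiplicative description, is forced to be a genuine ideal of $B$; everything after that is the rank‑and‑index bookkeeping of superstable group theory (boundedness of $|G:G^0|$, the Berline–Lascar decomposition, the indecomposability theorem) used to convert $I^0$ — or $B^0$, or a finite ideal $(b)$ — into a proper non‑zero (type‑)definable ideal or to collapse the situation to the finite case. As usual one runs the whole argument in a sufficiently saturated elementary extension, so that the type‑definable sets occurring are not accidentally too small.
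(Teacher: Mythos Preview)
Your argument has a genuine gap at the outset. You set $I^0=(\bar I^+)^0\cap I$ and then claim that in the $\omega$-stable case $I^0$ is a \emph{definable} ideal. But $I$ is an arbitrary ideal, not assumed definable, so $I^0$ is the intersection of a definable set with an undefinable one and is not definable (nor type-definable in the superstable case). Showing that $I^0$ is an ideal of $B$ is fine, but it buys nothing towards a contradiction with (type-)definable simplicity. The natural fix --- working with $(\bar I^+)^0$ itself --- does not help either: you can argue it is a strong left ideal, but multiplicative conjugation only stabilises $(\bar I^\circ)^0$, and the connected-component theorem for sub-braces identifies $(\bar I^+)^0\cap I$ with $(\bar I^\circ)^0\cap I$, not the two hulls themselves; and even if $(\bar I^+)^0$ were an ideal, in the case $(\bar I^+)^0=B$ you learn nothing about $I$.

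Your second line of attack, via the Berline--Lascar series and the indecomposability theorem, is closer to the paper's proof but is missing its key step. After reducing to $U(B)=\omega^\alpha\cdot n$, the paper does not split on $n$; instead it picks a specific $a\in I$. Since the definable ideal $\Soc(B)$ must be $0$, either $C_B^+(a)$ or $\Fixl(a)$ is a proper definable subgroup, and the corresponding orbit $X=\{b+a-b:b\in B\}$ or $X=\{\lambda_b(a):b\in B\}$ is a \emph{definable} subset of $I$ of Lascar rank $\ge\omega^\alpha$. This $X$ seeds the collection $\mathfrak X$ (closed under union, $+$, $\circ$, inversion, additive and multiplicative conjugation, and $\lambda$); the indecomposability theorem then yields a connected type-definable additive subgroup $H$ contained in some set of $\mathfrak X$, hence inside $I$, and uniqueness of $H$ forces it to be a non-zero ideal of $B$. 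Your proposal never specifies $\mathfrak X$, never ties the indecomposability step back to $I$, and the ``remaining low-rank situations'' (the case $n=1$, i.e.\ $U(B)=\omega^\alpha$) are not low-rank at all and cannot be disposed of merely by noting that $\ker\lambda$, $\Soc(B)$, $\Ann(B)$ are $\emptyset$-definable.
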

\begin{proof}The second statement follows from the first, as in an $\omega$-stable skew brace a type-definable ideal is definable. 
	
Let $B$ be a type-definably simple skew brace. If $B$ is trivial, the result follows from the analogous theorem for groups. So suppose $B$ is non-trivial, and $I\triangleleft B$ is a proper non-zero ideal. Note first that $U(B)=\omega^\alpha\cdot n$ for some ordinal $\alpha$ and some $n<\omega$ by Corollary \ref{c:components}. 

As $\Soc(B)$ is a definable proper ideal, it must be zero. Choose $0\not=a\in I$, then $a\notin\Soc(B)$, so either $C_+(a)<B$ or $\Fixl(a)=\{b\in B:\lambda_b(a)=a\}<B$; note that the latter is a multiplicative subgroup. In the first case put $X=\{b+a-b:b\in B\}$, in the second case put $X=\{\lambda_b(a):b\in B\}$. These are in bijection with $B^+/C_+(a)$ and $B^\circ/\Fixl(a)$ respectively; as $U(C_+(A))<\omega^\alpha\cdot n$ or $U(\Fixl(a))<\omega^\alpha\cdot n$, the Lascar inequalities yield that the rank of the quotient is at least $\omega^\alpha$, so $U(X)\ge\omega^\alpha$. Note that $X\subseteq I$.

Let $\mathfrak X$ be the smallest collection of definable subsets of $B$ containing $X$ and such that if $Y,Z\in\mathfrak X$ and $b\in B$, then
$$Y\cup Z,\ Y-Z,\ Y\circ Z^{-1},\ b+Y-b,\ b\circ Y\circ b^{-1}, \mbox{ and }\lambda_b(Y)$$
are in $\mathfrak X$. So $\mathfrak X$ consists of definable subsets of $I$, and $\bigcup\mathfrak X$ is the ideal generated by $X$. By Fact \ref{f:indecomposability} there is a connected type-definable additive subgroup $H\subseteq Y$ for some $Y\in\mathfrak X$ such that $U(H)=\omega^\alpha\cdot k$ and $U(Z/H)<\omega^\alpha$ for all $Z\in\mathfrak X$. It is unique, since if $K\not=H$ were a second such group, then $U(H\cap K)<\omega^\alpha\cdot k$, so $U(K+H/H)\ge\omega^\alpha$; a contradiction since $K+H$ is contained in some set in $\mathfrak X$. In particular $H$ is additively normal and $\lambda$-invariant, whence a strong left ideal, and a connected multiplicative subgroup. If there were $b\in B$ with $b\circ H\circ b^{-1}\not= H$ then $U(H\circ b\circ H\circ b^{-1}/H)\ge\omega^\alpha$, again a contradiction. Thus $H$ is a non-zero type-definable ideal contained in $I$, contradicting type-definable simplicity.
\end{proof}

\section{Solubility}\label{solubility}
There are many possible definitions of solubility in skew braces. The weakest one was introduced in~\cite{konovalov}. Let~$B$ be a skew brace. We recursively define $B_{(i)}$ as follows: 
$$B_{(1)}=B\quad\mbox{and}\quad B_{(n+1)}=\langle B_{(n)}\ast B_{(n)}\rangle_+.$$
Note that $B_{(n+1)}$ is an ideal in $B^{(n)}$ for all $n<\omega$. Then $B$ is {\em weakly soluble} of {\em derived length} $k$ if $B_{(k)}>B_{(k+1)}=\{0\}$. This means that there is a sequence $$\{0\}=I_0<I_1<\cdots<I_k=B$$ of skew sub-braces such that $I_n$ is an ideal in $I_{n+1}$ with trivial quotient for all $n<k$.

We shall say that $B$ is {\em left soluble} of derived length $\le k$ if in the above sequence all $I_n$ are left ideals in $B$; it is {\em strongly left soluble} of derived length $\le k$ if all the $I_n$ are strong left ideals in $B$, and it is {\em soluble} if all $I_n$ are ideals in $B$. Note that if $B$ is of soluble type, then we can require that all the quotients $I_{n+1}/I_n$ are abelian.

The weak soluble/left soluble/strongly left soluble/soluble radical of $B$ is the ideal generated by all weak soluble/left soluble/strongly left soluble/soluble ideals. Usually it need not be weak soluble/left soluble/strongly left soluble/soluble itself.

We shall now introduce some terminology.
Recall that if $H$ is a subgroup of $G$ and $x\in N_G(H)$, then $C_G(x/H)=\{g\in N_G(H):[g,x]\in H\}$. Clearly, this is a subgroup of $G$ containing $H$; if both $G$ and $H$ are definable, so is $C_G(x/H)$. For a subset $X\subseteq N_G(H)$ we put $C_G(X/H)=\bigcap_{x\in X}C_G(x/H)$, again a subgroup. This will be used both additively and multiplicatively.

Following \cite{FCbraces}, given a multiplicative subgroup $G$ and an additive subgroup $H$ of $B$ and an element $c\in B$, we define 
$$\Fixl_G(c/H)=\{x\in\Stab_G(H)\,:\,\lambda_x(c)\in c+H\}.$$
For a set $C\subseteq B$ we put $\Fixl_G(C/H)=\bigcap_{c\in C}\Fixl_G(c/H)$. Since for $x,x'\in\Fixl_G(c/H)$ we have
$$\lambda_{x\circ x'}(c)=\lambda_x(\lambda_{x'}(c))\in\lambda_x(c+H)=\lambda_x(c)+\lambda_x(H)=c+H,$$
both $\Fixl_G(c/H)$ and $\Fixl_G(C/H)$ are multiplicative subgroups of $G$ (for the inverse, consider $x=x'^{-1}$). Clearly, if $G$ and $H$ are definable, so is $\Fixl_G(c/H)$. If $H$ is trivial, we just write $\Fixl_G(c)$.

Finally, if $H$ and $G$ are additive subgroups and $c\in B
$, we define
$$\Fixr_G(c/H)=\{x\in N^+_G(H)\,:\,\lambda_c(x)\in x+H\},$$
and for a set $C$ we put $\Fixr_G(C/H)=\bigcap_{c\in C}\Fixr_G(c/H)$. Since for $x,x'\in\Fixr(c/H)$ we have
$$\lambda_c(x+x')=\lambda_c(x)+\lambda_c(x')\in x+H+x'+H=x+x'+H,$$
both $\Fixr_G(c/H)$ and $\Fixr_G(C/H)$ are additive subgroups of $G$ (for the inverse, consider $x=-x'$). Clearly, if $G$ and $H$ are definable, so is $\Fixr_G(c/H)$. As above we write $\Fixr_G(c)$ if $H$ is trivial.

\begin{rem}For $x\in\Stab_B(H)$ and $y\in N^+_B(H)$ we have $x\in\Fixl_B(y/H)$ iff $y\in\Fixr_B(x/H)$ iff $\lambda_x(y)\in y+H$.\end{rem}
\begin{rem}\label{r:contained} For a skew sub-brace $A$ and $X\subseteq\Stab_B(A)$ we have $A\le\Fixr_B(X/A)$. However, for $X\subseteq\Stab_B(A)\cap N_B^+(A)$ we have $A\le\Fixl_B(X/A)$ iff $X\le N_B^\circ(A)$.\end{rem}
\begin{proof} The first assertion is clear by definition. For the second assertion, consider $a\in A$ and $x\in X$. Then
$$\lambda_a(x)=-a+a\circ x\in x+A=A+x\Leftrightarrow a\circ x\in A+x=x+A=x\circ A\Leftrightarrow x^{-1}\circ a\circ x\in A.\qedhere$$\end{proof} 

\begin{rem}\label{fixdef}
Let $B$ be a stable skew brace, and $A$ (resp.\ $G$) be a definable additive (resp.\ multiplicative) subgroup. If $X$ is any subset of $B$, then there is a finite subset $Y$ of $X$ such that $$\begin{aligned}\Fixl_B(X/A)&=\Fixl_B(Y/A),&\Fixr_B(X/A)&=\Fixr_B(Y/A),\\C_B^+(X/A)&=C_B^+(Y/A),\quad\text{and}& C_B^\circ(X/G)&=C_B^\circ(Y/G),\end{aligned}$$
where $C^+$ is the additive and $C^\circ$ the multiplicative centraliser. In particular, all of these groups are definable.
\end{rem}
\begin{proof}This is an immediate application  of Fact \ref{f:icc}.\end{proof}

The following two Lemmas will allow us to find definable skew sub-braces.
\begin{lem}\label{l:norm} Let $B$ be a skew brace, $A$ a skew sub-brace, and $c\in N^+_B(A)$ such that $A\ast c\subseteq A$. Then $c\in N^\circ_B(A)$ iff $c\in\Stab_B(A)$.\end{lem}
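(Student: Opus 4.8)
The plan is to relate the three conditions $c\in N^\circ_B(A)$, $c\in\Stab_B(A)$, and the behaviour of the $\lambda$-action by using the identities $a\circ b=a+\lambda_a(b)$ and $a+b=a\circ\lambda_a^{-1}(b)$ connecting the two group operations. First I would record what the hypotheses give us: since $c\in N^+_B(A)$ we have $c+A=A+c$, and since $A\ast c\subseteq A$, i.e.\ $\lambda_a(c)-c\in A$ for all $a\in A$, we get $\lambda_a(c)\in A+c=c+A$ for all $a\in A$; equivalently, $-c+\lambda_a(c)\in A$, which (using $-c=\lambda_c(c^{-1})$ and the cocycle-type manipulations, or more directly the Remark preceding the lemma) says $c\in\Fixl_B(A/A)$ in the relevant sense — but the cleaner route is to just keep the raw inclusions.

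Next I would prove the implication $c\in\Stab_B(A)\Rightarrow c\in N^\circ_B(A)$. Assume $c\circ A=c+A$ (this is the characterisation of $\Stab_B(A)$ recalled in Remark~\ref{r:ideal}: $c\in\Stab_B(A)$ iff $c\circ A=c+A$). Since $A$ is a skew sub-brace it is closed under $\circ$, so for $a\in A$ I want $c\circ a\circ c^{-1}\in A$. Write $c\circ a\circ c^{-1}$ using $c\circ a\in c\circ A=c+A$, say $c\circ a=c+a'$ with $a'\in A$; then $c\circ a\circ c^{-1}=(c+a')\circ c^{-1}$. Now expand $(c+a')\circ c^{-1}$ via $x\circ y=x+\lambda_x(y)$ and the additive distribution, peeling off $c\circ c^{-1}=0$: one gets $c\circ a\circ c^{-1}=c+a'+\lambda_{c+a'}(c^{-1})$ and $0=c\circ c^{-1}=c+\lambda_c(c^{-1})$, so $\lambda_c(c^{-1})=-c$, and it remains to control $\lambda_{c+a'}(c^{-1})$ versus $\lambda_c(c^{-1})=-c$; since $a'\in A$ and $A\ast c\subseteq A$ (hence the $\lambda$-action of elements of $A$ moves $c^{-1}$, equivalently $c$, only within $A$), the difference lies in $A$, and combining with $c+a'$ and $-c$ — using $c\in N^+_B(A)$ to commute $A$ past $c$ additively — yields $c\circ a\circ c^{-1}\in A$. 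The converse $c\in N^\circ_B(A)\Rightarrow c\in\Stab_B(A)$ runs symmetrically: from $c\circ A\circ c^{-1}=A$ and $A\ast c\subseteq A$ we get $c\circ A\subseteq(c\circ A\circ c^{-1})\circ c=A\circ c$, and then convert the multiplicative coset $A\circ c$ into an additive coset using $a\circ c=a+\lambda_a(c)\in a+c+A=c+A$ (here is where $A\ast c\subseteq A$ and $c\in N^+_B(A)$ are both used), giving $c\circ A\subseteq c+A$, and the reverse inclusion by a cardinality/inverse argument (apply the same to $c^{-1}$, noting $c^{-1}\in N^+_B(A)$ and $A\ast c^{-1}\subseteq A$ follow from the group-theoretic identities for $\ast$ and the hypotheses on $c$).

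The main obstacle I expect is the bookkeeping in passing between $+$ and $\circ$ cosets of $A$ while tracking where $\lambda_{c}$, $\lambda_{c^{-1}}$, and $\lambda_{a}$ for $a\in A$ send $c^{\pm1}$: the hypothesis $A\ast c\subseteq A$ is exactly the statement needed to keep these $\lambda$-images inside the relevant coset, but one must apply it in the right spot and not conflate $A\ast c\subseteq A$ with $c\ast A\subseteq A$ (which is \emph{not} assumed). I would organise the computation so that at each step a coset equality $x+A=x\circ A$ (for suitable $x$) is invoked only after verifying $x\in\Stab_B(A)$ or, when $x\in A$, using closure of $A$; the key small lemma to extract first is that for $a\in A$ the identity $a\circ c=a+\lambda_a(c)$ together with $A\ast c\subseteq A$ and $A$ additively normalised by $c$ gives $a\circ c\in c+A$, i.e.\ $A\circ c\subseteq c+A$, which is the bridge used in both directions.
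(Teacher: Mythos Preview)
Your plan takes broadly the same route as the paper (translate between $+$-cosets and $\circ$-cosets of $A$ via $a\circ c=a+\lambda_a(c)$ and the hypothesis $A\ast c\subseteq A$), but there are two concrete problems.

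For the direction $c\in\Stab_B(A)\Rightarrow c\in N^\circ_B(A)$ you choose to compute $c\circ a\circ c^{-1}$ and are then forced to control $\lambda_{c+a'}(c^{-1})$, which unwinds to controlling $\lambda_{a''}(c^{-1})$ for $a''\in A$. At that point you invoke ``$A\ast c\subseteq A$ (hence the $\lambda$-action of elements of $A$ moves $c^{-1}$, equivalently $c$, only within $A$)'' --- but this is exactly the conflation you warned yourself against: $A\ast c\subseteq A$ says $\lambda_a(c)\in c+A$, and there is no general identity turning this into $\lambda_a(c^{-1})\in c^{-1}+A$. The paper sidesteps the whole difficulty by computing the \emph{other} conjugate $c^{-1}\circ a\circ c$: from $a\ast c\in A$ one gets $a\circ c\in A+c=c+A=c\circ A$ in one line, hence $c^{-1}\circ a\circ c\in A$. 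No $c^{-1}$ ever appears inside a $\lambda$.

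For the direction $c\in N^\circ_B(A)\Rightarrow c\in\Stab_B(A)$ you correctly obtain $c\circ A\subseteq c+A$ and then propose to get the reverse inclusion by running the same argument for $c^{-1}$, ``noting $c^{-1}\in N^+_B(A)$ and $A\ast c^{-1}\subseteq A$ follow from the group-theoretic identities for $\ast$ and the hypotheses on $c$''. These two claims are asserted but not justified, and they are not automatic: $N^+_B(A)$ is an \emph{additive} subgroup, so it contains $-c$, not $c^{-1}$; and the $\ast$-identities relate $a\ast(b\circ c)$ or $(a\circ b)\ast c$ to simpler pieces, but give no direct passage from $A\ast c\subseteq A$ to $A\ast c^{-1}\subseteq A$. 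If you want to close the argument this way you must actually derive those facts (or find another route to the reverse inclusion); as written this is a gap. The paper's argument here is the single displayed chain $\lambda_c(A)=-c+c\circ A=-c+A\circ c=-c+A+c+A=A$, using $c\in N^\circ_B(A)$ for the second equality and $c\in N^+_B(A)$ for the last.
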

\begin{proof}Suppose first that $c\in N^\circ_B(A)$. Then
$$\lambda_c(A)=-c+c\circ A=-c+A\circ c=-c+A+c+A=A.$$
Next, assume $c\in\Stab_B(A)$, and consider $a\in A$. Then $-a+a\circ c-c=a\ast c\in A$, whence 
	$a\circ c\in A+c=c+A=c\circ A$ and $c^{-1}\circ a\circ c\in A$.\end{proof}

\begin{lem}\label{l:ast} Let $B$ be a skew brace, $H$ an additive subgroup, $G$ a multiplicative subgroup and $A\subseteq G\cap H$ a skew sub-brace such that\begin{itemize}
\item $H\le N^+_B(A)$, and $G\le\Stab_B(A)$.
\item $G\ast H\subseteq A$ or $H\ast G\subseteq A$.
\end{itemize}
Then $C:=G\cap H$ is a skew sub-brace containing $A$ as an ideal with trivial quotient $C/A$.\end{lem}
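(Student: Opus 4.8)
The plan is to show first that $C=G\cap H$ is closed under $+$, $-$, $\circ$ and $\,^{-1}$, so that it is a skew sub-brace (the distributive law and the presence of $0=1$ being inherited from $B$), and then that $A$ is an ideal of $C$ with trivial quotient $C/A$. Two preliminary remarks carry most of the weight. First, for every $c\in C$ we have $A+c=c+A\subseteq G$: the equality because $A$ is additively normal in $H\ni c$, and the inclusion because $c+a=c\circ\lambda_{c^{-1}}(a)$ with $\lambda_{c^{-1}}(a)\in A\subseteq G$ since $c^{-1}\in G\le\Stab_B(A)$. Second, for all $c,c'\in C$ we have $c\ast c'\in A$: in the case $G\ast H\subseteq A$ because $c\in G$ and $c'\in H$, and in the case $H\ast G\subseteq A$ because $c\in H$ and $c'\in G$. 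I will also use freely the identities $a\circ b=a+\lambda_a(b)$, $a+b=a\circ\lambda_a^{-1}(b)$, $\lambda_a(b)=(a\ast b)+b$, $\lambda_a^{-1}=\lambda_{a^{-1}}$ and $-a=\lambda_a(a^{-1})$.

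Closure under $\circ$ is immediate: $c\circ c'=c+(c\ast c')+c'\in H$ since $c,c',c\ast c'\in H$, while $c\circ c'\in G$ because $(G,\circ)$ is a subgroup. For closure under $\,^{-1}$ I distinguish the two cases of the hypothesis. If $G\ast H\subseteq A$, then for $g\in G$ and $h\in H$ one has $\lambda_g(h)=(g\ast h)+h\in A+h\subseteq H$, so $\lambda_g(H)\subseteq H$; applying this to $g$ and to $g^{-1}$ gives $\lambda_g(H)=H$ for every $g\in G$, and therefore $c^{-1}=\lambda_{c^{-1}}(-c)\in\lambda_{c^{-1}}(H)=H$. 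If instead $H\ast G\subseteq A$, then $-c=\lambda_c(c^{-1})=(c\ast c^{-1})+c^{-1}$ with $c\ast c^{-1}\in A$ (using $c\in H$, $c^{-1}\in G$), so $c^{-1}=(-a)+(-c)\in H$ for $a:=c\ast c^{-1}\in A$. In both cases $c^{-1}\in G\cap H=C$. Closure under $+$ then follows from $c+c'=c\circ\lambda_{c^{-1}}(c')$ together with $\lambda_{c^{-1}}(c')=(c^{-1}\ast c')+c'\in A+c'=c'+A\subseteq G$ — here $c^{-1}\ast c'\in A$ by the second remark, now that $c^{-1}\in C$ — so $c+c'\in G$, while $c+c'\in H$ is clear. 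Closure under $-$ is similar: $-c=\lambda_c(c^{-1})=(c\ast c^{-1})+c^{-1}\in A+c^{-1}=c^{-1}+A\subseteq G$, while $-c\in H$.

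It remains to see that $A\trianglelefteq C$ with $C/A$ trivial. Since $C\subseteq H\le N^+_B(A)$ and $C\subseteq G\le\Stab_B(A)$, the additive subgroup $A$ is additively normal in $C$ and stabilised by all $\lambda_c$ with $c\in C$, so it is a strong left ideal of $C$; and $A\ast C\subseteq A$ in both cases of the hypothesis, so $A$ is an ideal of $C$. Triviality of $C/A$ amounts to $\overline{c\circ c'}=\overline{c+c'}$ in $C/A$ for all $c,c'\in C$, which holds because $-(c+c')+(c\circ c')=-c'+(c\ast c')+c'$ lies in $A$ by the second remark and the additive normality of $A$ in $C$. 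The main obstacle is the middle step: $G$ is only a multiplicative subgroup and $H$ only an additive one, so closure of $C$ under additive operations and under multiplicative inversion is not automatic, and it is precisely the hypothesis on $\ast$ that links the two group structures enough to force it — this is also the only place where the disjunction $G\ast H\subseteq A$ versus $H\ast G\subseteq A$ genuinely enters. Everything after that is bookkeeping.
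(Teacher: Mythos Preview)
Your proof is correct and follows essentially the same strategy as the paper's: use $c\ast c'\in A$ to move back and forth between the additive and multiplicative structures on $C$, then read off that $A$ is an ideal with trivial quotient from $C\ast C\subseteq A$. The only noteworthy difference is organisational: the paper pairs the closure checks (deducing $b+b'\in G$ and $b\circ b'\in H$ simultaneously from $b+b'\in (b\circ b')\circ A$, and likewise $b^{-1}\in H$ and $-b\in G$ together from $b^{-1}\in -b+A$), whereas you treat the four closures separately and, in the case $G\ast H\subseteq A$, take a small detour through the pleasant observation that $\lambda_g(H)=H$ for all $g\in G$ --- a valid alternative to the paper's more uniform handling of the two cases.
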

\begin{proof}Consider $b,b'\in C$. Then $b\ast b'\in A$, so $b+b'\in b\circ b'+A$; as $b\circ b'\in\Stab_B(A)$ we have $b\circ b'+A=b\circ b'\circ A$. This implies $b+b'\in G$ and $b\circ b'\in H$, whence $b+b'\in C$ and $b\circ b'\in C$. 

To see that $C$ is closed under inverse, consider $b\in C$ and note that
$b^{-1}\in G$. Hence either $b^{-1}\ast b\in A$ or $b\ast b^{-1}\in A$, which means $b^{-1}+b\in A$ or $b+b^{-1}\in A$. Since $b$ normalizes $A$, in either case $b^{-1}\in -b+A$, whence $-b\in b^{-1}+A=b^{-1}\circ A$ as $b^{-1}\in\Stab_B(A)$. Thus $b^{-1}\in H$ and $-b\in G$, implying $b^{-1}\in C$ and $-b\in C$. Hence $C$ is a skew sub-brace containing $A$. Moreover $A$ is additively normal in $C\le H$; since $C\ast C\subseteq A$, firstly $A$ is an ideal in $C$ and secondly the quotient $C/A$ is trivial.\end{proof}

We shall now study type-definable hulls.
\begin{lem}\label{l:hulls} Let $B$ be a stable brace, $X$ a subset, $H$ an additive and $G$ a multiplicative subgroup of $B$. Then $$\begin{aligned}\bar{N_B^+(H)}^+&\le N_B^+(\bar H^+),&
\bar{N_B^\circ(G)}^\circ&\le N_B^\circ(\bar G^\circ),& \bar{\Stab_B(H)}^\circ&\le\Stab_B(\bar H^+),\\ \bar{\Fixr_B(X/H)}^+&\le\Fixr(X/\bar H^+),&&\mbox{and}& \bar{\Fixl_B(X/H)}^\circ&\le\Fixl(X/\bar H^+).\end{aligned}$$\end{lem}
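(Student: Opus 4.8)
The hull $\bar H^+$ is, by definition, the intersection of all definable additive subgroups of $B$ containing $H$, and similarly for $\bar G^\circ$; in general these are type-definable but not definable, which is what makes the lemma non-trivial. The naive attempt --- observe that $N^+_B(H)\le N^+_B(\bar H^+)$ and note that the latter ``is definable'' --- fails, because $N^+_B(\bar H^+)$ need not be definable. The plan is to approximate $\bar H^+$ from above by definable additive subgroups that are \emph{genuinely} invariant under the relevant (arbitrary) group, the key tool being the uniform chain condition, Fact~\ref{f:icc}.

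First I would deal with the normaliser and stabiliser inclusions. Fix a definable additive subgroup $D$ with $H\le D$, and write $N=N^+_B(H)$. The subgroups $n+D-n$, $n\in N$, form a uniformly definable family, so by Fact~\ref{f:icc} there is a finite $N_0\subseteq N$ with $D':=\bigcap_{n\in N}(n+D-n)=\bigcap_{n\in N_0}(n+D-n)$; hence $D'$ is a definable additive subgroup. Since $n$ additively normalises $H$ one has $-n+H+n=H\le D$, so $H\le n+D-n$ for every $n\in N$ and thus $H\le D'$; and since $m+N=N$ for $m\in N$ one gets $m+D'-m=D'$, i.e.\ $N\le N^+_B(D')$. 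As $D'$ is definable, $N^+_B(D')$ is a definable additive subgroup containing $N$, whence $\bar N^+\le N^+_B(D')$. Each $D'$ is itself a definable additive subgroup with $H\le D'\le D$, so $\bigcap_D D'=\bar H^+$; as every $c\in\bar N^+$ normalises every $D'$, it normalises $\bar H^+$. This proves $\bar{N_B^+(H)}^+\le N_B^+(\bar H^+)$, and $\bar{N_B^\circ(G)}^\circ\le N_B^\circ(\bar G^\circ)$ is word-for-word the same with $\circ$ in place of $+$. For $\bar{\Stab_B(H)}^\circ\le\Stab_B(\bar H^+)$ one replaces $n+D-n$ by $\lambda_s(D)$, $s\in\Stab_B(H)$ (a uniformly definable family, since $\lambda_s$ and $\lambda_s^{-1}=\lambda_{s^{-1}}$ are given by terms); the finite-subintersection $D'$ again contains $H$ and is $\Stab_B(H)$-invariant (using $t\circ\Stab_B(H)=\Stab_B(H)$ and that $\lambda$ is a homomorphism into $\operatorname{Aut}(B,+)$), so $\Stab_B(H)\le\Stab_B(D')$ and hence $\bar{\Stab_B(H)}^\circ\le\Stab_B(D')$; since each $\lambda_c$ is a bijection it commutes with $\bigcap_D$, and one concludes $\lambda_c(\bar H^+)=\bar H^+$.

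For the $\Fixr$ and $\Fixl$ inclusions I would reuse the same $D'$. Because $\Fixr_B(X/H)\le N^+_B(H)$, the first part already gives $\bar{\Fixr_B(X/H)}^+\le N^+_B(\bar H^+)$, so only the condition $\lambda_c(y)\in y+\bar H^+$ (for $c\in X$) needs checking. Taking, for a definable additive $D\supseteq H$, the $N^+_B(H)$-invariant $D'$ above, every $x\in\Fixr_B(X/H)$ lies in $N^+_B(D')$ (as $D'$ is $N^+_B(H)$-invariant) and satisfies $\lambda_c(x)\in x+H\subseteq x+D'$, so $\Fixr_B(X/H)\le\Fixr_B(X/D')$; the latter is a definable additive subgroup by Remark~\ref{fixdef}, hence contains $\bar{\Fixr_B(X/H)}^+$. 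Thus any $y$ in this hull has $\lambda_c(y)\in y+D'$ for all $c\in X$, and intersecting over $D$ gives $\lambda_c(y)\in y+\bar H^+$. The $\Fixl$ case is the exact analogue with $\Stab_B(H)$ in place of $N^+_B(H)$: use the $\Stab_B(H)$-invariant $D'$, the inclusion $\Fixl_B(X/H)\le\Stab_B(H)$ together with the third part for the stabiliser condition, and $\Fixl_B(X/H)\le\Fixl_B(X/D')$, which is definable by Remark~\ref{fixdef}, for the condition $\lambda_y(c)\in c+\bar H^+$.

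I expect the only genuine obstacle to be the passage from the type-definable hull $\bar H^+$ to definable invariant approximations: everything hinges on Fact~\ref{f:icc}, which turns the infinite intersection $\bigcap_{n\in N}(n+D-n)$ (resp.\ $\bigcap_{s}\lambda_s(D)$) --- indexed by an \emph{arbitrary}, not necessarily definable group --- into a finite, hence definable, one; once such $D'$ are in hand, each of the five statements follows by pushing the definable hull of the acting group inside the definable normaliser/stabiliser/Fix-group of $D'$ and intersecting over $D$. A minor point to watch is that the manipulations of the intersection defining $\bar H^+$ go through bijections (conjugation by $c$, or $\lambda_c$), which is legitimate precisely because those maps commute with arbitrary intersections.
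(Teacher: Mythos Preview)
Your proof is correct and follows essentially the same route as the paper: for each definable additive supergroup $D$ of $H$, use Fact~\ref{f:icc} to replace $D$ by the definable $N^+_B(H)$-invariant (resp.\ $\Stab_B(H)$-invariant) subintersection $D'$, push the relevant definable hull into $N^+_B(D')$, $\Stab_B(D')$, $\Fixr_B(X/D')$ or $\Fixl_B(X/D')$, and intersect over $D$. The paper's write-up is more compressed---it just observes that $\bar H^+$ is an intersection of definable supergroups already normalised (resp.\ stabilised) by the acting group and then notes $\Fixr_B(X/H)\le\Fixr_B(X/N)$ for each such $N$---but the underlying argument is identical.
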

\begin{proof} Let $N$ be a definable supergroup of $H$. Then
	$$\bigcap_{n\in N^+_B(H)}n+N-n$$
is a definable subgroup of $N$ containing $H$ and additively normalized by 
$N_B^+(H)$. Hence $\bar{N_B^+(H)}^+\le N_B^+(\bar H^+)$. The proof for the multiplicative normalizer is analogous. 

For the stabilizer, consider again a definable additive $N\ge H$. Then
$$\bigcap_{s\in\Stab_B(H)}\lambda_s(N)$$
is a definable subgroup of $N$ containing $H$ and stabilized by $\Stab_B(H)$. Hence $\bar{\Stab_B(H)}^\circ\le\Stab_B(\bar H^+)$.

Now $\bar H^+$ is an intersection of definable supergroups $N$ normalized by $N_B(H)$. Then
$\Fixr_B(X/H)\le\Fixr(X/N)$, yielding $\bar{\Fixr_B(X/H)}^+\le\Fixr(X/\bar H^+)$. Similarly, $\bar H^+$ is an intersection of definable supergroups $N$ stabilized by $\Stab_B(H)$, whence
$\Fixl_B(X/H)\le\Fixl(X/N)$, and $\bar{\Fixl_B(X/H)}^\circ\le\Fixl(X/\bar H^+)$.\end{proof}

\begin{theo}\label{t:wsol}Let $B$ be a stable skew brace, and $C$ a weakly soluble skew sub-brace. Then $\bar C^+=\bar C^\circ=\bar C$ is a type-definable weakly soluble skew sub-brace $S$ of the same derived length, and $\bar{C_{(n)}}\ast\bar{C_{(n)}}\subseteq\bar{C_{(n+1)}}$ for all $n<\omega$. If $B$ is $\omega$-stable, $\bar C$ and all $\bar{C_{(n)}}$ are definable.\end{theo}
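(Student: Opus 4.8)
The plan is to induct on the derived length $k$ of $C$, using Lemma \ref{l:ast} to manufacture the definable (or type-definable) skew sub-braces layer by layer, working from the bottom term $C_{(k)}$ up to $C = C_{(1)}$. The base case $k=0$ is trivial (then $C=\{0\}$). For the inductive step, write $D=C_{(2)}=\langle C\ast C\rangle_+$; this is an ideal in $C$ with trivial quotient, and $D$ is weakly soluble of derived length $k-1$, so by induction $\bar D^+=\bar D^\circ=\bar D=:S_D$ is a type-definable weakly soluble skew sub-brace of derived length $k-1$ with $\bar{C_{(n)}}\ast\bar{C_{(n)}}\subseteq\bar{C_{(n+1)}}$ for all $n\ge 2$. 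It remains to produce a type-definable weakly soluble skew sub-brace $S\supseteq C$ of derived length $\le k$ containing $S_D$ as an ideal with trivial quotient, with $\bar C^+=\bar C^\circ=\bar C=S$ and $S\ast S\subseteq S_D$; combined with the inductive hypothesis this gives the full series and the containment $\bar{C_{(n)}}\ast\bar{C_{(n)}}\subseteq\bar{C_{(n+1)}}$ for all $n$.

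To build $S$, I would work inside a fixed definable additive supergroup $N$ of $S_D$ and a definable multiplicative supergroup $M$ of $S_D$, and shrink. Since $C$ normalizes $D$ additively and multiplicatively (it being an ideal in $C$) and $C\ast C\subseteq D\subseteq S_D$, Remark \ref{r:contained} and Lemma \ref{l:norm} tell us $C\subseteq N_B^+(S_D)\cap N_B^\circ(S_D)\cap\Stab_B(S_D)$. Then $C\le\Fixl_B(C/S_D)$ and $C\le\Fixr_B(C/S_D)$ by Remark \ref{r:contained} (using $C\le N_B^\circ(S_D)$ for the former), and by Remark \ref{fixdef} these Fix-groups are definable, as are $N_B^+(\bar{S_D}^+)$, $N_B^\circ(\bar{S_D}^\circ)$ and $\Stab_B(\bar{S_D}^+)$ after intersecting down via Lemma \ref{l:hulls}. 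Intersecting the definable multiplicative groups $N_B^\circ(S_D)$, $\Stab_B(S_D)$, $\Fixl_B(C/S_D)$ gives a definable multiplicative group $G^\ast\supseteq C$, and intersecting the definable additive groups $N_B^+(S_D)$, $\Fixr_B(C/S_D)$ gives a definable additive group $H^\ast\supseteq C$; replacing $C$ by its additive definable hull $\bar C^+$ wherever it appears as a parameter set (the Fix-groups over $C$ equal those over $\bar C^+$ by the same Fact \ref{f:icc} argument) lets us assume these contain $\bar C^+$ and $\bar C^\circ$. Now $G^\ast\cap H^\ast$ is a definable skew sub-brace by Lemma \ref{l:ast} (with $A=S_D$, checking $G^\ast\ast H^\ast\subseteq S_D$ from the Fixl/Fixr conditions), containing $S_D$ as an ideal with trivial quotient; setting $S=\bar C^+=\bar C^\circ=G^\ast\cap H^\ast\cap(\text{further definable hulls of }C)$ — more precisely, taking $S$ to be the intersection over all definable supergroups of $C$ of the corresponding $G^\ast\cap H^\ast$ — gives a type-definable skew sub-brace equal to $\bar C$, weakly soluble of derived length $\le k$ with $S\ast S\subseteq S_D$.

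The main obstacle I expect is verifying that the additive and multiplicative definable hulls of $C$ coincide and equal the skew sub-brace $S$ just constructed: one must check that $S$ really is both the additive definable hull $\bar C^+$ and the multiplicative definable hull $\bar C^\circ$, which amounts to showing every definable additive (resp.\ multiplicative) supergroup of $C$ already contains one of the $G^\ast\cap H^\ast$ from the intersection — this is where stability (Fact \ref{f:icc}) and Lemma \ref{l:hulls} are used to control how the various normalizer/stabilizer/Fix constructions interact with hulls, and where care is needed to keep the construction over the right parameters. A secondary point requiring attention is the derived-length bookkeeping: one must confirm that with $S$ and the inductively obtained $S_D$ the series $\{0\}=\bar{C_{(k+1)}}\le\bar{C_{(k)}}\le\cdots\le\bar{C_{(1)}}=S$ has each term an ideal in the next with trivial quotient, and that $\langle\bar{C_{(n)}}\ast\bar{C_{(n)}}\rangle_+\subseteq\bar{C_{(n+1)}}$ (not just $\ast$-containment), which follows since $\bar{C_{(n+1)}}$ is an additive group containing $\bar{C_{(n)}}\ast\bar{C_{(n)}}$. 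The $\omega$-stable case is then immediate: type-definable subgroups are definable there, so all $\bar{C_{(n)}}$ are outright definable.
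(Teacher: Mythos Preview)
Your overall strategy matches the paper's: reverse induction on the derived series, using Lemma~\ref{l:hulls} to push normalizer/stabilizer/Fix information to the hulls, then Lemma~\ref{l:ast} to produce a skew sub-brace, and finally stability to identify the two hulls. However, there are two genuine gaps in your execution.

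First, your application of Lemma~\ref{l:ast} does not go through as written. You set $G^\ast\le\Fixl_B(C/S_D)$ and $H^\ast\le\Fixr_B(C/S_D)$, which gives $G^\ast\ast C\subseteq S_D$ and $C\ast H^\ast\subseteq S_D$, but neither of these yields $G^\ast\ast H^\ast\subseteq S_D$ or $H^\ast\ast G^\ast\subseteq S_D$. The paper resolves this with a two-step bootstrap: from $C_{(n)}\le\Fixr_B(C_{(n)}/C_{(n+1)})$ one gets, via Lemma~\ref{l:hulls}, that $\bar{C_{(n)}}^+\le\Fixr_B(C_{(n)}/\bar{C_{(n+1)}})$; by the $\Fixl$/$\Fixr$ duality this means $C_{(n)}\le\Fixl_B(\bar{C_{(n)}}^+/\bar{C_{(n+1)}})$, and taking the multiplicative hull once more gives $\bar{C_{(n)}}^\circ\le\Fixl_B(\bar{C_{(n)}}^+/\bar{C_{(n+1)}})$. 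Now Lemma~\ref{l:ast} applies with $G=\bar{C_{(n)}}^\circ$, $H=\bar{C_{(n)}}^+$, since $G\ast H\subseteq\bar{C_{(n+1)}}$ holds directly. Your $G^\ast$ and $H^\ast$ skip this bootstrap, and the Fix conditions you impose are over the wrong set.

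Second, the obstacle you flag at the end---showing $\bar C^+=\bar C^\circ$---has a much cleaner resolution than the iterated intersection you sketch. Once $\bar C^+\cap\bar C^\circ$ is known to be a skew sub-brace, it is in particular a type-definable additive group containing $C$; by stability any type-definable group is an intersection of definable supergroups, so $\bar C^+\cap\bar C^\circ\supseteq\bar C^+$, forcing $\bar C^+\le\bar C^\circ$. The symmetric argument gives the reverse inclusion. No iteration over definable supergroups is needed. (Incidentally, your claim that $G^\ast$, $H^\ast$ are \emph{definable} via Remark~\ref{fixdef} is also off: that remark requires the modulus $A$ to be definable, but $S_D$ is only type-definable in the stable case. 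This is harmless once you work directly with the type-definable hulls as the paper does.)
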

\begin{proof} The last assertion is clear, since in an $\omega$-stable group type-definable subgroups are definable.
	
Suppose the derived length of $C$ is $k$. We shall show by reverse induction on $n$ that the additive definable hull $\bar{C_{(n)}}^+$ and the multiplicative definable hull $\bar{C_{(n)}}^\circ$ coincide for all $n$, and that  $\bar{C_{(n)}}\ast\bar{C_{(n)}}\subseteq\bar{C_{(n+1)}}$. Clearly we may assume that the ambient brace $B$ is $\aleph_1$-saturated. 

There is nothing to show for $n=k+1$. So suppose that $\bar{C_{(n+1)}}^+=\bar{C_{(n+1)}}^\circ=\bar{C_{(n+1)}}$ is a type-definable skew sub-brace. By Lemma \ref{l:hulls} we have
$$\begin{aligned}\bar{C_{(n)}}^+&\le\bar{N_B^+(C_{(n+1)})}^+\le N_B^+(\bar{C_{(n+1)}}),\\
\bar{C_{(n)}}^\circ&\le\bar{N_B^\circ(C_{(n+1)})}^\circ\le N_B^\circ(\bar{C_{(n+1)}}),\quad\mbox{and}\\
\bar{C_{(n)}}^\circ&\le\bar{\Stab_B(C_{(n+1)})}^\circ\le\Stab_B(\bar{C_{(n+1)}}).\end{aligned}$$
Moreover,
$$\bar{C_{(n)}}^+\le\bar{\Fixr_B(C_{(n)}/C_{(n+1)})}^+\le\Fixr_B(C_{(n)}/\bar{C_{(n+1)}}),$$
so
$$\bar{C_{(n)}}^\circ\le\bar{\Fixl_B(\bar{C_{(n)}}^+/\bar{C_{(n+1)}})}^\circ\le
\Fixl_B(\bar{C_{(n)}}^+/\bar{C_{(n+1)}}).$$

By Lemma \ref{l:ast} the intersection $\bar{C_{(n)}}^\circ\cap\bar{C_{(n)}}^+$ is a skew sub-brace containing $\bar {C_{(n+1)}}$ as an ideal with trivial quotient, and it is clearly type-definable (this uses $\aleph_1$-saturation, as we need it to type-define a skew sub-brace even in an elementary extension). Now by stability a type-definable subgroup is an intersection of definable subgroups. Applying this additively and multiplicatively, we obtain $\bar{C_{(n)}}^\circ=\bar{C_{(n)}}^+=\bar{C_{(n)}}$.
	
This finishes the induction step. In particular $\bar{C_{(1)}}^+=\bar{C_{(1)}}^\circ=\bar C$ is a type-definable weakly soluble skew sub-brace of derived length $k$, as required.\end{proof}

\begin{cor}Let $B$ be a stable skew brace, and $C$ a left soluble/strongly left soluble/soluble skew sub-brace, and let $\{0\}=I_0<I_1<\cdots<I_k=C$ be the corresponding chain of left ideals/strong left ideals/ideals. Then $C$ is contained in a type-definable skew sub-brace $\bar C$ of the same solubility type, and $\{0\}=\bar I_0<\bar I_1<\cdots<\bar I_k=\bar C$ is the corresponding chain of left ideals/strong left ideals/ideals.\end{cor}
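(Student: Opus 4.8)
The plan is to re-run the induction from the proof of Theorem~\ref{t:wsol}, but along the given chain $I_0<I_1<\cdots<I_k$ rather than the derived series, and then to propagate the left-ideal / strong-left-ideal / ideal property from $C$ to $\bar C$ by means of Lemma~\ref{l:hulls}.

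Passing, as in Theorem~\ref{t:wsol}, to an $\aleph_1$-saturated elementary extension, I would first prove by induction on $n$, with base $\bar I_0=\{0\}$, that $\bar I_n:=\bar{I_n}^+=\bar{I_n}^\circ$ is a type-definable skew sub-brace and that $\bar I_{n-1}$ is an ideal in $\bar I_n$ with trivial quotient. The only property of the derived series used in the proof of Theorem~\ref{t:wsol} is that $I_{n-1}$ is an ideal in $I_n$ with trivial quotient, equivalently $I_n\ast I_n\subseteq I_{n-1}$, and this holds here by hypothesis in all three cases; so the argument carries over unchanged. In detail, Lemma~\ref{l:hulls} gives $\bar{I_n}^+\le N_B^+(\bar I_{n-1})$, $\bar{I_n}^\circ\le N_B^\circ(\bar I_{n-1})\cap\Stab_B(\bar I_{n-1})$ and $\bar{I_n}^+\le\Fixr_B(I_n/\bar I_{n-1})$, from which one reads off $\bar{I_n}^\circ\le\Fixl_B(\bar{I_n}^+/\bar I_{n-1})$; Lemma~\ref{l:ast}, applied with $H=\bar{I_n}^+$, $G=\bar{I_n}^\circ$ and $A=\bar I_{n-1}$, then exhibits $\bar{I_n}^\circ\cap\bar{I_n}^+$ as a type-definable skew sub-brace containing $\bar I_{n-1}$ as an ideal with trivial quotient, and since in a stable theory a type-definable subgroup is an intersection of definable subgroups, this forces $\bar{I_n}^+=\bar{I_n}^\circ$. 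Thus $\bar C=\bar I_k$ is the skew sub-brace given by Theorem~\ref{t:wsol}, and $\{0\}=\bar I_0<\bar I_1<\cdots<\bar I_k=\bar C$ is a chain of skew sub-braces of length $k$, each an ideal in the next with trivial quotient.

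Next I would locate each $\bar I_n$ inside $\bar C$. If $I_n$ is a left ideal of $C$, then $C\ast I_n\subseteq I_n$, i.e.\ $C\subseteq\Stab_B(I_n)$, so by monotonicity of the multiplicative definable hull together with Lemma~\ref{l:hulls}, $\bar C=\bar C^\circ\le\bar{\Stab_B(I_n)}^\circ\le\Stab_B(\bar{I_n}^+)=\Stab_B(\bar I_n)$; hence $\bar C\ast\bar I_n\subseteq\bar I_n$ and $\bar I_n$ is a left ideal of $\bar C$. If moreover $I_n$ is a strong left ideal, then $C\subseteq N_B^+(I_n)$, so $\bar C=\bar C^+\le N_B^+(\bar I_n)$ by Lemma~\ref{l:hulls}, making $\bar I_n$ additively normal in $\bar C$, hence a strong left ideal. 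If moreover $I_n$ is an ideal, then also $C\subseteq N_B^\circ(I_n)$, so $\bar C=\bar C^\circ\le N_B^\circ(\bar I_n)$ by Lemma~\ref{l:hulls}, making $\bar I_n$ multiplicatively normal as well, hence an ideal of $\bar C$. In every case $\{0\}=\bar I_0<\bar I_1<\cdots<\bar I_k=\bar C$ is then a chain of left ideals / strong left ideals / ideals of $\bar C$ of the required shape, so $\bar C$ is left soluble / strongly left soluble / soluble, of the same derived length $k$ as $C$ by the reasoning used for Theorem~\ref{t:wsol}.

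As in Theorem~\ref{t:wsol}, the only genuinely delicate point is type-definability in models that are not sufficiently saturated, which forces the passage to an $\aleph_1$-saturated extension so that the intersections defining the $\bar I_n$ really describe skew sub-braces (and continue to do so in further elementary extensions). Everything else is routine bookkeeping combining Lemmas~\ref{l:hulls} and~\ref{l:ast}, and I do not foresee any serious obstacle.
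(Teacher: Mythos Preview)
Your proposal is correct and follows essentially the same route as the paper. The only cosmetic difference is that the paper's two-line proof invokes Theorem~\ref{t:wsol} as a black box---since each $I_n$ is itself weakly soluble, Theorem~\ref{t:wsol} already gives $\bar I_n^+=\bar I_n^\circ=\bar I_n$---whereas you re-run the induction of Theorem~\ref{t:wsol} along the given chain $(I_n)$ rather than along the derived series of each $I_n$; after that, both arguments finish identically via Lemma~\ref{l:hulls}.
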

\begin{proof}Since all $I_n$ are weakly soluble skew sub-braces, $\bar I_n^+=\bar I_n^\circ=\bar I_n$ is well-defined. The rest follows from Lemma \ref{l:hulls}.\end{proof}

\section{Nilpotency}\label{nilpotency}
\subsection{Annihilator-nilpotency}
With the terminology from the previous section, we slightly generalize the definition of the annihilator. Let $I\trianglelefteq B$ be an ideal. We put
$$\begin{aligned}\Ann(B;I)&=\Ann_1(B;I)=\Fixl_B(B/I)\cap C_B^+(B/I)\cap C_B^\circ(B/I)\quad\mbox{and}\\\
Ann_{n+1}(B;I)&=\Ann(B;\Ann_n(B;I))\\
&=\Fixl_B(B/\Ann_n(B;I))\cap C^+_B(B/\Ann_n(B;I))\cap C^\circ_B(B/\Ann_n(B;I)).\end{aligned}$$
We put $\overline{\Ann}(C;I)=\bigcup_{n<\omega}\Ann_n(C;I)$. If $I=\{0\}$ it is omitted. Note that because of centrality we also have
$$Ann_{n+1}(B;I)=\Fixr_B(B/\Ann_n(B;I))\cap C^+_B(B/\Ann_n(B;I))\cap C^\circ_B(B/\Ann_n(B;I)).$$
We can now define the lower annihilator series by
$$\Gamma_0(B)=B,\quad\mbox{and}\quad\Gamma_{n+1}(B)=\langle\Gamma_n(B)\ast B,\,[B,\Gamma_n(B)]_\circ,\,[B,\Gamma_n(B)]_+\rangle_+.$$
These are all ideals in $B$ and $B/I$ is annihilator nilpotent of class $\le k$ iff $\Gamma_k(B)\le I$; in this case $\Gamma_n(B)\le\Ann_{k-n}(B/I)$ for all $0\le n\le k$.

\begin{lem}\label{l:locbddannnilp}Let $B$ be a skew brace which is locally annihilator nilpotent of class $\le k$. Then $B$ is annihilator nilpotent of class $\le k$.\end{lem}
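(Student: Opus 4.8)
The plan is to work with the lower annihilator series $\Gamma_n$ and the characterisation recorded just above the statement: taking $I=\{0\}$ there, a skew brace $A$ is annihilator nilpotent of class $\le k$ if and only if $\Gamma_k(A)=\{0\}$. So it suffices to prove $\Gamma_k(B)=\{0\}$, and for this I will show that every element of $\Gamma_k(B)$ already lies in $\Gamma_k(A)$ for some finitely generated skew sub-brace $A\le B$; since such an $A$ is annihilator nilpotent of class $\le k$ by hypothesis, the element must be $0$.

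First I would record the evident monotonicity of the series: if $A\le B'$ are skew sub-braces of $B$, then $\Gamma_n(A)\le\Gamma_n(B')$ for all $n$. This is immediate by induction on $n$, since $\Gamma_0(A)=A\le B'=\Gamma_0(B')$, and if $\Gamma_n(A)\le\Gamma_n(B')$ then $\Gamma_n(A)\ast A\subseteq\Gamma_n(B')\ast B'$, $[A,\Gamma_n(A)]_\circ\subseteq[B',\Gamma_n(B')]_\circ$ and $[A,\Gamma_n(A)]_+\subseteq[B',\Gamma_n(B')]_+$, whence $\Gamma_{n+1}(A)\le\Gamma_{n+1}(B')$ after taking additive spans (here one uses that $+$, $\circ$ and $\ast$ on a sub-brace are the restrictions of those on $B$).

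The key step is the following claim, proved by induction on $n$: for every $x\in\Gamma_n(B)$ there is a finitely generated skew sub-brace $A\le B$ with $x\in\Gamma_n(A)$. For $n=0$ take $A=\langle x\rangle$. For the induction step, an element $x\in\Gamma_{n+1}(B)$ is by definition an additive combination $x=\sum_j\varepsilon_j w_j$ with $\varepsilon_j\in\{+1,-1\}$ of finitely many basic elements $w_j$, each of the form $g_j\ast b_j$, $[b_j,g_j]_\circ$ or $[b_j,g_j]_+$ with $g_j\in\Gamma_n(B)$ and $b_j\in B$. By the induction hypothesis there is a finitely generated $A_j\le B$ with $g_j\in\Gamma_n(A_j)$; let $A$ be the skew sub-brace generated by $\bigcup_j A_j$ together with all the $b_j$, which is still finitely generated. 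By monotonicity $g_j\in\Gamma_n(A)$, and $b_j\in A$, so each $w_j$ lies in $\bigl(\Gamma_n(A)\ast A\bigr)\cup[A,\Gamma_n(A)]_\circ\cup[A,\Gamma_n(A)]_+\subseteq\Gamma_{n+1}(A)$; as $\Gamma_{n+1}(A)$ is an additive subgroup, $x=\sum_j\varepsilon_j w_j\in\Gamma_{n+1}(A)$, proving the claim.

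Finally, let $x\in\Gamma_k(B)$. By the claim there is a finitely generated skew sub-brace $A\le B$ with $x\in\Gamma_k(A)$; by hypothesis $A$ is annihilator nilpotent of class $\le k$, so $\Gamma_k(A)=\{0\}$ and hence $x=0$. Thus $\Gamma_k(B)=\{0\}$, i.e.\ $B$ is annihilator nilpotent of class $\le k$. The only delicate point is the bookkeeping in the induction of the key step — making sure that the finitely many elements of $B$ appearing across all the nested additive-generation steps can be collected into a single finitely generated sub-brace — but once the claim is phrased as above this is routine.
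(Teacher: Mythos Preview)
Your proof is correct and follows essentially the same approach as the paper: both argue via the lower annihilator series, using that $\Gamma_k(B)=\bigcup_{C\le B\text{ fin.\ gen.}}\Gamma_k(C)=\{0\}$. The paper states this equality in one line, while you have spelled out the inductive justification (monotonicity plus the claim that each element of $\Gamma_n(B)$ lies in some $\Gamma_n(A)$ with $A$ finitely generated), which is exactly the content behind that line.
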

\begin{proof} We have
$\Gamma_k(B)=\bigcup_{C\le B\text{ fin.\ gen.}}\Gamma_k(C)=\{0\}$.
\end{proof}

\begin{lem}\label{anncdiversozero}
Let $B$ be a stable skew brace, $N$ a definable skew sub-brace, and $C$ a skew sub-brace stabilising and normalizing (additively and multiplicatively) $N$. If $C/(C\cap N)$ is non-zero locally annihilator nilpotent, then $\Ann(C;C\cap N)\not\le N$.
\end{lem}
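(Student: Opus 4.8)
The plan is to produce directly an element of $\Ann(C;C\cap N)$ that does not lie in $N$.

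Set $M=C\cap N$. Since $C$ stabilises $N$ and normalises it additively and multiplicatively, $M$ is $\lambda_c$-invariant for every $c\in C$, additively normal and multiplicatively normal in $C$; thus $M\trianglelefteq C$, so $\Ann(C;M)$ is defined and $C/M$ is a skew brace. For $x,y\in C$ the elements $\lambda_x(y)-y$, $[x,y]_+$ and $[x,y]_\circ$ all lie in $C$, hence lie in $M$ iff they lie in $N$; together with $N^+_C(M)=N^\circ_C(M)=C\le N^+_B(N)\cap N^\circ_B(N)$ and the description of the first annihilator by means of $\Fixr$ (the remark following the definition of the annihilator series), this yields
$$\Ann(C;M)=C\cap\Fixr_B(C/N)\cap C^+_B(C/N)\cap C^\circ_B(C/N).$$
Since $B$ is stable and $N$ is definable, Remark~\ref{fixdef} gives a finite $Y\subseteq C$ with $\Fixr_B(Y/N)=\Fixr_B(C/N)$, $C^+_B(Y/N)=C^+_B(C/N)$ and $C^\circ_B(Y/N)=C^\circ_B(C/N)$. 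As $C/M\ne 0$ we may pick $c_0\in C\setminus N$; put $F^*=Y\cup\{c_0\}$. Enlarging $Y$ to $F^*\subseteq C$ leaves the three intersections unchanged, so
$$\Ann(C;M)=C\cap\Fixr_B(F^*/N)\cap C^+_B(F^*/N)\cap C^\circ_B(F^*/N).$$

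Now let $A$ be the skew sub-brace of $C/M$ generated by the (finitely many) images of the elements of $F^*$. Since $C/M$ is locally annihilator nilpotent, $A$ is annihilator nilpotent; moreover $A\ne 0$ because the image of $c_0$ is non-zero. A non-zero annihilator nilpotent skew brace has non-zero annihilator: if $\Ann(A)=\Ann_1(A)=\{0\}$, then $\Ann_n(A)=\{0\}$ for all $n$, contradicting $\Ann_n(A)=A$ for some $n$. So choose $0\ne\bar c\in\Ann(A)$ and let $c\in C$ be a preimage of $\bar c$ under $C\to C/M$; then $c\notin M$, hence $c\in C\setminus N$. It remains to verify $c\in\Ann(C;M)$. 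Using once more the description of $\Ann(A)=\Ann_1(A)$ via $\Fixr$, and that the image of each $x\in F^*$ lies in $A$, the relation $\bar c\in\Ann(A)$ gives, for every $x\in F^*$,
$$\lambda_x(c)\in c+M\subseteq c+N,\qquad [x,c]_+\in M\subseteq N,\qquad [x,c]_\circ\in M\subseteq N.$$
As $c\in C\le N^+_B(N)\cap N^\circ_B(N)$, this means precisely $c\in\Fixr_B(F^*/N)\cap C^+_B(F^*/N)\cap C^\circ_B(F^*/N)$; since also $c\in C$, we conclude $c\in\Ann(C;M)$ while $c\notin N$, i.e.\ $\Ann(C;C\cap N)\not\le N$.

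The main point to get right is the interplay between ``modulo $C\cap N$'' and ``modulo $N$'': it is exactly what allows Remark~\ref{fixdef} (that is, Fact~\ref{f:icc}) to reduce the intersection defining $\Ann(C;C\cap N)$ to finitely many parameters from $C$, and these then lie inside a single finitely generated skew sub-brace of $C/(C\cap N)$, on which local annihilator nilpotency applies. One must also be careful that all three hypotheses on $C$ --- stabilising, additively normalising and multiplicatively normalising $N$ --- are used, precisely to ensure $C\cap N\trianglelefteq C$.
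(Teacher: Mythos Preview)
Your proof is correct and follows essentially the same strategy as the paper's: use stability (Fact~\ref{f:icc} via Remark~\ref{fixdef}) to reduce the intersection defining $\Ann(C;C\cap N)$ to finitely many parameters from $C$, then apply local annihilator nilpotency to the finitely generated sub-brace they determine. The differences are cosmetic: you use the $\Fixr$-description of the annihilator and work in the quotient $C/M$ to exhibit an explicit witness, while the paper uses $\Fixl$ and works with the sub-brace $C_0\le C$ generated by the finite set, comparing $\Ann(C_0;C_0\cap N)$ with $\Ann(C;C\cap N)$; your device of adjoining $c_0\in C\setminus N$ to the finite set replaces the paper's separate treatment of the case $X\subseteq N$.
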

\begin{proof}
It follows from Fact \ref{f:icc} that there is a finite set $X\subseteq C$ such that $$\Fixl_B(X/N)=\Fixl_B(C/N),\quad C^+_B(X/N)=C^+_B(C/N),\quad\textnormal{and}\quad C^\circ_B(X/N)=C^\circ_B(C/N).$$
Let $C_0$ be the skew sub-brace generated by $X$. Then
$$\begin{aligned}\Ann(C;C\cap N)&=\Fixl_C(C/(C\cap N))\cap C^+_C(C/(C\cap N))\cap C^\circ_C(C/(C\cap N))\\
&= C\cap\Fixl_B(C/N)\cap C^+_B(C/N)\cap C^\circ_B(C/N)\\
&=C\cap\Fixl_B(X/N)\cap C^+_B(X/N)\cap C^\circ_B(X/N)\\
&=C\cap\Fixl_B(C_0/N)\cap C^+_B(C_0/N)\cap C^\circ_B(C_0/N)\\
&\ge\Fixl_{C_0}(C_0/N)\cap C^+_{C_0}(C_0/N)\cap C^\circ_{C_0}(C_0/N)\\
&=\Fixl_{C_0}(C_0/(C_0\cap N))\cap C^+_{C_0}(C_0/(C_0\cap N))\cap C^\circ_{C_0}(C_0/(C_0\cap N))\\
&=\Ann(C_0;C_0\cap N).\end{aligned}$$
If $X\subseteq N$ then $\Fixl_B(X/N)=C^+_B(X/N)=C^\circ_B(X/N)=B$ and $\Ann(C;C\cap N)=C$. Otherwise $C_0\not\leq N$ with $C_0/(C_0\cap N)$ annihilator nilpotent. Hence 
$\Ann(C_0;C_0\cap N)\not\leq N$, whence $\Ann(C;C\cap N)\not\leq N$.
\end{proof}

\begin{cor} A locally annihilator nilpotent stable $\omega$-categorical skew brace is annihilator nilpotent.\end{cor}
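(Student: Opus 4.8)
The plan is to use $\omega$-categoricity to make the upper annihilator series stabilise, and then to feed the stabilisation point into Lemma \ref{anncdiversozero} to force it to be all of $B$.

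First I would observe, via the Ryll--Nardzewski theorem, that $\operatorname{Th}(B)$ admits only finitely many $\emptyset$-definable subsets of $B$. Each term $\Ann_n(B)$ of the upper annihilator series is $\emptyset$-definable, and $\Ann_{n+1}(B)$ depends only on $\Ann_n(B)$: unwinding the definition of the generalised annihilator of the previous subsection and using that every ideal is stabilised and additively/multiplicatively normalised by all of $B$, one gets $\Ann(B;J)=\{b\in B:\ b\ast B,\ [b,B]_+,\ [b,B]_\circ\subseteq J\}$, whence $\Ann(B;\Ann_n(B))=\Ann_{n+1}(B)$. Consequently, once two consecutive terms agree the series is constant from then on, so the ascending chain $\Ann_0(B)\le\Ann_1(B)\le\cdots$, taking only finitely many values, must stabilise: there is $N$ with $I:=\Ann_N(B)=\Ann_{N+1}(B)=\Ann(B;I)$.

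Then I would argue by contradiction: suppose $I\neq B$. The quotient $B/I$ is a non-zero skew brace, and it is still locally annihilator nilpotent, since every finitely generated skew sub-brace of $B/I$ is a homomorphic image of a finitely generated --- hence annihilator nilpotent --- skew sub-brace of $B$, and a homomorphic image of an annihilator nilpotent skew brace is again annihilator nilpotent (the image of $\Ann_k$ lands inside the $k$-th annihilator of the quotient). Now I apply Lemma \ref{anncdiversozero} to the stable skew brace $B$, the definable skew sub-brace $N:=I$, and the skew sub-brace $C:=B$, which plainly stabilises and normalises its own ideal $I$: since $C/(C\cap N)=B/I$ is non-zero and locally annihilator nilpotent, the lemma yields $\Ann(B;I)=\Ann(B;B\cap I)\not\le I$, contradicting $\Ann(B;I)=I$. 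Therefore $I=\Ann_N(B)=B$, i.e.\ $B$ is annihilator nilpotent of class at most $N$.

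I do not expect a serious obstacle here: the $\omega$-categoricity is used only to stabilise the annihilator series, while all the brace-theoretic content has been packaged into Lemma \ref{anncdiversozero}. The only points needing a little care are the routine identification $\Ann(B;\Ann_n(B))=\Ann_{n+1}(B)$ and the verification that passing to the definable quotient $B/I$ preserves local annihilator nilpotency; the key move is simply to recognise that choosing $C=B$ and $N=\Ann_N(B)$ at the stabilisation point is exactly what makes Lemma \ref{anncdiversozero} bite.
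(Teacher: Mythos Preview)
Your proof is correct and follows essentially the same approach as the paper: use $\omega$-categoricity to force the $\emptyset$-definable upper annihilator series to stabilise, and invoke Lemma \ref{anncdiversozero} with $C=B$ and $N=\Ann_N(B)$ to see that the stabilisation point must be all of $B$. The paper phrases this as ``the series is strictly increasing, hence terminates at $B$'', whereas you argue by contradiction from the stabilisation point, but the content is identical; your additional verification that $B/I$ remains locally annihilator nilpotent is the detail the paper leaves implicit.
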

\begin{proof}By Lemma \ref{anncdiversozero} the upper annihilator series is strictly increasing. Since it consists of $\emptyset$-definable ideals, it must finish with $B$.\end{proof}

\begin{theo}\label{t:annnilpdef}
Let $B$ be a stable skew brace. If $C$ is any skew sub-brace of $B$, then there is an increasing sequence of definable skew sub-braces $(A_n:n<\omega)$ of $B$ such that $\Ann_n(C)\le A_n\le\Ann_n(A_k)$ for all $n\le k<\omega$. In particular, $A_k$ is annihilator nilpotent of class $k$; if $C$ is annihilator nilpotent, then $C$ is contained in an annihilator nilpotent definable skew sub-brace of $B$ of the same class.

Moreover, if $B$ is $\omega$-stable of finite Morley rank and $C$ a connected skew sub-brace, then $\overline{\Ann}(C)=\Ann_n(C)$ for some $n\in\omega$; if in addition $C$ is locally annihilator nilpotent, it is annihilator nilpotent.
\end{theo}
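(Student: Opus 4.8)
The key technical input is Lemma~\ref{anncdiversozero}, which says that whenever a definable skew sub-brace $N$ does not already contain the annihilator of $C$ relative to $C\cap N$ (and $C$ normalizes and stabilizes $N$), we can push $N$ up. The plan is to build the sequence $(A_n)$ by induction on $n$, at each stage applying the machinery of Remark~\ref{fixdef} to produce definable approximations to the iterated fixers and centralizers that define $\Ann_{n+1}$.

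First I would set $A_0=\{0\}$. For the inductive step, suppose $A_n$ is a definable skew sub-brace with $\Ann_n(C)\le A_n$; I want a definable skew sub-brace $A_{n+1}\supseteq A_n$ with $\Ann_{n+1}(C)\le A_{n+1}\le\Ann(A_{n+1};A_n)$, which by the relation $\Ann_n(A_{n+1};A_0)=\Ann_n(A_{n+1})$ and a descending induction on the second parameter will give $A_{n+1}\le\Ann_{n+1}(A_k)$ for all $k\ge n+1$ once the full sequence is in place. The natural candidate is to take the definable set $\Fixl_B(C/A_n)\cap C^+_B(C/A_n)\cap C^\circ_B(C/A_n)$ — which is definable by Remark~\ref{fixdef}, replacing $C$ by a finite subset $X$ — and then intersect it with $C$, or better, pass to the skew sub-brace $C_0$ generated by $X$ as in the proof of Lemma~\ref{anncdiversozero}. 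The point is that $\Ann_{n+1}(C)$ is contained in $C\cap\Fixl_B(C/A_n)\cap C^+_B(C/A_n)\cap C^\circ_B(C/A_n)$ because $\Ann_{n+1}(C)$ centralizes $C$ modulo $\Ann_n(C)\le A_n$; and this latter set, intersected appropriately, must be shown to be a \emph{skew sub-brace} sitting inside $\Ann$ of itself relative to $A_n$ — here one invokes Lemma~\ref{l:ast} (with the relevant fixer/centralizer conditions playing the role of $G\ast H\subseteq A$) to see it is a skew sub-brace, and the centrality conditions directly give it lies in the appropriate annihilator. One must be a little careful that the definable hull operations behave well; Lemma~\ref{l:hulls} is the tool for that, showing $\bar{A}^+=\bar A^\circ$ for the relevant skew sub-braces and that the normalizer/stabilizer/fixer inclusions are preserved when passing to definable hulls.

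The claim that $A_k$ is annihilator nilpotent of class $k$ is then immediate from $A_k\le\Ann_k(A_k)$, and the statement about annihilator-nilpotent $C$ follows by taking $C=\Ann_k(C)$ for the class $k$, so that $C\le\Ann_k(C)\le A_k\le\Ann_k(A_k)$ with $A_k$ definable. For the final paragraph, suppose $B$ is $\omega$-stable of finite Morley rank and $C$ is connected. The definable skew sub-braces $A_n$ form an increasing chain, and each $A_n\le A_{n+1}\le\Ann(A_{n+1};A_n)$; since $A_{n+1}/A_n$ is central in the relevant sense, $A_{n+1}^0/A_n^0$ is connected, so we get an ascending chain of connected definable subgroups of $(B,+)$ — which must stabilize by the ascending chain condition on connected definable subgroups in finite Morley rank (the last Fact cited in the chain-condition discussion). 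Once $A_{n+1}^0=A_n^0$, one deduces $\Ann_{n+1}(C)\cap C^0=\Ann_n(C)\cap C^0$, and since $C=C^0$ is connected this forces $\overline{\Ann}(C)=\Ann_n(C)$. If moreover $C$ is locally annihilator nilpotent, then $\overline{\Ann}(C)=C$ by Lemma~\ref{l:locbddannnilp}-style reasoning (every finitely generated sub-brace is annihilator nilpotent, hence eventually inside some $\Ann_m(C)$, and the chain stabilizes), so $C=\Ann_n(C)$ is annihilator nilpotent.

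**Main obstacle.** The delicate point is the inductive construction of $A_{n+1}$ as an \emph{honest skew sub-brace} that simultaneously (i) contains $\Ann_{n+1}(C)$, (ii) is definable, and (iii) satisfies $A_{n+1}\le\Ann(A_{n+1};A_n)$ — the tension being that the natural definable candidate built from fixers and centralizers of a finite $X\subseteq C$ need not a priori be closed under the brace operations, so one has to run a Lemma~\ref{l:ast}-type argument, and one must verify that replacing $C$ by the finitely generated $C_0$ (to get definability via Remark~\ref{fixdef}) does not lose the containment $\Ann_{n+1}(C)\le A_{n+1}$. Getting these three conditions to hold at once, compatibly with the recursion $A_{n+1}\le\Ann_n(A_k)$ for all later $k$, is the crux; everything else is bookkeeping with the chain conditions.
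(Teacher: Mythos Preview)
Your overall strategy is right, and you have correctly identified the crux: getting $A_{n+1}$ to be simultaneously definable, a genuine skew sub-brace, and to satisfy the containments. But the mechanism you sketch does not quite work, and the paper's construction is more elaborate than what you describe.

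The gap is in how you intend to apply Lemma~\ref{l:ast}. Your candidate $\Fixl_B(C/A_n)\cap C^+_B(C/A_n)\cap C^\circ_B(C/A_n)$ is an intersection of two multiplicative groups and one additive group; Lemma~\ref{l:ast} needs a multiplicative $G$ and an additive $H$ with $G\ast H\subseteq A_n$ (or $H\ast G\subseteq A_n$). From $\Fixl_B(C/A_n)$ you only get $G\ast C\subseteq A_n$, not $G\ast H\subseteq A_n$ for the additive group you actually want to intersect with. The paper resolves this by carrying along \emph{auxiliary decreasing chains} $B=G_0\ge G_1\ge\cdots$ (multiplicative) and $B=H_0\ge H_1\ge\cdots$ (additive), both containing $C$, defined by
\[
G_{n+1}=C^\circ_{G_n}(\Ann_{n+1}(C)/A_n)\cap\Fixl_{G_n}(\Ann_{n+1}(C)/A_n),\qquad
H_{n+1}=C^+_{H_n}(\Ann_{n+1}(C)/A_n),
\]
and then setting
\[
A_{n+1}=\Fixr_{H_{n+1}}(G_{n+1}/A_n)\cap C_{G_{n+1}}^\circ(G_{n+1}/A_n)\cap C_{H_{n+1}}^+(H_{n+1}/A_n).
\]
Note the switch to $\Fixr$ (additive) in the last line: now the additive piece $H=\Fixr_{H_{n+1}}(G_{n+1}/A_n)\cap C_{H_{n+1}}^+(H_{n+1}/A_n)$ and the multiplicative piece $G=C_{G_{n+1}}^\circ(G_{n+1}/A_n)$ satisfy $G\ast H\subseteq A_n$ by the very definition of $\Fixr$, so Lemma~\ref{l:ast} applies. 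The two-step definition (first shrink $G_n,H_n$ using $\Ann_{n+1}(C)$, then define $A_{n+1}$ using $G_{n+1},H_{n+1}$) is what your sketch is missing.

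Your ``moreover'' argument also has a gap. Stabilization of $A_n^0$ alone does not give $\Ann_{n+1}(C)=\Ann_n(C)$; the paper additionally uses the descending chain condition to stabilize $G_n=G$ and $H_n=H$, then analyzes the $\lambda$-, multiplicative-, and additive-conjugation actions of $G^0$ and $H^0$ on the finite quotients $A_i/A_n$ to show that $A=\bigcup_i A_i$ is annihilator nilpotent. Finally, for the locally annihilator nilpotent conclusion you should invoke Lemma~\ref{anncdiversozero} (not Lemma~\ref{l:locbddannnilp}, which requires a uniform bound on the class): if $C\not\le A_{i+1}$ then $\Ann(C;C\cap A_{i+1})\not\le A_{i+1}$, contradicting $\overline{\Ann}(C)=\Ann_{i+1}(C)$.
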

\begin{proof}
We shall define inductively two decreasing chains of definable multiplicative and additive subgroups
$$B^\circ=G_0\ge G_1\ge\cdots\quad\text{and}\quad\quad B^+=H_0\ge H_1\ge\cdots$$ and an increasing chain of definable skew sub-braces
$$\{0\}=A_0\le A_1\le\cdots$$
such that for all $n>0$:\begin{enumerate}
\item $G_n\le\Stab_B(A_n)\cap N^\circ_B(A_n)$.
\item $H_n\le N^+_B(A_n)$.
\item $C\subseteq G_n\cap H_n$.
\item $A_n\subseteq\Fixr_{H_n}(G_n/A_{n-1})\cap C_{G_n}^\circ(G_n/A_{n-1})\cap C_{H_n}^+(H_n/A_{n-1})$.
\item $A_n\cap C=\Ann_n(C)$.\end{enumerate}
Moreover $A_i\le\Ann_i(A_n)$ for $i\le n$, so $A_n$ is annihilator nilpotent of class $\le n$.\smallskip

We put $G_0=H_0=B$ and $A_0=\{0\}$.
Suppose we have defined the sequences up to $n$. We put 
$$\begin{aligned}
G_{n+1}&=C^\circ_{G_n}(\Ann_{n+1}(C)/A_n)\cap\Fixl_{G_n}(\Ann_{n+1}(C)/A_n),\\
H_{n+1}&=C^+_{H_n}(\Ann_{n+1}(C)/A_n),\quad\mbox{and}\\
A_{n+1}&=\Fixr_{H_{n+1}}(G_{n+1}/A_n)\cap C_{G_{n+1}}^\circ(G_{n+1}/A_n)\cap C_{H_{n+1}}^+(H_{n+1}/A_n).\end{aligned}$$
Then $G_{n+1}\le G_n$ and $H_{n+1}\le H_n$ are definable subgroups (multiplicative and additive, respectively) by Fact \ref{f:icc}; since $C\ast\Ann_{n+1}(C)\le\Ann_n(C)\le A_n$ and $C\subseteq G_n\cap H_n$ normalises (additively and multiplicatively) and stabilises $A_n$ we have $C\le G_{n+1}\cap H_{n+1}$; moreover $G_{n+1}\le\Fixl_{G_n}(A_{n+1}/A_n)$ stabilizes $A_{n+1}$, so 1.--4.\ hold. 

By Remark \ref{r:contained} we have $A_n\subseteq A_{n+1}\subseteq G_{n+1}\cap H_{n+1}$. We put $G=C_{G_{n+1}}^\circ(G_{n+1}/A_n)$ and $H=\Fixr_{H_{n+1}}(G_{n+1}/A_n)\cap C_{H_{n+1}}^+(H_{n+1}/A_n)$. Then $G$ stabilizes and $H$ normalizes $A_n\subseteq G\cap H$ additively; moreover $G\ast H\subseteq A_n$, so $A_{n+1}=G\cap H$ is a skew sub-brace by Lemma \ref{l:ast}; clearly it is definable.

By definition of $G_{n+1}$ and $H_{n+1}$ we have
$$\Ann_{n+1}(C)\le C_{G_{n+1}}^\circ(G_{n+1}/A_n)\cap C_{H_{n+1}}^+(H_{n+1}/A_n)\cap\Fixr_{H_n+1}(G_{n+1}/A_n)=A_{n+1}.$$
On the other hand, by inductive hypothesis $A_n\cap C=\Ann_n(C)$. Hence
$$\begin{aligned}A_{n+1}\cap C&\le \Fixr_C(C/A_n)\cap C_C^\circ(C/A_n)\cap C^+_C(C/A_n)\\
&=\Fixr_C(C/(A_n\cap C))\cap C_C^\circ(C/(A_n\cap C))\cap C^+_C(C/(A_n\cap C))\\
&=\Fixr_C(C/\Ann_n(C))\cap C_C^\circ(C/\Ann_n(C))\cap C^+_C(C/\Ann_n(C))=\Ann_{n+1}(C),\end{aligned}$$
so equality and 5.\ hold.

Finally, for any $i<n$ we have $A_{i+1}\le A_n\subseteq G_n\cap H_n\subseteq G_{i+1}\cap H_{i+1}$. By induction we may assume $A_i\le\Ann_i(A_n)$, so
$$\begin{aligned}A_{i+1}&=\Fixr_{A_n}(G_{i+1}/A_i)\cap C^\circ_{A_n}(G_{i+1}/A_i)\cap C^+_{A_n}(H_{i+1}/A_i)\\
&\le\Fixr_{A_n}(A_n/\Ann_i(A_n))\cap C_{A_n}^\circ(A_n/\Ann_i(A_n))\cap C_{A_n}^+(A_n/\Ann_i(A_n))\\&=\Ann_{i+1}(A_n)\end{aligned}$$
and $A_n$ is annihilator nilpotent of class $\le n$. Thus if $C$ is annihilator nilpotent, it is contained in a definable annihilator nilpotent skew sub-brace of $B$ of the same class.

For the moreover part, assume that $B$ is $\omega$-stable of finite Morley rank. By the descending chain condition on definable subgroups, there is $n<\omega$ such that $G_n=G_i=:G$ and $H_n=H_i=:H$ for all $i\ge n$; by the ascending chain condition on connected definable subgroups, we may take $n$ such that $A_n^0=A_i^0$ for all $i\ge n$, so $A_n$ has finite index in $A_i$.

Recall that $G=G_i\le\Stab_B(A_i)$ for all $i\ge n$. So we consider the action of $G$ via $\lambda$ on the finite set $A_i/A_n$. Then the connected component $G^0$ fixes $A_i/A_n$. Moreover, $G$ also acts on $A_i/A_n$ via multiplicative conjugation, and $G^0$ centralises $A_i/A_n$. Finally, $H$ acts on $A_i/A_n$ via additive conjugation, and $H^0$ centralises $A_i/A_n$. Put $A=\bigcup_{i<\omega}A_i$, a skew sub-brace of $B$.

Since $A\subseteq G\cap H$ and both $G/G^0$ and $H/H^0$ are finite, there is $i\ge n$ such that $G^0\circ A_i=G^0\circ A$ and $H^0+A_i=H^0+A$. Note that $C\le G^0\cap H^0$ by connectedness of $C$. Then 
$$A\subseteq (G^0\circ A_i)\cap(H^0+A_i)\subseteq\Fixl_G(A/A_i)\cap C_G^\circ(A/A_i)\cap C_H^+(A/A_i),$$
that is, $A$ is annihilator nilpotent of class $i+1$, and $\overline{\Ann}(C)=\Ann_{i+1}(C)$.

Finally, if $C$ is locally annihilator nilpotent, so is $C/C\cap A_{i+1}$; by Corollary \ref{anncdiversozero} either $C\le A_{i+1}$ or 
$$A_{i+1}\cap C=\overline{\Ann}(C)=\Ann_{i+2}(C)=\Ann(C;A_{i+1}\cap C)\not\leq A_{i+1},$$
a contradiction. Thus $C$ must be annihilator nilpotent of class at most $i+1$.\end{proof}\smallskip

The condition that $C$ be connected is necessary: Consider the trivial skew brace given by a Prüfer $2$-group inverted by an involution. As a group it is locally nilpotent but not nilpotent; being a trivial brace this translates into locally annihilator nilpotent  but not annihilator nilpotent.

\subsection{Right nilpotency and socle nilpotency}
Recall that a skew brace $B$ is right nilpotent of class $n$ if the series $B=B^{(1)}$ and $B^{(k+1)}=\langle B^{(k)}\ast B\rangle_+$ of ideals terminates in $B^{(n+1)}=\{0\}$.

More generally, let $I\trianglelefteq B$ be an ideal. The {\em kernel ideal modulo $I$} of $B$, denoted by $\Kid(B;I)$, is the largest ideal contained in $\Fixl_B(B/I)$. The {\em kernel ideal sequence modulo $I$} is defined by 
$$\Kid_0(B;I)=I\quad\mbox{and}\quad\Kid_{n+1}(B;I)=\Kid(B;\Kid_n(B;I)).$$
Then $\Kid_n(B;I)$ is an ideal of $B$, and $B/I$ is right nilpotent of class $\le k$ iff $\Kid_k(B;I)=B$. In this case $B^{(n)}\le \Kid_{k+1-n}(B;I)$ for all $0\le n\le k$. We put $\overline{\Kid}(B;I)=\bigcup_{n<\omega}\Kid_n(B;I)$; if $I=\{0\}$ it is omitted.

\begin{lem} Let $B$ be a skew brace and $I\trianglelefteq B$ a definable ideal. Then $\Kid(B;I)$ is definable. More precisely,
$$\Kid(B)=\bigcap_{b\in B}\bigcap_{b'\in B}b'+\lambda_b(\Fixl_B(B/I)-b'.$$\end{lem}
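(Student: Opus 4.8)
The plan is to show directly that the displayed set is the largest ideal of $B$ contained in $\Fixl_B(B/I)$, and that it is cut out by a single first-order formula. Write $S:=\Fixl_B(B/I)$. Since $I$ is an ideal, $\lambda_b(I)=I$ for every $b$ (a left ideal is $\lambda$-invariant and $\lambda_b$ is an automorphism), so $\Stab_B(I)=B$ and hence $S=\{x\in B:\lambda_x(c)\in c+I\text{ for all }c\in B\}=\{x\in B:x\ast B\subseteq I\}=\pi^{-1}(\Ker\lambda_{B/I})$, where $\pi\colon B\to B/I$ is the quotient map. By the lemma above applied to $B/I$, $\Ker\lambda_{B/I}$ is a (trivial) skew sub-brace of $B/I$, hence an additive and a multiplicative subgroup; consequently $S$ is both an additive and a multiplicative subgroup of $B$. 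It contains $I$ because $I\ast B\subseteq I$, and it is definable since $x\in S$ is expressed by $\forall c\,(x\ast c\in I)$ and $I$ is definable. Put $J:=\bigcap_{b,b'\in B}\bigl(b'+\lambda_b(S)-b'\bigr)$.

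The first point I would record is that $J$ is definable: for fixed $b,b'$ one has $y\in b'+\lambda_b(S)-b'$ iff $\lambda_{b^{-1}}(-b'+y+b')\in S$, so
$$y\in J\iff\forall b\,\forall b'\,\forall c\,\bigl(\lambda_{b^{-1}}(-b'+y+b')\ast c\in I\bigr),$$
which is a single $\mathcal L$-formula over the parameters of $I$ — the apparently infinitary intersection is really nothing but universal quantification over $B$. Taking $b=b'=0$ (note $\lambda_0=\mathrm{id}$) gives $J\subseteq S$; and since $S$ is an additive subgroup, so is each $b'+\lambda_b(S)-b'$, hence so is $J$.

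Next I would verify that $J$ is an ideal. Additive normality: for $d\in B$ one has $d+J-d=\bigcap_{b,b'}\bigl((d+b')+\lambda_b(S)-(d+b')\bigr)=J$ after the reindexing $b'\mapsto d+b'$. Invariance under $\lambda$: since $b\mapsto\lambda_b$ is a homomorphism $(B,\circ)\to\operatorname{Aut}(B,+)$, for $a\in B$ we get $\lambda_a(J)=\bigcap_{b,b'}\bigl(\lambda_a(b')+\lambda_{a\circ b}(S)-\lambda_a(b')\bigr)=J$ after the reindexing $(b,b')\mapsto(a\circ b,\lambda_a(b'))$. Thus $J$ is a strong left ideal. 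For multiplicative normality the key observation is the sandwich $I\subseteq J\subseteq S$: since $I$ is an ideal with $I\subseteq S$, we have $I=b'+\lambda_b(I)-b'\subseteq b'+\lambda_b(S)-b'$ for all $b,b'$, so $I\subseteq J$; on the other hand $J\subseteq S$ forces $J\ast B\subseteq I$. Combining, $J\ast B\subseteq I\subseteq J$, so $J$ is an ideal.

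Finally, maximality. If $K\trianglelefteq B$ with $K\subseteq S$, then $K$ is a strong left ideal, so $\lambda_b(K)=K$ and $b'+K-b'=K$; hence $K=b'+\lambda_b(K)-b'\subseteq b'+\lambda_b(S)-b'$ for all $b,b'$, i.e.\ $K\subseteq J$. Therefore $J$ is the largest ideal of $B$ contained in $\Fixl_B(B/I)$, that is $J=\Kid(B;I)$, which is definable by the second paragraph. The only step that is not pure bookkeeping is the multiplicative normality of $J$, and it reduces entirely to the observation that $I$ itself sits between $J$ and $\Fixl_B(B/I)$; I expect this to be the crux of the argument, everything else being the two reindexings and the reading of the intersection as a universal quantification.
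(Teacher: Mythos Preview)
Your proof is correct and follows essentially the same route as the paper's: both identify $S=\Fixl_B(B/I)$ as the preimage of $\ker\bar\lambda$ (hence a skew sub-brace), form the displayed intersection as the largest strong left ideal inside $S$, and then observe that any strong left ideal $J\le S$ satisfies $J\ast B\subseteq I\subseteq J$ and is therefore already an ideal. The paper is terser---it asserts maximality ``clearly'' and does not spell out the two reindexings or the explicit first-order formula---so your version is a more detailed rendering of the same argument rather than a different one.
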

\begin{proof}Let $\bar\lambda$ be the induced $\lambda$-action on $B/I$. Then $\ker\bar\lambda$ is a trivial skew sub-brace of $B/I$, and $\Fixl_B(B/I)$ is its preimage in $B$, whence a skew sub-brace of $B$. It follows that 
$$\bigcap_{b\in B}\bigcap_{b'\in B}b'+\lambda_b(\Fixl_B(B/I))-b'$$ is a definable strong left ideal contained in $\Fixl_B(B/I)$, and it clearly is maximal such. However, for any strong left ideal  $J\le\Fixl_B(B/I)$ we have $J*B\le I$, so $J$ is an ideal.\end{proof}

We now turn to socle nilpotency. Note first that we can express the {\em upper socle series modulo $I$} using left fixators.
$$\begin{aligned}\Soc(B;I)&=\Soc_1(B;I)=\Fixl_B(B/I)\cap C_B^+(B/I)\quad\mbox{and}\\
\Soc_{n+1}(B;I)&=\Fixl_B(B/\Soc_n(B;I))\cap C^+_B(B/\Soc_n(B;I)).\end{aligned}$$
We put $\overline{\Soc}(B;I)=\bigcup_{n<\omega}\Soc_n(B;I)$; if $I=\{0\}$ it is omitted.

The {\em lower socle series} is given by
$$\Delta_0(B)=B,\quad\mbox{and}\quad\Delta_{n+1}(B)=\langle\Delta_n(B)\ast B,\,[B,\Delta_n(B)]_+\rangle_+.$$
These are again ideals in $B$, and $B/I$ is socle nilpotent of class $\le k$ iff $\Soc_k(B;I)=B$ iff $\Delta_k(B)\le I$; in this case $\Delta_n(B)\le\Soc_{k-n}(B;I)$ for all $0\le n\le k$.

\begin{rem} A skew brace $B$ is socle nilpotent iff it is right nilpotent of nilpotent type, see \cite{JeVAV22x}.\end{rem}

\begin{lem}\label{l:locbddrnilp}Let $B$ be a skew brace which is locally right/socle nilpotent of class $\le k$. Then $B$ is right/socle nilpotent of class $\le k$.\end{lem}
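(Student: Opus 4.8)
The plan is to imitate the proof of Lemma \ref{l:locbddannnilp}: reduce the statement to the corresponding one for finitely generated sub-skew-braces, where the hypothesis applies directly. The key observation is that each term of the relevant lower series is built from only finitely much data at a time.

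Concretely, I would first prove by induction on $n$ that every element of $B^{(n)}$ lies in $C^{(n)}$ for some finitely generated sub-skew-brace $C\le B$, and likewise that every element of $\Delta_n(B)$ lies in $\Delta_n(C)$ for some finitely generated $C\le B$. The case $n\le 1$ (resp.\ $n=0$) is immediate, taking $C=\langle x\rangle$. For the inductive step, recall that $B^{(n+1)}=\langle B^{(n)}\ast B\rangle_+$ and $\Delta_{n+1}(B)=\langle\Delta_n(B)\ast B,\,[B,\Delta_n(B)]_+\rangle_+$, so any $x$ in one of these is a finite $\pm$-sum of terms $a\ast b$ (with $a$ in the previous term and $b\in B$) together with, in the socle case, terms $[b,a]_+$ ($b\in B$, $a\in\Delta_n(B)$). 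By the inductive hypothesis each such $a$ lies in the appropriate term of some finitely generated $C_a$; letting $C$ be the sub-skew-brace generated by the finitely many $C_a$ together with the finitely many $b$'s occurring, $C$ is finitely generated, $a\in C^{(n)}$ (resp.\ $\Delta_n(C)$) since $C_a^{(n)}\subseteq C^{(n)}$, hence each generator $a\ast b$, resp.\ $[b,a]_+$, lies in $C^{(n+1)}$ (resp.\ $\Delta_{n+1}(C)$), and as these are additive subgroups so is $x$.

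Consequently $B^{(k+1)}=\bigcup\{C^{(k+1)}:C\le B\text{ finitely generated}\}$ and $\Delta_k(B)=\bigcup\{\Delta_k(C):C\le B\text{ finitely generated}\}$, the reverse inclusions being clear since $C^{(m)}\subseteq B^{(m)}$ and $\Delta_m(C)\subseteq\Delta_m(B)$ by an easy induction. Local right (resp.\ socle) nilpotency of class $\le k$ gives $C^{(k+1)}=\{0\}$ (resp.\ $\Delta_k(C)=\{0\}$) for every finitely generated $C$, so the unions vanish, i.e.\ $B^{(k+1)}=\{0\}$ (resp.\ $\Delta_k(B)=\{0\}$); by the characterisations recalled above this is exactly right (resp.\ socle) nilpotency of class $\le k$. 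The only delicate point is the bookkeeping in the inductive claim — keeping track that a generator of $B^{(n+1)}$ or $\Delta_{n+1}(B)$ only ever involves a single new element of $B$ beyond an element of the previous term — but this is routine, and the whole argument is the verbatim analogue of the $\Gamma_k$ computation in Lemma \ref{l:locbddannnilp}.
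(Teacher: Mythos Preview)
Your proof is correct and follows exactly the same idea as the paper's: both show that $B^{(k+1)}$ (resp.\ $\Delta_k(B)$) equals the union, over finitely generated sub-skew-braces $C\le B$, of $C^{(k+1)}$ (resp.\ $\Delta_k(C)$), which vanishes by hypothesis. The paper states this union identity as a one-liner, while you supply the inductive justification; your version is simply a fleshed-out form of the paper's argument (and in fact your index $B^{(k+1)}$ is the correct one---the paper's $B^{(k)}$ appears to be a typo).
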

\begin{proof}
$B^{(k)}=\bigcup_{C\le B\text{ fin.\ gen.}}C^{(k)}=\{0\}$ and $\Delta_k(B)=\bigcup_{C\le B\text{ fin.\ gen.}}\Delta_k(C)=\{0\}$.\end{proof}
\begin{lem}\label{kiddiversozero} Let $B$ be a stable skew brace, $N\trianglelefteq B$ a definable skew sub-brace, and $C$ a skew sub-brace stabilising and normalising (additively and multiplicatively) $N$. If $C/(C\cap N)$ is non-zero locally right/socle nilpotent, then $\Kid(C;C\cap N)\not\le N$ (resp.\  $\Soc(C;C\cap N)\not\leq N$).\end{lem}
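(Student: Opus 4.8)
The plan is to argue as in Lemma \ref{anncdiversozero}, with the operators $\Kid(\cdot\,;\cdot)$ and $\Soc(\cdot\,;\cdot)$ in place of $\Ann(\cdot\,;\cdot)$. Since $N\trianglelefteq B$ is definable, $B/N$ is again a stable skew brace, and writing $\bar C$ for the image of $C$ (so $\bar C\cong C/(C\cap N)$) one has $\Kid(C;C\cap N)/(C\cap N)=\Kid(\bar C)$ and $\Soc(C;C\cap N)/(C\cap N)=\Soc(\bar C)$; hence the two assertions are equivalent to $\Kid(\bar C)\neq\{0\}$ and $\Soc(\bar C)\neq\{0\}$. So I may assume $N=\{0\}$, i.e.\ that $C$ is a non-zero locally right/socle nilpotent skew sub-brace of a stable skew brace $B$, and I must show $\Kid(C)\neq\{0\}$ (resp.\ $\Soc(C)\neq\{0\}$). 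Here $\Kid(C)$ is the largest ideal of $C$ contained in $\ker\lambda_C=\Fixl_C(C)$, and $\Soc(C)=\ker\lambda_C\cap Z^+(C)$.

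Next I reduce to the finitely generated case. Each $\Fixl_B(c)=\{x:\lambda_x(c)=c\}$ is a definable multiplicative subgroup, uniformly in $c$, and each additive centraliser $C^+_B(c)$ is a definable additive subgroup, uniformly in $c$; by Fact \ref{f:icc} there is a finite $X\subseteq C$ with $L:=\bigcap_{c\in C}\Fixl_B(c)=\bigcap_{c\in X}\Fixl_B(c)$ and (for the socle case) $K:=\bigcap_{c\in C}C^+_B(c)=\bigcap_{c\in X}C^+_B(c)$, both definable. Enlarging $X$ by a non-zero element of $C$ and putting $C_0=\langle X\rangle$, I obtain a non-zero finitely generated skew sub-brace of $C$ with $\ker\lambda_{C_0}=C_0\cap L$ and $Z^+(C_0)=C_0\cap K$. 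Being finitely generated and locally right/socle nilpotent, $C_0$ is right/socle nilpotent; for a non-zero right (resp.\ socle) nilpotent brace the first term $\Kid_1$ (resp.\ $\Soc_1$) of the upper kernel-ideal (resp.\ socle) series is a non-zero ideal, since otherwise the whole ascending series would be stationary, contradicting that the series reaches the whole brace. Thus $\Kid(C_0)\neq\{0\}$, resp.\ $\Soc(C_0)\neq\{0\}$.

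It remains to transfer this to $C$. For the socle case it is immediate, exactly as in Lemma \ref{anncdiversozero}: $\Soc(C)=C\cap L\cap K\supseteq C_0\cap L\cap K=\Soc(C_0)\neq\{0\}$. For the kernel-ideal case I use the explicit description of $\Kid$ from the preceding lemma: $\Kid(C_0)$ is the largest strong left ideal of $C_0$ inside $\Fixl_{C_0}(C_0)=C_0\cap L$, and a strong left ideal contained in a left-fixator is automatically an ideal. Every element of $\Kid(C_0)$ lies in $L\subseteq\Fixl_C(C)$, and using the standard skew-brace identities one checks that $L$ — the set of elements whose $\lambda$-action on $C$ is trivial — is invariant under $\lambda_c$ and normalised, additively and multiplicatively, by every $c\in C$; hence the ideal of $C$ generated by $\Kid(C_0)$ stays inside $\Fixl_C(C)$, so it is a non-zero ideal of $C$ contained there and $\Kid(C)\neq\{0\}$.

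The main obstacle is precisely this last transfer for $\Kid$. Unlike $\Ann$ and $\Soc$, which are plain finite intersections of left-fixators and centralisers and therefore behave perfectly under Fact \ref{f:icc}, the operator $\Kid$ is a maximal-ideal construction, so one cannot simply intersect the relevant groups over $C_0$ and over $C$ and compare; one has instead to verify by hand that the ideal-theoretic closure inside $C$ of the non-zero ideal produced in the finitely generated piece does not escape the left-fixator $\Fixl_C(C)$.
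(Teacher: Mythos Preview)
Your argument for the socle case is essentially correct and parallels the paper's proof of Lemma~\ref{anncdiversozero}; the reduction to $N=\{0\}$ via the quotient is a harmless simplification given the hypothesis $N\trianglelefteq B$.

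The $\Kid$ case, however, has a genuine gap in the transfer step. You assert that $L=\Fixl_B(C)$ is invariant under $\lambda_c$ and additively normalised by every $c\in C$, but this is not a ``standard identity'' --- it is equivalent to saying that $\ker\lambda_C=L\cap C$ is a strong left ideal (hence an ideal) of $C$, which is precisely what fails in general for skew braces. Indeed, the computation $c\circ x\circ c^{-1}=c+\lambda_c(x)-c\in L$ (valid for $x\in L$, $c\in C$) shows only that $c+\lambda_c(x)-c\in L$, not that $\lambda_c(x)\in L$ or that $c+x-c\in L$ separately. Were $\ker\lambda$ always an ideal, the explicit formula the paper proves just before this lemma,
\[
\Kid(B;I)=\bigcap_{b\in B}\bigcap_{b'\in B}b'+\lambda_b\big(\Fixl_B(B/I)\big)-b',
\]
would collapse to $\Fixl_B(B/I)$ itself, and the whole construction would be redundant. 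So the ideal of $C$ generated by $\Kid(C_0)$ may well escape $\Fixl_C(C)$, and your conclusion does not follow.

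The paper's fix is exactly to exploit this explicit double-intersection description of $\Kid$. It first builds a definable auxiliary skew sub-brace $M\cap H$ (with $M=\Fixl_B(C/N)$ and $H=\Fixr_B(M/N)\cap N^+_B(N)$) so that the family of additive subgroups $c'+\lambda_c(\Fixl_{M\cap H}(C/N))-c'$ is uniformly definable, and \emph{then} applies Fact~\ref{f:icc} to that family --- not merely to the single fixators $\Fixl_B(c/N)$. This yields a finite $X\subseteq C$ for which the two double intersections (over $C$ and over $X$) coincide, whence $\Kid(C_0;C_0\cap N)\le\Kid(C;C\cap N)$ directly from the formula, bypassing any need to know that $\Fixl_C(C)$ is closed under ideal-generation.
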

\begin{proof} Put $M=\Fixl_B(C/N)$ and $H=\Fixr_B(M/N)\cap N^+_B(N)$. Then $M\ge N$ is a definable multiplicative subgroup containing $\Kid(C;N)$ by Fact \ref{f:icc} and Remark \ref{r:contained}, and $H\ge N$ is a definable additive subgroup containing $C$; both stabilise $N$, and $M\ast H\subseteq N$. So $M\cap H$ is a skew sub-brace containing $N$ as ideal with trivial quotient by Lemma \ref{l:ast}; moreover 
$$\Kid(C;N)\le\Fixl_C(C/N)\le\Fixl_{M\cap H}(C/N)=\Fixl_{M\cap H}((M\cap H)/N)=M\cap H.$$

By Fact \ref{f:icc} there is a finite set $X\subseteq C$ such that $\Fixl_B(X/N)=\Fixl_B(C/N)$ and
$$\bigcap_{c\in C}\bigcap_{c'\in C}c'+\lambda_c(\Fixl_{M\cap H}(C/N))-c'=\bigcap_{c\in X}\bigcap_{c'\in X}c'+\lambda_c(\Fixl_{M\cap H}(X/N))-c'$$
or, respectively,
$C^+_B(X/N)=C^+_B(C/N)$.

Let $C_0$ be the skew sub-brace generated by $X$. Then $\Kid(C_0;C_0\cap N)\le\Kid(C;C\cap N)$ (resp.\ $\Soc(C_0;C_0\cap N)\subseteq\Soc(C;C\cap N)$) as in the proof of Lemma \ref{anncdiversozero}. If $X\subseteq N$ then $\Kid(C;C\cap N)=C$ (resp.\ $\Soc(C;C\cap N)=C$). Otherwise $C_0\not\le N$ with $C_0/(C_0\cap N)$ right/socle nilpotent. Hence $\Kid(C_0;C_0\cap N)\not\le N$, whence $\Kid(C;C\cap N)\not\le N$ (resp.\ $\Soc(C_0;C_0\cap N)\not\le N$, implying $\Soc(C;C\cap N)\not\le N$).\end{proof}

\begin{cor} A locally right/socle nilpotent stable $\omega$-categorical skew brace is right/socle nilpotent.\end{cor}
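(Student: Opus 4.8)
The plan is to imitate the proof of the analogous result for annihilator nilpotency (the corollary following Lemma~\ref{anncdiversozero}): one shows that the kernel ideal sequence (respectively the upper socle series) of $B$ is strictly increasing until it reaches $B$, and then invokes $\omega$-categoricity to bound its length. For each $n$, set $N=\Kid_n(B)$ (respectively $N=\Soc_n(B)$). First I would check that $N$ is $\emptyset$-definable. For the socle series this is already recorded in Section~\ref{preliminaries}. For the kernel ideal sequence one argues by induction on $n$: if $\Kid_n(B)$ is $\emptyset$-definable then $\Fixl_B(B/\Kid_n(B))$ is an automorphism-invariant skew sub-brace of $B$ (it is the preimage of $\ker\bar\lambda$), hence so is its largest contained ideal $\Kid_{n+1}(B)$, which is therefore $\emptyset$-definable by the Ryll-Nardzewski criterion that a subset of $B$ is $\emptyset$-definable iff it is invariant under all automorphisms.

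Next, taking $C=B$ in Lemma~\ref{kiddiversozero}: since $N$ is an ideal of $B$, $B$ stabilises and normalises $N$ additively and multiplicatively, and $B/N$ is again locally right/socle nilpotent (a finitely generated sub-brace of $B/N$ is covered by the image of a finitely generated sub-brace of $B$, which is right/socle nilpotent, and this property passes to quotients). Hence Lemma~\ref{kiddiversozero} yields that either $N=B$, or $\Kid_{n+1}(B)=\Kid(B;N)\not\le N$ (respectively $\Soc_{n+1}(B)=\Soc(B;N)\not\le N$), i.e.\ the sequence strictly increases at step $n$. Since $\omega$-categoricity leaves only finitely many $\emptyset$-definable subsets of $B$ up to equality, a strictly increasing chain of $\emptyset$-definable ideals must be finite, so the kernel ideal sequence (respectively the upper socle series) reaches $B$ at some stage $k<\omega$. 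By the characterisations recalled before the corollary, this means exactly that $B$ is right nilpotent (respectively socle nilpotent) of class $\le k$.

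The argument is essentially a transcription of the annihilator-nilpotent case, so there is no serious obstacle; the one point that needs a little care is ensuring that the terms of the kernel ideal sequence stay parameter-free. A naive appeal to Fact~\ref{f:icc} only replaces the defining infinite intersection by a finite subintersection over new parameters, which would give definability over an ever-growing parameter set and would not bound the chain. The remedy, as above, is to observe that these ideals are automorphism-invariant and to conclude $\emptyset$-definability via Ryll-Nardzewski rather than by bookkeeping of formulas.
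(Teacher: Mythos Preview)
Your proof is correct and follows the same strategy as the paper: apply Lemma~\ref{kiddiversozero} with $C=B$ to see that the kernel ideal sequence (resp.\ upper socle series) strictly increases until it reaches $B$, then use $\omega$-categoricity and $\emptyset$-definability to bound its length. Your detour through Ryll--Nardzewski to secure $\emptyset$-definability of $\Kid_n(B)$ works but is unnecessary: the lemma immediately preceding Lemma~\ref{kiddiversozero} already gives an explicit parameter-free formula for $\Kid(B;I)$ whenever $I$ is $\emptyset$-definable, so $\emptyset$-definability of the whole sequence follows by straightforward induction without invoking automorphism-invariance.
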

\begin{proof}By Lemma \ref{kiddiversozero} the kernel ideal\,/\,upper socle series is strictly increasing. Since it consists of $\emptyset$-definable ideals, it must finish with $B$.\end{proof}

\begin{theo}\label{t:rnilpdef}
Let $B$ be a stable skew brace. If $C$ is any skew sub-brace of $B$, then there are increasing sequences of definable skew sub-braces $(K_n:n<\omega)$ and $(S_n:n<\omega)$ of $B$ such that 
$$\Kid_n(C)\le K_n\le\Kid_n(K_k)\quad\mbox{and}\quad \Soc_n(C)\le S_n\le\Soc_n(S_k)$$ for all $n\le k<\omega$. In particular, $K_k$ is right nilpotent of class $k$ and $S_k$ is socle nilpotent of class $k$; if $C$ is right/socle nilpotent, then $C$ is contained in a right/socle nilpotent definable skew sub-brace of $B$ of the same class.

Moreover, if $B$ is $\omega$-stable of finite Morley rank and $C$ a connected skew sub-brace, then $\overline{\Kid}(C)=\Kid_n(C)$ and $\overline{\Soc}(C)=\Soc_n(C)$ for some $n\in\omega$; if $C$ is locally right/socle nilpotent, it is right/socle nilpotent.
\end{theo}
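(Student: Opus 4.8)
The plan is to imitate the proof of Theorem~\ref{t:annnilpdef}, running the argument once more for $\Kid$ and once more for $\Soc$; in each case I would build, by induction on $n$, decreasing chains of definable subgroups together with an increasing chain of definable skew sub-braces, carrying analogues of the invariants 1.--5.\ of \ref{t:annnilpdef} and of the bound on the nilpotency class of the $n$-th sub-brace. As there, the last invariant is what produces the ``in particular'' part, and the passage to $C$ produces the statements about $C$ itself.

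For the socle case I would take multiplicative $B^\circ=G_0\ge G_1\ge\cdots$, additive $B^+=H_0\ge H_1\ge\cdots$, and definable skew sub-braces $\{0\}=S_0\le S_1\le\cdots$. Since $\Soc_{n+1}$ is defined through $\ast$ and the additive commutator only, one deletes from the recursion of \ref{t:annnilpdef} every multiplicative-centraliser factor and substitutes $\Soc$ for $\Ann$, except that one must repair the loss of multiplicative centrality: at stage $n+1$ put $G_{n+1}=\Fixl_{G_n}(\Soc_{n+1}(C)/S_n)$; let $H_{n+1}=C^+_{H_n}(\Soc_{n+1}(C)/S_n)\cap\Fixr_{H_n}(\Soc_{n+1}(C)/S_n)$, the extra right-fixator factor forcing $\Soc_{n+1}(C)\ast H_{n+1}\subseteq S_n$; and set $S_{n+1}=\Fixl_{G_{n+1}}(H_{n+1}/S_n)\cap\Fixr_{H_{n+1}}(G_{n+1}/S_n)\cap C^+_{H_{n+1}}(H_{n+1}/S_n)$, all definable by Fact~\ref{f:icc}. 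Lemma~\ref{l:ast} shows $S_{n+1}$ is a skew sub-brace with $S_n$ an ideal of trivial quotient, Lemma~\ref{l:norm} supplies the needed multiplicative normalisation, and the routine checks give $G_{n+1}\le\Stab_B(S_{n+1})$, $S_{n+1}\cap C=\Soc_{n+1}(C)$, and $S_i\le\Soc_i(S_n)$ for $i\le n$, so that $S_k$ is socle nilpotent of class $\le k$. Finally $C\le S_k$ forces $\Soc_j(S_k)\cap C\subseteq\Soc_j(C)$ for every $j$ by induction on $j$, so $S_k$ has class exactly $k$ whenever $C$ has.

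For the kernel-ideal case one cannot use centralisers, since $\Kid(B;I)$ is a \emph{largest ideal}; instead I would use the lemma computing $\Kid(B;I)$ — a strong left ideal inside a left fixator is automatically an ideal, with definable normal core $\bigcap_{b,b'}b'+\lambda_b(\cdot)-b'$. One builds decreasing definable multiplicative $M_n$ and additive $H_n$ and increasing definable skew sub-braces $K_n$ so that $M_n$ controls the $\lambda$-action and multiplicative normalisation of $K_n$, $H_n$ the additive normalisation, $\Kid_n(C)\le K_n$, and, crucially, $H_{n+1}\subseteq\Fixr_{H_n}(K_n/K_{n-1})$, which makes every later $K_m$ (lying in $H_m\subseteq H_{n+1}$) satisfy $K_n\ast K_m\subseteq K_{n-1}$. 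With $K_n\trianglelefteq K_k$ (Lemma~\ref{l:norm}) and the inductive hypothesis $K_{n-1}\le\Kid_{n-1}(K_k)$ this yields $K_n\le\Fixl_{K_k}(K_k/\Kid_{n-1}(K_k))$, hence $K_n\le\Kid_n(K_k)$; and since $\Kid_j(K_k)\cap C\subseteq\Kid_j(C)$ holds automatically for $C\le K_k$, one gets $K_n\cap C=\Kid_n(C)$ and class-exactness for right-nilpotent $C$. The ``moreover'' part is the verbatim analogue of the end of \ref{t:annnilpdef}: the descending chain condition stabilises the $G_n,H_n,M_n$, the ascending chain condition on connected definable subgroups stabilises the connected components $K_n^0$ and $S_n^0$, and connectedness of $C$ puts $C$ inside the stabilised groups and their connected components (on whose finite quotients $G^0$ acts trivially via $\lambda$ and via multiplicative conjugation, $H^0$ via additive conjugation), giving $\overline{\Kid}(C)=\Kid_{i+1}(C)$ and $\overline{\Soc}(C)=\Soc_{i+1}(C)$ for some $i$; if $C$ is in addition locally right/socle nilpotent, Lemma~\ref{kiddiversozero} applied to $N=K_{i+1}$ (resp.\ $S_{i+1}$) forces $C\le K_{i+1}$ (resp.\ $S_{i+1}$).

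The hard part will be getting the inductive invariants exactly right, so that at each stage the auxiliary definable groups simultaneously (i) contain $C$ (and, for $\Kid$, all later members of the sub-brace tower), (ii) are cut out by uniformly definable families so that Fact~\ref{f:icc} applies, and (iii) have intersections that are skew sub-braces via Lemma~\ref{l:ast}. Pinning down the precise $\Fixl$-versus-$\Fixr$ pattern — subtle for $\Soc$ because multiplicative centrality is no longer available to interchange $x\ast C$ and $C\ast x$, and subtle for $\Kid$ because one is intersecting down to a largest ideal rather than a centre — is where essentially all the difficulty lies; once the invariants are correctly formulated, the remaining verifications are a transcription of the annihilator argument.
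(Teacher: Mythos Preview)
Your overall strategy matches the paper's: both run the machinery of Theorem~\ref{t:annnilpdef} twice, once for $\Soc$ and once for $\Kid$, and both identify the $\Fixl$/$\Fixr$ bookkeeping and (for $\Kid$) the normal-core construction as the central points. Your $\Kid$ sketch and the ``moreover'' argument are essentially what the paper does.

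There is, however, a real gap in your $\Soc$ construction. Your choice $G_{n+1}=\Fixl_{G_n}(\Soc_{n+1}(C)/S_n)$ need not contain $C$: membership requires $C\ast\Soc_{n+1}(C)\subseteq S_n$, but the socle condition only gives $\Soc_{n+1}(C)\ast C\subseteq\Soc_n(C)$, and without multiplicative centrality you cannot swap the two factors. As a consequence nothing in your recursion forces $C$ to stabilise $S_{n+1}$ under $\lambda$. This does not break the verification of $S_{n+1}\cap C=\Soc_{n+1}(C)$ or of $S_i\le\Soc_i(S_k)$, but it \emph{does} break the ``moreover'' part: Lemma~\ref{kiddiversozero} needs $C$ to stabilise and normalise the definable $N=S_{i+1}$, and Lemma~\ref{l:norm} only tells you that stabilisation and multiplicative normalisation are equivalent here---it does not produce either. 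The paper repairs this by reversing your order of operations: it first sets $S'_{n+1}=\Fixl_{M_n}(C/S_n)$ (a multiplicative group containing $\Soc_{n+1}(C)$ because socle elements $\lambda$-fix all of $C$ modulo $S_n$), builds the additive chain $A_{n+1}\ni C$ from that, intersects to a skew sub-brace $S''_{n+1}$, and then takes the $\lambda$-core $S_{n+1}=\bigcap_{c\in C}\lambda_c(S''_{n+1})$. The core step is exactly what guarantees $C\le\Stab_B(S_{n+1})$; the decreasing multiplicative chain $M_{n+1}$ is then \emph{defined} as $N^\circ_{M_n}(S_{n+1})\cap\Stab_{M_n}(S_{n+1})$, so $C\subseteq M_{n+1}$ by construction. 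Your diagnosis that ``pinning down the precise $\Fixl$-versus-$\Fixr$ pattern is where essentially all the difficulty lies'' is exactly right; the missing ingredient is this $\lambda$-core.
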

\begin{proof}As in the proof of Theorem \ref{t:annnilpdef} we define inductively decreasing chains of definable multiplicative and additive subgroups
$$\begin{aligned}B^\circ&=G_0\ge G_1\ge\cdots&\text{and}\qquad B^\circ&=M_0\ge M_1\ge\cdots\\
B^+&=H_0\ge H_1\ge\cdots&\text{\ and\!}\qquad B^+&=A_0\ge A_1\ge\cdots\end{aligned}$$ and increasing chains of definable skew sub-braces
$$\{0\}=K_0\le K_1\le\cdots\quad\text{and}\quad\quad \{0\}=S_0\le S_1\le\cdots$$
such that for all $n>0$:
\renewcommand\arraystretch{1.5}
$$\begin{array}{llll}
1.&G_n\le\Stab_B(K_n)\cap N^\circ_B(K_n),&\text{and}&M_n\le\Stab_B(S_n)\cap N^\circ_B(S_n);\\
2.&H_n\le N_B^+(K_n),&\text{and}&A_n\le N^+_B(S_n);\\
3.&C\subseteq G_n\cap H_n,&\text{and}&C\subseteq M_n\cap A_n;\\
4.&K_n\subseteq\Fixl_{G_n}(H_n/K_{n-1})\cap H_n,&\text{and}
&S_n\subseteq\Fixl_{M_n}(A_n/S_{n-1})\cap C^+_{A_n}(A_n/S_{n-1});\\
5.&K_n\cap C=\Kid_n(C),&\text{and}&S_n\cap C=\Soc_n(C).\end{array}$$
Moreover $K_i\le\Kid_i(K_n)$ and $S_i\le\Soc_i(S_n)$ for $i\le n$, so $K_n$ is right nilpotent of class $\le n$ and $S_n$ is socle nilpotent of class $\le n$. Note that the conditions are the same, except for condition 4.\ which ensures that $A_n$ not only normalizes $S_n$ but centralizes $S_n$ modulo $S_{n-1}$.\smallskip

We put $G_0=H_0=M_0=A_0=B$ and $K_0=S_0=\{0\}$. Suppose we have defined the sequences up to $n$. We put  
$$\begin{aligned}
K'_{n+1}&=\Fixl_{G_n}(C/K_n),&S'_{n+1}&=\Fixl_{M_n}(C/S_n),\\
H'_{n+1}&=\Fixr_{H_n}(K'_{n+1}/K_n),&A_{n+1}&=\Fixr_{A_n}(S'_{n+1}/S_n)\\
K''_{n+1}&=K'_{n+1}\cap H'_{n+1},&&\qquad\cap C^+_{A_n}(\Soc_{n+1}(C)/S_n),\\
K_{n+1}&=\bigcap_{c\in C}\bigcap_{c'\in C}c'+\lambda_c(K''_{n+1})-c',&S''_{n+1}&=S'_{n+1}\cap C_{A_{n+1}}^+(A_{n+1}/S_n),\\
G_{n+1}&=N^\circ_{G_n}(K_{n+1})\cap \Stab_{G_n}(K_{n+1}),&S_{n+1}&=\bigcap_{c\in C}\lambda_c(S''_{n+1}),\\
H_{n+1}&=N^+_{H'_{n+1}}(K_{n+1})&M_{n+1}&=N^\circ_{M_n}(S_{n+1})\cap \Stab_{M_n}(S_{n+1}).\end{aligned}$$
Then $K'_{n+1}\le G_n$ and $S'_{n+1}\le M_n$ are definable multiplicative subgroups by Remark \ref{fixdef} containing, stabilizing and normalizing (multiplicatively) $K_n$ (resp.\ $S_n$) by Remark \ref{r:contained}; moreover $\Kid_{n+1}(C)\le K'_{n+1}$ and $\Soc_{n+1}(C)\le S'_{n+1}$. Next, $H'_{n+1}\le H_n$ and $A_{n+1}\le A_n$ are definable additive subgroups containing $C$, and containing, stabilizing and normalizing (additively) $K_n$ (resp.\ $S_n$). Hence  $K''_{n+1}\ge K_n$ and $S''_{n+1}\ge S_n$ are skew sub-braces by Lemma \ref{l:ast} with trivial quotients $K''_{n+1}/K_n$ and $S''_{n+1}/S_n$.

By Fact \ref{f:icc} the additive subgroup $K_{n+1}\le K''_{n+1}$ is definable. As $K_n$ is stabilized and normalized additively by $C$, we have $K_n\le K_{n+1}$; since $K''_{n+1}/K_n$ is trivial, $K_{n+1}$ is a skew sub-brace. Note that $C$ stabilizes and normalizes (additively) both $K_{n+1}$ and $\Kid_{n+1}(C)\le K''_{n+1}$; in particular $\Kid_{n+1}(C)\le K_{n+1}$. Similarly $S_{n+1}\ge S_n$ is a definable skew sub-brace containing $\Soc_{n+1}(C)$ stabilized by $C$ and centralized (additively) by $A_{n+1}$ (whence $C$) modulo $S_n$.

It follows that $G_{n+1}\le G_n$, $H_{n+1}\le H'_{n+1}\le H_n$ and $M_{n+1}\le M_n$ are definable subgroups containing $K_{n+1}$ (resp.\ $S_{n+1}$). 
Since $C$ stabilizes and normalizes $K_{n+1}$ additively and $K_{n+1}\ast C\subseteq K_n$, we have $C\le N^\circ_B(K_{n+1})$ by Lemma \ref{l:norm}. Thus $C\subseteq G_{n+1}\cap H_{n+1}$; similarly $C\subseteq M_{n+1}\cap A_{n+1}$. Hence 1.--4.\ hold.

We already have established $\Kid_{n+1}(C)\le K_{n+1}$ and $\Soc_{n+1}(C)\le S_{n+1}$. For the reverse inclusions, note that
$$\begin{aligned}K_{n+1}\cap C&\le\bigcap_{c\in C}\bigcap_{c'\in C}c'+\lambda_c(\Fixl_C(C/K_n))-c'\\
&=\bigcap_{c\in C}\bigcap_{c'\in C}c'+\lambda_c(\Fixl_C(C/K_n\cap C))-c'\\
&=\bigcap_{c\in C}\bigcap_{c'\in C}c'+\lambda_c(\Fixl_C(C/Kid_n(C)))-c'=\Kid_{n+1}(C),\end{aligned}$$
and
$$\begin{aligned}
S_{n+1}\cap C&\le\Fixl_C(C/S_n)\cap C^+_C(C/S_n)\\
&=\Fixl_C(C/S_n\cap C)\cap C^+_C(C/S_n\cap C)\\
&=\Fixl_C(C/\Soc_n(C))\cap C^+_C(C/\Soc_n(C))=\Soc_{n+1}(C).\end{aligned}$$
Thus equality and 5.\ hold.

Finally, for any $i<n$ we have $K_n\subseteq G_n\cap H_n\subseteq G_{i+1}\cap H_{i+1}$. Hence $K_n$ stabilises $K_i$ by 1. By induction we may assume $K_i\le\Kid_i(K_n)$, so
$$K_{i+1}\le\Fixl_{K_n}(H_{i+1}/K_i)\le\Fixl_{K_n}(K_n/\Kid_i(K_n)).$$
Since $K_{i+1}$ is stabilised and normalised by $K_n$, we have $K_{i+1}\le\Kid_{i+1}(K_n)$, so $K_n$ is right nilpotent of class $\le n$. Thus if $C$ is right nilpotent, it is contained in a definable right nilpotent sub-brace of the same class.

Similarly, assuming inductively $S_i\le\Soc_i(S_n)$ we have
$$\begin{aligned}S_{i+1}&\le\Fixl_{S_n}(A_{i+1}/K_i)\cap C^+_{S_n}(A_{n+1}/S_n)\\
&\le\Fixl_{S_n}(S_n/\Soc_i(S_n))\cap C_{S_n}^+(S_n/\Soc_i(S_n))=\Soc_{i+1}(S_n),\end{aligned}$$
so $S_n$ is socle nilpotent of class $\le n$. Thus if $C$ is socle nilpotent, it is contained in a definable socle nilpotent sub-brace of the same class.

The proof of the moreover part is the same as in the proof of Theorem \ref{t:annnilpdef}, using Corollary \ref{kiddiversozero} instead of Corollary \ref{anncdiversozero}.\end{proof}

The same example as before shows that connectedness is necessary.

\subsection{Left nilpotency}
Recall that a skew brace $B$ is {\em left nilpotent} of class $k$ if the series $B=B^1$ and $B^{n+1}=\langle B\ast B^n\rangle_+$ of left ideals terminates with $B^{k+1}=\{0\}$. It is {\em strongly left nilpotent} of class $k$ if there is a sequence $\{0\}=I_0<I_1<\cdots<I_k=B$ of strong left ideals such that $B\ast I_{n+1}\subseteq I_{n}$ for all $n<k$.

\begin{lem}\label{l:locbddlnilp}Let $B$ be a skew brace which is locally left nilpotent of class $\le n$. Then $B$ is left nilpotent of class $\le n$.\end{lem}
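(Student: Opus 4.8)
The plan is to mirror the proof of Lemma \ref{l:locbddrnilp}: express the relevant term of the lower left series as a union over finitely generated skew sub-braces, and note that each summand vanishes by the local hypothesis.

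First I would recall that the left series is defined by $B^1=B$ and $B^{n+1}=\langle B\ast B^n\rangle_+$, and that $B$ is left nilpotent of class $\le n$ precisely when $B^{n+1}=\{0\}$. The key observation is that for any finitely generated skew sub-brace $C\le B$ one has $C^{n+1}\le B^{n+1}$, and conversely every element of $B^{n+1}$, being an additive combination of elements of the form $b\ast b'$ with $b\in B$ and $b'\in B^n$ (and, recursively, with the elements of $B^n$ themselves lying in iterated $\ast$-products), already lies in $D^{n+1}$ for some finitely generated $D\le B$ containing all the finitely many parameters involved. Hence $B^{n+1}=\bigcup_{C\le B\text{ fin.\ gen.}}C^{n+1}$.

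Then I would invoke the local hypothesis: each such $C$ is left nilpotent of class $\le n$, so $C^{n+1}=\{0\}$. Therefore $B^{n+1}=\bigcup_{C\le B\text{ fin.\ gen.}}C^{n+1}=\{0\}$, which is exactly the assertion that $B$ is left nilpotent of class $\le n$.

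The only delicate point — and it is the same one that is tacitly handled in Lemma \ref{l:locbddrnilp} — is verifying that the union over finitely generated sub-braces really exhausts $B^{n+1}$; this requires checking that the operation $\langle -\ast -\rangle_+$ is compatible with directed unions, i.e.\ that the set of finitely generated skew sub-braces is directed and that an element witnessed by finitely many $\ast$-products and additive combinations lies in the corresponding finitely generated piece. Since skew sub-braces are closed under the finitary operations $+$, $\circ$, $\ast$ and inverses, this is routine, and the proof is a one-line computation exactly parallel to the right/socle case.

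\begin{proof}
$B^{n+1}=\bigcup_{C\le B\text{ fin.\ gen.}}C^{n+1}=\{0\}$.
\end{proof}
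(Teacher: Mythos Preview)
Your proof is correct and is essentially identical to the paper's own proof, which is the one-line computation $B^{n+1}=\bigcup_{C\le B\text{ fin.\ gen.}}C^{n+1}=\{0\}$ (the paper writes $B^n$ rather than $B^{n+1}$, an apparent index slip that your version corrects).
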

\begin{proof}We have
$B^n=\bigcup_{C\le B\text{ fin.\ gen.}}C^n=\{0\}$.\end{proof}

\begin{lem}\label{l:stronglyln}Let $B$ be a skew brace of nilpotent type. If $B$ is left nilpotent then $B$ is strongly left nilpotent. In fact, there is a sequence $\{0\}=I_0<\cdots<I_n=B$ of strong ideals with $B\ast I_{i+1}\subseteq I_i$ for all $i<n$, and whose quotients are additively centralized by $B$.\end{lem}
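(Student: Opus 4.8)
The plan is to work in the design group $G=G(B)=(B,+)\rtimes_\lambda(B,\circ)$ and reinterpret everything as a statement about the conjugation action of $G$ on its normal subgroup $N:=(B,+)$. Recall from the preliminaries that a strong left ideal of $B$ is exactly a subgroup of $N$ that is normal in $G$, that the multiplicative copy $A:=(B,\circ)$ acts on $N$ by $x\mapsto\lambda_a(x)$, and that $[(0,a),(b,1)]=(a\ast b,1)$. Hence for any subgroup $X\le N$ one has $[A,X]=\langle B\ast X\rangle_+$, and iterating (each $B^n$ being a left ideal of $B$, in particular an additive subgroup) gives $B^{n+1}=[A,{}_nN]$, the $n$-fold left-normed commutator. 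Now suppose $\{0\}=I_0\le\cdots\le I_n=B$ is a chain of subgroups of $N$ with each $I_i\trianglelefteq G$. Then $B\ast I_{i+1}\subseteq I_i$ says $[A,I_{i+1}]\le I_i$, and $I_{i+1}/I_i$ being additively centralised by $B$ says $[N,I_{i+1}]\le I_i$; since $I_i\trianglelefteq G$ and $G=\langle A,N\rangle$, these two together are equivalent to $[G,I_{i+1}]\subseteq I_i$. So the whole statement reduces to the group-theoretic assertion that $G$ acts nilpotently on $N$, i.e.\ $[G,{}_sN]=\{0\}$ for some $s\in\omega$: taking $I_i:=[G,{}_{s-i}N]$ (and discarding repetitions to make the chain strict) then gives exactly the desired sequence of strong left ideals.

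It remains to prove $[G,{}_sN]=\{0\}$ from the two hypotheses, which translate as: $N$ is nilpotent, say of class $c$ (``nilpotent type''), and $[A,{}_kN]=B^{k+1}=\{0\}$, where $k$ is the left-nilpotency class. First I reduce to the abelianisation $\bar N:=N/[N,N]$. The subgroup $N$ acts trivially on $\bar N$, so the $G$-action on $\bar N$ factors through $G/N\cong A$, giving $[G,{}_j\bar N]=[A,{}_j\bar N]$ for all $j$; and since the reduction $N\to\bar N$ is $A$-equivariant and surjective it carries $[A,{}_jN]$ onto $[A,{}_j\bar N]$ (by induction on $j$). Therefore $[G,{}_k\bar N]=\{0\}$: $G$ acts nilpotently on $\bar N$, of class $\le k$.

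Finally I invoke the classical P.~Hall-type fact that a group acting nilpotently on the abelianisation of a nilpotent normal subgroup acts nilpotently on that subgroup. Concretely, let $\gamma_i=\gamma_i(N)$ be the lower central series of $N$, so each $\gamma_i$ is characteristic in $N$, hence normal in $G$, and $\gamma_i/\gamma_{i+1}$ is abelian and a $G$-equivariant quotient of the $i$-fold tensor power $\bar N^{\otimes i}$ via the iterated-commutator map (with $G$ acting diagonally). From $[G,U\otimes V]\le[G,U]\otimes V+U\otimes[G,V]$ one gets by induction $[G,{}_m(U\otimes V)]\le\sum_{a+b=m}[G,{}_aU]\otimes[G,{}_bV]$, so nilpotency of the $G$-action on $\bar N$ propagates to each $\bar N^{\otimes i}$, hence to each $\gamma_i/\gamma_{i+1}$, with class bounded in terms of $i$ and $k$. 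Refining the series $(\gamma_i)$ by $G$-central series of its factors produces a finite $G$-central series of $N$, i.e.\ $[G,{}_sN]=\{0\}$ for some $s$ bounded in terms of $c$ and $k$; translating back as above finishes the proof. I expect this last step to be the only real content — for braces, where $(B,+)$ is abelian, the chain $(B^n)$ itself already does the job, so the work is entirely in handling non-abelian nilpotent $(B,+)$ — while the rest is the bookkeeping of the first paragraph, which must be arranged so that $[G,I_{i+1}]\subseteq I_i$ genuinely encodes both the $\ast$- and the additive-centrality conditions.
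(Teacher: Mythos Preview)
Your proof is correct, but it takes a genuinely different route from the paper's. The paper gives a direct, elementary construction: with $(B,+)$ nilpotent of class $k$ and $B$ left nilpotent of class $\ell$, it sets
\[
I_{i\ell-j+1}\;=\;Z^+_i(B)\cap\bigl(B^j+Z^+_{i-1}(B)\bigr)\qquad(1\le i\le k,\ 1\le j\le\ell+1),
\]
and simply checks that this is a chain of strong left ideals of length $k\ell$ with the required properties; the verification is a two-line computation using $\lambda$-invariance of $Z^+_i(B)$ and the identity $b\ast(y+z)=b\ast y+y+b\ast z-y$. Your argument instead passes to the design group $G=G(B)$, rephrases the conclusion as ``$G$ acts nilpotently on $N=(B,+)$'', and deduces this from the two hypotheses via the P.~Hall tensor-power mechanism on the lower central factors $\gamma_i(N)/\gamma_{i+1}(N)$.

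Both approaches are valid. The paper's is shorter, self-contained, and yields the clean bound $k\ell$; yours is more conceptual and places the lemma inside the design-group picture. In fact your argument essentially anticipates the paper's very next result (Theorem~\ref{BsemiB}), which shows $G(B)$ is nilpotent under the same hypotheses by first invoking the present lemma and \emph{then} applying Hall's theorem \cite{Hall58}. Your route reverses this dependency: you prove the weaker statement that $G$ acts nilpotently on $N$ directly, and read off the chain of strong left ideals as the series $[G,{}_{s-i}N]$. So in the paper's logical order the elementary lemma feeds the design-group theorem, whereas in yours a piece of the design-group theorem yields the lemma.
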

\begin{proof}Suppose $(B,+)$ is nilpotent of class $k$ and $B$ is left nilpotent of class $\ell$. Put $I_{i\ell-j+1}=Z^+_i(B)\cap (B^j+Z^+_{i-1}(B))$ for $1\le i\le k$ and $1\le j\le\ell+1$ and note that 
$$Z^+_{i+1}(B)\cap (B^{\ell+1}+Z^+_i(B))=Z^+_i(B)\cap (B^1+Z^+_{i-1}(B))=Z^+_i(B)=I_{i\ell}.$$
Then the sequence of left strong ideals
	$$B=I_{k\ell}\ge I_{k\ell-2}\ge\cdots\ge I_0=\{0\}$$
witnesses that $B$ is strongly left nilpotent of class $\le k\ell$, with additively central quotients $I_{i+1}/I_i$ for all $i<k\ell$.\end{proof}
	
We shall now link nilpotency of $G(B)$ and left nilpotency of $B$. Of course, $G(B)$ is abelian iff $B$ is a trivial brace (i.e.\ an abelian group). It has been proven in \cite[Theorem 4.8]{Cedo} that for a finite skew brace $B$ of nilpotent type, being left nilpotent is equivalent to $(B,\circ)$ being nilpotent.

\begin{theo}\label{BsemiB} Let $B$ be a skew brace. Then $B$ is left nilpotent of nilpotent type iff $G(B)$ is nilpotent.\end{theo}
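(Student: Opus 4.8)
The plan is to prove both implications by relating the lower central series of $G=G(B)$ to the left-series $B^n$ of $B$ together with the lower central series of the additive group $(B,+)$. Recall that inside $G=(B,+)\rtimes_\lambda(B,\circ)$ we have $(B,+)$ as a normal subgroup, the $\ast$-operation realised as the commutator $[(0,a),(b,0)]=(a\ast b,0)$, and conjugation of $(b,0)$ by $(0,a)$ given by $\lambda_a$. So for the subgroup $A=(B,+)\le G$ and the subgroup $M=\{(0,b):b\in B\}\cong(B,\circ)$, the commutator $[M,A]$ (computed in $G$, landing inside $A$) is exactly $\langle B\ast B\rangle_+=B^2$ together with the additive commutators $[a,a']_+$ coming from $[A,A]$; more precisely, since $A\trianglelefteq G$, the subgroup $[G,A]$ equals $\langle\, B\ast B,\ [B,B]_+\,\rangle_+ = \langle B^2,[B,B]_+\rangle_+$, and inductively $\gamma_{n+1}(G)\cap A$ is built from iterated $\ast$-products $B\ast(B\ast(\cdots))$ interleaved with additive commutators.

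First I would prove the ``only if'' direction. Assume $B$ is left nilpotent of class $\ell$ and $(B,+)$ is nilpotent of class $k$. The cleanest route is to use Lemma \ref{l:stronglyln}: there is a chain $\{0\}=I_0<I_1<\cdots<I_m=B$ of strong ideals (so each $(I_j,+)\trianglelefteq G$) with $B\ast I_{j+1}\subseteq I_j$ and with each quotient $I_{j+1}/I_j$ additively centralized by $B$. The condition $B\ast I_{j+1}\subseteq I_j$ says $[M,(I_{j+1},+)]\subseteq (I_j,+)$, and additive centrality says $[A,(I_{j+1},+)]\subseteq (I_j,+)$; together $[G,(I_{j+1},+)]\subseteq (I_j,+)$, so $(B,+)\le G$ is hit by the lower central series of $G$ after finitely many steps, i.e.\ $(B,+)\subseteq \gamma_{m+1}(G)$-complement... more precisely $\gamma_{m+1}(G)\cap A=\{0\}$, hence $\gamma_{m+1}(G)\le M$. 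But $\gamma_{m+1}(G)$ is normal in $G$ and contained in $M$, and $[A,M]\subseteq A$, so $[A,\gamma_{m+1}(G)]\subseteq A\cap M=\{0\}$; this forces $\gamma_{m+1}(G)\le Z(G)$ up to the action, and one more commutator computation (using that $M\cong(B,\circ)$ is itself nilpotent of bounded type because a left nilpotent brace of nilpotent type has $(B,\circ)$ nilpotent by \cite[Theorem 4.8]{Cedo}, or more elementarily because $a\circ b=a+\lambda_a(b)$ and $\lambda$ is controlled by the $\ast$-series) shows $G$ is nilpotent. I expect the bookkeeping of the exact class to be routine once the chain from Lemma \ref{l:stronglyln} is in place.

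For the ``if'' direction, assume $G=G(B)$ is nilpotent of class $c$. Then $(B,+)$, being a subgroup of $G$, is nilpotent, so $B$ is of nilpotent type. For left nilpotency, observe that $B^2=\langle B\ast B\rangle_+\subseteq[G,A]\cap A\subseteq \gamma_2(G)$, and inductively $B^{n+1}=\langle B\ast B^n\rangle_+$: since $B\ast B^n$ is a set of commutators $[(0,a),(b,0)]$ with $b\in B^n\subseteq \gamma_{?}(G)$... the point is that $B^n\subseteq\gamma_n(G)\cap A$ by induction (the base case $n=1$ is $B=A\le\gamma_1(G)=G$, and the step uses $B^{n+1}=\langle [M,(B^n,+)]\rangle_+\subseteq \gamma_{n+1}(G)$ because $(B^n,+)$ may fail to be normal in $G$, but $B^n$ is a left ideal so $[M,(B^n,+)]\subseteq (B^n,+)$ and lands inside $\gamma_{n+1}(G)$ via the standard three-subgroups / commutator-collection argument). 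Then $B^{c+1}\subseteq\gamma_{c+1}(G)=\{1\}$, so $B$ is left nilpotent of class $\le c$.

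The main obstacle, and the step that needs the most care, is the direction $G$ nilpotent $\Rightarrow$ $B$ left nilpotent: the subgroups $(B^n,+)$ are only \emph{left ideals}, hence need not be normal in $G$, so one cannot simply say $B^n=\gamma_n(G)\cap A$. The fix is to carry along both series at once — prove by induction that $\langle B^n,\gamma_n(G)\cap A\rangle$ behaves well, or equivalently show $B^n\supseteq$ (a suitable term built from $\gamma_n(G)\cap A$ and the additive lower central series) and $B^n\subseteq\gamma_n(G)$ — and to use that $\lambda_b-\mathrm{id}$ maps $B$ into $B^2$, so the failure of normality is itself governed by the left-series and disappears in the nilpotent regime. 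Getting the precise relationship between the class of $G$, the left-nilpotency class of $B$, and the additive nilpotency class is where the genuine work lies; for the mere equivalence, the rough containments above suffice.
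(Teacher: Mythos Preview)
Your ``if'' direction ($G(B)$ nilpotent $\Rightarrow$ $B$ left nilpotent of nilpotent type) is correct, and your worry about $B^n$ not being normal in $G$ is a red herring: the induction $B^{n}\subseteq\gamma_n(G)\cap(B,+)$ goes through immediately from $(a\ast b,0)=[(0,a),(b,0)]$, since $(b,0)\in\gamma_n(G)$ and $(0,a)\in G$ already give $[(0,a),(b,0)]\in\gamma_{n+1}(G)$, no normality needed. The paper does this direction differently, via $I_n=Z_n(G)\cap(B,+)$ and reading off a chain of strong left ideals, but the two arguments are equivalent.

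Your ``only if'' direction, however, has a real gap. From $[G,(I_{j+1},+)]\subseteq(I_j,+)$ you correctly obtain that $A=(B,+)$ satisfies $[G,\ldots,[G,A]\ldots]=\{0\}$ after $m$ iterations, i.e.\ $A\le Z_m(G)$. But this does \emph{not} give $\gamma_{m+1}(G)\cap A=\{0\}$: take any nilpotent $G$ of class $m{+}1$ and $A=Z_m(G)$; then $\gamma_{m+1}(G)\le Z(G)\le A$ is nonzero. Consequently the step ``hence $\gamma_{m+1}(G)\le M$'' fails. What $A\le Z_m(G)$ does buy you is that $G/Z_m(G)$ is a quotient of $G/A\cong(B,\circ)$, so the whole argument reduces to showing that $(B,\circ)$ is nilpotent --- and this is exactly the piece you wave away. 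Your citation to \cite[Theorem~4.8]{Cedo} covers only the finite case, and the ``more elementarily because $\lambda$ is controlled by the $\ast$-series'' is not an argument: the $I_j$ are strong left ideals, not ideals, so they give no multiplicative normal series for $(B,\circ)$.

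This is precisely where the paper does the genuine work. It uses Hall's stability-group theorem \cite[Theorem~1]{Hall58} applied to the action of $G$ on the chain $(I_j)$ to get $G/C_G(B,+)$ nilpotent, and then performs an explicit commutator computation inside $C_G(B,+)$: one shows that for $(c,d)\in C_G(B,+)$ and $(a,b)\in G$ the commutator is $(\lambda_{b^{-1}}(b\ast c),[b,d]_\circ)$, that $n$ iterations kill the first coordinate and land in $\{0\}\times\ker\lambda$, and that a further explicit expansion of $[b,e]_\circ$ for $e\in\ker\lambda$ in terms of $\ast$ and $[\cdot,\cdot]_+$ shows $n$ more iterations reach $0$. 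This is the missing ingredient in your sketch; without it (or an independent proof that $(B,\circ)$ is nilpotent, which essentially requires the same Hall-type input), the forward implication is incomplete.
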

\begin{proof} Suppose first that $G$ is nilpotent of class $c$, and put $I_n=Z_n(G)\cap (B,+)$ (where we consider $(B,+)$ as a subgroup of $G(B)$). Then for every $n$ the $\lambda$-action of $(B,\circ)$ on $I_{n+1}/I_n$ is trivial. It follows that $I_n$ is a strong left ideal, and $B*I_{n+1}\subseteq I_n$, so $B$ is strongly left nilpotent of class $\le c$, with additively central quotients. In particular $B$ is of nilpotent type.

Conversely, suppose that $B$ is left nilpotent of nilpotent type; we may assume that we have a sequence $\{0\}=I_0<I_1<\cdots<I_n=B$ of strong left ideals with central quotients. We consider the action of $G$ on $(B,+)$ by conjugation. Recall that in $G$
$$(a,b)(c,d)=(a+\lambda_bc,bd),$$
so $(c,d)^{-1}=(-\lambda_{d^{-1}}c,d^{-1})$, and
$$(c,d)^{-1}(a,b)(c,d)=(\lambda_{d^{-1}}(-c+a+\lambda_bc),d^{-1}bd).$$
Now $G$ stabilises the subgroup chain $(I_i)$; by \cite[Theorem 1]{Hall58} the quotient $G/C_G(B,+)$ is nilpotent, of class at most $\frac12n(n-1)$. (Actually, as the chain consists of subgroups normal in $(B,+)$, the bound is $n-1$.)

Clearly $(c,d)\in C_G(B,+)$ iff $\lambda_{d^{-1}}(-c+a+\lambda_1c)=a$ for all $a\in B$, i.e.\ $-c+a+c=\lambda_d(a)$ for all $a\in B$. But then for $(a,b)\in G$ and $(c,d)\in C_G(B,+)$ we have
$$\begin{aligned}(a,b)^{-1}(c,d)^{-1}(a,b)(c,d)&=(-\lambda_{b^{-1}}a,b^{-1})(\lambda_{d^{-1}}(-c+a+\lambda_bc),d^{-1}bd)\\
&=(-\lambda_{b^{-1}}a,b^{-1})(a+\lambda_bc-c,d^{-1}bd)\\
&=(\lambda_{b^{-1}}(-a+a+\lambda_bc-c),b^{-1}d^{-1}bd)\\
&=(\lambda_{b^{-1}}(b\ast c),b^{-1}d^{-1}bd).\end{aligned}$$
Since the $I_i$ are $\lambda$-invariant, and for $e\in I_{i+1}$ and $b\in B$ we have $b\ast e\in I_i$, after $n$ commutations we obtain an element of the form $(0,e)\in C_G(B,+)$, whence $\lambda_ex=x$ for all $x\in B$ and $e\in\ker\lambda$. However, for $e\in\ker\lambda$ and $b\in B$ we have $b^{-1}\circ e^{-1}\circ b\in\ker\lambda$, whence
$$\begin{aligned}b^{-1}\circ e^{-1}\circ b\circ e&=(b^{-1}\circ e^{-1}\circ b)+e=b^{-1}\circ(e^{-1}+b)+e\\
&=b^{-1}\circ e^{-1}-b^{-1}+0+e=b^{-1}+b^{-1}\ast e^{-1}+e^{-1}-b^{-1}+e\\
&=(b^{-1}+b^{-1}\ast e^{-1}-b^{-1})+(b^{-1}-e-b^{-1}+e).\end{aligned}$$
Since the $I_i$ are additively normal, and for $e\in I_{i+1}$ and $b\in B$ we also have $[b,e]_+\in I_i$, after $n$ further commutations we reach $0$. It follows that $G$ is nilpotent, of class at most $\frac12n(n-1)+2n=\frac12n(n+3)$.
\end{proof}
\begin{rem} To show nilpotency of $(B,\circ)$ we could have used the proof of \cite[Theorem 4.8]{Cedo}, using \cite[Theorem 1]{Hall58} instead of \cite[Theorem 4]{Hall58}. We could then have considered the $\lambda$-action of $B$ on $(B,+)$ and show that it is nilpotent. However, the proof given is more direct.\end{rem} 

It has been proven in \cite[Theorem 20]{Smok2} that the sum of two left nilpotent ideals of a brace is left nilpotent as well. We shall show an analogue for skew braces.

\begin{prop}
Let $B$ be a skew brace, and $I$ and $J$ be left nilpotent ideals of nilpotent type. Then $I+J$ is again a left nilpotent ideal of nilpotent type.
\end{prop}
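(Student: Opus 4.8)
The plan is to translate everything into the design group and apply Fitting's theorem there. Recall from Lemma~\ref{idealnormal} that for an ideal $I\trianglelefteq B$ the set $G(I)$ is a normal subgroup of $G(B)$, and for ideals $I,J$ one has $G(I)\,G(J)=G(I+J)$. Recall also that $I+J$ is again an ideal of $B$ (the sum of two ideals is an ideal), hence a skew brace in its own right, and its design group is precisely the subgroup $G(I+J)\le G(B)$ above, the $\lambda$-action being the one inherited from $B$ (this restriction is well defined because $I+J$ is in particular a left ideal, so $\lambda_a(I+J)\subseteq I+J$ for all $a\in I+J$).

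First I would apply Theorem~\ref{BsemiB} to the skew braces $I$ and $J$: since each is left nilpotent of nilpotent type, the groups $G(I)$ and $G(J)$ are nilpotent, say of classes $c_I$ and $c_J$. Next I would invoke Fitting's theorem in the form valid for arbitrary (not necessarily finite) groups: if $M$ and $N$ are normal nilpotent subgroups of a group, then $MN$ is nilpotent, of class at most $c_M+c_N$. Applied to $M=G(I)$ and $N=G(J)$ inside $G(B)$, this gives that $G(I+J)=G(I)\,G(J)$ is nilpotent.

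Finally I would apply Theorem~\ref{BsemiB} again, this time in the converse direction to the skew brace $I+J$: since its design group $G(I+J)$ is nilpotent, $I+J$ is left nilpotent of nilpotent type, which is the claim. (One could also note directly that $(I+J,+)\le G(I+J)$ is then nilpotent, which re-proves the ``nilpotent type'' part on its own; alternatively it follows from another application of Fitting's theorem to the additively normal subgroups $(I,+),(J,+)$ of $(B,+)$.)

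There is no serious obstacle here: the only point that deserves a word of caution is that Fitting's theorem is being used for possibly infinite groups, but this is a standard fact and even comes with an explicit bound on the nilpotency class, which together with the bounds in Theorem~\ref{BsemiB} yields an (unenlightening) explicit class bound for $I+J$. An alternative would be to imitate the brace-theoretic argument of \cite[Theorem~20]{Smok2} directly at the level of the $\ast$-operation, but passing through the design group is considerably shorter given Theorem~\ref{BsemiB}.
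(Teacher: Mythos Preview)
Your proposal is correct and follows essentially the same route as the paper: translate to the design group via Lemma~\ref{idealnormal}, apply Theorem~\ref{BsemiB} to get nilpotency of $G(I)$ and $G(J)$, use Fitting to conclude that $G(I+J)=G(I)\,G(J)$ is nilpotent, and then invoke Theorem~\ref{BsemiB} in the other direction. The paper's version is terser (it leaves Fitting's theorem implicit in the phrase ``as is their product''), but the argument is the same.
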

\begin{proof} We consider the group $G(B)$. By Lemma \ref{idealnormal} both $G(I)$ and $G(J)$ are normal subgroups of $G(B)$; by Theorem \ref{BsemiB} they are both nilpotent, as is their product $G(I)\,G(J)=G(I+J)$. Thus $I+J$ is a left nilpotent ideal.\end{proof}

Finally, we show that in a stable skew brace a left nilpotent skew sub-brace is contained in a type-definable one.
\begin{theo}
Let $B$ be a stable skew brace. If $C$ is a (strongly) left nilpotent skew sub-brace of $B$ of class $k$, then $\bar C^+=\bar C^\circ=\bar C$ is a type-definable (strongly) left nilpotent skew sub-brace of $B$ of class $k$, and there is an increasing sequence $\{0\}=\bar I_0<\bar I_1<\cdots<\bar I_k=\bar C$ of type-definable (strong) left ideals of $\bar C$ with $\bar C\ast \bar I_{n+1}\subseteq\bar I_n$ for all $n<k$. If $B$ is $\omega$-stable, $\bar C$ and all $\bar I_n$ are definable.\end{theo}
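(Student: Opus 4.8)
The plan is to read off the ``(strongly) left soluble'' part of the conclusion from the corollary to Theorem~\ref{t:wsol}, and then to verify by hand the one extra inclusion $\bar C\ast\bar I_{n+1}\subseteq\bar I_n$ that turns left solubility into left nilpotency, using the same $\Fixl$-trick as in Theorem~\ref{t:wsol}. So first I fix a chain $\{0\}=I_0<I_1<\cdots<I_k=C$ of (strong) left ideals of $C$ with $C\ast I_{n+1}\subseteq I_n$ for all $n<k$ witnessing (strong) left nilpotency of class $k$; in the non-strong case one may take $I_j=C^{k+1-j}$, using that $C^{m+1}$ is an ideal in $C^m$. Since $I_{n+1}\ast I_{n+1}\subseteq C\ast I_{n+1}\subseteq I_n$, the quotients $I_{n+1}/I_n$ are all trivial, so this is in particular a (strongly) left soluble chain for $C$. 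By the corollary to Theorem~\ref{t:wsol} (equivalently, by applying Theorem~\ref{t:wsol} to each $I_n$ and then Lemma~\ref{l:hulls}) we obtain a type-definable skew sub-brace $\bar C=\bar C^+=\bar C^\circ$ and a chain $\{0\}=\bar I_0<\bar I_1<\cdots<\bar I_k=\bar C$ of type-definable (strong) left ideals of $\bar C$, with $\bar I_n=\bar I_n^+=\bar I_n^\circ$ and $\bar I_n\trianglelefteq\bar I_{n+1}$ of trivial quotient; if $B$ is $\omega$-stable, all these subgroups are definable, being type-definable in an $\omega$-stable group. As in Theorem~\ref{t:wsol} we may assume $B$ to be $\aleph_1$-saturated.

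It remains to prove $\bar C\ast\bar I_{n+1}\subseteq\bar I_n$ for each $n<k$, equivalently $\bar C\le\Fixl_B(\bar I_{n+1}/\bar I_n)$. Fix $n$. Since $\bar I_n$ is a left ideal of $\bar C$ we have $C\le\bar C\le\Stab_B(\bar I_n)$, and since $\bar I_n\trianglelefteq\bar I_{n+1}$ we have $\bar I_{n+1}\le N^+_B(\bar I_n)$. From $C\ast I_{n+1}\subseteq I_n$, together with $I_{n+1}\le N^+_B(I_n)$ ($I_n$ being an ideal in $I_{n+1}$ in the non-strong case, additively normal in $C$ in the strong case), we get $I_{n+1}\le\Fixr_B(c/I_n)$ for every $c\in C$; by monotonicity of the additive definable hull and Lemma~\ref{l:hulls}, $\bar I_{n+1}=\bar I_{n+1}^+\le\bar{\Fixr_B(c/I_n)}^+\le\Fixr_B(c/\bar I_n^+)=\Fixr_B(c/\bar I_n)$. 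Thus $\lambda_c(\bar a)\in\bar a+\bar I_n$ for all $\bar a\in\bar I_{n+1}$, which together with $c\in\Stab_B(\bar I_n)$ means $c\in\Fixl_B(\bar a/\bar I_n)$ for all $\bar a\in\bar I_{n+1}$, i.e.\ $c\in\Fixl_B(\bar I_{n+1}/\bar I_n)$. Hence $C\le\Fixl_B(\bar I_{n+1}/\bar I_n)$, and as the latter is a multiplicative subgroup, Lemma~\ref{l:hulls} gives $\bar C=\bar C^\circ\le\bar{\Fixl_B(\bar I_{n+1}/\bar I_n)}^\circ\le\Fixl_B(\bar I_{n+1}/\bar I_n)$ (using that $\bar I_n$ equals its own additive definable hull). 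This says $\lambda_{\bar c}(\bar a)\in\bar a+\bar I_n$ for all $\bar c\in\bar C$ and $\bar a\in\bar I_{n+1}$, that is, $\bar C\ast\bar I_{n+1}\subseteq\bar I_n$.

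Finally, the chain $\{0\}=\bar I_0<\cdots<\bar I_k=\bar C$ of type-definable (strong) left ideals of $\bar C$ with $\bar C\ast\bar I_{n+1}\subseteq\bar I_n$ witnesses, by the immediate induction $\bar C^{m+1}\subseteq\bar I_{k-m}$, that $\bar C$ is (strongly) left nilpotent of class $\le k$; conversely $C\subseteq\bar C$ forces $C^m\subseteq\bar C^m$ for all $m$, so $\bar C^k\supseteq C^k\neq\{0\}$ and the class is exactly $k$. When $B$ is $\omega$-stable, everything produced is definable. I expect the only genuinely new point — the rest being a direct appeal to Theorem~\ref{t:wsol} and its corollary — to be the passage from $C\le\Fixl_B(\bar I_{n+1}/\bar I_n)$ to $\bar C\ast\bar I_{n+1}\subseteq\bar I_n$: one must transport the statement ``$C$ acts trivially on $\bar I_{n+1}$ modulo $\bar I_n$'' across the definable hull to all of $\bar C$, and the mechanism is precisely that $\Fixl$ carves out a multiplicative subgroup to which Lemma~\ref{l:hulls} (and, when the data are definable, definability) applies.
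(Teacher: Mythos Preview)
Your proof is correct and follows essentially the same approach as the paper's: invoke Theorem~\ref{t:wsol} (and its corollary) to identify $\bar I_n^+=\bar I_n^\circ=\bar I_n$ and obtain the chain of (strong) left ideals, then use Lemma~\ref{l:hulls} to pass the remaining containments through the hull. The paper's proof is extremely terse---it merely cites the normaliser/stabiliser clauses of Lemma~\ref{l:hulls} and says ``This shows the result''---whereas you have written out explicitly the two-step $\Fixr$/$\Fixl$ argument (first enlarge $I_{n+1}$ to $\bar I_{n+1}$ via $\Fixr$, then enlarge $C$ to $\bar C$ via $\Fixl$) needed for $\bar C\ast\bar I_{n+1}\subseteq\bar I_n$, exactly parallel to the analogous step inside the proof of Theorem~\ref{t:wsol}; you also supply the small extra verification that the class is exactly $k$.
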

\begin{proof}Let $\{0\}=I_0<I_1<\cdots<I_k=B$ be a chain of (strong) left ideals with $C\ast I_{n+1}\subseteq I_n$ for all $n<k$. Then all $I_n$ are weakly soluble, so $\bar I_n^+=\bar I_n^\circ=\bar I_n$ for all $n\le k$. Moreover, $\bar{N_B^+(I_n)}\le N_B^+(\bar I_n)$,  
$\bar{N_B^\circ(I_n)}\le N_B^\circ(\bar I_n)$, and $\bar{\Stab_B(I_n)}\le\Stab_B(\bar I_n)$ by Lemma \ref{l:hulls}. This shows the result.\end{proof}

\section{$\omega$-categorical skew braces}\label{omega-cat}
\subsection{Minimal ideals in $\omega$-categorical skew braces} 
\begin{theo}\label{bracegen}
Let $B$ be an $\omega$-categorical skew brace, $\bar b\in B$ a tuple of parameters, and $X$ a $\bar b$-definable set. Then the skew sub-brace generated by $X$, the (strong) left ideal generated by $X$ and the ideal generated by $X$ are all uniformly $\bar b$-definable (i.e.\ the defining formulas only depend on the formula used for $X$, using the parameter $\bar b$). 
\end{theo}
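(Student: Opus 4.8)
The plan is to realise each of the four generated substructures as the output of a \emph{definable closure process} and then use $\omega$-categoricity to see that this process stabilises after finitely many steps. Fix the formula $\varphi(x,\bar y)$ with $X=\varphi(B,\bar b)$, and set $Y_0=X$. For the skew sub-brace, let $Y_{k+1}$ be the closure of $Y_k$ under the operations $+,-,\circ,{}^{-1}$ (throwing in $0$); then $\bigcup_kY_k=\langle X\rangle$. For the left ideal, let $Y_{k+1}$ close $Y_k$ under $+,-$ and under $z\mapsto\lambda_b(z)$ for all $b\in B$; for the strong left ideal add the maps $z\mapsto -b+z+b$; for the ideal add in addition $z\mapsto b^{-1}\circ z\circ b$ and $z\mapsto z\ast b$, so that $Y_{k+1}$ closes $Y_k$ up under everything an ideal is required to be closed under. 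In each case $\bigcup_kY_k$ is exactly the substructure of the relevant kind generated by $X$. The first thing to check is that every $Y_k$ is $\bar b$-definable by a formula $Y_k(x,\bar y)$ that depends only on $\varphi$: this follows by induction on $k$, since each operation involved — including the ones quantifying over all $b\in B$, such as $\{z:\exists b\,\exists z'(Y_k(z',\bar y)\wedge z=-b+z'+b)\}$ — sends $\bar b$-definable sets to $\bar b$-definable sets.

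Next I would get definability in $B$ itself. The $Y_k$ form an increasing chain of $\bar b$-definable subsets of $B$; since adjoining the finitely many constants $\bar b$ preserves $\omega$-categoricity, Ryll--Nardzewski leaves only finitely many $\bar b$-definable subsets of $B$, so the chain is eventually constant, say $Y_{k_0}=Y_{k_0+1}=\cdots$. A set that is a fixed point of the closure process and contains $X=Y_0$ is closed under all the operations defining the notion in question, hence is a substructure of that kind containing $X$; conversely every such substructure contains each $Y_k$. Therefore $Y_{k_0}$ is precisely the generated substructure, and it is defined by $\psi(x,\bar b)$ with $\psi(x,\bar y):=Y_{k_0}(x,\bar y)$.

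It remains to secure the \emph{uniformity}: the index $k_0$, and hence $\psi$, was chosen using the particular pair $B,\bar b$, and I must check that the same formula computes the generated substructure from any parameters $\bar b'$ with $\bar b'\models\tp(\bar b)$. Here I would consider the parameter-free $\mathcal L$-formula $\theta(\bar y):=\forall x\big(Y_{k_0}(x,\bar y)\leftrightarrow Y_{k_0+1}(x,\bar y)\big)$. By construction $B\models\theta(\bar b)$, so $\theta\in\tp(\bar b)$ and hence $B\models\theta(\bar b')$ whenever $\bar b'\models\tp(\bar b)$. For such $\bar b'$ the set $Y_{k_0}(B,\bar b')$ is again a fixed point of the closure process built on $\varphi(B,\bar b')$ and contains $\varphi(B,\bar b')$, so by the argument of the previous paragraph it equals the substructure generated by $\varphi(B,\bar b')$. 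Running the four closure processes in parallel then gives the four required uniformly $\bar b$-definable substructures. (For the skew sub-brace one could also invoke uniform local finiteness, but it is not needed; note that $\langle X\rangle$ itself need not be finite.)

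The only genuinely model-theoretic input is the stabilisation of the chain $(Y_k)$: in a fixed $\omega$-categorical structure there is no a priori bound on the number of generation steps — a cyclic $2$-group $\mathbb Z/2^n\mathbb Z$, generated by one element, already needs $\sim n$ steps — so it is Ryll--Nardzewski that forces termination. Apart from that, the hard part is purely bookkeeping: verifying that the closure operations, in particular those quantifying over all $b\in B$ that are needed for $\lambda$-invariance and for additive and multiplicative normality, preserve $\bar b$-definability, and that a fixed point of the chosen process is honestly an object of the intended kind, so that the fixed-point condition $\theta(\bar y)$ transfers the whole statement along $\tp(\bar b)$.
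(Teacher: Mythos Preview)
Your proof is essentially correct and takes a genuinely different route from the paper. The paper's argument is much shorter: it simply observes that the generated substructure is invariant under every automorphism of $B$ fixing $\bar b$, hence (by the Ryll--Nardzewski characterisation, applied in the $\bar b$-expanded language) is $\bar b$-definable; uniformity then follows because the defining formula depends only on $\tp(\bar b)$ and there are only finitely many such types. Your approach is more constructive: you build the generated object stage by stage and use Ryll--Nardzewski to force the chain $(Y_k)$ to stabilise. This has the advantage of producing an explicit defining formula, and makes the proof self-contained without appealing to the automorphism description of definability.

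There is, however, a small gap in your uniformity argument. You show that the chosen index $k_0$ works for every $\bar b'\models\tp(\bar b)$, but the statement asks for a single formula $\psi(x,\bar y)$ that works for \emph{all} parameter tuples $\bar b$, not just those of one type. The fix is immediate and fits naturally into your framework: the formulas $Y_k(x,\bar y)$ are parameter-free and satisfy $\forall x\,\forall\bar y\,(Y_k\to Y_{k+1})$, so they form an increasing chain of $(1+|\bar y|)$-formulas; applying Ryll--Nardzewski to formulas in $(x,\bar y)$ (rather than to $\bar b$-definable subsets of $B$) gives a $k_0$ with $Y_{k_0}\leftrightarrow Y_{k_0+1}$ as formulas, hence valid for every $\bar b$. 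Alternatively, one can do what the paper does and take the maximum $k_0$ over the finitely many types of $\bar y$-tuples. Either way the detour through $\theta(\bar y)$ becomes unnecessary.
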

\begin{proof} Since $X$ is $\bar b$-definable, the skew sub-brace/(strong) left ideal/ideal generated by $X$ are stabilized by all automorphisms of $B$ fixing $\bar b$ pointwise. So by $\omega$-categoricity they are $\bar b$-definable. The formula defining them only depends on the type $\tp(\bar b)$.  By $\omega$-categoricity again there are only finitely many possibilities for $\tp(\bar b)$, so the result follows.
\end{proof}

Of course, if $X$ is finite, the skew sub-brace generated by $X$ is again finite, uniformly in the size of $X$.

It follows in particular from Theorem \ref{bracegen} that $\Gamma_n(B)$, $\Delta_n(B)$, $B^{(n)}$ and $B^n$ are $\emptyset$-definable for all $n<\omega$, and the sequences become stationary after finitely many steps.

\begin{cor} A definably simple $\omega$-categorical skew brace $B$ is simple.
\end{cor}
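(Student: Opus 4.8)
The plan is to argue by contradiction, in exact parallel with the earlier proof that a superstable type-definably simple skew brace is simple. Suppose $B$ is \emph{not} simple; then it admits a proper non-zero ideal $I\trianglelefteq B$. The goal is to manufacture from $I$ a proper non-zero \emph{definable} ideal, which will contradict definable simplicity.

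First I would pick a single nonzero element $b\in I$ and consider the singleton $\{b\}$, which is visibly $b$-definable. By Theorem \ref{bracegen} (applied with $\bar b=b$ and $X=\{b\}$), the ideal $(b)$ generated by $X$ is $b$-definable. Next I would record the chain of inclusions $0\neq b\in(b)\subseteq I\subsetneq B$: the membership $b\in(b)$ is by construction, the inclusion $(b)\subseteq I$ holds because $(b)$ is the smallest ideal of $B$ containing $b$ while $I$ is one such ideal, and $I\subsetneq B$ since $I$ is proper. Consequently $(b)$ is a proper non-zero definable ideal of $B$, contradicting the hypothesis that $B$ is definably simple. Hence $B$ must be simple.

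The only substantive input is that the ideal generated by a definable (here even finite) set is again definable, and this is precisely the content of Theorem \ref{bracegen}, which itself rests on the Ryll--Nardzewski characterisation of $\emptyset$-definability in an $\omega$-categorical structure (after adding $b$ as a constant) as invariance under automorphisms. Granting that, there is no further obstacle; the corollary is essentially immediate. One should perhaps remark explicitly that ``definably simple'' is understood as admitting no proper non-zero ideal definable with parameters, which is the convention already in force in the preceding simplicity results.
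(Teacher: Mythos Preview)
Your argument is correct and is essentially the contrapositive of the paper's own proof: the paper observes directly that by Theorem \ref{bracegen} the ideal $(b)$ is definable for any $0\neq b\in B$, so definable simplicity forces $(b)=B$, whence $B$ is simple. The content and the key input (Theorem \ref{bracegen}) are identical.
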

\begin{proof}If $B$ is definably simple, then $(b)=B$ for any $0\not=b\in B$. But then $B$ is simple.\end{proof}

Let $B$ be a skew brace. Let $B_S$ be the set of $b\in B$ generating a minimal ideal $(b)$. Put $B_U=\{b\in B_S:(b)\mbox{ is trivial}\}$ and $B_V=\{b\in B_S:(b)\mbox{ is non-trivial}\}$, so $B_U\cup B_V=B_S$. Let $\mathcal{S}(B)=\{(b):b\in B_S\}$ be the set of all minimal ideals of $B$, $\mathcal{U}(B)=\{(b):b\in B_U\}$ the set of all trivial minimal ideals, and $\mathcal{V}(B)=\{(b):b\in B_V\}$ the set of all non-trivial minimal ideals. Let $S(B)$, $U(B)$ and $V(B)$ be the skew sub-braces generated by those in $\mathcal{S}(B)$, $\mathcal{U}(B)$ and~$\mathcal{V}(B)$, respectively. An easy application of Zorn's lemma yields that $S(B)$, $U(B)$ and~$V(B)$ are restricted direct sums of some of the minimal ideals in $\mathcal{S}(B)$, $\mathcal{U}(B)$ and~$\mathcal{V}(B)$, respectively; in particular, $U(B)$ is trivial.

In general, these ideals of $B$ are not at all first-order, but for $\omega$-categorical skew braces the situation is better.

\begin{theo}\label{sbdef}
Let $B$ be an $\omega$-categorical skew brace. Then $B_S$, $B_U$ and $B_V$ are $\emptyset$-definable subsets of $B$, and $S(B)$, $U(B)$ and $V(B)$ are $\emptyset$-definable.  
\end{theo}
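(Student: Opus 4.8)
The plan is to reduce everything to Theorem~\ref{bracegen}. First I would apply that theorem to the $b$-definable singleton $\{b\}$, which is defined by the formula $x=y$ with parameter $y$: it provides a single formula $\chi(x,y)$, not depending on the chosen element, such that $(b)=\{x\in B:B\models\chi(x,b)\}$ for every $b\in B$; in other words the assignment $b\mapsto(b)$ is uniformly definable. I would then translate the two relevant properties of $(b)$ into first-order conditions on $b$ via $\chi$. Triviality of the skew sub-brace $(b)$, i.e.\ addition agreeing with multiplication on it, is just $\forall u\,\forall v\,\bigl(\chi(u,b)\wedge\chi(v,b)\to u+v=u\circ v\bigr)$, since $(b)$ is closed under $+$ and $\circ$. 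For minimality I would use the elementary fact that a non-zero ideal $(b)$ is a minimal (non-zero) ideal if and only if $b\neq 0$ and every non-zero element of $(b)$ generates all of $(b)$: indeed, if $J$ is a non-zero ideal with $J\subseteq(b)$ and $0\neq z\in J$, then $(z)\subseteq J\subseteq(b)$, so $(z)=(b)$ forces $J=(b)$. Through $\chi$ this becomes $b\neq 0\wedge\forall z\,\bigl(\chi(z,b)\wedge z\neq 0\to\forall w\,(\chi(w,z)\leftrightarrow\chi(w,b))\bigr)$. Hence $B_S$ is $\emptyset$-definable, and then so are $B_U$ (those $b\in B_S$ with $(b)$ trivial) and $B_V=B_S\setminus B_U$.

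For the generated sub-braces, I would first observe that $\bigcup\mathcal{S}(B)=\{x\in B:\exists y\,(y\in B_S\wedge\chi(x,y))\}$ is $\emptyset$-definable, because $B_S$ is, and likewise for $\bigcup\mathcal{U}(B)$ and $\bigcup\mathcal{V}(B)$ using $B_U$ and $B_V$. By definition $S(B)$, $U(B)$ and $V(B)$ are the skew sub-braces generated by these three $\emptyset$-definable sets, so a second application of Theorem~\ref{bracegen}, this time to a $\emptyset$-definable set, shows that they are $\emptyset$-definable as well.

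I do not anticipate a serious obstacle: the heart of the matter is the uniform definability furnished by $\omega$-categoricity through Theorem~\ref{bracegen}, and everything else is first-order bookkeeping. The only point requiring a moment's care is the purely algebraic reformulation of ``$(b)$ is a minimal ideal'' in terms of elements of $(b)$ generating $(b)$, which is exactly what makes the condition expressible with the single formula $\chi$; once that is in place the remaining quantifier manipulations are routine. (Proceeding via explicit formulas rather than via $\operatorname{Aut}(B)$-invariance has the incidental advantage of keeping the argument valid for $\omega$-categorical $B$ of arbitrary cardinality.)
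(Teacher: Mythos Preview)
Your proposal is correct and follows essentially the same route as the paper: uniform definability of $(b)$ from Theorem~\ref{bracegen}, minimality expressed as $(a)=(b)$ for all non-zero $a\in(b)$, triviality as $x+y=x\circ y$ on $(b)$, and a second appeal to Theorem~\ref{bracegen} for the generated sub-braces. The only cosmetic difference is that the paper applies the second step directly to $B_S$, $B_U$, $B_V$ rather than to $\bigcup\mathcal S(B)$ etc., but since $\bigcup\mathcal S(B)=B_S\cup\{0\}$ this changes nothing.
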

\begin{proof}
By Theorem \ref{bracegen} the ideal $(b)$ is uniformly $b$-definable; it is minimal if $(a)=(b)$ for all $0\not=a\in(b)$. Hence $B_S$ is $\emptyset$-definable. But triviality of $(b)$ is definable by the formula $\forall\, x,y\in(b)\ x+y=x\circ y$. So $B_U$ is $\emptyset$-definable, as is $B_V=B_S\setminus B_U$. Finally $S(B)$, $U(B)$ and $V(B)$ are just the ideals (or in fact additive subgroups) generated by $B_S$, $B_U$ and $B_V$, respectively, which are $\emptyset$-definable by Theorem \ref{bracegen}.
\end{proof}

Let $B$ be a skew brace. We define the {\it ascending Loewy series} of $B$ as follows: $$S_0(B)=\{0\},\quad\mbox{and}\quad S_{n+1}(B)/S_n(B)=S\big(B/S_n(B)\big).$$ 
Clearly, in an $\omega$-categorical skew brace $S_n(B)$ is $\emptyset$-definable for all $n<\omega$ and there is $n$ such that $S_n(B)=S_{n+1}(B)$, i.e.\ $B$ has finite {\em Loewy length}. 

In order to obtain a more precise description of the factors of the ascending Loewy series in $\omega$-categorical skew braces, we need the following analogue of a theorem of Remak for groups.

\begin{lem}\label{remak}
Let $B$ be a skew brace, and let $I=\bigoplus_{i\in\mathcal{I}} I_i$ be the restricted direct sum of minimal ideals $I_i$ of $B$. If $J$ is any ideal of $B$ contained in $I$, then $I=J\oplus\bigoplus_{j\in\mathcal{J}}B_j$ for some $\mathcal{J}\subseteq\mathcal{I}$. 

Moreover, if all $I_i$ are non-trivial, then $J$ is actually the direct sum of certain of the $I_i$.
\end{lem}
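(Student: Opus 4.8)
The plan is to prove both assertions by combining a Zorn's lemma argument (to exhibit a complement as a sub-sum) with the minimality of the $I_i$ (to exclude partial complements), the non-triviality hypothesis entering only at the very end. For the first part, consider the family $\mathfrak Z$ of all $\mathcal K\subseteq\mathcal I$ for which the sum $J+\sum_{k\in\mathcal K}I_k$ is direct, equivalently $J\cap\bigoplus_{k\in\mathcal K}I_k=\{0\}$ (directness of the $I_k$-part being automatic). This condition is of finite character, since a nonzero witness has finite support, so $\mathfrak Z$ is closed under unions of chains and Zorn yields a maximal $\mathcal J\in\mathfrak Z$. Put $K=J\oplus\bigoplus_{j\in\mathcal J}I_j$; being a sum of ideals it is an ideal of $B$. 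If $K\neq I$, choose $i_0$ with $I_{i_0}\not\subseteq K$: then $K\cap I_{i_0}\trianglelefteq B$ is properly contained in the minimal ideal $I_{i_0}$, hence zero, so $\mathcal J\cup\{i_0\}\in\mathfrak Z$, contradicting maximality. Thus $K=I$.

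For the moreover part, set $\mathcal K=\{i\in\mathcal I:I_i\subseteq J\}$ and $K=\bigoplus_{k\in\mathcal K}I_k\subseteq J$; the goal is $J=K$. For $i\notin\mathcal K$ the intersection $I_i\cap J\trianglelefteq B$ is properly contained in $I_i$, hence zero, so the Dedekind modular law (applied to the additive groups) gives a decomposition $J=K\oplus J'$ with $J'=J\cap\bigoplus_{l\notin\mathcal K}I_l\trianglelefteq B$ satisfying $J'\cap I_l=\{0\}$ for all $l\notin\mathcal K$; it suffices to show $J'=\{0\}$. Since for each such $l$ both $J'$ and $I_l$ are ideals with $J'\cap I_l=\{0\}$, each of $[J',I_l]_+$, $[J',I_l]_\circ$ and $J'\ast I_l$ lies in $J'\cap I_l=\{0\}$; propagating this through the group- and $\ast$-distributivity identities shows that $J'$ centralises $\bigoplus_{l\notin\mathcal K}I_l$ additively, multiplicatively and for $\ast$, and in particular (as $J'\ast J'=\{0\}$ and $J'$ is additively abelian) $J'$ is a trivial skew brace.

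Now suppose $J'\neq\{0\}$ and pick $0\neq x\in J'$; it has finite nonempty support, so fix $l$ in it. The projection $\pi_l$ onto $I_l$ has kernel the ideal $\bigoplus_{k\neq l}I_k$ of $B$, so it induces a surjective skew brace homomorphism from the ideal $(x)\trianglelefteq B$ onto the nonzero ideal $\pi_l\big((x)\big)$ of $B/\bigoplus_{k\neq l}I_k$, which is contained in $I/\bigoplus_{k\neq l}I_k$; since $I_l$ is minimal, this last quotient is a minimal ideal of $B/\bigoplus_{k\neq l}I_k$ (again by Dedekind), so $\pi_l\big((x)\big)$ is all of it, i.e.\ isomorphic to $I_l$. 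But $(x)\subseteq J'$ is trivial, so its homomorphic image $\pi_l\big((x)\big)\cong I_l$ is trivial, contradicting the non-triviality of $I_l$. Hence $J'=\{0\}$ and $J=\bigoplus_{k\in\mathcal K}I_k$ is the direct sum of certain of the $I_i$.

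The routine-but-delicate step, and the one I expect to require the most care, is the bookkeeping in the last paragraph: that the image of an ideal under a skew brace quotient is again an ideal, and that minimality of $I_l$ descends to $I/\bigoplus_{k\neq l}I_k$. Both reduce to the Dedekind modular law for the additive groups together with the stability of the ideal property under sums, intersections and quotient-images; once these are in place the rest is a short verification.
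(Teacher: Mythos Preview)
Your proof is correct. The first half is essentially identical to the paper's: Zorn on the family of $\mathcal K$ with $J\cap\bigoplus_{k\in\mathcal K}I_k=\{0\}$, then minimality of each $I_i$ forces the maximal complement to exhaust $I$. The reduction step in the second half (factor out the $I_i$ lying in $J$, observe that the remaining piece $J'$ has trivial intersection with every $I_l$, hence lies in the annihilator and is a trivial brace) also matches the paper.

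Where you diverge is in the final contradiction. You project onto a single coordinate $I_l$, check that minimality of $I_l$ survives in the quotient $B/\bigoplus_{k\neq l}I_k$, and conclude that the image of the trivial ideal $(x)$ is all of $I_l$, contradicting non-triviality of $I_l$. The paper instead reuses the first part of the lemma: having shown $J'$ trivial, it writes $I=J'\oplus M$ with $M=\bigoplus_{j\in\mathcal J}I_j$ a sub-sum, so $J'\cong I/M\cong\bigoplus_{i\notin\mathcal J}I_i$; since each summand is non-trivial and $J'$ is trivial, the index set must be empty and $J'=\{0\}$. This avoids the quotient bookkeeping you flag as delicate (descent of minimality, images of ideals) and gives a cleaner one-line finish. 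Your route works, but the paper's shortcut of feeding the already-proved complement statement back into itself is worth noting.
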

\begin{proof}
If $I=J$, the result is obvious. Assume $I\neq J$, so there is some $j\in\mathcal{I}$ with $J\cap I_j=\{0\}$. By Zorn's Lemma there is a maximal subset $\mathcal{J}\subset\mathcal{I}$ such that $J\cap\bigoplus_{j\in\mathcal{J}} I_j=\{0\}$. Then $I_i\subseteq J\oplus \bigoplus_{j\in\mathcal{J}} I_j$ for all $i\in\mathcal{I}$, whence $I=J\oplus \bigoplus_{j\in\mathcal{J}} I_j$. This proves the first half of the statement. 

Now assume that each $I_i$ is non-trivial. We can clearly factor out any $I_i$ contained in $J$, assuming consequently that $J\cap I_i=\{0\}$ for all $i\in\mathcal{I}$. It follows that, for each $i\in\mathcal{I}$, $$[J,I_i]_+=[J,I_i]_\circ=J\ast I_i=\{0\}.$$ Therefore $$J\subseteq Z(I,+)\cap Z(I,\circ)\cap\Ker(\lambda_I)=\Ann(I),$$ where $\lambda_I$ is the obvious  restriction of $\lambda$ to the skew brace $I$, so $J$ is trivial. By the first part of the proof $I=J\oplus M$, where $M=\bigoplus_{j\in\mathcal{J}} I_j$. Then 
$$J\simeq (J\oplus M)/M\simeq\bigoplus_{i\in\mathcal{I}}I_i/\bigoplus_{j\in\mathcal{J}}I_j\simeq\bigoplus_{i\in\mathcal{I}\setminus\mathcal{J}}I_i$$
is a direct sum of non-trivial skew braces, and consequently $J\subseteq M$. Hence~\hbox{$J=\{0\}$} and the statement is proved.
\end{proof}

\begin{cor}
Let $B$ be a skew brace. Then $S(B)$ is the direct sum of $U(B)$ and $V(B)$.
\end{cor}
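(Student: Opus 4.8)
The plan is to read off the asserted direct decomposition from the restricted-direct-sum structure recorded just before the statement. First I would observe that a minimal ideal of $B$ is, by definition, either trivial or non-trivial, so $\mathcal S(B)=\mathcal U(B)\cup\mathcal V(B)$, the union being disjoint. Since the sum of two ideals of $B$ is again an ideal, $U(B)+V(B)$ is an ideal of $B$ containing every minimal ideal, hence containing the sub-brace $S(B)$ generated by them; and it is obviously contained in $S(B)$. Thus $S(B)=U(B)+V(B)$, which is the easy half.

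The real content is the directness, i.e.\ $U(B)\cap V(B)=\{0\}$. Set $J=U(B)\cap V(B)$, an ideal of $B$. By the Zorn-type remark following the definitions, $V(B)$ is a restricted direct sum $\bigoplus_{i\in\mathcal K}V_i$ of non-trivial minimal ideals of $B$, and $J\subseteq V(B)$; so Lemma~\ref{remak} --- the ``moreover'' clause, which applies precisely because all the $V_i$ are non-trivial --- tells us that $J$ is itself the direct sum of certain of the $V_i$. But $J\subseteq U(B)$, and $U(B)$ is trivial, being a restricted direct sum of trivial skew braces; triviality passes to sub-braces, so $J$ is a trivial skew brace. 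A direct sum of non-trivial minimal ideals can be trivial only if it is the empty sum, for otherwise a summand $V_i$ would be a non-trivial sub-brace of a trivial brace. Hence $J=\{0\}$, and $S(B)=U(B)\oplus V(B)$.

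The only real hurdle is this directness step, and it is handled entirely by Lemma~\ref{remak}; the points to watch are that the $V_i$ occurring inside $V(B)$ are minimal ideals \emph{of $B$} in the precise sense the lemma requires (they are, by the construction of $V(B)$) and that triviality of $U(B)$ is inherited by the ideal $J$ (it is, since being a trivial skew brace means that $+$ and $\circ$ coincide, a condition stable under passing to substructures). If one prefers to sidestep the ``moreover'' clause, one can instead apply the first part of Lemma~\ref{remak} to $J\trianglelefteq V(B)$, writing $V(B)=J\oplus\bigoplus_{j\in\mathcal J}V_j$, whence $J\cong\bigoplus_{i\in\mathcal K\setminus\mathcal J}V_i$ is forced to be trivial and therefore the complementary index set empty, giving the same conclusion.
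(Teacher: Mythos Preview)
Your proof is correct and follows essentially the same route as the paper's. The paper's one-line argument---that $U(B)\cap V(B)$ is ``simultaneously trivial and non-trivial, whence equals $\{0\}$''---is exactly your directness step compressed: triviality comes from $U(B)$, and non-triviality (if non-zero) comes implicitly from Lemma~\ref{remak} applied inside $V(B)$, which you make explicit. Your inclusion of the easy half $S(B)=U(B)+V(B)$ and the careful remarks on why the hypotheses of Lemma~\ref{remak} are met simply fill in details the paper leaves to the reader.
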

\begin{proof} $U(B)\cap V(B)$ is simultaneously trivial and non-trivial, whence equals $\{0\}$.\end{proof}

\begin{cor}\label{cornumber}
Let $B$ be a skew brace, and let $\bigoplus_{i\in\mathcal{I}}I_i=\bigoplus_{j\in\mathcal{J}}J_j$ be direct sums of non-trivial minimal ideals $I_i,J_j$ of $B$. Then there is a bijection $\sigma:\mathcal{I}\to\mathcal{J}$ such that $I_i=J_{\sigma(i)}$.
\end{cor}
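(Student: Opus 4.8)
The plan is to derive this directly from the ``moreover'' part of Lemma \ref{remak}. Write $I=\bigoplus_{i\in\mathcal I}I_i=\bigoplus_{j\in\mathcal J}J_j$. First I would fix $i\in\mathcal I$ and observe that $I_i$ is an ideal of $B$ contained in $I$, while $I$ is a restricted direct sum of the non-trivial minimal ideals $J_j$ of $B$. Hence Lemma \ref{remak} applies and gives that $I_i$ is the direct sum of certain of the $J_j$, say $I_i=\bigoplus_{j\in\mathcal J_i}J_j$ for some $\mathcal J_i\subseteq\mathcal J$.

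Next I would use minimality of $I_i$ to see that $\mathcal J_i$ is a singleton: it cannot be empty, since $I_i\neq\{0\}$, and it cannot have two or more elements, since then any single $J_j$ with $j\in\mathcal J_i$ would be a proper non-zero ideal of $B$ contained in $I_i$, contradicting minimality. So $\mathcal J_i=\{\sigma(i)\}$ and $I_i=J_{\sigma(i)}$, which defines a map $\sigma\colon\mathcal I\to\mathcal J$. By the symmetric argument, applying Lemma \ref{remak} with the roles of the two decompositions interchanged, there is a map $\tau\colon\mathcal J\to\mathcal I$ with $J_j=I_{\tau(j)}$ for all $j\in\mathcal J$.

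It then remains to check that $\sigma$ is a bijection with inverse $\tau$. For injectivity, if $\sigma(i_1)=\sigma(i_2)$ with $i_1\neq i_2$, then $I_{i_1}=I_{i_2}$; but in the direct sum $\bigoplus_i I_i$ distinct summands intersect trivially, i.e.\ $I_{i_1}\cap\sum_{i\neq i_1}I_i=\{0\}$, which forces $I_{i_1}=\{0\}$, a contradiction. For surjectivity, given $j\in\mathcal J$ we have $I_{\tau(j)}=J_j$ and also $I_{\tau(j)}=J_{\sigma(\tau(j))}$, so $J_j=J_{\sigma(\tau(j))}$; the same trivial-intersection argument applied to the $J$'s gives $j=\sigma(\tau(j))$. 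Thus $\sigma$ is onto with $\tau$ as a right inverse, hence a bijection, and $I_i=J_{\sigma(i)}$ for all $i$, as required.

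I do not expect a serious obstacle here, since the substantive content is carried entirely by Lemma \ref{remak}; the only points requiring a little care are verifying that each $I_i$ is genuinely an ideal of $B$ (so that the lemma is applicable with $J=I_i$ inside $I=\bigoplus_j J_j$) and the bookkeeping with the trivial-intersection property of direct summands used to pin down the indices.
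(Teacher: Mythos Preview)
Your proof is correct and follows essentially the same route as the paper: apply the ``moreover'' part of Lemma \ref{remak} to see each $I_i$ equals a single $J_{\sigma(i)}$ by minimality, then check injectivity and surjectivity of $\sigma$. The only cosmetic difference is that the paper dispatches surjectivity directly from the equality $\bigoplus_j J_j=\bigoplus_i J_{\sigma(i)}$ rather than invoking the symmetric map $\tau$, but the underlying trivial-intersection argument is the same.
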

\begin{proof}
By Lemma \ref{remak}, for each $i\le m$ there is $j=\sigma(i)\le n$ such that $I_i=J_{\sigma(i)}$ (since $I_i$ is minimal, it cannot be the sum of two or more $J_j$), and clearly $\sigma$ must be injective. As
$$\bigoplus_{j\in\mathcal{J}}J_j=\bigoplus_{i\in\mathcal{I}}I_i=\bigoplus_{i\in\mathcal{I}}J_{\sigma(i)},$$
$\sigma$ must be surjective as well.
\end{proof}

\begin{cor}\label{corloewy2}
Let $B$ be an $\omega$-categorical skew brace. Then $B$ has only finitely many non-trivial minimal ideals.
\end{cor}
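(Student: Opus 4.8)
The plan is to use the $\emptyset$-definability of $V(B)$ together with uniform local finiteness. First I record that, by the discussion preceding Theorem~\ref{sbdef} and by Corollary~\ref{cornumber}, the non-trivial minimal ideals of $B$ are exactly the summands $I_j$ in an essentially unique decomposition $V(B)=\bigoplus_{j\in\mathcal J}I_j$, so the assertion is that $\mathcal J$ is finite. Since for a fixed $m$ the property ``$B$ has at most $m$ non-trivial minimal ideals'' is first-order — the set $B_V$ is $\emptyset$-definable by Theorem~\ref{sbdef}, and ``$(x)=(y)$'' is $\emptyset$-definable because $(x)$ is uniformly $x$-definable by Theorem~\ref{bracegen} — the statement transfers between elementarily equivalent models, so I may assume $B$ is the countable model of its theory.

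Now suppose for contradiction that $\mathcal J$ is infinite. Each $I_j$ equals $(b_j)$ for any $0\neq b_j\in I_j$, and $(b_j)$ is uniformly $b_j$-definable; by $\aleph_0$-homogeneity of the countable $\omega$-categorical $B$, the isomorphism type of $I_j$ as a skew brace depends only on $\tp(b_j)$, so there are only finitely many isomorphism types among the $I_j$, and for each such type $T$ the condition ``$I_j\cong T$'' is $\emptyset$-definable on $b_j$ (a finite disjunction of the formulas isolating the relevant $1$-types). Hence by the pigeonhole principle there is a fixed non-trivial $\omega$-categorical skew brace $T$ with $I_j\cong T$ for all $j$ in some infinite $\mathcal K\subseteq\mathcal J$; and by Theorem~\ref{bracegen} the ideal generated by the corresponding $\emptyset$-definable set, namely $W:=\bigoplus_{j\in\mathcal K}I_j\cong T^{(\mathcal K)}$, is $\emptyset$-definable in $B$, hence again $\omega$-categorical.

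It then remains to prove the following, which is the heart of the matter: an infinite restricted direct power $T^{(\mathcal K)}$ of a non-trivial skew brace $T$ is never $\omega$-categorical. The approach is the skew-brace analogue of the fact that an infinite restricted direct power of a finite non-abelian (simple) group is not $\omega$-categorical. Using non-triviality one fixes inside $T$ suitable data — for instance an element $a_0$ with $\lambda_{a_0}\neq\mathrm{id}$, equivalently $a_0\ast T\neq\{0\}$ — whose coordinatewise effect on an element $b\in W$ ``marks'' precisely the coordinates in the (finite) support of $b$; from $b$ one then extracts a $\emptyset$-definable $\mathbb{N}$-valued invariant, built from iterated fixators, centralisers and socles (all $\emptyset$-definable, e.g.\ by Theorem~\ref{bracegen}), whose value is a strictly increasing function of the size of the support. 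Letting the support grow produces infinitely many pairwise inequivalent formulas in one free variable, contradicting the Ryll--Nardzewski theorem.

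The main obstacle is precisely this last step. The difficulty is that a minimal ideal of $B$ need only be $B$-simple, not simple as a skew brace, so $T$ may be decomposable (and a priori even infinite); one likely has to insert a preliminary reduction — replacing $W$ by a $\emptyset$-definable quotient or sub-brace obtained from the Remak-type Lemma~\ref{remak} (for example $V(W)$, or the bottom factor of the ascending Loewy series of $W$) — so as to reach the case where the building block is simple, while checking that infinitude of the index set, non-triviality and $\omega$-categoricity are all preserved. Producing the honestly $\emptyset$-definable, honestly unbounded invariant in that reduced situation, making essential use of non-triviality, is where the real work lies.
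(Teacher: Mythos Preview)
Your proposal is incomplete: you yourself identify the construction of the unbounded $\emptyset$-definable invariant as ``the real work'' and do not carry it out. The detour through isomorphism types, restricted direct powers $T^{(\mathcal K)}$, and a Loewy-type reduction to simple building blocks is moreover unnecessary, and the worry you raise at the end (that a minimal ideal of $B$ need not be simple) is a genuine obstruction to the fixator/socle approach you sketch.

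The paper's argument supplies exactly the missing invariant, and much more directly. Given $n$ distinct non-trivial minimal ideals $(c_0),\ldots,(c_{n-1})$, their sum is direct by Lemma~\ref{remak}; setting $b=\sum_{i<n}c_i$, the ideal $(b)$ is contained in $\bigoplus_{i<n}(c_i)$, and by the second clause of Lemma~\ref{remak} (which uses non-triviality) it must equal the full sum, since $b$ lies in no proper sub-sum. Now Corollary~\ref{cornumber} says this decomposition of $(b)$ into non-trivial minimal ideals is \emph{unique}, so the integer $n$ is recoverable from $b$ alone --- hence from $\tp(b)$. Ryll--Nardzewski then bounds $n$. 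There is no need to pass to isomorphism classes, no need for the summands to be simple, and no fixator or socle computation: the uniqueness in Corollary~\ref{cornumber} is precisely the place where non-triviality is converted into a type-invariant count.
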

\begin{proof}
Choose $n\in\omega$, and suppose $c_i\in B_V$ for $i<n$ are such that $(c_i)\not=(c_j)$ for $i\not=j$. Then the $(c_i)$ are distinct non-trivial minimal ideals in $B$, and by Lemma \ref{remak} we have $\bigoplus_{i<k}(c_i)\cap (c_k)=\{0\}$ for all $k<n$, so their sum is direct. Put $b=\sum_{i<n} c_i$, and let $J$ be the ideal generated by $b$. Then $J\leq \bigoplus_{i<n}(c_i)$, and since $b$ cannot lie in any proper sub-sum, we must have equality. Moreover, by Corollary \ref{cornumber} the ideals $(c_i)$ for $i<n$ are uniquely determined by $b$ (up to permutation), as is $n$. In particular, tp$(b/\emptyset)$ implies that $(b)$ is the sum of $n$ elements of $B_V$ which generate distinct ideals.

By $\omega$-categoricity, $n$ must be bounded. Hence there can only be finitely many distinct non-trivial minimal ideals.
\end{proof}

In order to ensure the existence of minimal ideals, we have to add a tameness condition, NSOP.

\begin{lem}\label{lemmazoc}
Let $B$ be an $\omega$-categorical NSOP skew brace. If $I$ is a non-zero ideal of $B$, then there is a minimal ideal of $B$ contained in $I$.
\end{lem}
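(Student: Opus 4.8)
The plan is to argue by contradiction. Suppose the nonzero ideal $I$ contains no minimal ideal of $B$. I would then build an infinite strictly descending chain of principal ideals contained in $I$, and show that such a chain directly witnesses the strict order property, contradicting the hypothesis that $B$ is NSOP. Note that $\omega$-categoricity alone cannot do the job here --- it is consistent with infinite descending chains of uniformly definable sets (e.g.\ in $(\mathbb{Q},<)$) --- so the NSOP hypothesis has to enter in an essential way, which is exactly what this plan does.

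First I would pin down the relevant definability. By Theorem \ref{bracegen} the ideal $(b)$ generated by a single element $b$ is uniformly definable, the defining formula depending only on $\tp(b/\emptyset)$; since $\omega$-categoricity leaves only finitely many $1$-types over $\emptyset$, each isolated by a formula, these finitely many formulas amalgamate into a single formula $\varphi(x,y)$ with $(b)=\{a\in B:B\models\varphi(a,b)\}$ for every $b\in B$. Then the formula $\psi(x,y)$ given by $\forall z\,(\varphi(z,x)\to\varphi(z,y))$ is $\emptyset$-definable and expresses exactly $(x)\subseteq(y)$; it is visibly a pre-order on the definable set $B$.

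Next I would run the descent. Choose $0\neq b_0\in I$; since $I$ is an ideal containing $b_0$ we already have $(b_0)\subseteq I$. Given $b_n\neq 0$ with $(b_n)\subseteq I$, the ideal $(b_n)$ is nonzero and, by the contradiction hypothesis, not a minimal ideal of $B$, so it properly contains some nonzero ideal $J$; picking $0\neq b_{n+1}\in J$ gives $(b_{n+1})\subseteq J\subsetneq(b_n)\subseteq I$. Thus for every $n$ the formula $\psi(b_{n+1},b_n)$ holds while $\psi(b_n,b_{n+1})$ fails, so $\psi(b_i,b_j)$ holds precisely when $i\geq j$; this is an instance of the strict order property for $\psi$, the required contradiction. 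Hence $I$ contains a minimal ideal of $B$.

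The steps are all routine once the setup is in place; the only points needing a little care --- and the closest thing to an obstacle --- are the amalgamation of the finitely many type-indexed formulas for $(b)$ into one formula $\varphi$, and the elementary observation justifying the descent, namely that a nonzero principal ideal $(b_n)$ fails to be minimal exactly when it contains a proper nonzero ideal, in which case any $0\neq a$ in that ideal satisfies $(a)\subsetneq(b_n)$. I do not anticipate any deeper difficulty.
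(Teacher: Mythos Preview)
Your proposal is correct and follows essentially the same approach as the paper: both use $\omega$-categoricity (via Theorem \ref{bracegen}) to obtain a single formula defining the pre-order of inclusion on principal ideals, and then invoke NSOP to rule out an infinite strictly descending chain and hence produce a minimal element. The paper's proof is simply a terse two-line version of exactly what you have written out in detail.
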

\begin{proof}By $\omega$-categoricity there is a formula $\phi(x,y)$ expressing $(x)<(y)$. Clearly $\phi$ defines a partial pre-order; as it cannot have infinite chains, there must be minimal elements, generating minimal ideals.
\end{proof}

\begin{theo}\label{loewy2}
Let $B$ be an $\omega$-categorical NSOP skew brace. Then there is $n\in\omega$ such that $S_n(B)=B$.
\end{theo}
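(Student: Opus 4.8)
The plan is to exploit the fact, already observed above when the ascending Loewy series was introduced, that the terms $S_n(B)$ are $\emptyset$-definable and the chain $S_0(B)\le S_1(B)\le\cdots$ therefore becomes stationary; it will then remain only to rule out that it stabilises strictly below $B$, and this will come from the existence of minimal ideals (Lemma~\ref{lemmazoc}) applied to an appropriate quotient.

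First I would fix $n<\omega$ with $S_{n+1}(B)=S_n(B)$, which exists because the $S_i(B)$ are $\emptyset$-definable and, by the Ryll-Nardzewski theorem, there are only finitely many $\emptyset$-definable subsets of $B$ up to equivalence. Writing $S=S_n(B)$, I would then argue by contradiction: assume $S<B$ and pass to the quotient $\bar B=B/S$. Here I would invoke two preservation facts from Section~\ref{preliminaries}: since $S$ is a $\emptyset$-definable ideal, $\bar B$ is again $\omega$-categorical; and since $\bar B$ is interpretable in $B$, it is again NSOP, NSOP being inherited by interpretable structures (in particular by quotients by definable ideals). Also $\bar B\neq\{0\}$ because $S<B$.

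Now $\bar B$ is a non-zero ideal of itself, so Lemma~\ref{lemmazoc} yields a minimal ideal of $\bar B$, whence the skew sub-brace $S(\bar B)$ generated by the minimal ideals of $\bar B$ is non-zero. But by the definition of the Loewy series $S(\bar B)=S\big(B/S_n(B)\big)=S_{n+1}(B)/S_n(B)=\{0\}$, a contradiction; hence $S=S_n(B)=B$. I do not expect a genuine obstacle in this argument — it is essentially a packaging of Lemma~\ref{lemmazoc} with the stabilisation of a $\emptyset$-definable chain. The one point that needs a little care is the passage to $\bar B$, where both tameness hypotheses must be seen to descend: $\omega$-categoricity of $B/S$ is immediate from the preliminaries, while the NSOP-ness of $B/S$ relies on NSOP passing to interpretable structures; the real mathematical content sits upstream, in Lemma~\ref{lemmazoc}, which itself depends on ruling out the strict order property.
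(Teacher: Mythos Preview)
Your argument is correct and is essentially the same as the paper's: both stabilise the $\emptyset$-definable Loewy chain and then apply Lemma~\ref{lemmazoc} to the quotient $B/S_n(B)$ to derive a contradiction. You spell out more carefully that $B/S_n(B)$ inherits $\omega$-categoricity and NSOP, which the paper leaves implicit.
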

\begin{proof}
Let $n$ be the Loewy length of $B$. If $S_n(B)<B$ then the quotient $B/S_n(B)$ contains a minimal ideal by Lemma \ref{lemmazoc}, contradicting $S_{n+1}(B)=S_n(B)$.
\end{proof}

\begin{cor}
Let $B$ be a $\omega$-categorical locally annihilator nilpotent NSOP skew brace. Then $B$ is annihilator nilpotent.\end{cor}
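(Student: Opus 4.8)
The plan is to force the upper annihilator series to reach $B$. Each $\Ann_n(B)$ is a $\emptyset$-definable ideal, and an $\omega$-categorical structure has only finitely many $\emptyset$-definable subsets, so the increasing chain $\{0\}=\Ann_0(B)\le\Ann_1(B)\le\cdots$ stabilises, say at $\overline{\Ann}(B)=\Ann_N(B)$; hence it suffices to exclude $\overline{\Ann}(B)\neq B$. If $\overline{\Ann}(B)\neq B$, then $\bar B:=B/\overline{\Ann}(B)$ is non-zero, still locally annihilator nilpotent, $\omega$-categorical and NSOP (a quotient by a $\emptyset$-definable ideal), and $\Ann(\bar B)=\{0\}$ by choice of $N$. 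By Lemma \ref{lemmazoc} $\bar B$ has a minimal ideal $I\neq\{0\}$, and a contradiction will follow once we know that a minimal ideal of a locally annihilator nilpotent $\omega$-categorical NSOP skew brace lies in its annihilator. (One may equally run this as an induction on the Loewy length of $B$, finite by Theorem \ref{loewy2}: $B/S(B)$ has smaller Loewy length, hence is annihilator nilpotent, so $\Gamma_m(B)\le S(B)$ for some $m$; since each step of the lower annihilator series lowers the annihilator level by one, it then suffices to know $S(B)\le\overline{\Ann}(B)$, i.e.\ that minimal ideals are annihilator-central. Either way the same point is needed.)

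So the crux is: \emph{a minimal ideal $I$ of a locally annihilator nilpotent, $\omega$-categorical, NSOP skew brace $B$ satisfies $I\subseteq\Ann(B)$.} I would argue as follows. By Theorem \ref{bracegen}, $I=(a)$ is definable for every $0\neq a\in I$, so $I$ inherits all three tameness properties. The ideal $J\trianglelefteq B$ generated by $[I,B]_+\cup[I,B]_\circ\cup(I\ast B)\cup(B\ast I)$ is contained in $I$, hence $J=\{0\}$ or $J=I$ by minimality. If $J=\{0\}$ then $I\ast B=\{0\}$, $[I,B]_+=\{0\}$ and $[I,B]_\circ=\{0\}$, so $I\subseteq\Ker\lambda\cap Z^+(B)\cap Z^\circ(B)=\Ann(B)$, as desired. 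It remains to exclude $J=I$, that is, that $I$ is ``annihilator-perfect relative to $B$''. For this I would use that local annihilator nilpotency makes every finitely generated sub-brace annihilator nilpotent of class bounded in terms of the number of generators (uniform local finiteness), that by Corollary \ref{corloewy2} there are only finitely many non-trivial minimal ideals (so the non-trivial case is governed by a finite configuration), and that NSOP forbids infinite chains in any definable partial pre-order, in order to produce a non-zero ideal of $B$ inside $I$ on which $\ast$ and both commutators from $B$ act trivially — contradicting $J=I$.

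The main obstacle is precisely this last reduction. In the stable analogue the strict growth of the annihilator series (Lemma \ref{anncdiversozero}) rests on the chain condition Fact \ref{f:icc} for uniformly definable subgroups, which is not available under NSOP alone; the work therefore lies in replacing that chain condition by a combination of $\omega$-categoricity (finitely many $\emptyset$-definable sets, uniform local finiteness, and $\emptyset$-definability together with eventual stationarity of the series $\Gamma_n$ and $\Ann_n$) and NSOP (absence of infinite definable chains), so as to upgrade the purely local control given by annihilator nilpotency of finitely generated sub-braces — whose annihilators need not sit inside the global annihilator — to the global statement that minimal ideals are central. Once that is in hand, the remainder is routine bookkeeping with the $\emptyset$-definable annihilator series.
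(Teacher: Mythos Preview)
Your overall architecture is exactly the paper's: reduce to showing that minimal ideals of a locally annihilator nilpotent skew brace lie in the annihilator (equivalently, $S_n(B)\subseteq\Ann_n(B)$), then invoke the finite Loewy length from Theorem~\ref{loewy2}. You even spell out the Loewy-induction variant, which is precisely what the paper does.

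The gap is in the step you yourself flag as ``the main obstacle''. You do not actually prove that a minimal ideal $I$ satisfies $I\subseteq\Ann(B)$; the paragraph proposing to combine uniform local finiteness, Corollary~\ref{corloewy2}, and NSOP to rule out $J=I$ is a list of ingredients, not an argument. In particular, uniform local finiteness only bounds the annihilator class of an $n$-generated sub-brace by some $f(n)$, which gives no uniform bound and hence does not feed into Lemma~\ref{l:locbddannnilp}; and there is no definable pre-order in sight on which to deploy NSOP. Your diagnosis that one must replace the stable chain condition by an NSOP/$\omega$-categorical substitute is also off target.

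The paper closes this gap by citing \cite[Theorem~4.6]{BEFPT23}: this is a \emph{purely algebraic} result (no $\omega$-categoricity, no NSOP) yielding $S_n(B)\subseteq\Ann_n(B)$ for every locally annihilator nilpotent skew brace. The model-theoretic hypotheses enter only through Theorem~\ref{loewy2}, to force $S_m(B)=B$ for some $m$. So the missing idea is not a clever use of tameness but an external algebraic fact about minimal ideals in locally annihilator nilpotent skew braces; once you have it, the proof is two lines.
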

\begin{proof}
It follows from \cite[Theorem 4.6]{BEFPT23} (and induction) that $S_n(B)\subseteq\Ann_n(B)$ for every $n\in\omega$. Then $B=S_m(B)=\Ann_m(B)$ for some $m\in\omega$ by Theorem \ref{loewy2}.
\end{proof}

\begin{theo}\label{compositionseries}
Let $B$ be an $\omega$-categorical NSOP skew brace. Then there is a finite chain of $\emptyset$-definable skew sub-braces $$\{0\}=A_0\triangleleft A_1\triangleleft\cdots\triangleleft A_n=B$$ such that:
\begin{enumerate}[label=$(\arabic*)$]
    \item $A_i$ is an ideal of $A_{i+1}$, for each $0\leq i<n$;
    \item $A_{i+1}/A_i$ is the restricted direct sum of simple skew braces, which are either all trivial or all non-trivial; in the latter case the direct sum is finite.
\end{enumerate}
\end{theo}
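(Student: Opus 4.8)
The plan is to obtain the chain by refining the ascending Loewy series. By Theorem~\ref{loewy2} there is $m\in\omega$ with $S_m(B)=B$, and (as noted after the definition of the Loewy series, using Theorem~\ref{sbdef} and the $\omega$-categoricity of quotients by definable ideals) each $S_i(B)$ is $\emptyset$-definable. Write $\bar B_i=B/S_i(B)$; this is again $\omega$-categorical and NSOP, and $S_{i+1}(B)/S_i(B)=S(\bar B_i)$. By the corollaries to Lemma~\ref{remak} we have $S(\bar B_i)=U(\bar B_i)\oplus V(\bar B_i)$, where $U(\bar B_i)$ and $V(\bar B_i)$ are the ideals generated by the trivial, respectively non-trivial, minimal ideals of $\bar B_i$; both are $\emptyset$-definable by Theorem~\ref{sbdef}, so their preimages $S_i(B)\le T_i\le S_{i+1}(B)$ in $B$ are $\emptyset$-definable ideals of $B$. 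Interposing these $T_i$ in the Loewy series produces a finite chain $\{0\}=A_0\triangleleft A_1\triangleleft\cdots\triangleleft A_n=B$ of $\emptyset$-definable ideals of $B$ (in particular each $A_j$ is an ideal of $A_{j+1}$), whose successive factors are, up to isomorphism, the skew braces $U(\bar B_i)$ and $S(\bar B_i)/U(\bar B_i)\cong V(\bar B_i)$.

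It then remains to see that each such factor $F$ --- a restricted direct sum $\bigoplus_{j\in\mathcal J}I_j$ of minimal ideals $I_j$ of $\bar B_i$, all trivial or all non-trivial, and with $\mathcal J$ finite in the latter case by Corollary~\ref{corloewy2} --- is in fact a restricted direct sum of \emph{simple} skew braces of the stated kind. The key observation is that such an $F$ is itself a semisimple skew brace, i.e.\ $F=S(F)$: each $I_j$ is not merely an ideal but a \emph{minimal} ideal of $F$, since an ideal $M\trianglelefteq F$ with $M\subseteq I_j$ is, by Lemma~\ref{remak} applied inside $F=\bigoplus_{k}I_k$, a direct summand $F=M\oplus\bigoplus_{k\in\mathcal K}I_k$, and intersecting with $I_j$ (using that the $I_j$ form an independent family) forces $M=\{0\}$ or $M=I_j$. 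Hence $F=\bigoplus_{j}I_j$ exhibits $F$ as the sum of its own minimal ideals.

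Finally, in a semisimple skew brace $F=\bigoplus_{j}I_j$ every minimal ideal $K$ is simple: by Lemma~\ref{remak} again $F=K\oplus F'$ for an ideal $F'$ of $F$, and since $\lambda$ acts coordinatewise on a direct sum of skew braces, every ideal of $K$ is normal (additively and multiplicatively) and $\lambda$-invariant in $F$, hence an ideal of $F$; minimality of $K$ then leaves only $\{0\}$ and $K$. Thus $F=\bigoplus_j I_j$ is a restricted direct sum of simple skew braces. If $F=U(\bar B_i)$, these summands are sub-braces of the trivial skew brace $F$, hence all trivial; if $F=V(\bar B_i)$, then by the ``moreover'' part of Lemma~\ref{remak} the minimal ideals of $F$ are exactly the (non-trivial) $I_j$, so all summands are non-trivial, and there are finitely many of them by Corollary~\ref{corloewy2}. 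This gives conditions $(1)$ and $(2)$.

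I expect the only genuine subtlety to be the two uses of the Remak-type Lemma~\ref{remak} in the last two paragraphs: recognising the Loewy factors $U(\bar B_i)$ and $V(\bar B_i)$ as honestly semisimple skew braces (so that their constituents are minimal ideals of the factor itself), and then extracting simplicity of those constituents via the direct-summand splitting together with the coordinatewise action of $\lambda$. Everything else --- finiteness of the chain, $\emptyset$-definability of every $A_j$ via Theorems~\ref{loewy2}, \ref{sbdef} and \ref{bracegen}, and the identification of the factors --- is bookkeeping.
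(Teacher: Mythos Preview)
Your chain of $\emptyset$-definable ideals is fine, but the argument that its factors decompose into \emph{simple} skew braces has a real gap. In paragraph~2 you want to show that each $I_j$ (a minimal ideal of $\bar B_i$) is a minimal ideal of $F=U(\bar B_i)$ or $V(\bar B_i)$, and you invoke Lemma~\ref{remak} ``applied inside $F=\bigoplus_k I_k$'' to split off an ideal $M\trianglelefteq F$. But the hypothesis of Lemma~\ref{remak} is precisely that the $I_k$ be minimal ideals of the ambient brace --- here $F$ --- which is the statement you are proving; the application is circular. Worse, the claim is false: since $F=\bigoplus_k I_k$ is an internal direct product of skew braces, the ideals of $F$ contained in $I_j$ are exactly the ideals of the skew brace $I_j$, so $I_j$ is a minimal ideal of $F$ iff $I_j$ is already simple. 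For a concrete failure, take $B$ to be the trivial skew brace on $G=A_5\wr C_2$. The unique minimal normal subgroup is $I_1=A_5\times A_5$, so $F=U(G)=I_1$; yet $A_5\times 1$ is a proper non-zero ideal of $I_1$, so $I_1$ is neither simple nor minimal in $F$. Your paragraph~3 is correct \emph{given} paragraph~2, but paragraph~2 does not stand.

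The paper sidesteps this by choosing a \emph{maximal} chain of $\emptyset$-definable sub-ideals rather than the Loewy refinement. Maximality forces each quotient $Q_i=A_{i+1}/A_i$ to have no proper non-zero $\emptyset$-definable ideal; since $U(Q_i)$ and $V(Q_i)$ are $\emptyset$-definable, one of them must equal $Q_i$. Thus $Q_i$ is the direct sum of its \emph{own} minimal ideals, Lemma~\ref{remak} applies legitimately inside $Q_i$, each minimal ideal splits off as a direct summand of $Q_i$, and your paragraph-3 argument then yields simplicity. The crucial point is that one must work with minimal ideals of the quotient $Q_i$ itself, not of the ambient $\bar B_i$; maximality of the chain is exactly what makes these coincide.
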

\begin{proof}
By $\omega$-categoricity, there is a maximal chain $\{0\}=A_0\triangleleft A_1\triangleleft\cdots\triangleleft A_n=B$ such that $A_i$ is an $\emptyset$-definable ideal in $A_{i+1}$ for all $i<n$. Then any quotient $Q_i=A_{i+1}/A_i$ for $i<n$ is an $\omega$-categorical skew brace without any $\emptyset$-definable ideal, since otherwise we could refine the chain. Hence $U(Q_i)=Q_i$ or $V(Q_i)=Q_i$ by Theorem \ref{sbdef} and Lemma \ref{lemmazoc}. Thus $Q_i$ is the restricted direct sum of minimal ideals. But if $I$ is a minimal ideal of $Q_i$, by Lemma \ref{remak} applied to $Q_i$ there is an ideal $J$ of $Q_i$ such that $Q_i=I\oplus J$, so any ideal $I'$ of $I$ is an ideal of $Q_i$. By minimality $I'=I$, and $I$ is simple.

Finally, if $Q_i=U(Q_i)$, then $Q_i$ is trivial; if $Q_i=V(Q_i)$, then $Q_i$ is a direct sum of finitely many non-trivial simple braces by Corollary \ref{corloewy2}.
\end{proof}

Following \cite{Smok3}, we say that a skew brace $B$ is {\it strongly prime} if any kind of $\ast$-product of non-zero ideals is non-zero. An ideal $I$ of $B$ is {\it strongly prime} if $B/I$ is strongly prime as a skew brace. It has been proven in \cite{Smok3} that if $B$ is a finite brace, then the intersection $R$ of all strongly prime ideals of $B$ is the weakly soluble radical of $B$. Our next aim is to prove that this also holds for $\omega$-categorical NSOP skew braces.

Let $b$ be any element of a skew brace $B$. Following \cite{Smok3}, we define an {\em $N$-sequence} of elements of $B$ as a sequence $(b_i)_{i<\omega}$ in $B$ such that $b_{i+1}\in(b_i)_{(\alpha_i)}$ for every $i<\omega$ and for some positive integers $0<\alpha_1<\alpha_2<\cdots$ We say that an ideal $I$ of $B$ is an {\em $N$-ideal} if every $N$-sequence of elements of $I$ reaches zero. 

\begin{lem}	Let $B$ be an $\omega$-categorical NSOP skew brace. If $I$ is an $N$-ideal of $B$, then $I$ is weakly soluble.\end{lem}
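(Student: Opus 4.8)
I would argue by contraposition: assuming $I$ is \emph{not} weakly soluble, I will construct an infinite $N$-sequence of nonzero elements of $I$, contradicting the hypothesis that $I$ is an $N$-ideal. Two definability observations from $\omega$-categoricity are needed. By Theorem~\ref{bracegen} the ideal $(b)$ generated by a single element $b$ is uniformly $b$-definable, and (using Ryll--Nardzewski, since in an $\omega$-categorical structure the additive subgroup generated by a definable set is again definable) so is every term $(b)_{(n)}$ of the weak derived series of the skew brace $(b)$. For fixed $b$ the descending chain $(b)=(b)_{(1)}\supseteq(b)_{(2)}\supseteq\cdots$ of $b$-definable sets therefore stabilises, and uniformly so --- there are only finitely many $1$-types --- at some step $N$; write $(b)^{\infty}:=(b)_{(N)}$, a ``perfect'' skew sub-brace with $\langle(b)^{\infty}\ast(b)^{\infty}\rangle_+=(b)^{\infty}$. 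Thus $(b)$ is weakly soluble if and only if $(b)^{\infty}=\{0\}$, and the set $W_N:=\{x\in B:(x)_{(N)}=\{0\}\}$ is $\emptyset$-definable; the analogous statements hold, relative to its own principal ideals, inside any $\emptyset$-definable skew sub-brace of $B$.

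The crux will be the following claim: an $\omega$-categorical NSOP skew brace $J$ in which every principal ideal $(b)_J$ is weakly soluble is itself weakly soluble. Granting a uniform bound $N$ on the derived lengths of the $(b)_J$ as above, $J$ is generated (as an ideal, equivalently additively) by the weakly soluble ideals $(b)_J$, which have derived length $\le N$. Since $J$ has finite Loewy length by Theorem~\ref{loewy2}, it suffices to show each factor $S\big(J/S_j(J)\big)$ of the ascending Loewy series is weakly soluble. Such a factor is a restricted direct sum of minimal ideals of $X:=J/S_j(J)$ (with only finitely many non-trivial summands, by Corollary~\ref{corloewy2}), and every minimal ideal $M$ of $X$ is weakly soluble: writing $X=\sum_b\overline{(b)_J}$ with $\overline{(b)_J}$ the image of $(b)_J$, either $M$ meets some $\overline{(b)_J}$ non-trivially --- hence, by minimality, $M\subseteq\overline{(b)_J}$, so $M$ is an ideal of the weakly soluble skew brace $\overline{(b)_J}$ and is weakly soluble --- or $M\cap\overline{(b)_J}=\{0\}$ for every $b$, in which case $M$ centralises each $\overline{(b)_J}$ for $+$, $\circ$ and $\lambda$, hence all of $X$, so $M\subseteq\Ann(X)\subseteq\Ker(\lambda)$ is a trivial skew brace. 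Either way $M$, and therefore each Loewy factor, is weakly soluble of bounded derived length, so $J$ is weakly soluble. The same argument, applied to an $\emptyset$-definable ideal of $B$ generated by a set of elements each generating a weakly soluble ideal, shows that $\langle W_N\rangle\trianglelefteq B$ is weakly soluble.

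With the claim in hand I would assemble the sequence. If every nonzero $b\in I$ generated a weakly soluble ideal, then $I\subseteq\langle W_N\rangle$, which is weakly soluble by the previous paragraph, so $I$ would be weakly soluble --- contrary to assumption. Hence there is a nonzero $b_0\in I$ with $(b_0)$ not weakly soluble. Inductively, given $b_i\neq0$ with $(b_i)$ not weakly soluble, the skew sub-brace $J:=(b_i)^{\infty}$ is definable over $b_i$, hence $\omega$-categorical and NSOP, and it is non-zero and perfect, hence not weakly soluble; by the contrapositive of the claim there is a nonzero $c\in J$ with $(c)_J$ not weakly soluble, and since $(c)_J$ is a skew sub-brace of the ideal $(c)\le B$ generated by $c$, the latter is not weakly soluble either. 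Taking $\alpha_{i+1}=N+i$ (a strictly increasing sequence of positive integers) we have $b_{i+1}:=c\in(b_i)_{(\alpha_{i+1})}=(b_i)^{\infty}$, and the construction continues, yielding an $N$-sequence $(b_i)_{i<\omega}$ of nonzero elements of $I$. This contradicts that $I$ is an $N$-ideal, so $I$ is weakly soluble.

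The main obstacle is the claim of the second paragraph --- in effect, that in the $\omega$-categorical NSOP setting the weakly soluble radical is itself weakly soluble. This is false for general skew braces, and my proposed proof of it genuinely uses the structure theory developed in this section (finite Loewy length, socles as restricted direct sums of simple ideals, finitely many non-trivial minimal ideals, and triviality of the annihilator). A secondary, pervasive nuisance is that $I$ need not be definable, which is why every step is routed through the $\emptyset$-definable objects $W_N$, $\langle W_N\rangle$, and $(b_i)^{\infty}$ rather than through $I$ directly.
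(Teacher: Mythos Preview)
Your argument is correct, but it takes a longer and structurally different route than the paper's. The paper's proof exploits a single reduction: using Theorem~\ref{loewy2} and Corollary~\ref{corloewy2} (together with Lemma~\ref{remak}) one argues that a non-weakly-soluble ideal $I$ must contain a non-weakly-soluble \emph{minimal} ideal $M$ of $B$. Once $M$ is in hand the $N$-sequence is immediate: pick $0\ne b_i\in M_{(i)}$, and minimality forces $(b_i)=M$, so $b_{i+1}\in M_{(i+1)}=(b_i)_{(i+1)}$ with $\alpha_i=i+1$. No iteration, no perfect cores, no auxiliary claim about the weakly soluble radical is needed.

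Your approach instead proves the stronger intermediate statement that in an $\omega$-categorical NSOP skew brace the ideal generated by all elements with weakly soluble principal ideal is itself weakly soluble --- essentially that the weakly soluble radical is weakly soluble --- and then builds the $N$-sequence by repeatedly descending into the perfect core $(b_i)^\infty$ and re-applying the claim there. This is more machinery, but it is self-contained and in fact establishes a result of independent interest (the paper uses exactly this in the very next theorem, but only after proving the present lemma). The trade-off is clear: the paper's proof is two lines once the reduction to a minimal ideal is granted, whereas your route avoids having to justify that reduction carefully but pays for it with the Loewy/Remak analysis of the claim and the recursive construction. Both arguments ultimately rest on the same structural input (finite Loewy length, finitely many non-trivial minimal ideals, Remak's splitting), just deployed at different points.

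One small cosmetic point: your indices $\alpha_{i+1}=N+i$ are off by one relative to the paper's convention $b_{i+1}\in(b_i)_{(\alpha_i)}$; take $\alpha_i=N+i$ instead.
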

\begin{proof}
By Theorem \ref{loewy2} and Corollary \ref{corloewy2}, we may assume $I$ is a non-soluble, minimal ideal of $B$. Since $I$ is not soluble, $I_{(i)}\neq\{0\}$ for all $i<\omega$ and we can choose $0\not=b_i\in I_{(i)}$; note that $(b_i)=I$ by minimality. Then $(b_i)_{i<\omega}$ is an $N$-sequence which does not reach zero, a contradiction.
\end{proof}

\begin{theo}
Let $B$ be an $\omega$-categorical NSOP skew brace. The intersection of all strongly prime ideals of $B$ coincides with the soluble radical of $B$.
\end{theo}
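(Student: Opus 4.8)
Write $R$ for the intersection of all strongly prime ideals of $B$; it is an ideal of $B$, and the claim is that $R$ equals the soluble radical. I would prove the two inclusions separately, in both cases through the $N$-sequence machinery of \cite{Smok3}. Recall from \cite{Smok3} that $R$ is the largest $N$-ideal of $B$; equivalently, $b\in R$ if and only if every $N$-sequence starting at $b$ eventually reaches $0$.

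For the inclusion of the soluble radical into $R$ it suffices to show that every soluble ideal of $B$ is an $N$-ideal. Let $I\trianglelefteq B$ be soluble, hence weakly soluble, say $I_{(k+1)}=\{0\}$, and let $(b_i)_{i<\omega}$ be an $N$-sequence of elements of $I$, so $b_{i+1}\in(b_i)_{(\alpha_i)}$ for some $0<\alpha_1<\alpha_2<\cdots$. Since $b_i\in I$ and $I\trianglelefteq B$ we have $(b_i)\subseteq I$, hence $(b_i)_{(\alpha_i)}\subseteq I_{(\alpha_i)}$ by monotonicity of the weak derived series under inclusion; once $\alpha_i\ge k+1$ this is $\{0\}$, so the sequence reaches $0$. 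Thus $I$ is an $N$-ideal, so $I\subseteq R$, and since $R$ is an ideal the soluble radical is contained in $R$.

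For the reverse inclusion I would show that $R$ is itself a soluble ideal. Being an $N$-ideal, $R$ is weakly soluble by the preceding lemma, so it remains to upgrade weak solubility to solubility. For this I would intersect $R$ with the $\emptyset$-definable ascending Loewy series, which terminates at $B$ after finitely many steps, say $m$, by Theorem \ref{loewy2}, obtaining a chain
$$\{0\}=R\cap S_0(B)\trianglelefteq R\cap S_1(B)\trianglelefteq\cdots\trianglelefteq R\cap S_m(B)=R$$
of ideals of $B$, hence of ideals of $R$. The $i$-th factor $(R\cap S_{i+1}(B))/(R\cap S_i(B))$ maps isomorphically, as a skew brace, onto an ideal of $B/S_i(B)$ contained in $S(B/S_i(B))$, the restricted direct sum of the minimal ideals of $B/S_i(B)$; by Remak's Lemma \ref{remak} it is a direct summand of that sum, and hence is itself a restricted direct sum of minimal ideals of $B/S_i(B)$, each of which --- being a subquotient of the weakly soluble ideal $R$ --- is weakly soluble. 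A non-trivial such minimal ideal would lie in $V(B/S_i(B))$, a finite direct sum of non-trivial simple skew braces by Corollary \ref{corloewy2} and the analysis in the proof of Theorem \ref{compositionseries}; but a non-trivial simple skew brace $S$ has $\langle S\ast S\rangle_+=S$ and so is not weakly soluble. Hence only trivial minimal ideals occur, every factor of the displayed chain is a trivial skew brace, and $R$ is soluble, so $R$ is contained in the soluble radical. Combining the two inclusions, $R$ equals the soluble radical.

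The main obstacle is exactly this final upgrade from weak solubility to solubility: the $N$-ideal characterisation of $R$ together with the preceding lemma only delivers weak solubility, and one genuinely needs the structure theory of $\omega$-categorical NSOP skew braces --- finite Loewy length (Theorem \ref{loewy2}), the Remak-type decomposition (Lemma \ref{remak}), and the finiteness of the set of non-trivial minimal ideals (Corollary \ref{corloewy2}) --- to conclude that the weakly soluble radical is assembled from trivial layers and hence is soluble. The delicate heart of that step is the claim that a weakly soluble minimal ideal is trivial, equivalently that the non-trivial minimal ideals arising in the socle layers $S(B/S_i(B))$ are simple skew braces; this is the same phenomenon already exploited in the reduction step of the preceding lemma, and care is needed because ``soluble'' (unlike ``weakly soluble'') is not closed under extensions, so controlling the layers only up to solubility would not suffice.
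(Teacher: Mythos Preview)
The paper's own proof identifies the ``soluble radical'' with the \emph{weakly} soluble radical: it explicitly sets $S$ to be the weakly soluble radical, shows $S\le R$ by a direct strong-primality argument, and shows $R\le S$ by proving (via a maximal-ideal argument, essentially reproving the relevant part of \cite{Smok3}) that $(b)$ is an $N$-ideal for every $b\in R$, hence weakly soluble by the preceding lemma. Your route is close in spirit---you also channel everything through $N$-ideals and the preceding lemma---but you cite \cite{Smok3} for the characterisation of $R$ as the largest $N$-ideal where the paper reproves it, and your first inclusion (soluble $\Rightarrow$ $N$-ideal $\Rightarrow$ contained in $R$) is a slightly different path than the paper's direct contradiction with strong primality.

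The substantive issue is your upgrade from weak solubility to solubility, which the paper does not attempt. Your argument hinges on the claim that a non-trivial minimal ideal of $B/S_i(B)$ is simple, and you justify this by invoking the proof of Theorem~\ref{compositionseries}. But that proof only works because each quotient $Q_i$ there has \emph{no} proper nonzero $\emptyset$-definable ideal, so $Q_i=S(Q_i)$ and every minimal ideal is a direct summand of $Q_i$ itself; then any skew-brace ideal of the summand is an ideal of $Q_i$, and minimality forces simplicity. In your situation $B/S_i(B)$ generally has many $\emptyset$-definable ideals, and a non-trivial minimal ideal $I$ is only a direct summand of $S(B/S_i(B))$, not of $B/S_i(B)$. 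A nonzero proper skew-brace ideal $I'\trianglelefteq I$ is then an ideal of $S(B/S_i(B))$, but there is no reason for it to be invariant under the $\lambda$-action or conjugation by elements of $B/S_i(B)$ outside $S(B/S_i(B))$, so you cannot conclude $I'\trianglelefteq B/S_i(B)$ and hence cannot invoke minimality. Without simplicity, the implication ``non-trivial minimal ideal $\Rightarrow$ not weakly soluble'' is unproven, and the upgrade collapses. If you only aim for the weakly soluble radical (as the paper does), this extra step is unnecessary and your argument goes through.
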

\begin{proof}
Let $R$ be the intersection of all strongly prime ideals of $B$, and $S$ the weakly soluble radical of $B$. 

If $S\not\le R$ there is a strongly prime ideal $I$ and $b\in S\setminus I$ such that $(b)+I/I$ is trivial. Then $((b)+I)\ast((b)+I)/I=I$, contradicting strong primality of $I$. Hence $S\le R$.

For the other direction consider $b\in R$; we show that $(b)$ is soluble. By the previous lemma we only need to show that $(b)$ is an $N$-ideal of $B$. Suppose by contradiction this is not the case, and let $(b_i)_{i<\omega}$ be an $N$-sequence of non-zero elements of $(b)$. Let $I$ be an ideal of $B$ which is maximal with respect to not containing any of the $b_i$'s. We show that $I$ is a strongly prime ideal of $B$.

So let $J_1,\ldots,J_m$ be ideals of $B$ properly containing $I$. By maximality of $I$, there is $j\in\omega$ such that $b_k\in J=J_1\cap\ldots\cap J_m$ for every $k>j$. Let $L$ be any $\ast$-product of the $J_1,\ldots,J_m$. Then there is $h\in\omega$ such that $L\supseteq J_{(h)}$. Now, $$b_{i+1}\in(b_i)_{(\alpha_i)}\subseteq J_{(\alpha_i)}\subseteq J_{(h)}\subseteq L$$ for every $i>h,j$. Since $b_{i+1}\notin I$, the quotient $(L+I)/I$ is non-zero.

It follows that $I$ is strongly prime and $R\le I$, contradicting $b_0\in R\setminus I$. Therefore $(b)$ is an $N$-ideal, whence weakly soluble, and $R\subseteq S$.\end{proof}

\subsection{$\omega$-categorical tame skew braces}
Recall that $\omega$-categorical skew braces are in particular uniformly locally finite. We shall start by studying locally finite skew braces.

\begin{lem}\label{sylowsubgroupsideals}
Let $B$ be a locally finite skew brace. If $(B,+)$ and $(B,\circ)$ are locally nilpotent, then the Sylow subgroups of $(B,+)$ are ideals of $B$, and $B$ is the restricted direct sum of these ideals.
\end{lem}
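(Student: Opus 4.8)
The plan is to show that for every prime $p$ the Sylow $p$-subgroup $B_p$ of $(B,+)$ — unique, since $(B,+)$ is periodic and locally nilpotent, and equal to the set of elements of $p$-power additive order — coincides with the analogous subgroup $B_p^\circ$ of $(B,\circ)$, and that this common set is an ideal of $B$. First I would record the routine half. Since $B$ is locally finite, both $(B,+)$ and $(B,\circ)$ are periodic, and a periodic locally nilpotent group is the restricted direct sum of its primary components; thus $(B,+)=\bigoplus_p B_p$ and, likewise, $(B,\circ)$ is the restricted direct sum of its primary components $B_p^\circ$. Because $B_p$ is characteristic in $(B,+)$, it is stabilised by every additive automorphism, in particular by every $\lambda_a$; being moreover additively normal, $B_p$ is a strong left ideal of $B$, hence in particular a subgroup of $(B,\circ)$.

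The crux is the identification $B_p=B_p^\circ$, for which I would pass to finite sub-braces. Fix $a\in B$ and let $A$ be the sub-skew-brace it generates; by local finiteness $A$ is finite, and $(A,+)$, $(A,\circ)$ are finite nilpotent groups of the common order $|A|$. Applying the previous paragraph inside $A$ — using that $\lambda_a$ restricts to an automorphism of $(A,+)$ for $a\in A$, since $\lambda_a^{-1}=\lambda_{a^{-1}}$ — the additive Sylow $p$-subgroup $A_p$ of $(A,+)$ is a strong left ideal of $A$, hence a subgroup of $(A,\circ)$; its order is the full power of $p$ dividing $|A|=|(A,\circ)|$, so $(A_p,\circ)$ is a Sylow $p$-subgroup of the nilpotent group $(A,\circ)$ and therefore its unique one, i.e.\ $A_p=A_p^\circ$. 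Letting $a$ range first over $B_p$ and then over $B_p^\circ$ yields $B_p=B_p^\circ$.

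Once $B_p=B_p^\circ$ is known, the lemma follows at once. The set $B_p^\circ$ is characteristic in $(B,\circ)$, hence multiplicatively normal, and together with the fact that $B_p$ is a strong left ideal this makes $B_p$ an ideal of $B$. For the direct-sum statement, $B=\bigoplus_p B_p$ holds both additively and multiplicatively; distinct components $B_p$ and $B_q$ commute additively and multiplicatively (being normal subgroups with trivial intersection), and $B_p\ast B_q\subseteq B_p\cap B_q=\{0\}$, since $B\ast B_q\subseteq B_q$ (as $B_q$ is a left ideal) while $B_p\ast B\subseteq B_p$ (as $B_p$ is an ideal). Hence $B$ is the restricted direct sum of the skew-brace ideals $B_p$.

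The step I expect to be the genuine obstacle is exactly this identification of the additive and multiplicative $p$-components. The point that makes it work is that a strong left ideal is simultaneously an additive and a multiplicative subgroup, so that in a finite sub-brace the additive $p$-component, having precisely the right cardinality, is forced to be the unique Sylow $p$-subgroup of the nilpotent multiplicative group; globalising is then merely a matter of testing membership one element at a time through the finite sub-brace it generates.
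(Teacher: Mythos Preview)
Your proof is correct and follows essentially the same approach as the paper: both recognise $B_p$ as a strong left ideal via characteristicity in $(B,+)$, then pass to finite sub-braces where the additive Sylow $p$-subgroup, being a multiplicative subgroup of the right order, must coincide with the unique Sylow $p$-subgroup of the nilpotent finite group $(F,\circ)$. The only cosmetic difference is that you package this local fact as the global identity $B_p=B_p^\circ$ and then invoke characteristicity in $(B,\circ)$, whereas the paper reads off multiplicative normality of $B_p$ directly from normality of $F_p$ in each finite $(F,\circ)$.
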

\begin{proof}
Let $p$ be a prime and $B_p$ be the Sylow $p$-subgroup of $(B,+)$. Then $B_p$ is additively characteristic, whence normal and $\lambda$-invariant, i.e.\ a strong left ideal. Let $F$ be any finite skew sub-brace of $B$. Then $F_p=B_p\cap F$ is the Sylow $p$-subgroup of $(F,+)$. Now, $F_p$ is also a skew sub-brace and its order is precisely that of the Sylow $p$-subgroup of $(F,\circ)$. Thus $F_p$ is even normal in $(F,\circ)$ and consequently $B_p$ is multiplicatively normal in $B$, whence an ideal. The fact that $B$ is the restricted direct sum of these ideals follows easily.
\end{proof}

An important group-theoretic condition is the chain condition on centralisers (\mc). It has strong implications for nilpotency considerations.
\begin{fact}\label{factlocnilp}Let $G$ be an \mc-group.\begin{enumerate}
\item The Fitting subgroup generated by all normal nilpotent subgroups is nilpotent \cite{bludov98}.
\item If $G$ is locally nilpotent, it is hypercentral \cite{bludov98}.
\item If $G$ is locally nilpotent and periodic, it is virtually nilpotent \cite[Theorem A]{Bryant79}.
\item If $G$ is locally finite of finite exponent, it is virtually nilpotent \cite{kegel}.
\end{enumerate}
By \cite[Lemma 3.8]{Baum59}, even without \mc, if $G$ is (nilpotent $p$ of finite exponent)-by-(finite $p$), it is nilpotent. It follows that a locally nilpotent \mc-group of finite exponent is nilpotent.
\end{fact}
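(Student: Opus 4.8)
Items $(1)$–$(4)$ and Baumslag's lemma are quoted from the literature, so the only thing requiring argument is the concluding sentence. Let $G$ be locally nilpotent, \mc, and of finite exponent $e$; the plan is to reduce to a single prime and then apply \cite[Lemma 3.8]{Baum59}.

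First I would use that $G$ is periodic, so that, being locally nilpotent, it is the restricted direct product of its primary components $G_p=\{g\in G:g\text{ has }p\text{-power order}\}$, which are characteristic subgroups. Since $G$ has finite exponent, only the finitely many primes dividing $e$ occur, so $G=G_{p_1}\times\cdots\times G_{p_k}$ is a \emph{finite} direct product; as a finite direct product of nilpotent groups is nilpotent, it suffices to show that each $G_{p_i}$ is nilpotent.

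Fix a prime $p=p_i$ and put $P=G_p$, a $p$-group of finite exponent. It is locally nilpotent, being a subgroup of $G$, and it is \mc: a strictly decreasing chain of centralizers $C_P(X_1)\supsetneq C_P(X_2)\supsetneq\cdots$ equals, writing $Y_j=X_1\cup\cdots\cup X_j$, the chain $\bigl(C_G(Y_j)\cap P\bigr)_j$ (since $C_P(Y_j)=\bigcap_{i\le j}C_P(X_i)=C_P(X_j)$), which forces $C_G(Y_1)\supsetneq C_G(Y_2)\supsetneq\cdots$, contrary to \mc\ in $G$. Being locally nilpotent and periodic, $P$ is virtually nilpotent by \cite[Theorem A]{Bryant79}; replacing a nilpotent subgroup of finite index by its normal core, $P$ has a nilpotent normal subgroup $N$ with $P/N$ finite, and as every element of $P/N$ has $p$-power order the finite group $P/N$ is a $p$-group. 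Hence $P$ is (nilpotent $p$ of finite exponent)-by-(finite $p$), so $P$ is nilpotent by \cite[Lemma 3.8]{Baum59}, and therefore $G$ is nilpotent.

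Apart from routine verifications, the step needing care is the reduction to a single prime: one must recall that a periodic locally nilpotent group is the direct product of its primary components (here a \emph{finite} product, because $e<\infty$), and one must check that the nilpotent normal subgroup supplied by \cite[Theorem A]{Bryant79} can be taken of $p$-power index, which is exactly what makes \cite[Lemma 3.8]{Baum59} applicable.
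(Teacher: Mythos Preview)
Your argument is correct. The paper does not spell out a proof of the final sentence; it simply records items (1)--(4) and Baumslag's lemma as ingredients and asserts that the conclusion follows, so your derivation is exactly the kind of argument the authors have in mind: pass to the finitely many Sylow components of the locally nilpotent group of finite exponent, apply Bryant's theorem (item (3)) to each to get (nilpotent $p$ of finite exponent)-by-(finite $p$), and then invoke \cite[Lemma 3.8]{Baum59}.
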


\begin{cor}Let $B$ be a skew brace such that $G(B)$ is \mc.
Then the ideal $F$ generated by all left nilpotent ideals of nilpotent type is again left nilpotent of nilpotent type.\end{cor}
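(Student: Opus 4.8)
The plan is to translate the statement, via Theorem~\ref{BsemiB}, into the purely group-theoretic fact that the Fitting subgroup of an \mc-group is nilpotent. First I would use the proposition that the sum of two left nilpotent ideals of nilpotent type is again such an ideal: it shows that the collection $\mathcal F$ of all left nilpotent ideals of nilpotent type of $B$ is closed under finite sums, hence is directed under inclusion. Consequently $F=\bigcup_{I\in\mathcal F}I$ is a \emph{directed} union of ideals of $B$, and a directed union of ideals is again an ideal ($\lambda$-invariance, additive and multiplicative normality and $F\ast B\subseteq F$ are all inherited from the members of the union, using directedness for the $\ast$-condition). Thus $F$ is precisely the ideal generated by all members of $\mathcal F$, and it remains to show that $F$ itself is left nilpotent of nilpotent type.

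The key move is to pass to design groups. Because the union defining $F$ is directed, any $(a,b)\in G(F)$ satisfies $a,b\in I$ for a single $I\in\mathcal F$ (choose $I\in\mathcal F$ containing both $a$ and $b$), so $(a,b)\in G(I)$; hence $G(F)=\bigcup_{I\in\mathcal F}G(I)$, once more a directed union. By Lemma~\ref{idealnormal} each $G(I)$ is a normal subgroup of $G(B)$, and by Theorem~\ref{BsemiB} each $G(I)$ is nilpotent, being the design group of a left nilpotent skew brace of nilpotent type. So each $G(I)$ is a normal nilpotent subgroup of $G(B)$ and therefore lies inside the Fitting subgroup $\operatorname{Fit}(G(B))$; taking the union, $G(F)\le\operatorname{Fit}(G(B))$.

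Now I would invoke the hypothesis exactly once: since $G(B)$ is \mc, its Fitting subgroup $\operatorname{Fit}(G(B))$ is nilpotent by Fact~\ref{factlocnilp}. A subgroup of a nilpotent group is nilpotent, so $G(F)$ is nilpotent, and applying Theorem~\ref{BsemiB} in the reverse direction to the skew brace $F$ shows that $F$ is left nilpotent of nilpotent type, as required.

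I do not anticipate a serious obstacle: the argument is carried by three ingredients, namely the previous proposition (which gives directedness of $\mathcal F$, hence the identity $G(F)=\bigcup_{I\in\mathcal F}G(I)$), Lemma~\ref{idealnormal} (which gives normality of each $G(I)$ in $G(B)$), and the \mc-hypothesis, which is essential precisely because in an arbitrary group the Fitting subgroup need not be nilpotent. The only points requiring a little care are the verification that the directed union $F$ is genuinely an ideal and that its design group is the corresponding directed union of the $G(I)$, but both are routine.
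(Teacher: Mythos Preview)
Your proof is correct and follows essentially the same route as the paper: normality of each $G(I)$ in $G(B)$ via Lemma~\ref{idealnormal}, nilpotency of each $G(I)$ via Theorem~\ref{BsemiB}, containment of $G(F)$ in the Fitting subgroup, nilpotency of the latter by Fact~\ref{factlocnilp}, and then Theorem~\ref{BsemiB} in reverse. The only difference is that you spell out, via directedness of $\mathcal F$, why $G(F)$ is the union of the $G(I)$ and hence lies in the Fitting subgroup, a step the paper leaves implicit.
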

\begin{proof} For any left nilpotent ideal $I$ of nilpotent type, the group $G(I)$ is normal in $G(B)$ by Lemma \ref{idealnormal} and nilpotent by Theorem \ref{BsemiB}, whence contained in the Fitting subgroup of $G(B)$. Hence $G(F)$ is contained in the Fitting subgroup, which is nilpotent by Fact \ref{factlocnilp}. It follows that $G(F)$ is nilpotent, so $F$ is left nilpotent of nilpotent type by Theorem \ref{BsemiB}.
\end{proof}

We shall say that a skew brace has {\em finite exponents} if both $(B,+)$ and $(B,\circ)$ have finite exponent.
\begin{theo}\label{theoleftnilp}
Let $B$ be a locally finite skew brace of finite exponents such that $G(B)$ is \mc. Then the following statements are equivalent:
\begin{enumerate}[label=$(\arabic*)$]
\item $B$ is left nilpotent of nilpotent type.
\item $G(B)$ is nilpotent.
\item $(B,+)$ and $(B,\circ)$ are locally nilpotent.
\end{enumerate}
\end{theo}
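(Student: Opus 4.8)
The plan is to prove the cycle $(2)\Rightarrow(3)\Rightarrow(1)\Rightarrow(2)$, since the implication $(1)\Leftrightarrow(2)$ is already Theorem \ref{BsemiB} and requires no further work; so really the only new content is $(2)\Rightarrow(3)$ and $(3)\Rightarrow(2)$ under the standing hypotheses. For $(2)\Rightarrow(3)$: if $G(B)$ is nilpotent, then both $(B,+)$ and $(B,\circ)$, being subgroups of $G(B)$ (the first as the base of the semidirect product, the second as a complement), are nilpotent outright, hence certainly locally nilpotent. So this direction is immediate and needs no tameness or local finiteness assumption.

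The substantive direction is $(3)\Rightarrow(2)$. First I would observe that $G(B)=(B,+)\rtimes_\lambda(B,\circ)$ is locally finite of finite exponent: it is a subgroup of — indeed is — an extension of two locally finite groups of finite exponent, and local finiteness plus finite exponent of the two factors give the same for $G(B)$. By hypothesis $G(B)$ is \mc. Now I want to deduce that $(B,+)$ and $(B,\circ)$ locally nilpotent forces $G(B)$ locally nilpotent. The natural route is to use Lemma \ref{sylowsubgroupsideals}: since $(B,+)$ and $(B,\circ)$ are locally nilpotent, the Sylow $p$-subgroups $B_p$ of $(B,+)$ are ideals of $B$ and $B$ is their restricted direct sum. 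Then $G(B)$ is correspondingly the restricted direct sum (internal direct product) of the subgroups $G(B_p)$, which are $p$-groups of finite exponent; each $G(B_p)$ is locally finite \mc\ (as a subgroup of an \mc-group) and locally nilpotent (being a locally finite $p$-group), hence nilpotent by the final sentence of Fact \ref{factlocnilp}. A restricted direct sum of $p$-groups of bounded exponent over distinct primes is itself nilpotent provided only finitely many primes occur, which holds here because $G(B)$ has finite exponent: only the finitely many primes dividing the exponent contribute. Therefore $G(B)$ is nilpotent.

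The one point that needs a little care is the reduction to Sylow subgroups being $G(B)$-invariant in the right way: I need that $G(B)$ decomposes as the direct product of the $G(B_p)$, which follows because each $B_p$ is an ideal (so $G(B_p)\trianglelefteq G(B)$ by Lemma \ref{idealnormal}), the $B_p$ additively generate $B$ with trivial pairwise intersections, and the multiplicative groups of the $B_p$ likewise generate $(B,\circ)$ as an internal direct product — this last is exactly what Lemma \ref{sylowsubgroupsideals} gives, since the restricted direct sum decomposition of $B$ as a skew brace means the multiplicative structure also splits accordingly. Alternatively, and perhaps more cleanly, one can bypass the Sylow decomposition: $(B,+)$ locally nilpotent \mc\ of finite exponent is nilpotent by Fact \ref{factlocnilp}, similarly $(B,\circ)$ is nilpotent, and then $G(B)$ is (nilpotent)-by-(nilpotent); but that alone does not give nilpotency of $G(B)$, so one still needs the $\lambda$-action to be controlled — which is precisely where local finiteness and \mc\ re-enter, via the hypercentrality / Fitting statements. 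I expect the main obstacle to be verifying that the $\lambda$-action is ``nilpotent enough'': the cleanest argument really is the prime-by-prime one, reducing to the single-prime case where Fact \ref{factlocnilp}'s last sentence (nilpotent-$p$-of-finite-exponent-by-finite-$p$ is nilpotent, and hence locally nilpotent \mc\ of finite exponent is nilpotent) applies to $G(B_p)$ directly, and then assembling finitely many nilpotent direct factors.
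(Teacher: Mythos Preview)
Your proposal is correct and follows essentially the same route as the paper: reduce via Lemma~\ref{sylowsubgroupsideals} to the Sylow ideals $B_p$, observe that $G(B_p)$ is a locally finite $p$-group (hence locally nilpotent) which is \mc\ and of finite exponent, apply Fact~\ref{factlocnilp} to get each $G(B_p)$ nilpotent, and then use that only finitely many primes occur to assemble $G(B)$. The paper spells out the local nilpotency of $G(B_p)$ by passing through a finite sub-brace $F$ generated by coordinates, whereas you invoke ``locally finite $p$-group $\Rightarrow$ locally nilpotent'' directly; these are the same observation.
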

\begin{proof}
The equivalence between (1) and (2) is Theorem \ref{BsemiB}. Clearly (2) implies (3).

Assume now that (3) holds. By Lemma \ref{sylowsubgroupsideals} the Sylow $p$-subgroups $B_p$ are ideals in $B$; as $B$ has finite exponents, there are only finitely many of them. Consider a finite subset $X$ of $G(B_p)$, and let $F$ be the finite skew sub-brace generated by the coordinate elements of pairs in $X$. Then $X\subseteq G(F)$ is a finite $p$-group, whence nilpotent. So $G(B_p)$ is locally nilpotent \mc\ of finite exponent, whence nilpotent. As the $B_p$ are ideals in $B$, we have $B_p\ast B_q\subseteq B_p\cap B_q=\{0\}$ and the $\lambda$-action of $B_p$ on $B_q$ is trivial for $p\not=q$, so $G(B)$ is nilpotent.
\end{proof}

A skew brace $B$ is {\it left nil} if for every $b\in B$ there is $n\in\omega$ such that $$b\ast_n b=\underbrace{b\ast(b\ast(\ldots \ast b)\ldots))}_{n\textnormal{ times }\ast}=0.$$ 
Smoktunowicz \cite{Smok} proved that if $B$ finite and $(B,+)$ is abelian, then $B$ is left nil iff it is left nilpotent. Using the previous result, we can extend Smoktunowicz' theorem.

\begin{cor}\label{corleftnil}
Let $B$ be a locally finite brace of finite exponents such that $G(B)$ is \mc. If $B$ is left nil, then it is left nilpotent.
\end{cor}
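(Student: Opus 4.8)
The plan is to reduce the statement to Theorem \ref{theoleftnilp} by showing that, under the hypotheses, being left nil forces $(B,+)$ and $(B,\circ)$ to be locally nilpotent; then condition $(3)$ of Theorem \ref{theoleftnilp} gives that $B$ is left nilpotent of nilpotent type, in particular left nilpotent. First I would observe that a finitely generated sub-brace $F$ of $B$ is finite (local finiteness), so it suffices to work inside an arbitrary finite sub-brace $F$ and show that $(F,+)$ and $(F,\circ)$ are nilpotent. Since $B$ is left nil, every $b\in F$ satisfies $b\ast_n b=0$ for some $n$; as $F$ is finite this $n$ can be taken uniform on $F$, so $F$ itself is left nil in the strong (uniform) sense.

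The key step is to pass from left nil to left nilpotent at the level of a single finite sub-brace. For finite braces (i.e.\ of abelian additive type) this is precisely Smoktunowicz' theorem \cite{Smok}: a finite left nil brace is left nilpotent. To reduce the skew case to the brace case I would first handle the additive group: by Lemma \ref{sylowsubgroupsideals} it is enough to treat the case where $(F,+)$ is a finite $p$-group, which is automatically nilpotent, so $(F,+)$ is nilpotent for free; the content is showing $(F,\circ)$ is nilpotent. For this, consider the $\lambda$-action and the $\ast$-operation: since $a\ast b=-a+a\circ b-b$, left nilness of the $\ast$-operation controls how far $\circ$ deviates from $+$. More precisely, I would show that the left ideal series $F=F^1\ge F^2\ge\cdots$ terminates — using that $F$ is finite, left nil, and that $F^{n+1}=\langle F\ast F^n\rangle_+$ — so that $F$ is left nilpotent; this is the heart of the argument and is exactly where Smoktunowicz' combinatorial analysis of iterated $\ast$-products is invoked (applied to the additive-abelian quotients, or to the $p$-components $F_p$ which are braces on a $p$-group, hence of abelian type after passing to $F_p/pF_p$ if needed — more simply, since $(F_p,+)$ need not be abelian we instead argue directly that left nilness plus finiteness forces the descending left-ideal chain to reach $0$).

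Once each finite sub-brace $F$ is left nilpotent with $(F,+)$ and $(F,\circ)$ nilpotent, it follows that $(B,+)$ and $(B,\circ)$ are locally nilpotent, so hypothesis $(3)$ of Theorem \ref{theoleftnilp} holds; that theorem then yields that $B$ is left nilpotent of nilpotent type, and in particular left nilpotent, as required. The main obstacle I anticipate is the reduction in the previous paragraph: Smoktunowicz' theorem is stated for braces (abelian additive group), and in a general skew brace of finite exponents the additive $p$-components are $p$-groups but not abelian, so one must either decompose via Lemma \ref{sylowsubgroupsideals} and argue that for a finite brace-like piece the left-nil-implies-left-nilpotent step still goes through, or give a self-contained finiteness argument showing the descending series $F^n$ stabilizes at $0$ whenever $F$ is finite and left nil. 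The cleanest route is probably: reduce to $F$ a finite $p$-brace via Lemma \ref{sylowsubgroupsideals}, note $(F,+)$ nilpotent, and then observe that left nilness makes the $\ast$-action of $F$ on itself a nil action on a finite additive group, so by a Fitting-type / Engel argument the left ideal series terminates, giving $(F,\circ)$ nilpotent as well.
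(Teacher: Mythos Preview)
Your overall strategy coincides with the paper's proof: take a finite sub-brace $F$, show it is left nilpotent using Smoktunowicz' theorem, conclude that $(F,\circ)$ is nilpotent (via Theorem~\ref{theoleftnilp} applied to $F$), deduce that $(B,\circ)$ is locally nilpotent, and then apply Theorem~\ref{theoleftnilp} to $B$ itself.

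However, you have misread the hypothesis. The corollary is stated for a \emph{brace}, not a skew brace: by the paper's conventions this means $(B,+)$ is abelian. Consequently every finite sub-brace $F$ is already a finite brace in Smoktunowicz' sense, and \cite[Theorem~12]{Smok} applies to $F$ directly. There is no ``skew case'' to reduce, no need for Lemma~\ref{sylowsubgroupsideals} here, and the entire discussion about $p$-components, abelianising $F_p/pF_p$, or giving a self-contained Engel/Fitting argument for the descending series is superfluous. The ``main obstacle'' you anticipate simply does not arise.

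Once this is corrected, your proof collapses to exactly the paper's three-line argument: $F$ finite sub-brace $\Rightarrow$ $F$ left nil $\Rightarrow$ $F$ left nilpotent (Smoktunowicz) $\Rightarrow$ $(F,\circ)$ nilpotent (Theorem~\ref{theoleftnilp}) $\Rightarrow$ $(B,\circ)$ locally nilpotent; since $(B,+)$ is abelian, condition~(3) of Theorem~\ref{theoleftnilp} holds and $B$ is left nilpotent.
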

\begin{proof}
Every finite sub-brace $F$ of $B$ is left nil, whence left nilpotent by \cite[Theorem 12]{Smok}. Thus $(F,\circ)$ is nilpotent by Theorem \ref{theoleftnilp}. So $(B,\circ)$ is locally nilpotent, and $B$ is left nilpotent by Theorem \ref{theoleftnilp} again.\end{proof}

It is well known that Fitting Theorem can be weakened assuming that only one of the subgroups is normal while the other is just subnormal: this can be easily proved by induction on the subnormal defect of the subgroup which is subnormal. As a consequence of Theorem \ref{theoleftnilp}, we have a similar result for left nilpotency of nilpotent type in locally finite skew braces of finite exponent such that $G(B)$ is \mc.
\begin{cor}\label{fittingsubnormal}
Let $B$ be a locally finite skew brace of finite exponents such that $G(B)$ is \mc. If $I$ is a left nilpotent ideal of nilpotent type and~$J$ is a left nilpotent sub-ideal of nilpotent type, then $I+J$ is left nilpotent of nilpotent type.
\end{cor}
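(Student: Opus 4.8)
The plan is to transport the problem into the design group $G(B)$ and then invoke the subnormal strengthening of Fitting's theorem recalled just before the statement. Throughout I will use Lemma~\ref{idealnormal}, which turns an ideal relation into a normality relation between design groups and identifies $G(I+J)$ with $G(I)\,G(J)$, together with Theorem~\ref{BsemiB}, which says that a skew brace is left nilpotent of nilpotent type if and only if its design group is nilpotent. Since $I\trianglelefteq B$, Lemma~\ref{idealnormal} gives $G(I)\trianglelefteq G(B)$, and since $I$ is left nilpotent of nilpotent type, $G(I)$ is nilpotent by Theorem~\ref{BsemiB}. Because $J$ is a sub-ideal of $B$, there is a chain $B=I_0\ge I_1\ge\cdots\ge I_n=J$ of skew sub-braces with $I_{k+1}\trianglelefteq I_k$ for every $k<n$; applying Lemma~\ref{idealnormal} inside each $I_k$ in turn yields $G(J)=G(I_n)\trianglelefteq G(I_{n-1})\trianglelefteq\cdots\trianglelefteq G(I_0)=G(B)$, so that $G(J)$ is a subnormal nilpotent subgroup of $G(B)$, nilpotency being furnished once more by Theorem~\ref{BsemiB}.

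With this in place, the weakened Fitting theorem --- the product of a normal nilpotent subgroup and a subnormal nilpotent subgroup is nilpotent --- applies directly in $G(B)$ and gives that $G(I)\,G(J)$ is a nilpotent subgroup. By the second assertion of Lemma~\ref{idealnormal} this subgroup coincides with $G(I+J)$, so the design group of the skew sub-brace $I+J$ is nilpotent; a final application of Theorem~\ref{BsemiB} then delivers that $I+J$ is left nilpotent of nilpotent type, which is the claim.

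The genuine content of the argument is entirely group-theoretic and is precisely the subnormal version of Fitting's theorem; this is the expected main obstacle, although the excerpt records it as known, with proof a short induction on the subnormal defect of the subgroup that is subnormal. It is also the only point at which the running hypotheses on $B$ are felt: the \mc-condition on $G(B)$, via Fact~\ref{factlocnilp}(1), is what ensures that the Fitting subgroups arising in that induction are genuinely nilpotent rather than merely locally nilpotent. The remainder is routine bookkeeping: one need only check that each link $I_{k+1}\trianglelefteq I_k$ of the sub-ideal chain really is an ideal relation between skew braces, so that Lemma~\ref{idealnormal} applies, and that $I+J$ is itself a skew sub-brace --- which it is because $I$ is an ideal, whence $I+J=J+I=I\circ J=J\circ I$ --- so that $G(I+J)$ makes sense.
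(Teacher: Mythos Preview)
Your proof is correct, and in fact it takes a slightly different---and cleaner---route than the paper's own argument. The paper applies the subnormal Fitting theorem twice, once in $(B,+)$ and once in $(B,\circ)$, concluding that both the additive and multiplicative groups of $I+J=I\circ J$ are nilpotent, and then invokes Theorem~\ref{theoleftnilp} (the equivalence $(1)\Leftrightarrow(3)$) to finish. You instead pass once to $G(B)$ via Lemma~\ref{idealnormal}, apply the subnormal Fitting theorem there, and finish with Theorem~\ref{BsemiB}.

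Your route actually proves more: Theorem~\ref{BsemiB} carries no standing hypotheses, so your argument yields the conclusion for an \emph{arbitrary} skew brace $B$, without local finiteness, finite exponents, or \mc\ on $G(B)$. The paper's route genuinely needs those hypotheses, because the implication $(3)\Rightarrow(1)$ of Theorem~\ref{theoleftnilp} does.

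One remark in your write-up is inaccurate, though: the subnormal Fitting theorem does \emph{not} require \mc. The induction on the defect goes through in any group: working inside $NM$, take $H\trianglelefteq NM$ with $M$ subnormal of defect one less in $H$; by the Dedekind modular law $H=(N\cap H)M$, which is nilpotent by induction, and then $NM=NH$ is nilpotent by the ordinary Fitting theorem. No Fitting subgroups (and hence no appeal to Fact~\ref{factlocnilp}(1)) are needed. So in fact your argument uses none of the corollary's hypotheses on $B$.
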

\begin{proof} Since $I$ (resp.\ $J$) is a nilpotent normal (resp.\ nilpotent subnormal) subgroup of $(B,+)$ and of $(B,\circ)$, we have that the additive and the multiplicative group of $I+J=I\circ J$ is nilpotent, so Theorem~\ref{theoleftnilp} yields that $I+J$ is left nilpotent.\end{proof}

Recall that if $\mathfrak X$ is a property, a group is virtually $\mathfrak X$ if it has a normal subgroup of finite index which is $\mathfrak X$. We shall call a skew brace $B$ {\it virtually} $\mathfrak X$ if it has an ideal of finite index which is $\mathfrak X$. 

If $\mathfrak X$ is closed under subgroups, then having an $\mathfrak X$ subgroup and having a normal $\mathfrak X$ subgroup are the same. However, it is not clear (and in fact we think it is unlikely in general) that a skew sub-brace of finite and equal indices contains an ideal of finite index; Theorem \ref{conncpt} shows this in the $\omega$-stable or stable $\omega$-categorical context. In fact, we do not even know (and do not think) that if $A$ is a skew sub-brace such that the additive or the multiplicative index is finite, the other index must be finite as well, or even that the indices agree if they are both finite. This obviously holds if $A$ contains an ideal of finite index; the next lemma shows it holds for locally finite skew braces.

\begin{lem} Let $B$ be a locally finite skew brace, and $A$ a skew sub-brace such that the additive or the multiplicative index is finite. Then the two indices agree.\end{lem}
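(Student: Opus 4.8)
The plan is to exploit local finiteness to push everything down to \emph{finite} skew sub-braces, where Lagrange's theorem forces the additive and multiplicative indices to coincide. Without loss of generality assume the additive index $n:=[B:A]_+$ is finite; the case of a finite multiplicative index is entirely symmetric, interchanging the roles of $+$ and $\circ$ throughout (the argument never uses the fact that the two operations behave differently). So it suffices to show $[B:A]_\circ=n$.

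The first step is a general remark about a finite skew sub-brace $F\le B$. Then $F\cap A$ is again a finite skew sub-brace of $B$, and the natural maps of coset spaces $F/(F\cap A)\to B/A$, taken either additively or multiplicatively, are injective; hence $[F:F\cap A]_+\le[B:A]_+$ and $[F:F\cap A]_\circ\le[B:A]_\circ$. On the other hand $F\cap A$ is a subgroup of each of the two finite groups $(F,+)$ and $(F,\circ)$, both of order $|F|$, so Lagrange's theorem gives $[F:F\cap A]_+=|F|/|F\cap A|=[F:F\cap A]_\circ$.

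With this in hand the two inequalities follow by choosing suitable generators for $F$. For $[B:A]_\circ\le n$: if this failed there would be $n+1$ elements of $B$ lying in pairwise distinct multiplicative cosets of $A$; by local finiteness the skew sub-brace $F$ they generate is finite, and since elements in distinct multiplicative cosets of $A$ lie in distinct multiplicative cosets of $F\cap A$ inside $F$ we get $[F:F\cap A]_\circ\ge n+1$, hence $[F:F\cap A]_+\ge n+1$, contradicting $[F:F\cap A]_+\le[B:A]_+=n$. For $[B:A]_\circ\ge n$: pick $n$ elements of $B$ in pairwise distinct additive cosets of $A$ and let $F$ be the finite skew sub-brace they generate; then $[F:F\cap A]_+\ge n$, so $[F:F\cap A]_\circ\ge n$, so $[B:A]_\circ\ge[F:F\cap A]_\circ\ge n$. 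Together these give $[B:A]_\circ=n=[B:A]_+$, as desired.

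I do not expect a genuine obstacle: the whole content is the reduction to finite sub-braces, which is exactly where local finiteness enters, and it is essential here — as noted just before the statement, for a general skew sub-brace of finite index on one side one does not even expect the other index to be finite.
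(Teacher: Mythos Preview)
Your proof is correct and follows essentially the same idea as the paper's: reduce to a finite skew sub-brace $F$ generated by coset representatives, where Lagrange forces the additive and multiplicative indices of $F\cap A$ in $F$ to agree. The only difference is cosmetic: the paper handles both inequalities at once by generating a single $F$ from representatives for \emph{both} coset spaces, whereas you split the argument into two separate choices of $F$.
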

\begin{proof} Suppose not. Let $\bar a$ be representatives for the smaller index, and $\bar b$ with $|\bar b|>|\bar a|$ some representatives for the bigger index. Consider the finite skew sub-brace $F$ generated by $\bar a,\bar b$. Since $F$ is finite, the skew sub-brace $A\cap F$ has the same additive and multiplicative index in $F$, which is $|\bar a|$ on one hand, but also $\ge|\bar b|$ on the other, a contradiction.\end{proof}

We now turn to $\omega$-categorical stable skew braces.
\begin{fact}[\cite{baur}]\label{wcatgp} An $\omega$-categorical connected $\omega$-stable group is abelian.\end{fact}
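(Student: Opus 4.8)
The plan is to derive this from the structure theory of $\omega$-categorical $\omega$-stable groups (the statement goes back to Baur, \cite{baur}), reducing the genuine content to a single point where that theory is really needed.

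First I would record the two global consequences of the hypotheses. Since $G$ is $\omega$-categorical it is uniformly locally finite, so $(G,\cdot)$ has finite exponent; and since $G$ is $\omega$-categorical and $\omega$-stable it has finite Morley rank, by the theorem of Cherlin, Harrington and Lachlan. Moreover an $\omega$-categorical $\omega$-stable group is nilpotent-by-finite (by the structure theory of such groups, \cite{baur}; see also Cherlin--Harrington--Lachlan); as $G$ is connected, the connected component of a nilpotent normal subgroup of finite index is a definable normal nilpotent subgroup of finite index in $G$, hence equals $G$. Thus $G$ is nilpotent.

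Next I would reduce to nilpotency class at most $2$, arguing by induction on the class. If the class of $G$ is at least $2$, then $G/Z(G)$ is again connected (being a quotient of a connected group), $\omega$-categorical and $\omega$-stable, and nilpotent of strictly smaller class, so abelian by the inductive hypothesis; hence $[G,G]\le Z(G)$ and $G$ has class at most $2$. So from now on assume $G$ is nilpotent of class at most $2$. A soft but crucial observation is that $[G,G]$ is then connected: for each $b\in G$ the map $x\mapsto[x,b]$ is a definable endomorphism of $G$ (using class $\le 2$, so that conjugation acts trivially on $[G,G]$), whence its image $[G,b]$ is a connected definable subgroup of $G$; and $[G,G]$ is generated by the family $\{[G,b]:b\in G\}$ of connected subgroups, all passing through the identity, hence is connected. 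Consequently, if $Z(G)$ is finite then $[G,G]\le Z(G)$ is a finite connected group, hence trivial, and $G$ is abelian. So I may assume $Z(G)$ is infinite; then $[G,G]\le Z(G)^0$, and the commutator induces a definable, non-degenerate, alternating bi-additive map $\beta: G/Z(G)\times G/Z(G)\to[G,G]$ between infinite connected $\omega$-categorical $\omega$-stable groups of finite exponent. The theorem is now equivalent to $[G,G]=\{0\}$, i.e.\ to the non-existence of such a $\beta$.

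The hard part is precisely this last step: showing that a connected $\omega$-categorical $\omega$-stable group cannot carry a non-degenerate alternating form. Here the full classification of $\omega$-categorical $\omega$-stable groups enters (equivalently, smooth approximability): such a connected group is coordinatised by, and in particular finite-to-one interpretable in, a module over a finite ring, hence is abelian, so no such $\beta$ exists when $G$ is connected. By contrast, the Heisenberg-type groups that do carry such a form always fail to be connected, the orthogonal complements $C_G(a)/Z(G)$ furnishing proper definable subgroups of finite index in $G/Z(G)$, hence in $G$. I expect this to be the only place where the tameness hypotheses are used beyond soft stable-group theory, and thus the main obstacle; everything preceding it is formal.
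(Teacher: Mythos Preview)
The paper does not prove this statement: it is stated as a \emph{Fact} with a citation to \cite{baur} (Baur--Cherlin--Macintyre) and used as a black box in the subsequent theorem. So there is nothing in the paper to compare your argument against.

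That said, your proposal has a genuine circularity at the key step. The reduction to nilpotency class $\le 2$ and the observation that $[G,G]$ is connected are fine, but in the final paragraph you write that ``such a connected group is coordinatised by\ldots\ a module over a finite ring, hence is abelian, so no such $\beta$ exists'': this is precisely the statement you are trying to prove. Moreover, the result you cite from \cite{baur} as ``nilpotent-by-finite'' is in fact \emph{abelian}-by-finite; once you grant that, connectedness finishes the proof in one line, and all your intermediate work on class-$2$ groups and alternating forms is superfluous. So either you are invoking the full result (making the argument circular), or you are invoking a genuinely weaker input, in which case the hard step---ruling out a non-degenerate alternating form on a connected $\omega$-categorical $\omega$-stable group---remains entirely undone. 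Your closing remark that $C_G(a)/Z(G)$ must have finite index is likewise unjustified without already knowing that $G/Z(G)$ is (essentially) a module over a finite ring, which is again the content of the theorem.
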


\begin{theo} An $\omega$-categorical $\omega$-stable skew brace is virtually trivial of abelian type; an $\omega$-categorical stable skew brace is virtually left nilpotent of nilpotent type.\end{theo}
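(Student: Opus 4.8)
The plan is to handle both cases by passing to the additive connected component $B^0$. By Theorem~\ref{conncpt} it is a definable ideal of \emph{finite} index in $B$ (the hypotheses ``$B$ $\omega$-stable'', resp.\ ``$B$ stable $\omega$-categorical'', give finite index) and it is simultaneously additively and multiplicatively connected. Being definable, $B^0$ is again $\omega$-categorical and $\omega$-stable (resp.\ stable), and so is its design group $G(B^0)=(B^0,+)\rtimes_\lambda(B^0,\circ)$, which is interpretable in $B^0$. The crucial structural observation is that $G(B^0)$ is a \emph{connected} group: the maps $a\mapsto(a,1)$ and $b\mapsto(0,b)$ embed the connected groups $(B^0,+)$ and $(B^0,\circ)$ as definable subgroups of $G(B^0)$, and since $(a,b)=(a,1)(0,b)$ these subgroups generate $G(B^0)$; as a group generated by connected definable subgroups is connected, $G(B^0)$ is connected.

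For the $\omega$-stable case, $G(B^0)$ is then an $\omega$-categorical connected $\omega$-stable group, hence abelian by Fact~\ref{wcatgp}. But abelianness of the semidirect product $(B^0,+)\rtimes_\lambda(B^0,\circ)$ means exactly that the $\lambda$-action is trivial, i.e.\ $\lambda_a=\operatorname{id}$ for all $a\in B^0$, so $a+b=a\circ b$ for all $a,b\in B^0$ and $B^0$ is a trivial brace; its additive group is a subgroup of the abelian $G(B^0)$, hence abelian. Thus $B^0$ is trivial of abelian type and is a finite-index ideal of $B$.

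For the stable case, $G(B^0)$ is stable, hence satisfies the chain condition on centralisers (\mc) by Fact~\ref{f:icc}, and it is $\omega$-categorical, hence locally finite of finite exponent. By Fact~\ref{factlocnilp}(4) it is virtually nilpotent, so by Fact~\ref{factlocnilp}(1) its Fitting subgroup $F=\operatorname{Fit}(G(B^0))$ is nilpotent; as $F$ contains the finite-index nilpotent normal subgroup witnessing virtual nilpotency, it has finite index. Now $F$ is characteristic in $G(B^0)$, and every automorphism of the skew brace $B^0$ induces an automorphism of $G(B^0)$ (it respects $\lambda_b(c)=-b+b\circ c$), so $F$ is invariant under all automorphisms of $B^0$; by $\omega$-categoricity $F$ is definable. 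A definable subgroup of finite index in the connected group $G(B^0)$ must be all of it, so $G(B^0)=F$ is nilpotent. By Theorem~\ref{BsemiB} the skew brace $B^0$ is left nilpotent of nilpotent type, and being a finite-index ideal of $B$ it witnesses that $B$ is virtually left nilpotent of nilpotent type.

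The step I expect to require the most care is precisely this reduction to $B^0$: the naive attempt to realise $\operatorname{Fit}(G(B))$ as $G(I)$ for an ideal $I\trianglelefteq B$ fails, since $\operatorname{Fit}(G(B))$ need not respect the additive/multiplicative splitting. Passing first to the connected $B^0$ is what makes the argument work, because connectedness of $G(B^0)$ collapses ``virtually nilpotent'' to ``nilpotent'' once one knows the Fitting subgroup is definable; verifying that definability (via $\omega$-categoricity and the lifting of brace automorphisms to $G(B^0)$) and the connectedness of $G(B^0)$ are the two routine-but-essential checks. In the $\omega$-stable case the only real content beyond Fact~\ref{wcatgp} is the translation ``$G(B^0)$ abelian $\Leftrightarrow$ $B^0$ a trivial brace''.
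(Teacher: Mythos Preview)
Your proof is correct and follows essentially the same route as the paper: pass to the finite-index ideal $B^0$, observe that $G(B^0)$ is connected and $\omega$-categorical, apply Fact~\ref{wcatgp} in the $\omega$-stable case and Fact~\ref{factlocnilp} in the stable case, then translate back via Theorem~\ref{BsemiB}. The only difference is that the paper compresses the stable step to ``connected locally finite \mc\ of finite exponent, whence nilpotent by Fact~\ref{factlocnilp}'', whereas you unpack this as virtually nilpotent $\Rightarrow$ Fitting subgroup definable (by $\omega$-categoricity) of finite index $\Rightarrow$ equal to $G(B^0)$ by connectedness; the content is the same.
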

\begin{proof} The connected component $B^0$ is a definable ideal in $B$ by Theorem \ref{conncpt}. The group $G(B^0)$ is definable, connected, and $\omega$-categorical. If $B$ is $\omega$-stable, so is $G$, whence abelian by Fact \ref{wcatgp}, and $B^0$ is trivial of abelian type. If $B$ is stable, so is $G$. Therefore $G$ is connected locally finite \mc\ of finite exponent, whence nilpotent by Fact \ref{factlocnilp}; it follows that $B^0$ is left nilpotent of nilpotent type by Theorem \ref{BsemiB}.\end{proof}

\section{$\pi$-nilpotency for skew braces of nilpotent type}\label{local-nilpotency}
Let $\pi$ be a set of primes, and $\pi'$ its complement. A periodic group $G$ is called {\em $\pi$-nilpotent of class $c$} if it has a normal $\pi'$-subgroup $N$ such that $G/N$ is a nilpotent $\pi$-group of class $c$. It is clear that a locally finite group is nilpotent of class $\le c$ iff it is $p$-nilpotent of class $\le c$ for all primes $p$. In this section we shall generalize this concept to skew braces.

\subsection{Left $\pi$-nilpotency}

A skew brace $B$ of nilpotent type is called {\em left $\pi$-nilpotent of class $\le d$} if $L_d(B,B_\pi)=\{0\}$, where $B_\pi$ is the Hall $\pi$-subgroup of $(B,+)$ and $L_d(X,Y)$ is defined recursively by 
$$L_0(X,Y)=\langle Y\rangle_+\qquad\mbox{and}\qquad L_{d+1}(X,Y)=\langle X\ast L_d(X,Y)\rangle_+.$$
Note that $L_n(B,B)=B^n$.

For an equivalent point of view, let us dualize the socle construction. If $B$ is a skew brace of nilpotent type, $I$ a strong left ideal and $G$ a multiplicative subgroup, define inductively a central series of additive normal subgroups by
$$F_I^0(G)=\{0\}\quad\mbox{and}\quad F_I^{n+1}(G)=\Fixr_I(G/F_n(G))\cap C_I^+(B/F_n(G)).$$
Then all $F_I^i(G)$ are stabilized by $N_B^\circ(G)$. In particular, $F_I^i(B)$ forms a central series of strong left ideals.

\begin{lem}\label{l:lpinilp} A skew brace $B$ of nilpotent type is left $\pi$-nilpotent iff $B_\pi\le F_B^{cd}(B)$, where $c$ is the nilpotency class of $(B,+)$ and $d$ the left $\pi$-nilpotency class of $B$.\end{lem}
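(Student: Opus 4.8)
The plan is to prove the two implications separately, the second being the substantive one.

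\smallskip
$(\Leftarrow)$ Suppose $B_\pi\le F_B^{cd}(B)$. Directly from the recursion defining the $F$-series, every $x\in F_B^{n}(B)$ satisfies $b\ast x=\lambda_b(x)-x\in F_B^{n-1}(B)$ for all $b\in B$, hence $\langle B\ast F_B^{n}(B)\rangle_+\subseteq F_B^{n-1}(B)$. Since $L_0(B,B_\pi)=\langle B_\pi\rangle_+=B_\pi\subseteq F_B^{cd}(B)$, an immediate induction gives $L_k(B,B_\pi)\subseteq F_B^{cd-k}(B)$ for all $k$, so $L_{cd}(B,B_\pi)\subseteq F_B^{0}(B)=\{0\}$ and $B$ is left $\pi$-nilpotent (of class $\le cd$, which is $\ge d$, so no contradiction with reading $d$ as the class).

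\smallskip
$(\Rightarrow)$ Assume $L_d(B,B_\pi)=\{0\}$. Set $W_i=B_\pi\cap Z^+_i(B)$ for $0\le i\le c$. As $B_\pi$ and the $Z^+_i(B)$ are additively characteristic, each $W_i$ is a strong left ideal, and $W_0=\{0\}$, $W_c=B_\pi$ (because $(B,+)$ has class $c$), $[B,W_i]_+\subseteq W_{i-1}$ and $B\ast W_i\subseteq W_i$. Recall also that every $L_j=L_j(B,B_\pi)$ is a $\lambda$-invariant left ideal with $L_j\subseteq B_\pi$, $L_{j+1}\subseteq L_j$ and $B\ast L_j\subseteq L_{j+1}$. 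In the spirit of Lemma~\ref{l:stronglyln}, define, for $1\le i\le c$ and $0\le j\le d$,
\[
I_{id-j}\ :=\ W_i\cap\bigl(L_j+W_{i-1}\bigr).
\]
At an overlap $n=id$ both prescriptions yield $W_i$, so $I_n$ is well defined for $0\le n\le cd$, and using $L_d=\{0\}$ at the bottom of each block and $L_0=B_\pi$, $W_c=B_\pi$ at the top, one obtains a chain $\{0\}=I_0\subseteq I_1\subseteq\cdots\subseteq I_{cd}=B_\pi$.

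\smallskip
The remaining checks are exactly those of Lemma~\ref{l:stronglyln}, and this bookkeeping is the only real obstacle. Each $I_n$ is a strong left ideal precisely because $W_i$ sits outermost: for $y\in I_n$ one has $a+y-a=[a,y]_++y$ with $[a,y]_+\in W_{i-1}$ (as $y\in W_i$), so $a+y-a\in W_{i-1}+(L_j+W_{i-1})=L_j+W_{i-1}$ and also $a+y-a\in W_i$; $\lambda$-invariance is immediate. For consecutive terms, the identity $b\ast(x+w)=b\ast x+x+b\ast w-x$ together with $B\ast L_j\subseteq L_{j+1}$, $B\ast W_{i-1}\subseteq W_{i-1}$ and additive normality of $W_{i-1}$ yields $B\ast I_n\subseteq I_{n-1}$, while $[B,I_n]_+\subseteq[B,W_i]_+\subseteq W_{i-1}\subseteq I_{n-1}$. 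An induction along the chain then gives $I_n\subseteq F_B^{n}(B)$: if $I_{n-1}\subseteq F_B^{n-1}(B)$, every $y\in I_n$ has $B\ast y$ and $[B,y]_+$ inside $I_{n-1}\subseteq F_B^{n-1}(B)$ (the latter also placing $y$ in $N^+_B(F_B^{n-1}(B))$), so $y\in F_B^{n}(B)$; hence $B_\pi=I_{cd}\subseteq F_B^{cd}(B)$. The two design choices — intersecting with $W_i$ outermost, rather than using $L_j+W_{i-1}$ alone, which is only a left ideal, and using $W_i=B_\pi\cap Z^+_i(B)$ in place of $Z^+_i(B)$ — are exactly what makes the chain consist of strong left ideals, dovetail into length precisely $cd$, and terminate at $B_\pi$ rather than at $Z^+_c(B)=(B,+)$, which in general need not be contained in any $F_B^{n}(B)$. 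Non-commutativity of $(B,+)$ is why the additive identities must be handled with the same care as in Lemma~\ref{l:stronglyln}.
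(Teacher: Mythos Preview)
Your proof is correct and follows essentially the same approach as the paper: the backward direction uses $B\ast F_B^{i+1}(B)\subseteq F_B^i(B)$ just as the paper does, and for the forward direction you carry out explicitly the refinement ``as in Lemma~\ref{l:stronglyln}'' that the paper only alludes to, building the central chain $I_{id-j}=W_i\cap(L_j+W_{i-1})$ and verifying inductively that $I_n\subseteq F_B^n(B)$. Your intersection with $W_i=B_\pi\cap Z^+_i(B)$ rather than $Z^+_i(B)$ is the natural adaptation needed to land in $B_\pi$ at the top; the paper leaves this implicit.
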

\begin{proof} If $B_p\le F_B^n(B)$ it is easy to see that $L_n(B,B_\pi)=\{0\}$ since $B\ast F_B^{i+1}(B)\subseteq F_B^i(B)$ for all $i<n$.

Conversely, suppose $B$ is left $\pi$-nilpotent, and consider the chain $L_i(B,B_\pi)$ of left ideals. As in Lemma \ref{l:stronglyln}, since $B$ is of nilpotent type we can refine this chain to a central chain $B_\pi=I_{cd}>\cdots>I_0=\{0\}$ of left ideals, such that
$B\ast I_{i+1}\subseteq I_i$ for all $i<cdn$. But by centrality the left ideals $I_i$ are strong, and one sees inductively that $I_i\le F_B^i(B)$.
\end{proof}

\begin{prop}
Let $B$ be a skew brace of periodic nilpotent type. Then $B$ is left nilpotent iff there is finite $d$ such that $B$ is left $p$-nilpotent of class $\le d$ for all primes $p$.
\end{prop}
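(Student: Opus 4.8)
The plan is to prove the two implications separately. The direction ``left nilpotent $\Rightarrow$ uniformly left $p$-nilpotent'' is purely formal, relying only on the fact that $Y\mapsto\langle B\ast Y\rangle_+$ is monotone with respect to inclusion; the converse is where the hypothesis of periodic nilpotent type is used, via the primary decomposition of $(B,+)$ together with Lemma \ref{l:lpinilp}.

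So assume first that $B$ is left nilpotent. For a prime $p$ let $B_p\le(B,+)$ be the Hall $\{p\}$-subgroup. Then $L_0(B,B_p)=\langle B_p\rangle_+=B_p\le B=\langle B\rangle_+=L_0(B,B)$, and since $X\subseteq X'$ implies $B\ast X\subseteq B\ast X'$ and hence $\langle B\ast X\rangle_+\le\langle B\ast X'\rangle_+$, an immediate induction on $i$ gives $L_i(B,B_p)\le L_i(B,B)$ for every $i$. Now $i\mapsto L_i(B,B)$ is, up to an index shift, the series defining left nilpotency of $B$, so it vanishes for all $i$ greater than or equal to some $d<\omega$; consequently $L_d(B,B_p)=\{0\}$ for every prime $p$ at once, i.e.\ $B$ is left $p$-nilpotent of class $\le d$ for all $p$.

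Conversely, assume $B$ is left $p$-nilpotent of class $\le d$ for every prime $p$, with $d$ finite and independent of $p$. Since $(B,+)$ is periodic nilpotent, it has finite nilpotency class $c$ and decomposes as the restricted direct sum $(B,+)=\bigoplus_p B_p$ of its primary components $B_p$, which are precisely its Hall $\{p\}$-subgroups. By Lemma \ref{l:lpinilp}, for each $p$ the actual left $p$-nilpotency class $d_p\le d$ satisfies $B_p\le F_B^{c d_p}(B)\le F_B^{cd}(B)$, the last inclusion holding because the central series $\bigl(F_B^i(B)\bigr)_i$ of strong left ideals is increasing. Since $F_B^{cd}(B)$ is an additive subgroup containing every summand $B_p$, it contains the restricted direct sum $\bigoplus_p B_p=B$, so $F_B^{cd}(B)=B$.

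Finally, to deduce left nilpotency from $F_B^{N}(B)=B$ with $N=cd$: unwinding the definition of $\Fixr$ via the identity $c\ast x=\lambda_c(x)-x$ and using that each $F_B^i(B)$ is additively normal, one gets $B\ast F_B^{i+1}(B)\subseteq F_B^i(B)$, and hence $\langle B\ast F_B^{i+1}(B)\rangle_+\subseteq F_B^i(B)$, for all $i$. A downward induction then shows $B^{j+1}\subseteq F_B^{N-j}(B)$ for $0\le j\le N$: the case $j=0$ reads $B^1=B=F_B^N(B)$, and the induction step is $B^{j+2}=\langle B\ast B^{j+1}\rangle_+\subseteq\langle B\ast F_B^{N-j}(B)\rangle_+\subseteq F_B^{N-j-1}(B)$. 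Taking $j=N$ gives $B^{N+1}\subseteq F_B^0(B)=\{0\}$, so $B$ is left nilpotent of class $\le cd$. I do not expect any real obstacle in this argument: the only points requiring care are that the additive nilpotency class $c$ is a single finite number common to all primes, so that the bound $cd$ is genuinely uniform, and that an additive subgroup which contains each $B_p$ automatically contains the full restricted direct sum $\bigoplus_p B_p=B$.
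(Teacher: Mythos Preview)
Your proof is correct and follows essentially the same route as the paper: use Lemma~\ref{l:lpinilp} to translate left $p$-nilpotency into $B_p\le F_B^{cd}(B)$, then use the primary decomposition $(B,+)=\bigoplus_p B_p$ to pass between $B\le F_B^n(B)$ and $B_p\le F_B^n(B)$ for all $p$. The only cosmetic difference is that for the forward direction you argue directly via monotonicity of $Y\mapsto L_i(B,Y)$ rather than going through $F_B^n(B)$, which is in fact slightly cleaner and gives the sharper bound $d$ without the factor $c$.
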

\begin{proof} This was shown for finite braces in \cite[Lemma 13]{adolfo}, with a different proof.
Since $B$ is of nilpotent type, $B$ is left nilpotent iff $B\le F_B^n(B)$ for some $n$. On the other hand, as $B$ is periodic and nilpotent, $B\le F_B^n(B)$ iff $B_p\le F_B^n(B)$ for all primes $p$. The result follows.\end{proof}

By \cite[Theorem 14]{adolfo}, a finite brace is $p$-nilpotent iff its multiplicative group is $p$-nilpotent. This is still true for locally finite braces. However, in the non-locally finite case, even for braces it is not true that an additively periodic element must be multiplicatively periodic, or that an additive $p$-element (resp.\ $p'$-element) must be multiplicatively $p$ (resp.\ $p'$).

We shall call a skew brace {\em bi-periodic} if it is additively and multiplicatively periodic. A bi-periodic skew brace is {\em $\pi$-separating} if every skew sub-brace which is additively $\pi$ (resp.\ $\pi'$) is multiplicatively $\pi$ (resp.\ $\pi'$). Clearly any locally finite skew brace is bi-periodic and $\pi$-separating for all $\pi$.

\begin{lem}\label{l:eqcond}	Let $B$ be a skew brace of periodic nilpotent type for some set  $\pi$ of primes. Then the following are equivalent: 
	\begin{enumerate}[label=$(\arabic*)$]
		\item $B_{\pi'}\ast B_\pi=\{0\}$;
		\item $B_{\pi'}$ is an ideal;
		\item $G(B_{\pi'})$ is normal in $G(B)$.
	\end{enumerate}
\end{lem}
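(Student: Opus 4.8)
The plan is to establish $(1)\Leftrightarrow(2)$ directly from the $\ast$-identities recorded in the preliminaries, and $(2)\Leftrightarrow(3)$ by combining Lemma~\ref{idealnormal} with a short computation in the design group $G(B)$. I would begin with the structural background: since $(B,+)$ is periodic and nilpotent it is the (restricted) direct product of its Sylow subgroups, so $(B,+)=B_\pi+B_{\pi'}$ with $B_\pi\cap B_{\pi'}=\{0\}$ and --- the point that does the work --- every element of $B_\pi$ commuting additively with every element of $B_{\pi'}$. Both $B_\pi$ and $B_{\pi'}$ are characteristic in $(B,+)$, hence additively normal and invariant under every $\lambda_a$, so both are strong left ideals; consequently each is a multiplicative subgroup and a skew sub-brace, and $B\ast B_\pi\subseteq B_\pi$ while $B\ast B_{\pi'}\subseteq B_{\pi'}$.

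For $(1)\Rightarrow(2)$, as $B_{\pi'}$ is already a strong left ideal it suffices to check $B_{\pi'}\ast B\subseteq B_{\pi'}$. Given $a\in B_{\pi'}$ and $b\in B$, I would write $b=x+y$ with $x\in B_\pi$, $y\in B_{\pi'}$ and expand via $a\ast(x+y)=a\ast x+x+a\ast y-x$: hypothesis (1) kills $a\ast x$, the term $a\ast y$ lies in $B_{\pi'}$ because $B_{\pi'}$ is a left ideal, and the conjugating element $x\in B_\pi$ commutes additively with $B_{\pi'}$, so the expression collapses to $a\ast y\in B_{\pi'}$. For $(2)\Rightarrow(1)$: if $B_{\pi'}$ is an ideal then $B_{\pi'}\ast B_\pi\subseteq B_{\pi'}$, while always $B_{\pi'}\ast B_\pi\subseteq B\ast B_\pi\subseteq B_\pi$, whence $B_{\pi'}\ast B_\pi\subseteq B_\pi\cap B_{\pi'}=\{0\}$.

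Finally, $(2)\Rightarrow(3)$ is exactly Lemma~\ref{idealnormal}, and for $(3)\Rightarrow(2)$ --- using once more that $B_{\pi'}$ is a strong left ideal, so that only multiplicative normality is missing --- a direct computation in $G(B)=(B,+)\rtimes_\lambda(B,\circ)$ gives, for $a\in B_{\pi'}$ and $b\in B$, the identity $(0,b)\,(0,a)\,(0,b)^{-1}=(0,\,b\circ a\circ b^{-1})$; by (3) this element lies in $G(B_{\pi'})$, which forces $b\circ a\circ b^{-1}\in B_{\pi'}$. The whole argument is essentially bookkeeping, and the only genuinely load-bearing observation --- hence the only place I expect to need any care --- is that the additive factors $B_\pi$ and $B_{\pi'}$ commute and are automatically strong left ideals, which is precisely what makes the $\ast$-identity telescope in the step $(1)\Rightarrow(2)$.
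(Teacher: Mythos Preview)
Your proof is correct and follows essentially the same route as the paper's: the equivalence $(1)\Leftrightarrow(2)$ via the identity $a\ast(b+c)=a\ast b+b+a\ast c-b$ together with $B_\pi\cap B_{\pi'}=\{0\}$, and $(2)\Leftrightarrow(3)$ from the fact that $B_{\pi'}$ is already a strong left ideal. One cosmetic remark: in your $(1)\Rightarrow(2)$ step you decompose $b=x+y$ with the $\pi$-part first, which forces you to invoke $[B_\pi,B_{\pi'}]_+=0$ to simplify $x+a\ast y-x$; the paper writes the $\pi'$-part first, so the conjugating element cancels against $0$ and no commutation is needed --- but this is a matter of taste, not substance.
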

\begin{proof}
If $B_{\pi'}\ast B_\pi=\{0\}$, then for $b'\in B_{\pi'}$ and $b\in B$ there are $b_0\in B_{\pi'}$ and $b_1\in B_{\pi}$ with $b=b_0+b_1$, so
$$b'\ast b=b'\ast(b_0+b_1)=b'\ast b_0+b_0+b'\ast b_1-b_0=b'\ast b_0+b_0+0-b_0\in B_{\pi'},$$ whence $B_{\pi'}\ast B\subseteq B_{\pi'}$ and $B_{\pi'}$ is an ideal. Conversely, if $B_{\pi'}$ is an ideal, then $B_{\pi'}\ast B_\pi\subseteq B_{\pi'}\cap B_\pi=\{0\}$. Thus (1) and (2) are equivalent.

Since $B_{\pi'}$ is a strong left ideal, clearly (2) and (3) are equivalent.
\end{proof}

Note that without $\pi$-separation, $G(B_\pi)$ need not be a $\pi$-group nor  $G(B_{\pi'})$ a $\pi'$-group.

\begin{theo}\label{pnilpequivalence}
Let $B$ be a bi-periodic $\pi$-separating skew brace of nilpotent type for some set $\pi$ of primes. Then the following conditions are equivalent:
	\begin{enumerate}[label=$(\arabic*)$]
	\item $B$ is left  $\pi$-nilpotent;
	\item $B_{\pi'}$ is an ideal and $B_\pi$ is left nilpotent as a skew sub-brace;
	\item $G(B)$ is  $\pi$-nilpotent.
\end{enumerate}\end{theo}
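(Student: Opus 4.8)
The plan is to establish the cycle of implications $(1)\Rightarrow(2)\Rightarrow(3)\Rightarrow(1)$. First I would record the ingredients that make everything work. Since $(B,+)$ is periodic nilpotent, it is the internal direct sum $(B,+)=(B_\pi,+)\oplus(B_{\pi'},+)$ of its Hall subgroups, both of which are characteristic, hence $\lambda$-invariant, hence strong left ideals of $B$, and $(B_\pi,+)$ is locally finite. By $\pi$-separation applied to the skew sub-braces $B_\pi$ and $B_{\pi'}$, the group $(B_\pi,\circ)$ is a $\pi$-group and $(B_{\pi'},\circ)$ a $\pi'$-group; since an extension of a $\pi$-group (resp.\ $\pi'$-group) by another of the same kind is again of that kind, $G(B_\pi)=(B_\pi,+)\rtimes_\lambda(B_\pi,\circ)$ is a $\pi$-group and $G(B_{\pi'})$ a $\pi'$-group. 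Finally, a one-line computation in $G(B)=(B,+)\rtimes_\lambda(B,\circ)$ gives $[(0,x),(y,1)]=(x\ast y,1)$ for all $x,y\in B$, with $(y,1)$ read inside the normal subgroup $(B,+)\trianglelefteq G(B)$; this identity is used throughout.

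For $(1)\Rightarrow(2)$: starting from $B_\pi=L_0(B,B_\pi)$, an easy induction gives $B_\pi^{n}\subseteq L_{n-1}(B,B_\pi)$ for all $n\ge1$ (the inductive step being $B_\pi^{n+1}=\langle B_\pi\ast B_\pi^{n}\rangle_+\subseteq\langle B\ast L_{n-1}(B,B_\pi)\rangle_+=L_n(B,B_\pi)$), so if $B$ is left $\pi$-nilpotent of class $d$ then $B_\pi^{d+1}\subseteq L_d(B,B_\pi)=\{0\}$ and $B_\pi$ is left nilpotent. By Lemma~\ref{l:eqcond} it then suffices to prove $B_{\pi'}\ast B_\pi=\{0\}$. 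Fixing $a\in B_{\pi'}$ and iterating the identity above, the left $\ast$-product $a\ast(a\ast(\cdots(a\ast b)\cdots))$ with $k$ copies of $a$ equals the $k$-fold commutator of $(b,1)$ with $(0,a)$ in $G(B)$, hence lies in $L_k(B,B_\pi)$ and vanishes once $k\ge d$. Thus the cyclic subgroup of $\operatorname{Aut}(B_\pi)$ generated by $\lambda_a|_{B_\pi}$ acts nilpotently on $B_\pi$; since $B_\pi$ is a locally finite $\pi$-group and, by $\pi$-separation, $|a|_\circ$ is a $\pi'$-number, this action is coprime, and a coprime nilpotent action is trivial. Hence $\lambda_a$ fixes $B_\pi$ pointwise, i.e.\ $a\ast b=\lambda_a(b)-b=0$ for all $b\in B_\pi$, so $B_{\pi'}$ is an ideal.

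For $(2)\Rightarrow(3)$: as $B_{\pi'}$ is an ideal, $G(B_{\pi'})$ is a normal $\pi'$-subgroup of $G(B)$ by Lemma~\ref{idealnormal}, and $G(B)/G(B_{\pi'})\cong G(B/B_{\pi'})$ (the map $(a,b)\mapsto(a+B_{\pi'},bB_{\pi'})$ has kernel $G(B_{\pi'})$). Moreover $a\circ b=a+\lambda_a(b)\in B_\pi$ for $a,b\in B_\pi$, so the additive bijection $B_\pi\to B/B_{\pi'}$ is also multiplicative and $B/B_{\pi'}\cong B_\pi$ as skew braces. By hypothesis $B_\pi$ is left nilpotent, and it is of nilpotent type, so $G(B/B_{\pi'})$ is nilpotent by Theorem~\ref{BsemiB}, and, being isomorphic to $G(B_\pi)$, it is a $\pi$-group. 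Therefore $G(B)$ has a normal $\pi'$-subgroup with nilpotent $\pi$-quotient, i.e.\ it is $\pi$-nilpotent.

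For $(3)\Rightarrow(1)$: pick a normal $\pi'$-subgroup $N\trianglelefteq G(B)$ with $G(B)/N$ nilpotent of class $c$. Then $(B_\pi,+)\cap N=\{0\}$ (a $\pi$-group meets a $\pi'$-group trivially), and conjugation by $G(B)$ preserves the characteristic subgroup $(B_\pi,+)$ of the normal subgroup $(B,+)$, so the left-normed commutator $K=[\,G(B),G(B),\ldots,G(B),(B_\pi,+)\,]$ with $c$ occurrences of $G(B)$ is contained in $(B_\pi,+)$; its image in $G(B)/N$ lies in the $(c+1)$-st term of the lower central series of $G(B)/N$, which is trivial, so $K\subseteq(B_\pi,+)\cap N=\{0\}$ and $(B_\pi,+)\subseteq Z_c(G(B))$. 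An induction on $k$ using $[(0,x),(y,1)]=(x\ast y,1)$ then shows $L_k(B,B_\pi)\subseteq Z_{c-k}(G(B))\cap(B,+)$; in particular $L_{c-1}(B,B_\pi)$ lies in the centre of $G(B)$, whence $B\ast L_{c-1}(B,B_\pi)=\{0\}$ and $L_c(B,B_\pi)=\{0\}$, so $B$ is left $\pi$-nilpotent. The step I expect to be the main obstacle is the coprime-action argument inside $(1)\Rightarrow(2)$: it is exactly where the $\pi$-separation hypothesis is indispensable — it is what makes the $\lambda$-action of $B_{\pi'}$ on $B_\pi$ coprime, and that is precisely what upgrades $B_{\pi'}$ from a strong left ideal to a genuine ideal — and it relies on the standard (but not entirely trivial) fact that a coprime action all of whose iterated commutators vanish is trivial, which one reduces to the finite case using local finiteness of $B_\pi$.
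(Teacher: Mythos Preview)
Your proof is correct and follows the same cycle $(1)\Rightarrow(2)\Rightarrow(3)\Rightarrow(1)$ as the paper, with essentially the same argument for $(2)\Rightarrow(3)$. The other two implications, however, are handled differently.

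For $(1)\Rightarrow(2)$ the paper does not use a coprime-action argument. Instead it works with the series $F_{B_\pi}^n(B)$ of Lemma~\ref{l:lpinilp} and shows by induction on $n$ that $B_{\pi'}$ is multiplicatively normal in $B_{\pi'}+F_{B_\pi}^n(B)$: the inductive step uses that this sum is an ideal in $B_{\pi'}+F_{B_\pi}^{n+1}(B)$, so for $g$ in the latter the quotient $B_{\pi'}/(B_{\pi'}\cap B_{\pi'}^g)$ embeds multiplicatively in $F_{B_\pi}^n(B)$; $\pi$-separation then forces this quotient to be simultaneously $\pi$ and $\pi'$, hence trivial. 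Your route via ``a $\pi'$-automorphism of a locally finite nilpotent $\pi$-group whose iterated commutator map vanishes must be the identity'' is more direct and isolates exactly where $\pi$-separation enters (namely, to make the order of $\lambda_a$ coprime to $B_\pi$); the paper's argument, on the other hand, avoids invoking that auxiliary fact and stays entirely inside the skew-brace formalism.

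For $(3)\Rightarrow(1)$ the paper first identifies the normal $\pi'$-subgroup: using $\pi$-separation it shows $G(B_\pi)\cap N=\{1\}$ and $G(B_{\pi'})\le N$, whence $N=G(B_{\pi'})$; this gives $B_{\pi'}\ast B_\pi=\{0\}$ by Lemma~\ref{l:eqcond}, and then the identity $(a\circ b)\ast c=a\ast(b\ast c)+b\ast c+a\ast c$ yields $L_n(B,B_\pi)=L_n(B_\pi,B_\pi)$, which vanishes for large $n$ by left nilpotency of $B_\pi$. Your argument bypasses all of this: you never need to know what $N$ is, only that $(B_\pi,+)\cap N=\{0\}$ (which uses nothing beyond $(B_\pi,+)$ being a $\pi$-group), and you read off $L_c(B,B_\pi)=\{0\}$ straight from $(B_\pi,+)\le Z_c(G(B))$ via the commutator identity. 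This is cleaner and, notably, does not use $\pi$-separation at all in this implication. One terminological quibble: the commutator you write as ``left-normed'' is being used as $[G,[G,\ldots,[G,(B_\pi,+)]\ldots]]$, which many authors would call right-normed; the computation is fine, but the label may confuse a reader.
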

\begin{proof} Suppose $B$ is left  $\pi$-nilpotent. Note first that $F_{B_\pi}^n(B)=B_\pi\cap F_B^n(B)$ for all $i$, and that $B_{\pi'}+F_{B_\pi}^n(B)$ is a strong left ideal, for all $n$. We shall show inductively that $B_{\pi'}$ is multiplicatively normal in $B_{\pi'}+F_{B_\pi}^n(B)$. This is clear for $n=0$, so suppose it holds for $n$. Now 
$$\begin{aligned}(B_{\pi'}+F_{B_\pi}^n(B))&\ast(B_{\pi'}+F_{B_\pi}^{n+1}(B))=\\
&=(B_{\pi'}+F_{B_\pi}^n(B))\ast B_{\pi'}+(B_{\pi'}+F_{B_\pi}^n(B))\ast F_{B_\pi}^{n+1}(B)\\&\subseteq B_{\pi'}+F_{B_\pi}^n(B).\end{aligned}$$
It follows that the strong left ideal $B_{\pi'}+F_{B_\pi}^n(B)$ is an ideal in $B_{\pi'}+F_{B_\pi}^{n+1}(B)$. So if $g\in B_{\pi'}+F_{B_\pi}^{n+1}(B)$, then, since $B_{\pi'}+F_{B_\pi}^n(B)=B_{\pi'}\circ F_{B_\pi}^n(B)$, we have
multiplicatively
$$B_{\pi'}/(B_{\pi'}\cap B_{\pi'}^g)\cong (B_{\pi'}\circ B_{\pi'}^g)/B_{\pi'}\le (B_{\pi'}\circ F_{B_\pi}^n(B))/B_{\pi'}\cong F_{B_\pi}^n(B).$$
However, by $\pi$-separation $F_{B_\pi}^n(B)\le B_\pi$ is multiplicatively a $\pi$-group, and $B_{\pi'}$ is multiplicatively a $\pi'$-group, which implies that $B_{\pi'}/(B_{\pi'}\cap B_{\pi'}^g)$ is trivial and $B_{\pi'}\le B_{\pi'}^g$. Considering $g^{-1}$ we get equality, so $B_{\pi'}$ is normal in $B_{\pi'}+F_{B_\pi}^{n+1}(B)$. This finishes the induction, and proves multiplicative normality of $B_{\pi'}$ in $B$. Moreover, since $$B^{(d+1)}=L_d(B_\pi,B_\pi)\le L_d(B,B_\pi)=0$$ for big enough $d$, the skew sub-brace $B_\pi$ is nilpotent, so (1) implies (2).

Assume (2). Then $G(B)=G(B_{\pi'})\,G(B_\pi)$; the first factor is a normal  $\pi'$-subgroup, and the second a nilpotent $\pi$-subgroup by Theorem \ref{BsemiB}, so (3) holds.

Finally, assume (3), and let $N$ be the normal  $\pi'$-subgroup of $G(B)$ such that the quotient $G(B)/N$ is a nilpotent  $\pi$-group. So for every $g\in G(B)$ there is some $\pi$-number $n>0$ such that $g^n\in N$. It follows that all  $\pi'$-elements are already in $N$. In particular $G(B_{\pi'})\le N$. But $G(B_\pi)$ is a $\pi$-group and intersects $N$ trivially. It follows that $N=G(B_{\pi'})$, so the latter is normal, and $B_{\pi'}$ is multiplicatively normal in $B$. Thus $B_{\pi'}\ast B_\pi=\{0\}$ by Lemma \ref{l:eqcond}. Moreover $G(B_\pi)\cong G(B)/G(B_{\pi'})$ is nilpotent, and $B_\pi$ is a nilpotent brace by Theorem \ref{BsemiB}.

Now the equality $(a\circ b)\ast c=a\ast(b\ast c)+a\ast c+b\ast c$ implies inductively
$$L_n(B,B_\pi)=L_n(B_{\pi'}\circ B_\pi,B_\pi)=L_n(B_{\pi'},B_\pi)+L_n(B_\pi,B_\pi)=L_n(B_\pi,B_\pi)$$ for every $n\geq0$.
In particular $L_d(B,B_\pi)=L_d(B_\pi,B_\pi)=0$ for large enough $d$ by nilpotency of $B_\pi$, showing (1).
\end{proof}

It was shown by Smoktunowicz \cite[Theorem 20]{Smok2} that the sum of finitely many left $p$-nilpotent ideals of a brace is left $p$-nilpotent. We shall see that the same holds for $\pi$-nilpotency in skew braces of periodic nilpotent type.

\begin{theo}\label{fittingpnilp}
Let $B$ a skew brace, and $\pi$ a set of primes. If $I$ and $J$ are left $\pi$-nilpotent ideals of nilpotent type, then $I+J$ is left $\pi$-nilpotent of nilpotent type.
\end{theo}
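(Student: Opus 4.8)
The plan is to mirror the proof of the Proposition on sums of left nilpotent ideals, replacing ordinary nilpotency of the design group by $\pi$-nilpotency. As a preliminary reduction: the sum of two ideals is again an ideal, so $I+J$ is a skew sub-brace of $B$, and an ideal of $B$ contained in $I+J$ is an ideal of $I+J$; hence $I$ and $J$ are left $\pi$-nilpotent ideals of the skew brace $I+J$, and we may assume $B=I+J$. Then $(B,+)=(I,+)(J,+)$ is the product of two normal nilpotent subgroups, hence nilpotent by Fitting's theorem (and periodic, being generated by two normal periodic subgroups), so $B$ is of (periodic) nilpotent type; this settles half of the conclusion.

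I would then pass to $G=G(B)$. By Lemma \ref{idealnormal} the subgroups $G(I)$ and $G(J)$ are normal in $G$ and satisfy $G(I)\,G(J)=G(I+J)=G$, while by Theorem \ref{pnilpequivalence} both $G(I)$ and $G(J)$ are $\pi$-nilpotent. The heart of the matter is the following group-theoretic lemma, which I would isolate and prove: \emph{the product of two normal $\pi$-nilpotent subgroups $H_1,H_2$ of a group is again $\pi$-nilpotent}. Indeed, in a $\pi$-nilpotent group $H$ the set $O_{\pi'}(H)$ of $\pi'$-elements is precisely the normal $\pi'$-subgroup witnessing $\pi$-nilpotency, since any $\pi'$-element maps to a $\pi'$-element of the $\pi$-group $H/O_{\pi'}(H)$ and hence lies in $O_{\pi'}(H)$; thus $O_{\pi'}(H)$ is characteristic in $H$. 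Consequently $N_1=O_{\pi'}(H_1)$ and $N_2=O_{\pi'}(H_2)$ are normal in the ambient group, $N_1N_2$ is a normal $\pi'$-subgroup (an extension of a $\pi'$-group by a $\pi'$-group is a $\pi'$-group), and $H_1H_2/N_1N_2=(H_1N_1N_2/N_1N_2)(H_2N_1N_2/N_1N_2)$ is a product of two normal subgroups, each of which is a homomorphic image of the nilpotent $\pi$-group $H_i/N_i$; by Fitting's theorem this product is nilpotent, and being generated by two normal $\pi$-subgroups it is a $\pi$-group. So $H_1H_2$ is $\pi$-nilpotent. Applying this with $H_i\in\{G(I),G(J)\}$ shows that $G(B)$ is $\pi$-nilpotent.

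Finally, since $B=I+J$ is of nilpotent type with $G(B)$ $\pi$-nilpotent, Theorem \ref{pnilpequivalence} gives that $B$ is left $\pi$-nilpotent of nilpotent type, as required. The step I expect to be the main obstacle is the group-theoretic lemma above; the only other point deserving a word is that Theorem \ref{pnilpequivalence} is applicable to $I$, $J$ and to $I+J$ --- for $I$ and $J$ this is part of the hypotheses in force in this section, and for $I+J$ it follows because products of two normal periodic (resp.\ locally finite) subgroups are again periodic (resp.\ locally finite), so that $I+J$ inherits the relevant tameness from $I$ and $J$.
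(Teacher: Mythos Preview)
Your overall strategy---pass to the design group, use the Fitting-type lemma for $\pi$-nilpotent normal subgroups, and translate back via Theorem~\ref{pnilpequivalence}---is natural and the group-theoretic lemma you isolate is correct. However, the argument has a genuine gap at the point where you invoke Theorem~\ref{pnilpequivalence}. That theorem requires the skew brace to be \emph{bi-periodic and $\pi$-separating}, and these are \emph{not} standing hypotheses in the section: they appear only in the statement of Theorem~\ref{pnilpequivalence} itself. Your claim that ``for $I$ and $J$ this is part of the hypotheses in force in this section'' is a misreading. Moreover, even granting bi-periodicity and $\pi$-separation for $I$ and $J$ separately, your inheritance argument for $I+J$ conflates $\pi$-separation with local finiteness; it is not clear that $\pi$-separation passes to the sum of ideals in general. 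So as written, your proof establishes the theorem only under the extra blanket assumption that everything in sight is, say, locally finite.

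The paper avoids this entirely by \emph{not} going through the design group. Its proof is a direct ``Fitting-style'' argument on the brace side: from the two witnessing chains $F_{I_\pi}^i(I)$ and $F_{J_\pi}^j(J)$ (which are strong left ideals of $B$) it builds the diagonal sums
\[
H_i=\sum_{j=0}^i\bigl(F_{I_\pi}^j(I)\cap F_{J_\pi}^{i-j}(J)\bigr),
\]
and then uses the identities $a\ast(b+c)=a\ast b+b+a\ast c-b$ and $(a\circ b)\ast c=a\ast(b\ast c)+a\ast c+b\ast c$ to check that $(I+J)\ast H_{i+1}\subseteq H_i$. This mirrors the classical proof of Fitting's theorem via the ``product'' of two central series, and it never needs $\pi$-separation or multiplicative periodicity. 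If you want to rescue your design-group approach in full generality, you would need an analogue of Theorem~\ref{pnilpequivalence} without the $\pi$-separating hypothesis, which the paper does not provide; the direct combinatorial route is both shorter and strictly more general here.
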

\begin{proof} Clearly the sum of two ideals of nilpotent type is again an ideal of nilpotent type, so we may assume that $B=I+J$ is of nilpotent type. Then by left $\pi$-nilpotency of $I$ and $J$ there are $k$ and $\ell$ such that
	$$\begin{aligned}\{0\}&=F_{I_\pi}^0(I)<F_{I_\pi}^1(I)<\cdots<F_{I_\pi}^k(I)=I_\pi\quad\mbox{and}\\ \{0\}&=F_{J_\pi}^0(J)<F_{J_\pi}^1(J)<\cdots<F_{J_\pi}^\ell(J)=J_\pi.\end{aligned}$$
Clearly the two series consist of strong left ideals of $B$.
Put $F_{I_\pi}^{k+1}(I)=F_{J_\pi}^{\ell+1}(J)=I_\pi+J_\pi$, and 
$$H_i=\sum_{j=0}^i\big(F_{I_\pi}^j(I)\cap F_{J_\pi}^{i-j}(J)\big).$$
Since $a\ast(b+c)=a\ast b +b+a\ast c-b$, by additive normality of the $F_{I_\pi}^i(I)$ and $F_{J_\pi}^i(J)$ we get $I\ast H_{i+1}\subseteq H_i$ and $J\ast H_{i+1}\subseteq H_i$ for all $i<2n+1$. And as $(a\circ b)\ast c=a\ast(b\ast c)+a\ast c+b\ast c$
we also have $(I\circ J)\ast H_{i+1}\subseteq H_i$ for all $i<2n+1$. It follows that $I+J$ is left $p$-nilpotent of class at most $\ell+k+1$.
\end{proof}

\begin{rem} It follows that in a skew brace the sum of all left $\pi$-nilpotent ideals of nilpotent type is locally left $\pi$-nilpotent of nilpotent type.\end{rem}

\begin{cor} Let $B$ be a skew brace, and $\pi$ a set of primes. The sum of a left $\pi$-nilpotent ideal $I$ of  nilpotent type and a left $\pi$-nilpotent sub-ideal $J$ of nilpotent type is again left $\pi$-nilpotent of nilpotent type.\end{cor}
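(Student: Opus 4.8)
The plan is to prove, by induction on the sub-ideal defect $d$ of $J$, the following statement uniformly over all ambient skew braces: \emph{if $B'$ is a skew brace, $I'$ a left $\pi$-nilpotent ideal of nilpotent type of $B'$, and $J'$ a left $\pi$-nilpotent sub-ideal of nilpotent type of $B'$ of defect at most $d$, then $I'+J'$ is left $\pi$-nilpotent of nilpotent type.} The base case $d\le 1$ is immediate: if $J'=B'$ there is nothing to prove, and if $J'$ is an ideal of $B'$ this is Theorem \ref{fittingpnilp}.

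For the inductive step I would first reduce to the case $B=I+J$: the skew sub-brace $I+J$ has $I$ as an ideal and $J$ as a sub-ideal (intersect a defining chain of $J$ with $I+J$) of no larger defect, and left $\pi$-nilpotency of nilpotent type is an intrinsic property, so nothing is lost. I would also record the easy observation that a skew sub-brace $D$ of a left $\pi$-nilpotent skew brace $C$ of nilpotent type is again left $\pi$-nilpotent of nilpotent type of the same class: indeed $(D,+)$ is a periodic nilpotent subgroup of $(C,+)$ with Hall $\pi$-subgroup $D_\pi=D\cap C_\pi\subseteq C_\pi$, and from $D\subseteq C$ and $D_\pi\subseteq C_\pi$ one gets $L_n(D,D_\pi)\subseteq L_n(C,C_\pi)$ for all $n$ by induction on $n$, hence $L_n(D,D_\pi)=\{0\}$ once $L_n(C,C_\pi)=\{0\}$.

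Now pick a shortest defining chain $B=N_0\ge N_1\ge\cdots\ge N_d=J$, where each $N_{i+1}$ is an ideal of $N_i$ (we may assume $d\ge 2$), and put $N=N_1$, an ideal of $B$. Since $J\le N$ and $B=I+J$, we have $B=I+N$ — this is the skew-brace analogue of the step ``$G=AN$'' in the classical argument that the product of a normal nilpotent subgroup and a subnormal nilpotent subgroup is nilpotent. A Dedekind-type computation, using $B=I\circ J$ because $I$ is an ideal, gives $N=(N\cap I)\circ J=(N\cap I)+J$: if $x\in N$, write $x=i\circ j$ with $i\in I$, $j\in J$; then $i=x\circ j^{-1}\in N$, so $i\in N\cap I$ and $x\in(N\cap I)\circ J$. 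Here $N\cap I$ is an ideal of $N$ (because $I$ is a strong left ideal and multiplicatively normal in $B$ and $N$ is an ideal of $B$), and it is left $\pi$-nilpotent of nilpotent type by the observation above, while $J$ is a sub-ideal of $N$ of defect $d-1$. By the induction hypothesis applied inside $N$, the sum $(N\cap I)+J=N$ is left $\pi$-nilpotent of nilpotent type. Finally $B=I+N$ is a sum of two left $\pi$-nilpotent ideals of nilpotent type of $B$, so Theorem \ref{fittingpnilp} yields that $B=I+J$ is left $\pi$-nilpotent of nilpotent type, closing the induction.

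I expect the only real obstacle to be arranging things so that the defect genuinely decreases. The naive move — intersect $I$ with $N_1$, apply the inductive hypothesis to $(I\cap N_1)+J$, and then re-add $I$ — fails, because $(I\cap N_1)+J$ is again only a sub-ideal of $B$ of defect $\le d$, giving no reduction. What unlocks the argument is to first pass to the case $B=I+J$ and then exploit the pair of identities $B=I+N$ and $N=(N\cap I)+J$: this exhibits $N$ itself (not an auxiliary sum) as a sum of an ideal of $N$ and a sub-ideal of $N$ of strictly smaller defect, so the induction applies to $N$, after which only the single ``ideal plus ideal'' instance $B=I+N$ of Theorem \ref{fittingpnilp} remains.
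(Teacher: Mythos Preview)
Your proof is correct and follows essentially the same route as the paper's: reduce to $B=I+J$, induct on the sub-ideal defect, take an ideal $N\trianglelefteq B$ with $J$ a sub-ideal of $N$ of defect one less, use Dedekind to get $N=(N\cap I)+J$, apply the induction hypothesis inside $N$, and finish with Theorem~\ref{fittingpnilp} applied to $B=I+N$. Your version is more explicit about the uniform induction hypothesis, the inheritance of left $\pi$-nilpotency to skew sub-braces, and the Dedekind computation, but the architecture is identical to the paper's.
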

\begin{proof} Without loss of generality, we may assume $B=I+J$.
The proof is by induction on the sub-ideal defect $n$ of $J$.
If $n=1$, this is just Theorem \ref{fittingpnilp}. So suppose $n>1$ and let $H$ be an ideal of $B$ containing $J$ as a sub-ideal of defect $n-1$.
Clearly, $H\cap I$ is a left $\pi$-nilpotent ideal of nilpotent type in $H$, so by induction $(H\cap I)+J$ is left $\pi$-nilpotent of nilpotent type.
But $J\le H$ and $I+J=B$, so $H=(H\cap I)+J$ and the statement follows from Theorem \ref{fittingpnilp}.\end{proof}

\begin{theo}
Let $B$ be an $\omega$-categorical skew brace such that $G(B)$ is \mc, and let $\pi$ be a set of primes. Then the sum of all left $\pi$-nilpotent ideals of nilpotent type is left $\pi$-nilpotent of nilpotent type and definable.
\end{theo}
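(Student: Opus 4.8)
The plan is to descend to the design group $G(B)$ and combine Theorem~\ref{pnilpequivalence} with Fact~\ref{factlocnilp}. Write $F_\pi$ for the sum of all left $\pi$-nilpotent ideals of nilpotent type. First I would check that $F_\pi$ is the ideal generated by $D=\{b\in B:(b)\text{ is a left }\pi\text{-nilpotent ideal of nilpotent type}\}$: for $b\in D$ clearly $(b)\le F_\pi$, and conversely if $I$ is a left $\pi$-nilpotent ideal of nilpotent type of class $n$ and $b\in I$, then $(b)\trianglelefteq B$ lies in $I$, so $((b),+)$ is nilpotent and, using $(b)_\pi\le I_\pi$ together with $(b)\ast Z\subseteq I\ast Z$, induction gives $L_n((b),(b)_\pi)\subseteq L_n(I,I_\pi)=\{0\}$; hence $b\in D$ and $I=\sum_{b\in I}(b)\le\langle D\rangle$.

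Definability is then a consequence of $\omega$-categoricity. By Theorem~\ref{bracegen} the ideal $(b)$ is uniformly $b$-definable, and since $(B,+)$ has finite exponent $e$, writing $e_\pi$ for its $\pi$-part, the Hall $\pi$-subgroup $(b)_\pi=\{x\in(b):e_\pi x=0\}$ is uniformly $b$-definable as well. For each $n$ the set of $b$ with $((b),+)$ nilpotent of class $\le n$ is $\emptyset$-definable, and these sets increase with $n$; by Ryll--Nardzewski there are only finitely many $\emptyset$-definable subsets of $B$, so the chain stabilises at some $n_0$. Similarly $\{b:L_d((b),(b)_\pi)=\{0\}\}$ is $\emptyset$-definable and increasing in $d$, so stabilises at some $d_0$. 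Thus $D$ is $\emptyset$-definable, and by Theorem~\ref{bracegen} so is $F_\pi$.

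Next I would show $F_\pi$ is of nilpotent type. The group $(F_\pi,+)$ is a subgroup of $(B,+)\le G(B)$, hence locally finite of finite exponent, and it is an \mc-group because subgroups of \mc-groups are \mc: each term $C_H(X_i)$ of a descending chain of centralisers in $H\le G$ equals $H\cap\bigcap_{j\le i}C_G(X_j)$, which stabilises by \mc for $G$. A finitely generated additive subgroup of $F_\pi$ lies in a finite sum of left $\pi$-nilpotent ideals of nilpotent type, which is of nilpotent type by Theorem~\ref{fittingpnilp}; so $(F_\pi,+)$ is locally nilpotent, and therefore nilpotent by Fact~\ref{factlocnilp}. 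Being also locally finite, $F_\pi$ is bi-periodic and $\pi$-separating, so by Theorem~\ref{pnilpequivalence} it remains to prove that $G(F_\pi)$ is $\pi$-nilpotent. Now $G(F_\pi)\le G(B)$ is again locally finite of finite exponent and \mc, and it is the directed union of the subgroups $G(S)$ as $S$ ranges over finite sums of left $\pi$-nilpotent ideals of nilpotent type; each such $S$ is left $\pi$-nilpotent of nilpotent type by Theorem~\ref{fittingpnilp}, so $G(S)$ is $\pi$-nilpotent by Theorem~\ref{pnilpequivalence}, and hence $G(F_\pi)$ is locally $\pi$-nilpotent.

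The main obstacle is the remaining purely group-theoretic fact: a locally finite \mc-group $G$ of finite exponent that is locally $\pi$-nilpotent is $\pi$-nilpotent. Here local $\pi$-nilpotency forces the $\pi'$-elements of $G$ to form a subgroup $N$ — any two of them lie in a finite $\pi$-nilpotent subgroup, in which the $\pi'$-elements are precisely the normal Hall $\pi'$-subgroup — which is visibly a normal $\pi'$-subgroup; then every element of $G/N$ is a $\pi$-element, so $G/N$ is a periodic, locally nilpotent $\pi$-group and thus the direct product of its primary components $(G/N)_p$. By Fact~\ref{factlocnilp}, $G$, and hence $G/N$, is virtually nilpotent, so $G/N$ has a nilpotent normal subgroup $\bar K$ of finite index $s$; for $p\nmid s$ we get $(G/N)_p\le\bar K$, nilpotent of class bounded by that of $\bar K$, while for the finitely many $p\mid s$ the component $(G/N)_p$ is (nilpotent $p$ of finite exponent)-by-(finite $p$), hence nilpotent by the Baumslag lemma cited in Fact~\ref{factlocnilp}. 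Since the classes of the $(G/N)_p$ are uniformly bounded, $G/N$ is nilpotent, so $G$ is $\pi$-nilpotent. Applying this to $G(F_\pi)$ and reading Theorem~\ref{pnilpequivalence} in the reverse direction shows $F_\pi$ is left $\pi$-nilpotent of nilpotent type; together with its definability this completes the proof.
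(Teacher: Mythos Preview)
Your proof is correct. The paper's argument begins the same way --- identifying $R$ (your $D$) as $\emptyset$-definable and showing $(R,+)$ is nilpotent via local nilpotency and \mc\ --- but then diverges at the key step. Rather than proving a general group-theoretic lemma about locally $\pi$-nilpotent \mc-groups, the paper exploits the brace structure directly: for each $b\in R$ the ideal $(b)$ is left $\pi$-nilpotent of nilpotent type, so by Theorem~\ref{pnilpequivalence} its Hall $\pi'$-part $(b)_{\pi'}$ is an ideal of $(b)$, hence additively and multiplicatively characteristic in $(b)$ and therefore multiplicatively normal in $B$; thus $R_{\pi'}$, being generated by such normal subgroups, is itself an ideal of $B$, and $G(R_{\pi'})$ is a normal $\pi'$-subgroup of $G(R)$ with complement $G(R_\pi)$. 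The paper then observes that $G(R_\pi)$ is locally nilpotent, hence nilpotent by Fact~\ref{factlocnilp}, and concludes $G(R)$ is $\pi$-nilpotent. Your approach instead packages the endgame into a standalone lemma --- a locally finite \mc-group of finite exponent that is locally $\pi$-nilpotent is $\pi$-nilpotent --- which is a clean reusable fact and whose proof (via Fact~\ref{factlocnilp}(4) and Baumslag) is sound; the paper's route is shorter here because the ideal structure of the $(b)_{\pi'}$ hands you the normal $\pi'$-complement for free, whereas you reconstruct it abstractly inside $G(F_\pi)$. Your definability argument via stabilising chains of $\emptyset$-definable sets is also more elaborate than necessary: the paper simply notes that $D$ is automorphism-invariant and invokes Ryll--Nardzewski once.
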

\begin{proof}By $\omega$-categoricity, the set $R$ of all elements generating a left $\pi$-nilpotent ideal of nilpotent type is definable. It is an ideal, and additively locally nilpotent, whence nilpotent by Fact \ref{factlocnilp}. Since the exponent is finite, the Hall subgroups $R_\pi$ and $R_{\pi'}$ are definable strong left ideals of $B$.
Moreover, $R_{\pi'}$ is generated by multiplicatively normal subgroups, so $R_{\pi'}$ is actually an ideal in $B$. It follows that $G(R_{\pi'})$ is a normal $\pi'$-subgroup of $G(R)$, and has $G(R_\pi)$ as complement. Moreover, $G(R_\pi)$ is locally nilpotent, whence nilpotent again by Fact \ref{factlocnilp}. It follows that $R$ is left $\pi$-nilpotent of nilpotent type, and contains all left $\pi$-nilpotent ideals of nilpotent type.\end{proof}

\subsection{Right $\pi$-nilpotency}
A skew brace $B$ of nilpotent type is called {\em right $\pi$-nilpotent of class $\le d$} if $R_d(B_\pi,B)=\{0\}$, where $R_n(X,Y)$ is defined recursively by 
$$R_0(X,Y)=\langle X\rangle_+\qquad\mbox{and}\qquad R_{n+1}(X,Y)=\langle R_n(X,Y)\ast Y\rangle_+.$$
Note that $R_n(B,B)=B^{(n)}$.
\begin{lem}A skew brace $B$ of nilpotent type is right $\pi$-nilpotent iff $B_\pi\le\Soc_n(B)$ for some $n<\omega$.\end{lem}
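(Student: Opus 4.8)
The plan is to prove, by induction on the right $\pi$-nilpotency class, the following mild strengthening of the lemma, from which the statement is the case $A=B_\pi$ (recall $B_\pi$ is additively characteristic, hence additively normal): \emph{if $(B,+)$ is nilpotent of class $c$ and $A$ is an additively normal subgroup of $B$, then $R_d(A,B)=\{0\}$ implies $A\le\Soc_{cd}(B)$, while conversely $A\le\Soc_n(B)$ implies $R_n(A,B)=\{0\}$.} The converse half, which yields the ``if'' direction of the lemma, is a one-line induction on $k$: $R_0(A,B)=\langle A\rangle_+=A\le\Soc_n(B)$, and if $R_k(A,B)\le\Soc_{n-k}(B)$ then $R_k(A,B)\ast B\subseteq\Soc_{n-k-1}(B)$ by the definition of the socle series, whence $R_{k+1}(A,B)=\langle R_k(A,B)\ast B\rangle_+\le\Soc_{n-k-1}(B)$ since $\Soc_{n-k-1}(B)$ is an additive subgroup; taking $k=n$ gives $R_n(A,B)=\{0\}$.

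For the direct implication I would first record two elementary facts. From the identity $a\ast(b+c)=a\ast b+b+a\ast c-b$ one obtains $b+(x\ast y)-b=x\ast(b+y)-x\ast b$, so if $A$ is additively normal then so is $R_1(A,B)=\langle A\ast B\rangle_+$; and a straightforward induction gives $R_k(R_1(X,Y),Y)=R_{k+1}(X,Y)$, hence $R_d(A,B)=R_{d-1}(R_1(A,B),B)$. Now I would induct on $d$. The case $d=0$ is trivial, since then $A=R_0(A,B)=\{0\}$. For $d\ge1$ put $A_1=R_1(A,B)$: it is additively normal and satisfies $R_{d-1}(A_1,B)=\{0\}$, so by the inductive hypothesis $A_1\le\Soc_m(B)$ with $m=c(d-1)$, and therefore $A\ast B\subseteq A_1\subseteq\Soc_m(B)$.

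It remains to lift $A$ itself into the socle series at the cost of only $c$ further steps, and this is where the additive nilpotency enters. Let $\{0\}=Z_0^+\le Z_1^+\le\cdots\le Z_c^+=B$ be the additive upper central series of $(B,+)$ and set $A^{(j)}=A\cap Z_j^+$, so that $\{0\}=A^{(0)}\le A^{(1)}\le\cdots\le A^{(c)}=A$. Since $A$ is additively normal, $[A^{(j)},B]_+\subseteq[A,B]_+\cap[Z_j^+,B]_+\subseteq A\cap Z_{j-1}^+=A^{(j-1)}$, while $A^{(j)}\ast B\subseteq A\ast B\subseteq\Soc_m(B)$. An induction on $j$ now yields $A^{(j)}\le\Soc_{m+j}(B)$: if $b\in A^{(j+1)}$ then $b\ast B\subseteq\Soc_m(B)\subseteq\Soc_{m+j}(B)$ and $[b,B]_+\subseteq A^{(j)}\subseteq\Soc_{m+j}(B)$, so $b\in\Soc_{m+j+1}(B)$. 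Taking $j=c$ gives $A=A^{(c)}\le\Soc_{m+c}(B)=\Soc_{cd}(B)$, which closes the induction; applied to $A=B_\pi$ this is exactly the ``only if'' direction.

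I expect the main obstacle to be precisely this last interleaving. Peeling off one factor $\ast B$ via $R_d(A,B)=R_{d-1}(R_1(A,B),B)$ lowers $d$ but produces additive commutators $[A,B]_+$, which must be absorbed using the upper central series of the \emph{ambient} additive group $(B,+)$; one cannot instead pass to the sub-brace generated by $A$, or to $\gamma_2^+(B)$, because the socle series is not inherited by skew sub-braces, so there is no way to relativize. Recognising that the natural inductive statement is about arbitrary additively normal subgroups $A$ (rather than just Hall subgroups) and that $R_1(A,B)$ is again additively normal is what makes the induction on $d$ go through; the rest is bookkeeping with the two nested series.
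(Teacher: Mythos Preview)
Your argument is correct and takes essentially the same route as the paper: interleave the descending $R$-series with the additive upper central series to climb into the socle series, giving the bound $n\le cd$; the paper (by analogy with Lemma~\ref{l:lpinilp} and the refinement of Lemma~\ref{l:stronglyln}) writes down the refined chain all at once, whereas you organise the same computation as an induction on $d$ applied to an arbitrary additively normal $A$. One minor slip: since addition need not commute, the conjugation identity should read $b+(x\ast y)-b=-(x\ast b)+x\ast(b+y)$ rather than $x\ast(b+y)-x\ast b$, but both terms on the right lie in $\langle A\ast B\rangle_+$, so your conclusion that $R_1(A,B)$ is additively normal is unaffected.
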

\begin{proof} This is similar to Lemma \ref{l:lpinilp}. Note that $n\le cd$, where $c$ is the nilpotency class of $(B,+)$ and $d$ the right $\pi$-nilpotency class of $B$.\end{proof}

\begin{cor}A skew brace $B$ of periodic nilpotent type is right nilpotent of class $\le d$ iff it is  right $p$-nilpotent of class $\le d$ for all primes $p$.\end{cor}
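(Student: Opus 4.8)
Since $(B,+)$ is nilpotent and periodic it is the restricted direct sum $(B,+)=\bigoplus_{p}B_p$ of its primary components, and each $B_p$ is additively characteristic, hence a strong left ideal of $B$; in particular every $B_p$ is $\lambda$-invariant. Recall also that, with the conventions above, $B$ is right nilpotent of class $\le d$ precisely when $R_d(B,B)=B^{(d+1)}=\{0\}$, and that $R_n(-,B)$ is monotone in its first argument. The implication from right nilpotency to right $p$-nilpotency is then immediate: if $R_d(B,B)=\{0\}$ then $R_d(B_p,B)\subseteq R_d(B,B)=\{0\}$ for every prime $p$.

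For the converse the plan is to establish, for every $n<\omega$, the identity
$$R_n(B,B)=\textstyle\sum_{p}R_n(B_p,B),$$
the sum being the additive subgroup generated; with $n=d$ this gives that vanishing of all $R_d(B_p,B)$ forces $B^{(d+1)}=R_d(B,B)=\{0\}$, i.e.\ that $B$ is right nilpotent of class $\le d$, which together with the first implication yields the equivalence. I would prove the identity by induction on $n$, carrying along the auxiliary statement that each $R_n(B_p,B)$ is a left ideal of $B$; the case $n=0$ is just the primary decomposition of $(B,+)$. For the inductive step I would first isolate a \emph{distributivity lemma}: for any family $(L_i)_i$ of left ideals of $B$,
$$\big\langle\big(\textstyle\sum_iL_i\big)\ast B\big\rangle_+=\textstyle\sum_i\langle L_i\ast B\rangle_+ .$$
The inclusion ``$\supseteq$'' is clear; for ``$\subseteq$'' one expands a generator $a\ast b$, writing $a=\ell+a'$ with $\ell\in L_{i_0}$ and $a'$ a sum of fewer elements of the $L_i$, via
$$a\ast b=\ell\ast\big(\lambda_\ell^{-1}(a')\ast b\big)+\lambda_\ell^{-1}(a')\ast b+\ell\ast b .$$
Here $\ell\ast b$ and $\ell\ast\big(\lambda_\ell^{-1}(a')\ast b\big)$ lie in $\langle L_{i_0}\ast B\rangle_+$ simply because $\ell\in L_{i_0}$, while $\lambda_\ell^{-1}(a')$ is again a sum of fewer elements of the $L_i$ (using that each $L_i$, being a left ideal, is $\lambda$-invariant), so the middle term is handled by the inner induction on the number of summands. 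Applying the lemma with $L_p=R_n(B_p,B)$ turns $R_{n+1}(B,B)=\langle R_n(B,B)\ast B\rangle_+=\big\langle\big(\sum_pR_n(B_p,B)\big)\ast B\big\rangle_+$ into $\sum_p\langle R_n(B_p,B)\ast B\rangle_+=\sum_pR_{n+1}(B_p,B)$, which is the identity at level $n+1$.

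The step I expect to be the main obstacle is the auxiliary statement feeding this induction, namely that $\langle L\ast B\rangle_+$ is again a left ideal whenever $L$ is — equivalently that $B\ast\langle L\ast B\rangle_+\subseteq\langle L\ast B\rangle_+$. This is the skew-brace analogue of the standard fact that the terms $B^{(n)}$ are ideals, but there one iterates $\langle-\ast B\rangle_+$ starting from the ideal $B$, whereas here one must start from the merely strong left ideals $B_p$, whose $\ast$-product with $B$ need not land back inside $B_p$. I would try to deduce it from the identities $(a\circ l)\ast b=a\ast(l\ast b)+l\ast b+a\ast b$ and $(a+m)\ast c=a\ast(\lambda_a^{-1}(m)\ast c)+\lambda_a^{-1}(m)\ast c+a\ast c$ together with $a\circ L=a+L$ and $\lambda_a(L)=L$ for a left ideal $L$; should the naive reduction of $a\ast(l\ast b)$ modulo $\langle L\ast B\rangle_+$ merely loop back on itself, I would fall back on an induction on the nilpotency class of $(B,+)$, passing to a quotient of $B$ that is again of nilpotent type and into which $L$ maps to a left ideal. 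Once the identity $R_n(B,B)=\sum_pR_n(B_p,B)$ is in hand the argument closes as above. (If $B$ has finite exponents only finitely many primes occur and all the sums are finite, but this is not needed; and note that the whole proof is consistent with either indexing convention for $R_n(B,B)$ versus $B^{(n)}$, provided it is used uniformly.)
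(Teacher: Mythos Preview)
Your approach differs from the paper's, which is essentially a one-liner: by the lemma immediately preceding the corollary, right $\pi$-nilpotency is characterised by $B_\pi\le\Soc_n(B)$, and since $(B,+)=\bigoplus_pB_p$ one has $B\le\Soc_n(B)$ if and only if $B_p\le\Soc_n(B)$ for every prime~$p$. The upper socle series already consists of ideals of $B$, so no auxiliary $\lambda$-invariance needs to be verified.

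Your direct descending computation has a real gap, and it is precisely the one you flag: the claim that $\langle L\ast B\rangle_+$ is again a left ideal whenever $L$ is a (strong) left ideal. The identities you propose do not close it. Attempting to show $c\ast(\ell\ast b)\in\langle L\ast B\rangle_+$ via $(c\circ\ell)\ast b=c\ast(\ell\ast b)+\ell\ast b+c\ast b$ requires controlling $(c\circ\ell)\ast b$ with $c\circ\ell\notin L$; rewriting $c\circ\ell=c+\lambda_c(\ell)$ and expanding with the additive formula simply regenerates the term $c\ast(\lambda_c^{-1}(\lambda_c(\ell))\ast b)=c\ast(\ell\ast b)$ you started from, so the computation loops. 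The standard argument that $B^{(n+1)}$ is a left ideal uses that $B^{(n)}$ is an \emph{ideal}, in particular multiplicatively normal, so that one can write $c\circ a=(c\circ a\circ c^{-1})\circ c$ with $c\circ a\circ c^{-1}\in B^{(n)}$ --- and this is exactly the ingredient the strong left ideal $B_p$ lacks. Your fallback of inducting on the additive nilpotency class is not spelled out, and it is unclear how passing to a quotient would yield $\lambda$-invariance of $R_n(B_p,B)$ in $B$ itself. The ascending route through $\Soc_n(B)$ avoids the issue entirely.
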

\begin{proof}Since $(B,+)$ is periodic and nilpotent, $B\le\Soc_n(B)$ iff $B_p\le\Soc_n(B)$ nilpotent for all primes $p$.\end{proof}

\begin{lem}\label{l:leftright}Let $B$ be a skew brace whose multiplicative group is abelian. Then $\Soc_i(B)=F_B^i(B)$ for all $i<\omega$. In particular $B$ is left ($\pi$-) nilpotent iff it is right ($\pi$-) nilpotent.\end{lem}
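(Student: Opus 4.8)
The plan is to establish the identity $\Soc_i(B)=F_B^i(B)$ by induction on $i$, and then to read off the equivalence of left and right ($\pi$-)nilpotency from the known descriptions of these notions in terms of the two series. The case $i=0$ is clear, both sides being $\{0\}$. For the inductive step I would put $H:=\Soc_n(B)=F_B^n(B)$. Since $H$ is an ideal it is additively normal and a left ideal, so $N_B^+(H)=\Stab_B(H)=B$; hence
\[\Soc_{n+1}(B)=\Fixl_B(B/H)\cap C_B^+(B/H)\quad\text{and}\quad F_B^{n+1}(B)=\Fixr_B(B/H)\cap C_B^+(B/H)\]
both unwind to the set of $x\in B$ with $[x,c]_+\in H$ for every $c\in B$, subject in the first case to $\lambda_x(c)\in c+H$ for every $c$, and in the second case to $\lambda_c(x)\in x+H$ for every $c$. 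So everything reduces to the claim: if $x$ satisfies $[x,c]_+\in H$ for all $c\in B$, then $\lambda_x(c)\in c+H$ for all $c$ if and only if $\lambda_c(x)\in x+H$ for all $c$.

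The one ingredient that makes this work is multiplicative commutativity: $x\circ c=c\circ x$ translates to $x+\lambda_x(c)=c+\lambda_c(x)$, i.e.\ $\lambda_c(x)=-c+x+\lambda_x(c)$ and symmetrically $\lambda_x(c)=-x+c+\lambda_c(x)$. Assuming $\lambda_x(c)\in c+H=H+c$ and substituting into the first identity, I would push the outer $-c$ through, using that $H$ is additively normal and that $x$ commutes with $c$ modulo $H$ (so $x+c-x\in H+c$), ending up in $-c+H+c=H$, which is exactly $\lambda_c(x)\in x+H$; the converse is the mirror image, using the second identity together with $[c,x]_+=-[x,c]_+\in H$. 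This proves the claim, hence $\Soc_{n+1}(B)=F_B^{n+1}(B)$, and the induction closes.

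For the concluding sentence I would recall that $B$ (of nilpotent type) is left, resp.\ left $\pi$-, nilpotent exactly when $B$, resp.\ the Hall $\pi$-subgroup $B_\pi$ of $(B,+)$, lies in some $F_B^n(B)$ (Lemma \ref{l:lpinilp}, complemented by Lemma \ref{l:stronglyln} for the non-$\pi$ case), and right, resp.\ right $\pi$-, nilpotent exactly when $B$, resp.\ $B_\pi$, lies in some $\Soc_n(B)$ (by the characterisations recorded just above); since the two series agree termwise, so do the two notions of nilpotency. The only mildly delicate point in the whole argument is the coset bookkeeping in the (possibly non-abelian) additive group, but it is controlled entirely by additive normality of $H$ together with the hypothesis that the relevant $x$ centralises $B$ modulo $H$, so I do not foresee a genuine obstacle.
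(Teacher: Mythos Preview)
Your argument is correct and follows the same inductive strategy as the paper, with the same key use of multiplicative commutativity. The only difference is cosmetic: where you carry out the inductive step by coset manipulations modulo $H=\Soc_n(B)=F_B^n(B)$, the paper simply passes to the quotient $B/H$ (legitimate since $H$ is an ideal), reducing to the case $i=1$; there the statement becomes the one-line observation that for $a\in Z^+(B)$ one has $a\circ b=a+b$ for all $b$ iff $b\circ a=b+a$ for all $b$, which is immediate from $a\circ b=b\circ a$ and $a+b=b+a$. This spares all the additive-normality bookkeeping you describe, but the content is identical.
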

\begin{proof}By induction on $i$, the case $i=0$ being trivial. So assume it is true for $i$; dividing by $\Soc_i(B)=F_B^i(B)$ we may assume $i=1$. But then
$$\Soc_1(B)=\{a\in Z^+(B):\forall b\ a\circ b=a+b\}=\{a\in Z^+(B):\forall b\ b\circ a=b+a\}=F_B^1(B).\qedhere$$
\end{proof}

\begin{cor} Let $B$ be a skew brace of periodic nilpotent type, and $\pi$ a set of primes. Suppose $(B_\pi,\circ)$ is abelian and normal in $(B,\circ)$, and $G(B_\pi)$ is nilpotent. Then $B$ is right $\pi$-nilpotent.
\end{cor}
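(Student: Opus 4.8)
The plan is to first show that $B_\pi$ is an ideal of $B$. Since $(B,+)$ is periodic and nilpotent, it decomposes as the internal direct product $B_\pi\oplus B_{\pi'}$ of its $\pi$- and $\pi'$-parts, both of which are characteristic additive subgroups whose elements centralise each other additively. In particular $B_\pi$ is additively normal and invariant under every $\lambda_a$, hence a strong left ideal; being multiplicatively normal by hypothesis, it is an ideal, so $B_\pi\ast B\subseteq B_\pi$. By the definition of right $\pi$-nilpotency it then suffices to produce $d<\omega$ with $R_d(B_\pi,B)=\{0\}$.

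The key point will be that the $\ast$-action of $B$ with left argument in $B_\pi$ stays inside $B_\pi$ and only depends on the $\pi$-component of the right argument. Concretely, for $x\in B_\pi$ and $b\in B$ I would write $b=b_{\pi'}+b_\pi$ with $b_{\pi'}\in B_{\pi'}$ and $b_\pi\in B_\pi$; then the identity $a\ast(b+c)=a\ast b+b+a\ast c-b$ gives $x\ast b=x\ast b_{\pi'}+b_{\pi'}+x\ast b_\pi-b_{\pi'}$. Now $x\ast b_{\pi'}=\lambda_x(b_{\pi'})-b_{\pi'}\in B_{\pi'}$ because $\lambda_x$ preserves the characteristic subgroup $B_{\pi'}$, while $x\ast b_{\pi'}\in B_\pi$ because $B_\pi$ is an ideal; hence $x\ast b_{\pi'}=0$. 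Since $x\ast b_\pi\in B_\pi$ commutes additively with $b_{\pi'}$, this collapses to $x\ast b=x\ast b_\pi$. A routine induction on $n$ (trivial base case, the inductive step using the preceding identity) then yields $R_n(B_\pi,B)=R_n(B_\pi,B_\pi)$ for every $n$; the right-hand side is computed entirely inside the skew sub-brace $B_\pi$, hence equals the corresponding term of the right-nilpotency series of $B_\pi$.

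Finally I would invoke the nilpotency hypotheses. Since $G(B_\pi)$ is nilpotent, Theorem \ref{BsemiB} shows that $B_\pi$ is left nilpotent of nilpotent type; as $(B_\pi,\circ)$ is abelian, Lemma \ref{l:leftright} converts this into right nilpotency of $B_\pi$, so $R_d(B_\pi,B_\pi)=\{0\}$ for all large enough $d$. Combining with the previous paragraph, $R_d(B_\pi,B)=R_d(B_\pi,B_\pi)=\{0\}$ for such $d$, so $B$ is right $\pi$-nilpotent. The one genuinely delicate step is the reduction $R_n(B_\pi,B)=R_n(B_\pi,B_\pi)$: it is precisely there that multiplicative normality of $B_\pi$ (to make $B_\pi$ an ideal) and periodicity of the nilpotent group $(B,+)$ (to split off $B_{\pi'}$) enter; everything else is bookkeeping on top of Theorem \ref{BsemiB} and Lemma \ref{l:leftright}.
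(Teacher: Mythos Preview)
Your proof is correct and follows essentially the same route as the paper's: both establish that $B_\pi$ is an ideal (so $B_\pi\ast B_{\pi'}\subseteq B_\pi\cap B_{\pi'}=\{0\}$), deduce inductively that $R_n(B_\pi,B)=R_n(B_\pi,B_\pi)$, and then invoke Theorem~\ref{BsemiB} and Lemma~\ref{l:leftright} to finish. The only cosmetic difference is that you work elementwise (showing $x\ast b=x\ast b_\pi$ for $x\in B_\pi$) while the paper phrases the same computation at the level of sets.
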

\begin{proof} Since $B_\pi$ is multiplicatively normal, it is an ideal in $B$. Thus $B_\pi\ast B_{\pi'}\subseteq B_\pi\cap B_{\pi'}=\{0\}$, and we show inductively that $R_i(B_\pi,B)=R_i(B_\pi,B_\pi)$. This is clear for $i=0$; suppose it holds for $i$. Then in particular $R_i(B_\pi,B_\pi)\ast B_{\pi'}=0$, so
$$\begin{aligned}R_{i+1}(B_\pi,B)&=\langle R_i(B_\pi,B)\ast B\rangle_+=\langle R_i(B_\pi,B_\pi)\ast(B_\pi+B_{\pi'})\rangle_+\\
&=\langle R_i(B_\pi,B_\pi)\ast B_\pi+0\rangle_+
=R_{i+1}(B_\pi,B_\pi).\end{aligned}$$
Since $G(B_\pi)$ is nilpotent, $B_\pi$ is left nilpotent by Theorem \ref{BsemiB}, whence right nilpotent by Lemma \ref{l:leftright}, so $R_d(B_\pi,B)=R_d(B_\pi,B_\pi)=0$ for sufficiently large $d$.
\end{proof}
\begin{rem}The hypotheses are in particular satisfied if $\pi$ consists of a single prime $p$, $(B_p,\circ)$ is abelian and normal, and $G(B_\pi)$ is a $p$-group of finite exponent which is \mc.\end{rem}

\begin{flushleft}
\rule{8cm}{0.4pt}\\
\end{flushleft}

{
\sloppy
\noindent
Maria Ferrara

\noindent
Dipartimento di Matematica e Fisica

\noindent
Università degli Studi della Campania  ``Luigi Vanvitelli''

\noindent
viale Lincoln 5, Caserta (Italy)

\noindent
e-mail: maria.ferrara1@unicampania.it
}

\bigskip
\bigskip
{
	\sloppy
	\noindent
	Moreno Invitti
	
	\noindent 
	Universit\'e Claude Bernard Lyon 1, CNRS
	
	\noindent
	Institut Camille Jordan UMR 5208,
	
	\noindent
	21 avenue Claude Bernard
	
	\noindent
	69622 Villeurbanne Cedex (France)
	
	\noindent
	e-mail: invitti@math.univ-lyon1.fr

}

\bigskip
\bigskip

{
\sloppy
\noindent
Marco Trombetti

\noindent 
Dipartimento di Matematica e Applicazioni ``Renato Caccioppoli''

\noindent
Università degli Studi di Napoli Federico II

\noindent
Complesso Universitario Monte S. Angelo

\noindent
Via Cintia, Napoli (Italy)

\noindent
e-mail: marco.trombetti@unina.it 

}

\bigskip
\bigskip

{
\sloppy
\noindent
Frank O. Wagner

\noindent 
Universit\'e Claude Bernard Lyon 1, CNRS

\noindent
Institut Camille Jordan UMR 5208,

\noindent
21 avenue Claude Bernard

\noindent
69622 Villeurbanne Cedex (France)

\noindent
e-mail: wagner@math.univ-lyon1.fr

}

\end{document}